\documentclass[a4paper,10pt,leqno]{amsart}
        \title{Approximately fibering a manifold over an aspherical one}

       \author{Tom Farrell}
       \address{Yau Mathematical Sciences Center and Department of Mathematics\\Tsinghua University\\ Beijing, China.}
       \email{farrell@math.tsinghua.edu.cn}
       \author{Wolfgang L\"uck}
       \address{Universit\"at Bonn\\
               Mathematisches Institut\\
               Endenicher Allee 60,
               D-53115 Bonn, Germany}
       \email{wolfgang.lueck@him.uni-bonn.de}
       \urladdr{http://www.math.uni-muenster.de/u/lueck}
       \author{Wolfgang Steimle}
       \address{Institut f\"ur Mathematik\\
				Universit\"at Augsburg\\
				D-86135 Augsburg, Germany}
       \email{wolfgang.steimle@math.uni-augsburg.de}
	  \urladdr{http://www.math.uni-augsburg.de/prof/diff/arbeitsgruppe/steimle/}
       \date{July, 2018}
  
       \keywords{approximate fibrations, torsion invariants, assembly maps}      
 
       \subjclass[2000]{55R65, 19J10}

\usepackage[active]{srcltx}
\usepackage{pdfsync}
\usepackage{color}
\usepackage{hyperref}
\usepackage{calc}
\usepackage{enumerate,amssymb}
\usepackage{graphicx}
\usepackage[arrow,curve,matrix,tips,2cell]{xy}
  \SelectTips{eu}{10} \UseTips
  \UseAllTwocells
\usepackage{tikz}
\usetikzlibrary{decorations.pathmorphing,snakes}

\DeclareMathAlphabet{\matheurm}{U}{eur}{m}{n}
  % AMS Fonts' Euler roman (cursive) medium weight
  % used for categories

\newcommand{\NK}{N\!K}
\newcommand{\Or}{\matheurm{Or}}
\newcommand{\Spaces}{\matheurm{Spaces}}
\newcommand{\Spectra}{\matheurm{Spectra}}

\DeclareMathOperator{\asmb}{asmb}

\DeclareMathOperator{\cok}{cok}

\DeclareMathOperator{\cyl}{cyl}

\DeclareMathOperator{\fib}{fib}

\DeclareMathOperator{\GL}{GL}

\DeclareMathOperator{\id}{id}
\DeclareMathOperator{\im}{im}

\DeclareMathOperator{\map}{map}

\DeclareMathOperator{\NWh}{NWh}

\DeclareMathOperator{\pt}{pt}
\DeclareMathOperator{\pr}{pr}

\DeclareMathOperator{\Wh}{Wh}

\newcommand{\VCyc}{{\mathcal{VC}\text{yc}}}
\newcommand{\Linfty}{L^{\langle -\infty\rangle}}
\newcommand{\bfLinfty}{\mathbf{L}^{\langle -\infty\rangle}}

  \newcommand{\IC}{\mathbb{C}}
  
  \newcommand{\IE}{\mathbb{E}}

  \newcommand{\IH}{\mathbb{H}}

  \newcommand{\IP}{\mathbb{P}}
  
  \newcommand{\IR}{\mathbb{R}}

  \newcommand{\IZ}{\mathbb{Z}}

  \newcommand{\cala}{\mathcal{A}}
  
  \newcommand{\calc}{\mathcal{C}}
  
  \newcommand{\cale}{\mathcal{E}}

  \newcommand{\calm}{\mathcal{M}}
  \newcommand{\caln}{\mathcal{N}}

  \newcommand{\calp}{\mathcal{P}}

  \newcommand{\cals}{\mathcal{S}}
  \newcommand{\calt}{\mathcal{T}}
  
  \newcommand{\calv}{\mathcal{V}}
  \newcommand{\calw}{\mathcal{W}}

  \newcommand{\bfA}{{\mathbf A}}
  \newcommand{\bfa}{{\mathbf a}}
  \newcommand{\bfasmb}{{\mathbf a}{\mathbf s}{\mathbf m}{\mathbf b}}
  
  \newcommand{\bfb}{{\mathbf b}}

  \newcommand{\bfE}{{\mathbf E}}

  \newcommand{\bfK}{{\mathbf K}}
  \newcommand{\bfL}{{\mathbf L}}

  \newcommand{\bfu}{{\mathbf u}}

  \newcommand{\bfWh}{{\mathbf W}{\mathbf h}}

\newcommand{\inv}{^{-1}}
\newcommand{\eps}{\varepsilon}

\newcounter{commentcounter}

\definecolor{lightgray}{RGB}{200,200,200}

%%%%%%%%%%%%%%%%%%%%%%%%%%%%%%%%%%%%%%%%%%%%%%%%%%%%%%%%%

\theoremstyle{plain}
\newtheorem{theorem}{Theorem}[section]

\newtheorem{lemma}[theorem]{Lemma}
\newtheorem{corollary}[theorem]{Corollary}
\newtheorem{proposition}[theorem]{Proposition}
\newtheorem{conjecture}[theorem]{Conjecture}

\newtheorem{definition}[theorem]{Definition}
\theoremstyle{definition}

\newtheorem{addendum}[theorem]{Addendum}

\newtheorem{remark}[theorem]{Remark}
\newtheorem{notation}[theorem]{Notation}

\theoremstyle{remark}

\makeatletter\let\c@equation=\c@theorem\makeatother

\hyphenation{equi-variant}

\newcommand{\version}[1]                       
%marks the date of last editing and compilation
{\begin{center} Last edited on #1\\
Last compiled on \today\\
name of the file: \jobname
\end{center}
}

%%%%%%%%%%%%%%%%%%%%%%%%%%%%%%%%%%%%%%%%%%%%%%%%

\newcommand{\EGF}[2]{E_{#2}(#1)}

\newcommand{\intgf}[2]{\int_{#1} #2}

\newcommand{\xycomsquare}[8]                % commutative square (xy-Version)
{$$\xymatrix{#1 \ar[r]^-{#2} \ar[d]^{#4} &
    #3 \ar[d]^{#5}  \\
    #6\ar[r]^-{#7} & #8 }$$}

%%%%%%%%%%%%%%%%%%%%%%%%%%%%%%%%%%%%%%%%%%%%%%%%%%%%%%%%%%%%%%%%%%%%%%%%%%%%%%%
%%%%%%%%%%%%%%%%%%%%%%%%%%%%%%%%%%%%%%%%%%%%%%%%%%%%%%%%%%%%%%%%%%%%%%%%%%%%%%%
%%%%%%%%%%%%%%%%%%%%%%%%%%%%%%%%%%%%%%%%%%%%%%%%%%%%%%%%%%%%%%%%%%%%%%%%%%%%%%%

\begin{document}

\begin{abstract}
  The paper is devoted to the problem when a map from some closed connected manifold to an
  aspherical closed manifold approximately fibers, i.e., is homotopic to Manifold
  Approximate Fibration. We define obstructions in algebraic $K$-theory.  Their vanishing
  is necessary and under certain conditions sufficient.  Basic ingredients are Quinn's
  Thin $h$-Cobordism Theorem and End Theorem, and knowledge about the Farrell-Jones
  Conjectures in algebraic $K$- and $L$-theory and the MAF-Rigidity Conjecture by
  Hughes-Taylor-Williams.
\end{abstract}

\maketitle

%%%%%%%%%%%%%%%%%%%%%%%%%%%%%%%%%%%%%%%%%%%%%%%%%%%%%%%%%%%%%%%%%%%%%%%%%%%%%%%
%%%%%%%%%%%%%%%%%%%%%%%%%%%%%%%%%%%%%%%%%%%%%%%%%%%%%%%%%%%%%%%%%%%%%%%%%%%%%%%
%%%%%%%%%%%%%%%%%%%%%%%%%%%%%%%%%%%%%%%%%%%%%%%%%%%%%%%%%%%%%%%%%%%%%%%%%%%%%%%

\section*{Introduction}

%%%%%%%%%%%%%%%%%%%%%%%%%%%%%%%%%%%%%%%%%%%%%%%%%%%%%%%%%%%%%%%%%%%%%%%%%%%%%%%

\subsection{Manifold Approximate Fibrations}
\label{subsec:Manifold_Approximate_Fibrations}

A Manifold Approximate Fibration (MAF) is a map between closed
topological manifolds which is an approximate fibration in the sense
that it has an ``approximate lifting property'', generalizing the
usual lifting property of Hurewicz fibrations, see Definition~\ref{def:MAF}.
This notion of an approximate fibration was introduced by 
Coram-Duvall~\cite{Coram-Duvall(1977)}.  
The theory of MAFs is a
``bundle theory'', see~\cite{Hughes-Taylor-Williams(1990)}, which plays a prominent role in the study of topological 
manifolds and controlled topology.
For instance, 
Edwards~\cite{Edwards(2009)} proved in the early 1970s that a locally flat submanifold of a
topological manifold of dimension greater than five has a mapping
cylinder neighborhood, where the projection map is a {MAF}.

The investigations of group actions on topological manifold
of Quinn~\cite{Quinn(1982a),Quinn(1987),Quinn(1988)} 
rely on the theory of approximate fibrations. Further papers dealing with
approximate fibrations are
\cite{Chapman-Ferry(1983),Ferry(1977a),Hughes(1997),Hughes-Khan(2010),
Hughes(1985mani), Hughes(1985spaces),
Hughes(1988),Hughes-Taylor-Williams(1991),Hughes-Taylor-Williams(1995),
Lacher(1977)}. 

A map $p\colon M\to B$ between closed manifolds is a MAF if and only if the 
preimage of
any open ball in $B$ has the weak homotopy type of the homotopy fiber of $p$. It 
is also
true that the restriction of a MAF to an open subset of the base space is again 
an
approximate fibration, see~\cite[Corollary~12.14 and 
Theorem~12.15]{Hughes-Taylor-Williams(1990)}.

In this paper we treat the following question: 
\begin{quote}
Given a closed
aspherical manifold $B$ and a  map $p\colon M\to B$ from some closed
connected manifold, when is $p$ homotopic to {MAF}?
\end{quote}

We emphasize that we do not want to change the source or target, only the map 
within its homotopy class.

%%%%%%%%%%%%%%%%%%%%%%%%%%%%%%%%%%%%%%%%%%%%%%%%%%%%%%%%%%%%%%%%%%%%%%%%%%%%%%%

\subsection{Two appetizers}
\label{subsec:Two_appetizers}

As an illustration we present two easy to formulate prototypes of more general 
results we
will later prove.

\begin{theorem}[Special case I] \label{the:main_easy_to_state_result_I} Let $B$ 
be a closed connected aspherical PL manifold with hyperbolic fundamental group, e.g., a   
  closed connected smooth manifold with Riemannian metric of negative sectional 
curvature. Let
  $M$ be a closed connected manifold of dimension $\neq 4$. Suppose $\pi_1(M)$ 
is
  torsionfree.  Moreover, assume that $\pi_1(M)$ is a hyperbolic group, 
CAT(0)-group, a
  solvable group, arithmetic group, or a lattice in an almost connected Lie
  group. (Actually, we only need that $\pi_1(M)$ satisfies the $K$- and 
$L$-theoretic
  Farrell-Jones Conjecture.)

  Then a map $M \to B$ is homotopic to a MAF if and only if the homotopy fiber 
of $p$ is
  finitely dominated.
\end{theorem}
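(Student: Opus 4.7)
The plan is to prove the two implications separately.

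For the ``only if'' direction, suppose $p$ is homotopic to a MAF $q\colon M\to B$. The homotopy fiber of $p$ agrees up to weak equivalence with that of $q$, so it suffices to show the latter is finitely dominated. By the characterization of MAFs recalled in the introduction, the preimage $q^{-1}(U)$ of a small open ball $U\subset B$ is weakly equivalent to the homotopy fiber of $q$; since $q^{-1}(U)$ is an open subset of the manifold $M$, it is an ANR, and by choosing $U$ with suitable boundary behaviour one can realise $q^{-1}(U)$ as a retract of a finite CW complex. Hence the homotopy fiber is finitely dominated.

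For the ``if'' direction, assume the homotopy fiber $F$ is finitely dominated. The strategy is to translate the existence of a MAF in the homotopy class of $p$ into an obstruction-theoretic problem in controlled topology over $B$, and to kill the obstructions using the Farrell-Jones Conjecture together with the MAF-Rigidity Conjecture. Concretely, I would apply Quinn's End Theorem and Thin $h$-Cobordism Theorem to attempt to deform $p$ inductively into a map with arbitrarily small approximate lifting behaviour over $B$. At each stage an obstruction arises in a $K$-theoretic group associated with $\pi_1(B)$ and its orbit category: a primary finiteness obstruction, defined precisely because $F$ is finitely dominated and refining Wall's classical invariant, together with secondary $\Wh$-type obstructions controlling the required thin $h$-cobordisms and controlled end completions on $\widetilde M\to\widetilde B$.

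The relevant obstruction groups are to be computed via assembly maps of the shape $H^G_\ast(E_\VCyc G;\bfK)\to K_\ast(\IZ G)$ for $G=\pi_1(B)$ and for $G=\pi_1(M)$. The hyperbolicity of $\pi_1(B)$ and the $K$- and $L$-theoretic Farrell-Jones Conjecture for $\pi_1(M)$ force both assembly maps to be isomorphisms, and, combined with the MAF-Rigidity Conjecture of Hughes-Taylor-Williams (applicable in this setting because of the hyperbolicity and torsionfreeness assumptions), this reduces the full geometric question to the vanishing of the primary finiteness obstruction, which holds by hypothesis. The main obstacle will be the careful setup of the controlled obstruction theory and the identification of these obstructions as well-defined classes in the equivariant $K$-theory groups above; once that framework is in place the Farrell-Jones and MAF-Rigidity inputs do the computational work. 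The hypothesis $\dim M\neq 4$ enters through Quinn's theorems, which require dimension $\geq 5$, with low-dimensional cases treated by direct geometric arguments.
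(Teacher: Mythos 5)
Your ``only if'' direction is essentially the paper's Lemma~\ref{lem:finite_domination_for_approximate_fibration} and is fine in outline, though the final step needs more care than ``choosing $U$ with suitable boundary behaviour'': the paper replaces the total space by a compact simplicial complex via a cell-like map, then sandwiches a finite subcomplex between $q^{-1}(U_1)$ and $q^{-1}(U_2)$ for nested balls.

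The ``if'' direction has a genuine gap: you invoke the MAF-Rigidity Conjecture~\ref{con:MAF_rigidity_conjecture} as an input. That conjecture is known, via Hughes--Taylor--Williams, only when $B$ admits a Riemannian metric of non-positive sectional curvature. Theorem~\ref{the:main_easy_to_state_result_I} merely assumes $B$ is a closed aspherical PL manifold with hyperbolic $\pi_1$, and as the paper notes (citing Davis--Januszkiewicz) this class is far larger than the non-positively curved one, so the MAF-Rigidity Conjecture is simply not available. The paper's proof deliberately avoids it: since $\pi_1(B)$ is hyperbolic it contains no $\IZ\oplus\IZ$, so the Klein bottle condition holds and the cyclic subgroups of $\pi_1(B)$ are orientable; by Theorem~\ref{the:FJC_and_Tate_cohomology} this forces the Tate cohomology of $\NWh(\cdot)$ to vanish, so the splitting obstruction in Theorem~\ref{the:stably_implies_unstably}~\ref{the:stably_implies_unstably_approx} disappears without any rigidity input. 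MAF-Rigidity only enters the paper for Theorem~\ref{the:main_result_2}, where the Klein bottle condition may fail and one has to \emph{identify} the obstruction $\kappa_0$ rather than show the ambient group vanishes.

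You are also glossing over the central reduction that makes the theorem so clean. The hypothesis gives only finite domination of $F_p$, not homotopy finiteness, and does not directly give $N\tau(p)=0$. The paper gets both for free from the assumption that $\pi_1(M)$ is torsionfree and satisfies FJC: this kills $\Wh(\pi_1(M))$, $\Wh(\pi_1(M)\times\IZ)$ and $\widetilde K_0(\IZ[\pi_1(F_p)])$ outright, so $F_p$ is homotopy finite, $N\tau(p)$ vanishes because its ambient group is zero, and one can then quote Theorem~\ref{the:main_result_1}. You should also be explicit that the positive direction is proved by the stabilization--destabilization strategy of Theorems~\ref{the:theorem_is_stably_true} and~\ref{the:stably_implies_unstably}: first cross with a torus and use the $L$-theoretic FJC plus surgery to make $p_s$ a MAF, then split off circle factors one at a time via Quinn's End Theorem. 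Your sketch mentions Quinn's theorems, which are indeed the destabilization engine, but omits the stabilization step entirely, and the finiteness obstruction plays no role at the destabilization stage.
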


\begin{theorem}[Special case II] \label{the:main_easy_to_state_result_II}
  Let $B$ be an aspherical closed connected PL manifold with 
hyperbolic fundamental group, e.g., a   
  closed connected smooth manifold with Riemannian metric of negative sectional 
curvature. 
  Let $M$ be a closed aspherical manifold of dimension
  $\neq 4$. Let $p \colon M \to B$ be a map such
that the kernel of  $\pi_1(p) \colon \pi_1(M) \to \pi_1(B)$ is 
poly-cyclic and its image has finite index.
  
  Then $p \colon M \to B$ is homotopic to a \text{MAF}.
\end{theorem}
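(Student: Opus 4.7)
The plan is to reduce the statement to Theorem~\ref{the:main_easy_to_state_result_I} by verifying its three remaining hypotheses in the present setting: that $\pi_1(M)$ is torsion-free, that $\pi_1(M)$ satisfies the $K$- and $L$-theoretic Farrell--Jones Conjecture, and that the homotopy fiber of $p$ is finitely dominated.

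Torsion-freeness of $\pi_1(M)$ is a standard consequence of the asphericity of the closed manifold $M$: any nontrivial finite subgroup would act freely on the contractible universal cover and thereby yield a finite-dimensional $K(G,1)$ for a finite group $G$, which is impossible. Hence the kernel $K := \ker \pi_1(p)$ is a torsion-free polycyclic group, i.e., poly-$\IZ$, and so $BK$ admits a finite CW model built inductively as an iterated $S^1$-bundle. Writing $H := \mathrm{im}\, \pi_1(p)$, which is of finite index in $\pi_1(B)$ by hypothesis, the long exact homotopy sequence of $p$ together with the asphericity of both $M$ and $B$ shows that the homotopy fiber $F$ of $p$ satisfies $\pi_n(F) = 0$ for $n \ge 2$, $\pi_1(F_0) \cong K$ for each path-component $F_0$ of $F$, and $\pi_0(F) \cong \pi_1(B)/H$, which is finite. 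Consequently $F$ is homotopy equivalent to a finite disjoint union of copies of $BK$ and is therefore finitely dominated.

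To verify the Farrell--Jones Conjecture for $\pi_1(M)$, I would use the extension $1 \to K \to \pi_1(M) \to H \to 1$: the group $K$ is virtually polycyclic, and $H$, being a finite-index subgroup of a hyperbolic group, is itself hyperbolic. Both hyperbolic groups and virtually polycyclic groups satisfy the $K$- and $L$-theoretic Farrell--Jones Conjecture with coefficients in arbitrary additive categories (by Bartels--L\"uck--Reich and Bartels--L\"uck for hyperbolic groups, and by Wegner together with Bartels--Farrell--L\"uck for virtually polycyclic groups). Since the coefficient version of the conjecture is closed under group extensions, $\pi_1(M)$ satisfies it as well, and Theorem~\ref{the:main_easy_to_state_result_I} then applies to produce a MAF homotopic to $p$.

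The main obstacle is bookkeeping with the Farrell--Jones Conjecture rather than anything genuinely geometric: one has to invoke throughout the \emph{coefficient} (or fibered) version of the conjecture, since it is this form that enjoys closure under extensions, and one has to confirm that all inputs -- hyperbolic groups and virtually polycyclic groups -- satisfy this stronger variant. Once the relevant citations are in place, the remainder of the argument is elementary manipulation of the long exact homotopy sequence together with standard facts about torsion-free polycyclic groups.
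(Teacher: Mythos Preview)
Your argument is essentially the same as the paper's: verify that $\pi_1(M)$ is torsionfree, that the homotopy fiber is homotopy finite (via a finite model for $BK$ with $K$ poly-$\IZ$), and that $\pi_1(M)$ satisfies FJC, then invoke Theorem~\ref{the:main_easy_to_state_result_I}. The paper first passes to the $\pi_1$-surjective case via Remark~\ref{rem:pi_1_surjective} rather than handling the finitely many components of the fiber directly, but this is cosmetic.

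One point deserves tightening. The Farrell--Jones Conjecture with coefficients is \emph{not} known to be closed under arbitrary extensions; the inheritance property you need is Theorem~\ref{the:status_of_FJC}(x): if $1\to K\to G\to Q\to 1$ is exact and both $Q$ and the preimage of every virtually cyclic subgroup of $Q$ satisfy FJC, then so does $G$. In your situation $Q=H$ is torsionfree hyperbolic, so its virtually cyclic subgroups are infinite cyclic, and the preimage of any such is an extension of the poly-cyclic $K$ by $\IZ$, hence poly-cyclic (and solvable), hence satisfies FJC. This is exactly the observation the paper records as ``an extension of a poly-cyclic group by an infinite cyclic group is again poly-cyclic''; with it in place your argument goes through.
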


%%%%%%%%%%%%%%%%%%%%%%%%%%%%%%%%%%%%%%%%%%%%%%%%%%%%%%%%%%%%%%%%%%%%%%%%%%%%%%%

\subsection{The Naive Conjecture}
\label{subsec:The_Naive_Conjecture}

\emph{Throughout this introduction, we will make the following standing assumptions:}

$p\colon M\to B$ is a continuous map, where
\begin{enumerate}
\item $M$ is a connected closed (topological) manifold;
\item $B$ is an aspherical  closed (topological) manifold which admits a PL structure
  (e.g., is smooth);
\item The homotopy fiber of $p$ is homotopy finite, i.e., has the homotopy type of a finite CW complex;
\item $p$ induces a surjection on fundamental groups.
\end{enumerate}
(But see also Section~\ref{sec:non_finite_fiber} for a discussion of the case of 
a non-finite fiber, in particular see
Theorem~\ref{the:main_result_3}.)

In this situation, we will show in 
Theorem~\ref{thm:factorization_to_an_approximate_fibration} that
there exists a factorization up to homotopy
\begin{equation}\label{diag:factorization_property}
  \xymatrix{M \ar[rr]^f \ar[rd]_p && E \ar[ld]^q \\ & B}
\end{equation}
where $f$ is a homotopy equivalence, $q$ is an approximate fibration,
and $E$ is a compact ENR.

Given such a factorization~\eqref{diag:factorization_property}, denote by 
$\tau(f)\in
\Wh(\pi_1(E))$ the Whitehead torsion of the homotopy equivalence $f$.  Its image 
in the
cokernel $\NWh(p)$ of the assembly map
\[
H_1^{\pi}(\widetilde{B},\Wh(p))   \to \pi_1(\bfWh(M)) = \Wh(\pi_1(M)),
\]
see Section~\ref{sec:Assembly_maps}, depends only on the homotopy class of $p$. 
We call it \emph{tight torsion} and denote it by
\[
N\tau(p)\in \NWh(p).
\]
See Section~\ref{sec:tight_torsion} for more details and for the relation to
the fibering obstruction $\tau_{\fib}(p)$ defined by the authors 
in~\cite{Farrell-Lueck-Steimle(2010)}.

Clearly, if $p$ is homotopic to a MAF, then $N\tau(p)=0$. It is implicit
in~\cite{Farrell(1967)} that if $B=S^1$ and $\dim(M)\geq 6$, then the converse 
is also
true.  From this example, one may be tempted to propose the following

\begin{conjecture}[Naive Conjecture]\label{con:Naive_Conjecture}
Under the above assumptions (i) to (iv) , the map $p$ is homotopic to a MAF if 
and only if $N\tau(p)=0$.
\end{conjecture}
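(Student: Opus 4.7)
The forward direction is immediate---for a MAF one may take $E=M$ and $f=\id$ in the factorization, so $\tau(f)=0$---so the content lies in the converse. My plan is to start from the factorization $M\xrightarrow{f}E\xrightarrow{q}B$ guaranteed by Theorem~\ref{thm:factorization_to_an_approximate_fibration} and deform $f$ within its homotopy class so that $q\circ f$ becomes arbitrarily well-controlled over $B$. A standard fact (implicit in Chapman--Ferry and used pervasively in controlled topology) says that a map which is $\varepsilon$-close to an approximate fibration for every $\varepsilon>0$ is itself an approximate fibration. Since $q$ is already an approximate fibration, it suffices to exhibit, for every $\varepsilon>0$, a map homotopic to $f$ which is $\varepsilon$-controlled as a homotopy equivalence over $B$.

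The first main step is a geometric realization of the assembly. The hypothesis $N\tau(p)=0$ means $\tau(f)\in\Wh(\pi_1(M))$ lies in the image of the assembly, i.e.\ admits a decomposition into ``locally supported'' contributions over small open pieces of $B$. I would try to realize this decomposition by an actual $h$-cobordism $(W;M,N)$ over $E$ whose controlled Whitehead torsion over $B$ sums to $-\tau(f)$. Attaching $W$ then produces a new factorization $N\xrightarrow{f'}E'\xrightarrow{q'}B$ with $q'$ an approximate fibration and with $\tau(f')$ vanishing not merely in $\Wh(\pi_1(M))$ but in the controlled (small) Whitehead group over $B$.

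The second main step is to invoke Quinn's Thin $h$-Cobordism Theorem. Once the controlled torsion of $f'$ vanishes, Quinn's theorem (applicable in dimensions $\geq 5$, compatibly with the hypothesis $\dim M\neq 4$ modulo separate handling of low dimensions) provides, for each $\varepsilon>0$, a deformation of $f'$ to an $\varepsilon$-controlled equivalence $f'_\varepsilon$ over $B$. The composite $q'\circ f'_\varepsilon$ is then $\varepsilon$-close to the approximate fibration $q'$, and passing to the limit yields a MAF homotopic to $p$.

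The hard part---and, I suspect, the reason the conjecture is dubbed \emph{naive}---is the geometric realization step. The image of the assembly map in $\Wh(\pi_1(M))$ need not be realizable by $h$-cobordisms over $B$ with prescribed controlled torsion: higher $K$-theoretic phenomena, in particular negative $K$-groups and $\Nil$-type summands entering the coefficient spectrum $\bfWh(p)$, can obstruct the lift from algebra to geometry. Overcoming this presumably forces the $K$-theoretic Farrell--Jones Conjecture for $\pi_1(M)$, plus the MAF-Rigidity Conjecture of Hughes--Taylor--Williams for $B$, exactly as reflected in the hypotheses of Theorems~\ref{the:main_easy_to_state_result_I} and~\ref{the:main_easy_to_state_result_II}. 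Without such inputs I would expect counterexamples to the naive form and a need to enlarge $N\tau(p)$ to a richer, spectrum-level obstruction capturing the full cokernel of the assembly map rather than just its $\pi_1$.
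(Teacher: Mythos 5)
This statement is labeled a Conjecture for good reason: Theorem~\ref{the:important_example} shows it is false in general, so no unconditional argument can succeed. You anticipate this, but your sketch misdiagnoses where it breaks. The first step---realizing an assembly-image class by a controlled $h$-cobordism over $B$ via Quinn's Thin $h$-Cobordism Theorem---is sound, and carried out verbatim it reproduces the paper's proof of Theorem~\ref{the:interpretation_in_terms_of_Q_manifolds}. The gap is your assertion that $\tau(f')$ then ``vanishes in the controlled (small) Whitehead group over $B$.'' The original $f\colon M\to E$ is a bare homotopy equivalence, not a controlled one, so its Whitehead torsion has no canonical controlled lift and there is no sense in which it cancels against the controlled torsion of $W$; what you can kill is only the ordinary torsion $\tau(f')\in\Wh(\pi_1 M)$. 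That is enough to get $M\times Q\cong W\times Q$ by Chapman's theorem, hence the Hilbert-cube statement, but a homotopy equivalence of finite-dimensional manifolds with trivial Whitehead torsion cannot in general be deformed to an $\varepsilon$-controlled one over $B$. Your final appeal to Quinn for $f'_\varepsilon$ is therefore unsupported---and it must fail, or Theorem~\ref{the:important_example} would be contradicted.

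The paper's actual proof of the conditional versions is structurally different: stabilize by $T^s$, prove $p_s$ is homotopic to a MAF via the $L$-theoretic FJC for $\pi_1(B)$ (Theorem~\ref{the:theorem_is_stably_true}), then descend circle by circle (Theorem~\ref{the:stably_implies_unstably}). Each descent passes to the infinite cyclic cover, applies Quinn's End Theorem, and produces an $h$-cobordism $(W;M,N)$ with $N\to B$ a MAF; writing $x=\tau(W,M)$, the associated homotopy equivalence has torsion $x-(-1)^n\overline x$ representing $N\tau(p)$. The hypothesis $N\tau(p)=0$ controls only this symmetrization; whether $x$ itself can be pushed into the assembly image so that $W$ becomes a product is governed by a class in $\widehat H^n(\IZ/2;\NWh(p))$. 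This Tate cohomology class is the secondary obstruction $\kappa_0$, vanishing automatically when the cyclic subgroups of $\pi_1(B)$ are orientable (hyperbolic, or CAT(0) plus the Klein bottle condition, via Theorem~\ref{the:FJC_and_Tate_cohomology}) and nonzero in the counterexample. So the extra input needed is the $K$- and $L$-theoretic FJC for $\pi_1(B)$ together with the vanishing of these duality Tate groups---not FJC for $\pi_1(M)$---and MAF-Rigidity enters only in Theorem~\ref{the:main_result_2}, to make $\kappa_0$ well-defined when those Tate groups do not all vanish.
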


%%%%%%%%%%%%%%%%%%%%%%%%%%%%%%%%%%%%%%%%%%%%%%%%%%%%%%%%%%%%%%%%%%%%%%%%%%%%%%%

\subsection{The main results}
\label{subsec:The_main_results}

This Naive Conjecture is wrong in general: In 
Section~\ref{sec:counterexample_to_Naive_Conjecture} we give a
counterexample where $B$ is a product of the Klein bottle with the
circle $S^1$.  
However, the Naive Conjecture holds in remarkable
generality:

\begin{theorem}[Hyperbolic or CAT(0)-fundamental group and the Klein bottle 
condition]
\label{the:main_result_1}
In addition to the standing assumptions above, assume the following conditions about $\pi_1(B)$:

  \begin{enumerate} 
  \item The group $\pi_1(B)$ is hyperbolic. (This is the case if
  $B$ is a connected  smooth  manifold with Riemannian metric of  negative 
sectional curvature);

  \item The group $\pi_1(B)$ is a CAT(0)-group. (This is the case if
  $B$ is a connected   closed smooth  manifold with Riemannian metric of  
non-positive sectional curvature).
  Moreover, $\pi_1(B)$ satisfies the ``Klein bottle
  condition'' that it does not contain the fundamental group $K=\IZ\rtimes \IZ$ 
of the Klein bottle.
  
  \end{enumerate}
  Suppose $\dim(M) \not= 4$. 

  Then the Naive Conjecture is true.\footnote{In the special case where $B$ is a (real)
hyperbolic manifold, $\dim M>\dim B+4$, and $\Wh(\pi_1(M)\times \IZ^n)=0$ for
  all $n\geq 0$, the conclusion of Theorem~\ref{the:main_result_1} was
  proven by Farrell-Jones in~\cite[Theorem~10.7]{Farrell-Jones(1989)}.} 
\end{theorem}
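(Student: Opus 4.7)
The ``only if'' direction is noted before Conjecture~\ref{con:Naive_Conjecture}, so my plan is to prove the converse: $N\tau(p) = 0$ implies $p$ is homotopic to a MAF. The first step is to apply Theorem~\ref{thm:factorization_to_an_approximate_fibration} to factor $p$ up to homotopy as $M \xrightarrow{f} E \xrightarrow{q} B$, with $f$ a homotopy equivalence, $E$ a compact ENR, and $q$ an approximate fibration. It then suffices to construct a compact ENR $E'$, an approximate fibration $q'\colon E' \to B$, and a \emph{simple} homotopy equivalence $g\colon M \to E'$ with $q' \circ g \simeq p$: the $s$-cobordism theorem (this is where the hypothesis $\dim M \neq 4$ enters; the low-dimensional cases need separate treatment) combined with a controlled $\alpha$-approximation then yields a homeomorphism homotopic to $g$, and composition with $q'$ is a MAF. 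The Hughes-Taylor-Williams MAF-Rigidity Conjecture, known here under the Farrell-Jones Conjecture, will subsequently identify this MAF with a map homotopic to $p$ itself.

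Second, I would realize $\tau(f) \in \Wh(\pi_1(M))$ geometrically. The hypothesis $N\tau(p) = 0$ places $\tau(f)$ in the image of the assembly map $H_1^\pi(\widetilde B;\Wh(p)) \to \Wh(\pi_1(M))$. The plan is to use Quinn's End Theorem to assemble local $h$-cobordisms of the preimages $q\inv(U)$, for $U \subset B$ ranging over a suitable open cover, into a single controlled $h$-cobordism $(W; E, E')$ admitting an approximate fibration $W \to B$ extending $q$, with total torsion equal to $-\tau(f)$. The composite $M \xrightarrow{f} E \hookrightarrow W$ then has vanishing Whitehead torsion.

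The hardest step is this geometric assembly and its compatibility with the approximate-fibration structure. The Farrell-Jones Conjecture for $\pi_1(B)$ reduces the relevant assembly map to contributions from virtually cyclic subgroups, and the local-to-global construction succeeds provided the associated Bass-Nil groups $\NK$ vanish. This vanishing holds for hyperbolic groups (case (i)) and for CAT(0) groups not containing the Klein bottle group $K = \IZ \rtimes \IZ$ (case (ii))---the Klein bottle condition is precisely what rules out nontrivial Nil contributions coming from infinite dihedral-type virtually cyclic subgroups. With $W$ in hand, Quinn's Thin $h$-Cobordism Theorem applies: collapsing the thin structure over $B$ produces the sought approximate fibration $q'\colon E' \to B$ and the simple homotopy equivalence $g\colon M \to E'$, completing the proof modulo the MAF-rigidity identification.
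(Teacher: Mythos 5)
Your reduction in the first paragraph is not sound: the $s$-cobordism theorem converts a trivial $h$-cobordism into a product, but it does not convert a simple homotopy equivalence $g\colon M\to E'$ of closed manifolds into a homeomorphism homotopic to $g$. Chapman's $\alpha$-approximation theorem would require $g$ to already be a fine (controlled over $B$) homotopy equivalence, and the composite $M\xrightarrow{f}E\hookrightarrow W\to E'$ you propose to build inherits no control over $B$ from $f$; so neither tool applies as invoked. What the paper actually does (Theorems~\ref{the:theorem_is_stably_true} and~\ref{the:stably_implies_unstably}, applied to derive Theorem~\ref{the:main_result_1_generalized} and hence Theorem~\ref{the:main_result_1}) is quite different: it first crosses with a torus and uses the $L$-theoretic Farrell--Jones Conjecture to prove that $p_s\colon M\times T^s\to B\times T^s$ is homotopic to a MAF for some $s$ (there is no surgery input at all in your argument, and without it there is no starting point); it then splits off one circle at a time by passing to the infinite cyclic cover of $p_1$, applying Quinn's End Theorem to obtain an actual $h$-cobordism $(W;M,\partial\calw)$ with $\partial\calw\to B$ a MAF, and only then applying the $s$-cobordism theorem to trivialize $W$. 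That is the step you are missing: you never construct an $h$-cobordism from $M$ to the total space of a MAF.

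Your account of the Klein bottle condition is also off. It does not make the Bass $\NK$-groups vanish (they need not vanish for hyperbolic groups), nor does it make the assembly map surjective. What it does, via orientability of cyclic subgroups (Theorem~\ref{the:fundamental_groups_of_npc_manifolds_are_orientable}) and Theorem~\ref{the:FJC_and_Tate_cohomology}, is force the vanishing of $\widehat H^n(\IZ/2;\NWh(p))$: the involution identity $\tau(f)=x-(-1)^n\overline x$ from the $h$-cobordism torsion $x=\tau(W,M)$ only pins $x$ down modulo norms and the assembly image, and it is precisely this Tate cohomology vanishing that lets one modify $W$ to have controlled torsion and hence to be a product. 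Finally, note that the MAF Rigidity Conjecture plays no role in Theorem~\ref{the:main_result_1} (it is only needed for Theorem~\ref{the:main_result_2} and is known only for non-positively curved $B$, not for general hyperbolic or CAT(0) fundamental groups), so your closing appeal to it would restrict the scope of the theorem even if the earlier steps held.
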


   We show in Proposition~\ref{prop:klein_bottle_and_locally_symmetric_space}
  that if $B$ is a non-positively curved
locally symmetric space, then up to passage to a finite cover, the
Klein bottle condition is also satisfied.

In the case where the Klein bottle condition fails, we are able to
identify additional obstructions. Recall that the Whitehead group
$\Wh(\pi_1(M))$ carries a $w_1(M)$-twisted involution $a\mapsto \overline{a}$. 
It 
induces an involution on $\NWh(p)$. Recall also that
if $A$ is an abelian group with involution, and $s\in \IZ$, then the
$s$-th Tate cohomology group of $\IZ/2$ with coefficients in $A$ is
defined by
\[
\widehat H^s(\IZ/2;A)=\frac{ \{x\in A \;\vert\; x=(-1)^s 
\cdot\overline{x}\}}{\{x+(-1)^s \cdot \overline{x}\;\vert\; x\in A\}}.
\]

\begin{theorem}[Non-positive sectional curvature in general] 
\label{the:main_result_2}

In addition to the standing assumptions above, assume that  $B$ is smooth and 
admits a Riemannian metric with non-positive sectional curvature.
Suppose 
   $n = \dim M\neq 4$ and  $N\tau(p)=0$.

 Then there is an
  integer $s$ and a sequence of obstructions
  \[
  \kappa_i\in \widehat H^{n+i}(\IZ/2;\NWh(p\times\id_{T^i})),\quad (i=0,\dots, 
s)
  \]
  where $\kappa_{i-1}$ is defined whenever $\kappa_i$ vanishes.  The map $p$ is 
homotopic
  to a MAF if and only if all the $\kappa_i$ vanish.
\end{theorem}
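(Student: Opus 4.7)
The plan is to start from the homotopy factorization~\eqref{diag:factorization_property} produced by Theorem~\ref{thm:factorization_to_an_approximate_fibration}: a compact ENR $E$, an approximate fibration $q\colon E\to B$ and a homotopy equivalence $f\colon M\to E$ with $qf\simeq p$. The hypothesis $N\tau(p)=0$ says that $\tau(f)\in\Wh(\pi_1(E))$ lies in the image of the assembly map. If we could arrange $f$ to be a homeomorphism (and not merely a homotopy equivalence), then $p\simeq qf$ would itself be a MAF, so the whole problem reduces to modifying the factorization to achieve this, up to inductive corrections governed by crossing with tori.

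The piece of $\tau(f)$ that comes from the trivial subgroup component of the Farrell--Jones splitting of $\Wh(\pi_1(M))$ can be absorbed into $q$ by changing the compact ENR model of $E$. What remains is the contribution from infinite virtually cyclic subgroups of $\pi_1(M)$. Under the CAT(0) hypothesis, these are all either infinite cyclic or of Klein bottle type $K=\IZ\rtimes\IZ$. The infinite cyclic summands contribute only Nil-terms which vanish after taking finite index, while the Klein bottle summands are governed, after transfer to a finite-index $\IZ^2$ subgroup, by a $\IZ/2$-action coming from the $w_1(M)$-twisted involution on $\Wh(\pi_1(M))$. This is the mechanism that produces a Tate cohomology obstruction, and iterating the transfer through larger and larger index shifts is what forces us to cross with tori.

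To set up the inductive descent, I would define $\kappa_s$ for $s$ large enough that after crossing with $T^s$ the Farrell--Jones decomposition has stabilized and the obstruction sits in $\widehat H^{n+s}(\IZ/2;\NWh(p\times\id_{T^s}))$ canonically; there one realizes $\kappa_s$ as the obstruction to splitting a carefully chosen controlled $h$-cobordism coming from Quinn's Thin $h$-Cobordism Theorem. The step ``$\kappa_i=0 \Rightarrow \kappa_{i-1}$ is defined and is the next obstruction'' is then the core induction: given a MAF structure on $p\times\id_{T^i}$, one attempts to split off one circle factor by a Farrell-style fibering argument, and the obstruction is an $s$-cobordism class which, because the splitting can be pushed from either end of the circle, is naturally a norm class in Tate cohomology and lies in $\widehat H^{n+i-1}(\IZ/2;\NWh(p\times\id_{T^{i-1}}))$. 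The terminal step $\kappa_0=0 \Rightarrow p$ homotopic to a MAF is the analogue of Theorem~\ref{the:main_result_1} in the presence of Klein bottle subgroups, now applicable because the obstructions that the Klein bottle condition was designed to exclude have been shown to vanish one by one.

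The main obstacle, in my view, is to establish well-definedness and naturality of the $\kappa_i$: the choices made in the factorization~\eqref{diag:factorization_property}, in the controlled $h$-cobordism decompositions used to define the obstructions, and in the transfer to $\IZ^2$-subgroups of the Klein bottles must be shown not to affect the resulting Tate cohomology class. The key inputs are MAF-Rigidity of Hughes--Taylor--Williams (so that different MAF structures yield the same $K$-theoretic invariants), Quinn's End Theorem (to convert algebraic vanishing into geometric splittings of manifolds), and the $K$- and $L$-theoretic Farrell--Jones Conjecture for $\pi_1(M)$ and for the various products $\pi_1(M)\times\IZ^i$ (to make the algebraic decomposition of $\Wh$ and the identification with Tate cohomology rigorous at each stage of the induction).
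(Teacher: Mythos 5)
Your strategic skeleton matches the paper: stabilize via $T^s$ using the $L$-theoretic FJC, then descend one circle at a time, with Quinn's End/Thin $h$-Cobordism Theorems providing the geometry and the Hughes--Taylor--Williams MAF-Rigidity providing well-definedness of the obstructions. But the mechanism by which you produce the Tate cohomology classes is not the paper's, and as described it would not work.

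First, your opening reduction is misleading. You write that if $\tau(f)$ could be killed (absorbed into a new ENR model of $E$), then $f$ would become a homeomorphism and $p$ a MAF. But $E$ is only a compact ENR, not a manifold; a homotopy equivalence of zero Whitehead torsion from a manifold to a compact ENR is far from a homeomorphism, so killing $\tau(f)$ gains nothing directly. The actual proof never tries to compare $M$ to $E$ as spaces. Instead it starts from the MAF structure on $p_1 = p\times\id_{S^1}$ (given inductively), passes to the infinite cyclic cover $M\times\IR \to B$, and applies Quinn's End Theorem (with injectivity of the FJ assembly map killing the end obstruction) to complete it to $\calw$ with $\partial\calw$ a closed manifold that approximately fibers over $B$. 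The resulting $h$-cobordism $W$ between $M$ and $\partial\calw$ is the geometric object whose torsion $x = \tau(W,M)$ defines the obstruction; the identity $\tau(f) = x - (-1)^n\overline{x}$ together with $N\tau(p)=0$ shows that $[x]\in\NWh(p)$ is fixed up to sign under the involution, hence defines a Tate class $\kappa_0\in\widehat H^n(\IZ/2;\NWh(p))$.

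Second, your account of \emph{why} the class is a Tate class — splitting $\Wh$ by the FJ decomposition, isolating Klein bottle summands, transferring to finite-index $\IZ^2$ subgroups, Nil-terms vanishing ``after taking finite index'' — does not appear in the paper and is not how the obstruction is defined. The obstruction lives in $\widehat H^n(\IZ/2;\NWh(p))$ for \emph{any} $B$ satisfying MAF-Rigidity, without any case analysis on virtually cyclic subgroups of $\pi_1(B)$. The role of the Klein bottle condition (Theorem~\ref{the:main_result_1}) is only to make the \emph{target} group $\widehat H^n(\IZ/2;\NWh(p))$ vanish outright (Theorem~\ref{the:FJC_and_Tate_cohomology}), so $\kappa_0$ is automatically zero; it is not that the obstruction is ``built out of'' Klein bottle contributions. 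Your transfer story risks building in the wrong object, and in any case you would then need an entirely separate argument to show it computes the obstruction to destabilizing.

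Third, the well-definedness step you flag as ``the main obstacle'' is indeed the content that MAF-Rigidity supplies, but the argument is more delicate than ``different MAF structures yield the same $K$-theoretic invariants.'' Given two choices of approximate fibration homotopic to $p_1$ and two completions, MAF-Rigidity produces a controlled interpolation $H\colon E\to B\times S^1\times[0,1]$, which via Quinn's relative End Theorem yields a completion $(\calv,\overline F)$ whose boundary component $\partial_0\calv$ is a \emph{thin} $h$-cobordism. The torsion identity $\tau(\calv,\calw') = (-1)^{n+1}\overline{\tau(\calv,\calw\cup\partial_0\calv)}$, together with the vanishing of the thin $h$-cobordism's torsion in $\NWh(p)$, is what forces $[x]-[x'] = 0$ in Tate cohomology. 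None of this is visible from your outline, and it is precisely where the proof would go wrong if you tried to use the FJ subgroup decomposition instead of the direct geometric argument.

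In short: you have the right external scaffolding (Theorems~\ref{the:theorem_is_stably_true} and~\ref{the:stably_implies_unstably}, Quinn's End Theorem, MAF-Rigidity), but the internal mechanism — construct an $h$-cobordism from the completion of the infinite cyclic cover, read off its torsion, observe the self-duality forced by $N\tau(p)=0$, and use MAF-Rigidity plus thin $h$-cobordisms for well-definedness — is the heart of the proof and is either absent or replaced by an FJ-decomposition narrative that does not lead to a correct definition of $\kappa_i$.
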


Notice that there are many more closed aspherical manifolds whose
fundamental groups are hyperbolic or CAT(0) respectively than there
are closed connected smooth manifolds with Riemannian metric of
negative or non-positive respectively sectional  curvature,
see~\cite{Davis-Januszkiewicz(1991)}. The reason why in
Theorem~\ref{the:main_result_2} the existence of a smooth structure
together with a Riemannian metric of non-negative sectional curvature
occurs is that we need for its proof to know the MAF-Rigidity Conjecture, see
Section~\ref{sec:Proof_of_Theorem_ref(the:obstruction_to_destabilization)}.

Weinberger~\cite[14.4.4 on page~263]{Weinberger(1994)}  conjectured that if the 
homotopy
fiber of $p$ is homotopy finite, the map $p$ approximately fibers
``modulo a Nil-obstruction''. Theorem~\ref{the:main_result_2} actually
shows that there is a number of different Nil-obstructions whose
vanishing, in the non-positively curved case, is necessary and
sufficient for $p$ to approximately fiber. In many situations, the number $s$ 
can be specified, 
see Remark~\ref{rem:number_of_stabilizations_needed}.

The reason why the $\kappa_i$-obstructions do not show up in 
Theorem~\ref{the:main_result_1} is that the Klein bottle condition implies
the vanishing of the corresponding Tate cohomology groups.
On the other hand Theorem~\ref{the:main_result_1} is a consequence of a  more 
general statement. To
formulate it, we introduce the following terminology:

\begin{definition}[Orientable cyclic subgroups] 
 \label{def:orientable_cyclic_subgroups}
  Given a torsionfree group $G$, we say that \emph{the cyclic subgroups of $G$ 
are
  orientable} if there is a choice $g_C$ of a generator for each non-trivial
  cyclic subgroup $C$, such that whenever $f\colon C\to C'$ is an
  inclusion or a conjugation by some element of $G$, the element
  $f(g_C)$ is a positive power of $g_{C'}$.
\end{definition}

\begin{remark}[FJC]\label{not:FJC}
We will say that a group \emph{$G$ satisfies FJC} if it satisfies
both the $K$- and $L$-theoretic Farrell-Jones Conjecture with  coefficients in 
additive categories.
A discussion of the FJC and a description of groups for which it has been proven 
is given in 
Section~\ref{sec:Brief_review_of_the_Farrell-Jones_Conjecture_with_coefficients_in_additive_categories}.
These include hyperbolic groups, CAT(0)-groups, solvable groups, arithmetic 
groups, and 
lattices in almost connected Lie groups.
\end{remark}

\begin{theorem}[Aspherical base manifold, FJC, and orientable cyclic subgroups]
\label{the:main_result_1_generalized}
  If $\dim M\neq 4$, the Naive Conjecture is true, provided $\pi_1(B)$ satisfies 
FJC
  and the cyclic subgroups of $\pi_1(B)$ are orientable. 
\end{theorem}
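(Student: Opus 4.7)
The ``only if'' direction is immediate from the definition of $N\tau(p)$ via the factorization~\eqref{diag:factorization_property}: a MAF $p$ is its own factorization with $f=\id$, so $\tau(f)=0$. For the converse I would attack the problem in two stages.

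The first stage is to extract from the proof of Theorem~\ref{the:main_result_2} a version driven purely by the algebraic hypothesis FJC rather than by the smoothness and non-positive curvature of $B$. The geometric hypothesis in Theorem~\ref{the:main_result_2} enters only to guarantee the FJC for $\pi_1(B)$ and the MAF-Rigidity Conjecture for $B$; granting both (with MAF-Rigidity expected to follow from the $L$-theoretic half of FJC), the machinery based on Quinn's End Theorem and Thin $h$-Cobordism Theorem produces, under $N\tau(p)=0$, the same sequence of higher obstructions $\kappa_i \in \widehat H^{n+i}(\IZ/2;\NWh(p\times\id_{T^i}))$ for $i=0,\dots,s$, with $p$ homotopic to a MAF if and only if all $\kappa_i$ vanish.

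The second stage is to show that these Tate cohomology groups all vanish under our hypothesis on cyclic subgroups. By FJC for $\pi_1(B\times T^i)=\pi_1(B)\times\IZ^i$, the assembly map decomposes $\NWh(p\times\id_{T^i})$ as a direct sum of Nil-groups indexed by conjugacy classes of infinite virtually cyclic subgroups $V\leq\pi_1(B\times T^i)$. The $w_1$-twisted involution on $\Wh$ acts on this sum by permuting the indexing set and acting on the individual Nil-summands. Orientability of cyclic subgroups of $\pi_1(B)$ passes to $\pi_1(B)\times\IZ^i$ and forces the involution, for each infinite cyclic $V=\langle g\rangle$, to interchange the Nil-summands attached to $g$ and $g^{-1}$; a parallel analysis for virtually cyclic $V$ of type $\IZ\rtimes\IZ/2$ shows that orientability excludes precisely those $V$ whose twisting would prevent such an interchange. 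In either case the induced $\IZ/2$-action on the direct sum is free on summands, whence its Tate cohomology vanishes.

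The main obstacle is the second stage: verifying cleanly that the orientability hypothesis propagates through the FJC-decomposition of $\NWh(p\times\id_{T^i})$ for every $i\leq s$ and yields a free $\IZ/2$-action at the summand level despite the $w_1(M)$-twisting. This is the core algebraic computation of the argument, and is what separates the present theorem from the counterexample alluded to in Section~\ref{sec:counterexample_to_Naive_Conjecture}.
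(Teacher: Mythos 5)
Your proposal contains a genuine gap in Stage 1. You route through Theorem~\ref{the:main_result_2}, whose $\kappa_i$-obstruction theory requires the MAF-Rigidity Conjecture~\ref{con:MAF_rigidity_conjecture} to even define the classes $\kappa_i$: as shown in Subsection~\ref{subsec:The_splitting_obstruction}, MAF-Rigidity is exactly what makes $\kappa_0$ independent of the choice of approximate fibration homotopic to $p_1$ and of the choice of completion. Your parenthetical ``(with MAF-Rigidity expected to follow from the $L$-theoretic half of FJC)'' is a hope, not a theorem; MAF-Rigidity is known only for non-positively curved base manifolds (Hughes--Taylor--Williams). Under the hypotheses of Theorem~\ref{the:main_result_1_generalized} --- $B$ merely aspherical with $\pi_1(B)$ satisfying FJC and having orientable cyclic subgroups --- MAF-Rigidity is open, so your argument would prove the theorem only conditionally.

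The paper sidesteps this entirely. The point is that when the orientability hypothesis forces $\widehat H^{n+i}(\IZ/2;\NWh(p\times\id_{T^i}))=0$ for all $i$ (which is the content of Theorem~\ref{the:FJC_and_Tate_cohomology} together with Lemma~\ref{lem:extensions_and_orientable_subgroups}), one does not need a well-defined obstruction class at all: Theorem~\ref{the:stably_implies_unstably}~\ref{the:stably_implies_unstably_approx} already says that vanishing of $N\tau$ together with vanishing of the \emph{ambient} Tate group lets one kill the torsion of \emph{any} $h$-cobordism produced in part~\ref{the:stably_implies_unstably:fac_prop}, without singling out a canonical one. The actual argument is therefore: first apply the Stabilization Theorem~\ref{the:theorem_is_stably_true} to get a MAF $p_s$; then run the Splitting Theorem~\ref{the:stably_implies_unstably} down from $s$ to $0$, at each step verifying $N\tau(p_{s-1})=0$ via the Kwun--Szczarba formula $\tau(f\times\id_{T^{s-1}})=\chi(T^{s-1})\tau(f)=0$ (or by hypothesis when $s-1=0$) and verifying the Tate-vanishing hypothesis by Theorem~\ref{the:FJC_and_Tate_cohomology}. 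No appeal to MAF-Rigidity, and no $\kappa_i$, is needed. Your Stage~2 sketch --- the involution acting freely on Nil-summands under the orientability hypothesis --- is in the right spirit and roughly corresponds to what Theorem~\ref{the:FJC_and_Tate_cohomology} outsources to~\cite{Lueck-Steimle(2016splitasmb)}, but it is the Splitting Theorem's part~(ii), not a $\kappa$-obstruction argument, that converts this vanishing into a MAF.
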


Since the Naive Conjecture is wrong in general, an immediate question
is whether there is a geometric interpretation of the vanishing of the
tight torsion $N\tau(p)$. We are grateful to
Bruce Williams for suggesting to us an interpretation in terms of
$Q$-manifold theory. In fact the following holds, where
$Q=\prod_{i=1}^\infty I$ is the Hilbert cube:

\begin{theorem}[Hilbert cube 
version]\label{the:interpretation_in_terms_of_Q_manifolds}
  Under the standing assumptions above, assume that $N\tau(p)=0$. 
  Then the composite $M\times Q\to B$ of the projection
  map $M\times Q\to M$ with $p$ is homotopic to an approximate
  fibration. 
\end{theorem}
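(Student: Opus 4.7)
The strategy is to pass to the Hilbert cube so that Chapman's classification of compact $Q$-manifolds applies, and to absorb the Whitehead torsion of the natural homotopy equivalence into a controlled $h$-cobordism over $B$.

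First, I would apply Theorem~\ref{thm:factorization_to_an_approximate_fibration} to factor $p$ up to homotopy as $M \xrightarrow{f} E \xrightarrow{q} B$, where $f$ is a homotopy equivalence, $q$ is an approximate fibration, and $E$ is a compact ENR. Crossing with $Q$ yields
\[
M \times Q \xrightarrow{f \times \id_Q} E \times Q \xrightarrow{\overline{q}} B, \qquad \overline{q} := q \circ \pr_E.
\]
The map $\overline{q}$ is again an approximate fibration, since the composition of a Hurewicz fibration with an approximate fibration is an approximate fibration. By the basic result of Chapman that the product of any compact ENR with $Q$ is a $Q$-manifold, both $M \times Q$ and $E \times Q$ are compact $Q$-manifolds, and $\overline{f} := f \times \id_Q$ is a homotopy equivalence between them whose Whitehead torsion coincides with $\tau(f) \in \Wh(\pi_1(M))$ under the canonical isomorphism $\Wh(\pi_1(M\times Q)) \cong \Wh(\pi_1(M))$.

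Next I would exploit the hypothesis $N\tau(p) = 0$, which says exactly that $\tau(f)$ lies in the image of the assembly map. The geometric content of this is that $\tau(f)$ can be realized as the torsion of a controlled $h$-cobordism $W$ over $B$ with $\partial_0 W = E \times Q$: identifying the image of the assembly with the image of the controlled Whitehead group under forget-control, one applies Quinn's thin $h$-cobordism / end theorem in the $Q$-manifold setting to produce such a $W$ with $\tau(W) = -\tau(f)$. Let $E' = \partial_1 W$ and let $q'\colon E' \to B$ be the restriction of the controlled structure map $W \to B$; this is an approximate fibration. The composite $g\colon E \times Q \hookrightarrow W \to E'$ (where the second map is the deformation retraction onto $E'$) is then a homotopy equivalence of compact $Q$-manifolds with $\tau(g) = -\tau(f)$ and $q' \circ g \simeq \overline{q}$.

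Then $g \circ \overline{f}\colon M \times Q \to E'$ is a homotopy equivalence between compact $Q$-manifolds with torsion $\tau(f) + (-\tau(f)) = 0$. By Chapman's theorem that a simple homotopy equivalence of compact $Q$-manifolds is homotopic to a homeomorphism, there exists a homeomorphism $h\colon M \times Q \to E'$ homotopic to $g \circ \overline{f}$. The composite $q' \circ h\colon M \times Q \to B$ is an approximate fibration (a homeomorphism followed by one), and
\[
q' \circ h \;\simeq\; q' \circ g \circ \overline{f} \;\simeq\; \overline{q} \circ \overline{f} \;=\; q \circ f \circ \pr_M \;\simeq\; p \circ \pr_M,
\]
which is exactly the map in the statement. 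The main obstacle is the realization step of the third paragraph: producing $W$ (equivalently $g$ and $q'$) requires the controlled $K$-theory interpretation of the assembly map together with the $Q$-manifold controlled $h$-cobordism theorem. The advantage of passing to $Q$ is precisely that Chapman's theorem is dimension-free and requires only vanishing of Whitehead torsion, which is why this conclusion needs neither FJC, nor MAF-Rigidity, nor any restriction on $\dim M$.
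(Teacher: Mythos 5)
Your proof is essentially the paper's proof, and the core idea matches exactly: factor $p\simeq q\circ f$ via Theorem~\ref{thm:factorization_to_an_approximate_fibration}, use $N\tau(p)=0$ together with Lemma~\ref{lem:identifying_assembly_maps}~\ref{lem:identifying_assembly_maps:same_image} to transfer $\tau(f)$ to Quinn's controlled Whitehead group, realize $-\tau(f)$ by a controlled (thin) $h$-cobordism whose other end still approximately fibers, and finish with Chapman's theorem that a simple homotopy equivalence of compact $Q$-manifolds is homotopic to a homeomorphism.

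Where you diverge from the paper is the order of operations, and this is precisely what you flag as the ``main obstacle.'' You cross with $Q$ \emph{before} invoking Quinn's thin $h$-cobordism theorem, so you need a $Q$-manifold version of it. The paper avoids this entirely: it first replaces the compact ENR $E$ by a compact finite-dimensional manifold with boundary, using Lacher's result that every compact ENR receives a cell-like map from such a manifold (\cite[Corollary~11.2]{Lacher(1977)}) -- cell-like maps between compact ENRs being approximate fibrations with trivial Whitehead torsion, so nothing in your setup is lost. Then it applies Quinn's thin $h$-cobordism theorem \cite[1.2]{Quinn(1982a)} verbatim in the finite-dimensional manifold category to produce $W$, observes via Lemma~\ref{lem:condition_for_epsilon_fibration} that $W\to B$ (and hence $W\times Q\to B$) is an approximate fibration, and crosses with $Q$ only for the final step, where Chapman~\cite{Chapman(1974)} is the only $Q$-manifold result needed. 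This buys a clean, fully cited argument with no appeal to controlled $Q$-manifold theory. One smaller point: you assert without justification that your $q'\colon E'\to B$ is an approximate fibration; the paper's Lemma~\ref{lem:condition_for_epsilon_fibration} is exactly the tool for deducing this from the thinness of $W$ over $B$, and you would need the same lemma (or a $Q$-manifold analogue) to close your argument.
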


%%%%%%%%%%%%%%%%%%%%%%%%%%%%%%%%%%%%%%%%%%%%%%%%%%%%%%%%%%%%%%%%%%%%%%%%%%%%%%%

\subsection{Block bundles}
\label{subsec:Block_bundles}

Every block bundle is homotopic to a MAF, see 
Quinn~\cite[Lemma~3.3.1]{Quinn(1979a)}.
The converse is not true: In Section~\ref{sec:An_example_on_block_fibering} 
we give  an example of a map to a $2$-torus which
satisfies the conditions of the Naive Conjecture and therefore
approximately fibers, but does not block fiber. However, if certain middle and 
lower
$K$-groups vanish, then by~\cite[Theorem~3.3.2]{Quinn(1979a)} a MAF is 
homotopic to a block bundle, provided the difference $\dim(M) - \dim(B)$  is 
greater or equal to five.  

The lower $K$-groups under consideration vanish in the situation
of Theorem~\ref{the:main_easy_to_state_result_I} and 
Theorem~\ref{the:main_easy_to_state_result_II}, essentially
since there $\pi_1(M)$ is torsionfree and satisfies FJC by assumption.
Hence we can replace in Theorem~\ref{the:main_easy_to_state_result_I} 
and Theorem~\ref{the:main_easy_to_state_result_II}
MAF by the stronger conclusion that $p$ is homotopic to a block bundle,
provided the difference $\dim(M) - \dim(B)$  is greater or equal to five.

%%%%%%%%%%%%%%%%%%%%%%%%%%%%%%%%%%%%%%%%%%%%%%%%%%%%%%%%%%%%%%%%%%%%%%%%%%%%%%%

\subsection{Locally trivial bundles}
\label{subsec:locally_trivial_bundles}

Of course it would be desirable if  $p$ is homotopic to the projection of a 
locally trivial bundle 
which is a very restrictive condition. This problem leads to new obstructions in 
the \emph{higher} Whitehead groups
and one can hope for a satisfactory answer under  certain conditions on the 
dimensions of $B$ and $M$.

There is one favorite case. Namely, if $f \colon M \to B$ is a homotopy 
equivalence
of closed aspherical manifolds of dimension $\not= 4$ and $\pi_1(B)$ satisfies 
FJC, then
the Borel Conjecture is known to be true that predicts that 
$f$ is homotopic to a homeomorphism and in particular to the projection of a 
fiber bundle.  Modulo the 
orientability assumption and in the high-dimensional case, this also follows 
from Theorem~\ref{the:main_result_1_generalized}, because a manifold approximate 
fibration which is a homotopy equivalence is a CE map, which was shown to be 
homotopic to a homeomorphism by Siebenmann~\cite{Siebenmann(1972)} in dimensions at 
least 5. 

More generally, if $M$ is a closed aspherical manifold of dimension $\ge 5$ such 
that
$\pi_1(M)$ is a direct product $G_1 \times G_2$ and the cohomological dimension 
of $G_i$
is not equal to $3,4,5$ for $i = 1,2$, then there are closed aspherical 
manifolds $M_1$ and $M_2$ with
$G_i = \pi_1(M_i)$ together with a homeomorphism $f \colon M \xrightarrow{\cong} 
M_1\times
M_2$ implementing on fundamental groups the given isomorphism $\pi_1(M) \cong 
G_1 \times
G_2$, see~\cite[Theorem~6.1]{Lueck(2010asph)}. The manifolds $M_i$ are unique up 
to
homeomorphism.  This implies (by inspecting the proof to  make sure that 
$\dim(N) = 3,4,5$ is not needed)
for a $\pi_1$-surjective map $p \colon M \to N$ of closed
aspherical manifolds that $p$ is homotopic to the projection of a trivial 
bundle, if
$\dim(M) \ge 5$ and $\dim(M) - \dim(N) \not= 3,4,5$, the inclusion of
the kernel of $\pi_1(p) \colon \pi_1(M) \to \pi_1(N)$ into $\pi_1(M)$ has a 
retraction, and
$\pi_1(M)$ satisfies FJC.

%%%%%%%%%%%%%%%%%%%%%%%%%%%%%%%%%%%%%%%%%%%%%%%%%%%%%%%%%%%%%%%%%%%%%%%%%%%%%%%

\subsection{Acknowledgements}
\label{subsec:Acknowledgments}
 This paper is financially supported by the Leibniz-Preis of the second
author, granted by the Deutsche Forschungsgemeinschaft {DFG}. Moreover the first named author was partially supported by NSF grant DMS-1206622. The paper is financially supported by the ERC 
Advanced Grant ``KL2MG-interactions'' (no.
662400) of the second author granted by the European Research Council.
The third named author was partly supported by the 
ERC Advanced Grant 288082 and by the Danish National Research Foundation through 
the Centre for Symmetry and Deformation (DNRF92). We thank the referee
for his detailed report and very valuable comments.

\tableofcontents

%%%%%%%%%%%%%%%%%%%%%%%%%%%%%%%%%%%%%%%%%%%%%%%%%%%%%%%%%%%%%%%%%%%%%%%%%%%%%%%
%%%%%%%%%%%%%%%%%%%%%%%%%%%%%%%%%%%%%%%%%%%%%%%%%%%%%%%%%%%%%%%%%%%%%%%%%%%%%%%
%%%%%%%%%%%%%%%%%%%%%%%%%%%%%%%%%%%%%%%%%%%%%%%%%%%%%%%%%%%%%%%%%%%%%%%%%%%%%%%

\section{Organization of the proofs}\label{sec:organization_of_the_proofs}

In this section, after recalling some definitions, we describe our main 
stabilization-destabilization technique. This technique is essentially the 
combination of Theorem~\ref{the:theorem_is_stably_true} and
Theorem~\ref{the:stably_implies_unstably} whose proof will be given in later 
sections. In this section we explain how to derive the results stated in the 
introduction from this technique.

>From a logical perspective, parts of this section 
should be read at the very end of the article. We decided to place it here 
because it summarizes the main strategy.

%%%%%%%%%%%%%%%%%%%%%%%%%%%%%%%%%%%%%%%%%%%%%%%%%%%%%%%%%%%%%%%%%%%%%%%%%%%%%%%

\subsection{Review of MAF}
\label{subsec:Review_of_MAF}

We first recall the definition of an $\eps$-fibration, approximate fibration 
and MAF.

\begin{definition}[$\eps$-fibration]
  \label{def:epsilon-fibration}
  Let $E$ be a topological space, $B$ be a metric space, and $p \colon E \to B$ be 
  a continuous map. We call
  $p$ an \emph{$\eps$-fibration}, if for any homotopy $h \colon X \times 
[0,1] \to B$ and map
  $f \colon X \to E$ with $p \circ f = h_0$ for $h_0(x) := h(x,0)$ there is a 
map $H
  \colon X \times [0,1] \to E$ such that $H_0 = f$ and for all $(x,t) \in X 
\times [0,1]$ we have
  $d(p\circ H(x,t),h(x,t)) \le \eps$, in other words, we can solve the 
lifting
  problem below up to an error bounded by $\eps$ when measured in $B$.
  \[
  \xymatrix{X \ar[r]^-f \ar[d]_{i_0} & E \ar[d]^p
    \\
    X \times [0,1] \ar[r]_-h \ar@{.>}[ur]^{H} & B }
  \]
\end{definition}

An $\eps$-fibration for $\eps = 0$ is the same as a (Hurewicz) fibration 
in the classical sense.

\begin{definition}[Approximate fibration]
  \label{def:approximate_fibration}
  Let $p\colon E\to B$ be a continuous map of topological spaces with $B$ compact metric. 
  We call
  $p$ an \emph{approximate fibration} if it is an $\eps$-fibration for every $\eps > 0$.
\end{definition}

This definition is independent of the choice of metric on $B$.
If one wants to consider
non-compact base spaces, one should use a slightly more complicated definition in terms of open coverings.

\begin{definition}[MAF]
  \label{def:MAF}
  A \emph{Manifold Approximate Fibration} (MAF) is an approximate fibration with 
closed manifolds
  as source and target.
\end{definition}

%%%%%%%%%%%%%%%%%%%%%%%%%%%%%%%%%%%%%%%%%%%%%%%%%%%%%%%%%%%%%%%%%%%%%%%%%%%%%%%

\subsection{Low dimensions}
\label{subsec:Low-dimensions}

The results of the introduction are true by inspection if the dimension of $M$ 
is 1 or
2. In Section~\ref{sec:Three-dimensional_manifolds}, which is independent of the 
rest
of the paper, we treat the case where $M$ has dimension three.

%%%%%%%%%%%%%%%%%%%%%%%%%%%%%%%%%%%%%%%%%%%%%%%%%%%%%%%%%%%%%%%%%%%%%%%%%%%%%%%

\subsection{Stabilization-Destabilization Strategy}
\label{subsec:Stabilization-Destabilization_Strategy}

The following line of
argument applies in the high-dimensional case $\dim(M)\geq 5$.

\begin{notation}
  Consider a map $p \colon M \to B$. Let $T^s$ denote the $s$-torus, i.e., 
$T^s=S^1\times\dots\times S^1$
  ($s$ factors), and
  \[
   p_s\colon M\times T^s\to B\times T^s
   \] 	
   be the map $p\times\id_{T^s}$. 
\end{notation}

The following is the starting point for our results. It will be proven
in Section~\ref{sec:stably_true_finite_fiber} for homotopy finite homotopy fiber 
and finally for 
finitely dominated homotopy fiber in 
Subsection~\ref{subsec:stably_true_in_general}.

\begin{theorem}[Stabilization Theorem]\label{the:theorem_is_stably_true}
  Let $p \colon M \to B$ be a $\pi_1$-surjective 
  map of closed manifolds such 
that $B$ is PL and 
  aspherical and the homotopy fiber $F_p$ is finitely dominated. Suppose that 
the $L$-theoretic FJC  is true for the group
  $\pi_1(B)$. 

  Then there  exists a natural number $s$ such that $p_s\colon M\times T^s\to 
B\times T^s$
  is homotopic to a MAF.
\end{theorem}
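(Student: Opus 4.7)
The plan is to start from the factorization of $p$, up to homotopy, as
\[
M \xrightarrow{f} E \xrightarrow{q} B,
\]
provided by Theorem~\ref{thm:factorization_to_an_approximate_fibration}, where $f$ is a homotopy equivalence, $q$ is an approximate fibration, and $E$ is a compact ENR. This reduces matters to the question of whether, after torus stabilization, the ENR $E$ can be replaced by a closed manifold compatibly with both $q$ and the homotopy equivalence from $M$; since an approximate fibration precomposed with a homeomorphism of its source remains an approximate fibration, this would yield a MAF homotopic to $p_s$.

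I would first apply Quinn's End Theorem in the stabilized setting to produce, for all sufficiently large $s$, a closed manifold $N$ and a MAF $\tilde q\colon N\to B\times T^s$ whose underlying approximate fibration is homotopic to $q\times\id_{T^s}$. The role of torus stabilization here is twofold: it moves us into a stable dimension range in which controlled engulfing and the End Theorem apply unobstructedly, and it trades the problem of realizing $E$ by a manifold for a controlled surgery problem over $B\times T^s$ whose obstruction naturally lies in an $L$-group of $\pi_1(B\times T^s)=\pi_1(B)\times\IZ^s$. Next, I would compare $N$ with $M\times T^s$: the homotopy equivalence $f\times\id_{T^s}$ combined with $\tilde q$ provides a homotopy commutative diagram, and the obstruction to $f\times\id_{T^s}$ being homotopic to a homeomorphism over the approximate fibration structure is again a controlled surgery obstruction of the same $L$-theoretic nature.

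The $L$-theoretic Farrell--Jones Conjecture for $\pi_1(B)$, together with its inheritance under products with $\IZ$, implies that the assembly map computing the relevant $L$-groups of $\pi_1(B)\times\IZ^s$ is an isomorphism, so that all the controlled surgery obstructions encountered above can be killed at the expense of further torus stabilization. Quinn's Thin $h$-Cobordism Theorem is then applied to convert the resulting controlled $h$-cobordism between $M\times T^s$ and $N$ into an actual homeomorphism realizing the approximate fibration $q_s$ by a MAF. The main obstacle is the precise setup of these controlled surgery problems — matching the globally defined obstruction with the assembly map picture so that FJC can be invoked, and ensuring that successive torus stabilizations do not destroy the identification of the resulting MAF with $p_s$ up to homotopy. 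This last point is what forces an a priori unknown stabilization exponent $s$ rather than an explicit bound.
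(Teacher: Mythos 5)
Your proposal diverges substantially from the paper's proof, and several of its steps have real gaps.

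The paper does not use Quinn's End Theorem or the Thin $h$-Cobordism Theorem here at all — those tools appear only in the proof of the \emph{Splitting} Theorem~\ref{the:stably_implies_unstably}. Instead, the paper's proof of the Stabilization Theorem works with a specific model $E=\widetilde B\times_{\pi_1(B)}\overline M$ (the Borel construction), which is already a genuine fiber bundle over $B$ with fiber the cover $\overline M$ of $M$ and a homotopy equivalence $q\colon E\to M$ with contractible fiber $\widetilde B$. A homotopy inverse $\varphi\colon M\to E$ is made transverse to the preimages $\widehat p^{-1}(\sigma)$ of the simplices of a triangulation, producing a \emph{conglomerate surgery problem} in the sense of Quinn's thesis, whose obstruction sits in $\IH_n(B;\bfL^h(\widehat p))$. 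The $L$-theoretic FJC for $\pi_1(B)$ (together with passage from $\bfL^h$ to $\bfL^{\langle -\infty\rangle}$, which is geometrically implemented by crossing with tori) lets one kill this obstruction after stabilization, eventually producing an $s$-split homotopy inverse via the $s$-cobordism theorem. Crucially, this yields $\eps$-control \emph{only over $B$}, not over the torus factor; the paper then shows (Lemma~\ref{lem:bounded_over_torus}) that the domination can be taken bounded over the torus, and passes to finite covers of $T^s$ to convert bounded control into $\eps$-control. Finally Chapman's approximation theorem upgrades the resulting $\eps$-fibration to a MAF.

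The concrete problems with your route are the following. First, the End Theorem completes open manifolds with tame ends; there is no open manifold with a tame end in sight when one tries to ``replace the compact ENR $E$ by a closed manifold.'' Resolving an ENR total space by a manifold compatibly with the map to $B\times T^s$ is not a consequence of the End Theorem, and there are $K$- and $L$-theoretic obstructions to it that you do not identify. Second, even if you had a MAF $\tilde q\colon N\to B\times T^s$ and a controlled surgery problem comparing $N$ to $M\times T^s$, the vanishing of the controlled Whitehead torsion required to apply the Thin $h$-Cobordism Theorem is an additional $K$-theoretic issue separate from the $L$-theoretic surgery obstruction; the Stabilization Theorem does not assume $N\tau(p)=0$, so this cannot simply be assumed away, and you do not explain why stabilization disposes of it. Third, and most essentially, you nowhere address the problem of gaining control in the torus direction. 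Any surgery-theoretic argument over a triangulation of $B$ produces control over $B$ alone, and the passage from bounded control over the torus to $\eps$-control via finite covers of $T^s$ is indispensable and entirely missing from your sketch.
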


Theorem~\ref{the:theorem_is_stably_true} suggests the following
strategy to obtain an approximate fibration $M\to B$ homotopic to $p$:
First choose an approximate fibration $M\times T^s\to B\times T^s$
homotopic to $p_s$ and try to ``split''. The next theorem yields  a sufficient
condition to get from $s$ one step down to $(s-1)$. It will be proven in 
Section~\ref{sec:Proof_of_the_Destabilization_Theorem_ref(the:stably_implies_unstably)}.

\begin{theorem}[Splitting Theorem]\label{the:stably_implies_unstably}
   Let $p \colon M \to B$ be a  $\pi_1$-surjective
   map of closed manifolds such 
that $B$ is PL and 
  aspherical.
  \begin{enumerate}
  \item \label{the:stably_implies_unstably:fac_prop} 
    Suppose that $n=\dim M\geq 5$, the $K$-theoretic FJC 
   is true for the group $\pi_1(B)$ and $p_1\colon M\times  S^1\to B\times S^1$ 
is homotopic to a MAF.

   Then  $p$ is $h$-cobordant to $q\colon N\to B$ where $q$ is a MAF.
  
  \item \label{the:stably_implies_unstably_approx}
  If, additionally, $N\tau(p)=0$ and $\widehat H^n(\IZ/2;\NWh(\pi_1
    M))=0$, then $p\colon M\to B$ is homotopic to a MAF.
  \end{enumerate}
\end{theorem}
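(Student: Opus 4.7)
The plan is to combine a controlled splitting of the given MAF over $B \times S^1$ (in the spirit of Farrell's fibering theorem) with the $K$-theoretic input supplied by the Farrell--Jones Conjecture.

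For part~(i), start from a MAF $q \colon M' \to B \times S^1$ together with a homotopy equivalence $f \colon M \times S^1 \to M'$ realizing $qf \simeq p_1$. I would pass to the infinite cyclic cover to obtain a proper MAF $\bar{q} \colon \bar{M}' \to B \times \mathbb{R}$ and attempt to split $\bar{M}'$ transversally over $B \times \{0\}$, producing a codimension-one closed submanifold $N \subset \bar{M}'$ whose projection is a MAF $N \to B$. This splitting is a controlled version of Farrell's fibering construction: by Quinn's End Theorem and Thin $h$-Cobordism Theorem applied in the approximate-fibration setting, the obstruction lives in a suitable controlled $K$-theory group over $B \times \mathbb{R}$; the $K$-theoretic FJC for $\pi_1(B) \times \mathbb{Z}$ identifies this group and lets us kill the obstruction up to an $h$-cobordism modification. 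Combining the resulting equivalence $N \times S^1 \simeq_{B \times S^1} M'$ with $\pi_1$-surjectivity of $p$, one lifts to an $h$-cobordism $W$ between $M$ and $N$ over $B$, whence $p$ is $h$-cobordant to the MAF $q \colon N \to B$.

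For part~(ii), I would track the Whitehead torsion $\tau(W) \in \Wh(\pi_1 M)$ of the $h$-cobordism produced above. The central claim is that the image of $\tau(W)$ in $\NWh(p)$ agrees with the tight torsion $N\tau(p)$, modulo an involution-symmetric correction coming from the flip-ambiguity of the splitting at $B \times \{0\}$ versus $B \times \{1/2\}$. Granting this, the hypothesis $N\tau(p)=0$ places $\tau(W)$ in the assembly image up to the involution-symmetric term, and the residual obstruction is precisely the class of $\tau(W)$ in $\widehat H^n(\mathbb{Z}/2;\NWh(\pi_1 M))$, the sign $(-1)^n$ arising from Poincar\'e duality in the relevant dimension. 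The vanishing of this Tate group allows us to adjust the splitting so that $\tau(W)=0$; the $s$-cobordism theorem then applies (since $\dim W = n+1 \geq 6$) to collapse $W$ to a cylinder, yielding a homotopy from $p$ to the MAF $q$.

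The main obstacle I expect is the controlled splitting in part~(i): producing a genuine codimension-one MAF-splitting of $\bar{M}' \to B \times \mathbb{R}$ requires delicate use of Quinn's geometric machinery, together with a precise identification of the obstruction group in terms of the Farrell--Jones assembly map so that FJC can be brought to bear. A second technical point is the identification of $\tau(W)$ with $N\tau(p)$ (modulo the involution), which requires comparing the intrinsic torsion arising from the splitting procedure with the torsion attached to the canonical factorization~\eqref{diag:factorization_property}; once this comparison is carried out, part~(ii) becomes a formal consequence of standard $K$-theoretic involution arguments and the $s$-cobordism theorem.
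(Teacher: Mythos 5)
The proposal follows a broadly similar Farrell-style infinite cyclic cover strategy, but the route in part~(i) is genuinely different from the paper's, and that difference hides a gap. You propose to lift the MAF on $M\times S^1$ to a proper map over $B\times\IR$ and then split it transversally over $B\times\{0\}$; the paper instead composes all the way down to $B$: starting from the MAF $\overline{p_1}\colon M\times S^1\to B\times S^1$, it forms $\rho\colon M\times\IR\to M\times S^1\xrightarrow{\overline{p_1}}B\times S^1\to B$, observes this is an approximate fibration over $B$, and applies Quinn's End Theorem to complete $\rho$ at $+\infty$. The completion $\calw$ comes with a collar/boundary $\partial\calw=N$ already equipped with a MAF $N\to B$, and the compact region $W$ between $M\times 0$ and $\partial\calw$ is exactly the desired $h$-cobordism. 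Your version does not explain why the transversal preimage $N$ is a MAF over $B$, nor how the claimed equivalence $N\times S^1\simeq_{B\times S^1}M'$ yields an $h$-cobordism from $M$ to $N$; neither step is automatic.

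The more serious gap is in how the obstruction is killed. You write that FJC ``identifies this group and lets us kill the obstruction up to an $h$-cobordism modification.'' That is not how FJC is used here, and without the right mechanism the end obstruction need not vanish. The paper's key observation is geometric: the end obstruction $q_0(\rho)\in\IH_0(\widetilde B;\bfWh(p'))$ maps, under Quinn's assembly map, to Siebenmann's finiteness obstruction for the end of $M\times\IR$, and that is \emph{zero} because $M\times\IR$ visibly compactifies to $M\times[0,\infty]$. FJC is then invoked to show the assembly map $\IH_0(\widetilde B;\bfWh(p'))\to\Wh_0(\pi_1 M)$ is \emph{injective}, so that $q_0(\rho)=0$ outright. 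No $h$-cobordism modification is performed or needed in part~(i); the modification-by-$h$-cobordism step belongs to part~(ii), where it is carried out via Quinn's Thin $h$-Cobordism Theorem~1.2(b) and End Theorem~1.1(b) to alter $\tau(W,M)$ by an element of the assembly image, after the vanishing of $\widehat H^n(\IZ/2;\NWh(p))$ has been used to write $x=z+(-1)^n\overline z+a$ with $a$ assembled. Your part~(ii) sketch captures the torsion identity $\tau(f)=x-(-1)^n\overline x$ and the reduction to a Tate class, which is right, but conflates the two distinct geometric steps — insertion of a compensating $h$-cobordism to the left of $M\times 0$, and a Quinn-theorem change of completion — into a single vague ``adjust the splitting.'' Fill in the injectivity/Siebenmann argument in part~(i), and your argument will align with the paper's.
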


%%%%%%%%%%%%%%%%%%%%%%%%%%%%%%%%%%%%%%%%%%%%%%%%%%%%%%%%%%%%%%%%%%%%%%%%%%%%%%%

\subsection{Proof of Theorem~\ref{the:main_result_1_generalized}}
\label{subsec:Proof_of_Theorem_ref(the:main_result_1_generalized)}

  We need to show that if $N\tau(p)=0$ and $\dim(M)\geq 5$, 
  then $p$ is homotopic to a MAF. 
   
    By Theorem~\ref{the:theorem_is_stably_true} the map
  \[
   p_s\colon M\times T^s\to B\times T^s
   \]
  is homotopic to a MAF for some $s\geq 1$.    Now we want to apply 
  Theorem~\ref{the:stably_implies_unstably} in the case, where $M$, $p$ and
  $p_1$ are specialized to be $M\times T^{s-1}$, $p_{s-1}$ and $p_s$
  respectively. 
 
   The orientability assumption for $\pi_1(B)$ implies the orientability
   assumption for $\pi_1(B\times T^{n-1})$, see 
Lemma~\ref{lem:extensions_and_orientable_subgroups}.
  As mentioned in the introduction,
  this orientability assumption implies that the Tate
  cohomology groups $\widehat H^{n+s-1}(\IZ/2;\NWh(\pi_1(M \times T^{s-1})))$ 
  vanish, see~Theorem~\ref{the:FJC_and_Tate_cohomology}.

  Let $p\simeq q\circ f$ be a factorization as provided by 
Theorem~\ref{thm:factorization_to_an_approximate_fibration}. Then
  \[
  q_{s-1}:=q\times\id_{T^{s-1}}\colon E\times T^{s-1}\to B\times
  T^{s-1}
  \] 
   is also an approximate fibration. Also
  \[
   f_{s-1}:=f\times\id_{T^{s-1}}\colon M\times T^{s-1}\to E\times
  T^{s-1}
   \] 
   is a homotopy equivalence and $p_{s-1}\simeq q_{s-1}\circ
  f_{s-1}$. So we can compute $N\tau(p_{s-1})$ from 
  $\tau(f_{s-1})=\chi(T^{s-1})\,\tau(f)$.

  There are two cases to consider: $s-1>0$ and $s-1=0$. When $s-1>0$,
  $\chi(T^{s-1})=0$ and hence $N\tau(p_{s-1})=0$. When $s-1=0$,
  $p_{s-1}=p$ and $N\tau(p_{s-1})=0$ by assumption. Hence all conditions of 
  Theorem~\ref{the:stably_implies_unstably} hold in both cases, and we
  conclude that $p_{s-1}$ is homotopic to an approximate
  fibration. Continuing this argument inductively, we finally arrive
  at the desired result, namely $p\colon M\to B$ is homotopic to an
  approximate fibration. This finishes the proof of 
Theorem~\ref{the:main_result_1_generalized}.

%%%%%%%%%%%%%%%%%%%%%%%%%%%%%%%%%%%%%%%%%%%%%%%%%%%%%%%%%%%%%%%%%%%%%%%%%%%%%%%

\subsection{Proof of Theorem~\ref{the:main_result_1}}
\label{subsec:Proof_of_Theorem_ref(the:main_result_1)}
In the situation of Theorem~\ref{the:main_result_1} the fundamental group 
$\pi_1(B)$ satisfies
FJC by Theorem~\ref{the:status_of_FJC} and 
has  orientable cyclic subgroups by 
Theorem~\ref{the:fundamental_groups_of_npc_manifolds_are_orientable}.
Hence Theorem~\ref{the:main_result_1} follows from 
Theorem~\ref{the:main_result_1_generalized}.

%%%%%%%%%%%%%%%%%%%%%%%%%%%%%%%%%%%%%%%%%%%%%%%%%%%%%%%%%%%%%%%%%%%%%%%%%%%%%%%

\subsection{Proof of Theorem~\ref{the:main_result_2}}
\label{subsec:Proof_of_Theorem_ref(the:main_result_2)}

The statement of Theorem~\ref{the:stably_implies_unstably} becomes
wrong if we drop the assumption on $\widehat H^n(\IZ/2, \NWh(p))$;
see Section~\ref{sec:counterexample_to_Naive_Conjecture} for a
counterexample. However, in the case where $B$ is non-positively curved
(or more generally, for all manifolds $B$ that satisfy the MAF
Rigidity Conjecture~\ref{con:MAF_rigidity_conjecture}), we
are able to identify a specific element
\[
\kappa_0\in \widehat H^n(\IZ/2, \NWh(p)),
\]
the \emph{splitting obstruction}, such that:

\begin{theorem}[MAF-Rigidity Conjecture and the $\kappa_0$-obstruction]
\label{the:obstruction_to_destabilization}
  Suppose that the MAF Rigidity Conjecture holds for $B$. In the
  situation of Theorem~\ref{the:stably_implies_unstably}, suppose that
  the conditions of (i) hold, and that $N\tau(p)=0$. Then $\kappa_0=0$
  if and only if $p$ is homotopic to an approximate fibration.
\end{theorem}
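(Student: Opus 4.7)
The approach is to use the Splitting Theorem~\ref{the:stably_implies_unstably}\,(i) to produce an $h$-cobordism between $p$ and a MAF, and to identify $\kappa_0$ as the precise obstruction to deforming this $h$-cobordism to a product, leveraging the MAF Rigidity Conjecture to pin down the necessary well-definedness.

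Concretely, I would first apply Theorem~\ref{the:stably_implies_unstably}\,(i), whose hypotheses are in force, to obtain an $h$-cobordism $W$ from $M$ to a closed manifold $N$ together with a MAF $q\colon N \to B$ that, jointly with the $h$-equivalence $M \hookrightarrow W$, represents $p$ up to homotopy. The primary invariant is the Whitehead torsion $\tau(W, M) \in \Wh(\pi_1 M)$. Unwinding the definition of the tight torsion, its image in $\NWh(p)$ is $N\tau(p)$, which vanishes by hypothesis, so $\tau(W, M)$ lies in the image of the assembly map $H_1^\pi(\widetilde B; \Wh(p)) \to \Wh(\pi_1 M)$.

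Second, I would define $\kappa_0$ by coupling $\tau(W, M)$ with its image under the $h$-cobordism duality $\tau \mapsto (-1)^n\overline{\tau}$. Reversal of $W$ produces a MAF $h$-cobordism from $N$ to $M$ with torsion $(-1)^n\overline{\tau(W, M)}$, so any invariant which is truly a property of $p$ (and not of the chosen orientation of the cobordism) must land in the subgroup $\{x \mid x = (-1)^n\overline{x}\}$ of $\NWh(p)$, modulo something. The MAF Rigidity Conjecture is then used in two ways: (a) it ensures that two different choices of $(W,q)$ differ by a MAF $h$-cobordism on $(M,p)$, so the residue class in $\NWh(p)$ is independent of $W$; and (b) it identifies the subgroup of \emph{MAF-realizable torsions} inside the assembly image with the image of the norm map $y \mapsto y + (-1)^n\overline{y}$ on $\NWh(p)$. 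Together these produce a well-defined element $\kappa_0 \in \widehat H^n(\IZ/2;\NWh(p))$. Necessity is then immediate: if $p$ is itself a MAF, take $W = M\times[0,1]$, and $\tau=0$, giving $\kappa_0 = 0$.

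For sufficiency, assume $\kappa_0 = 0$. Then the image of $\tau(W,M)$ in $\NWh(p)$ lies in the norm image, so it is MAF-realizable: there exists a MAF $h$-cobordism $V\colon M\to M$ over $B\times[0,1]$ extending $p$ on both ends with $\tau(V, M) = -\tau(W, M)$. Concatenating to form $W' := V \cup_M W$ yields an $h$-cobordism from $M$ to $N$ carrying a MAF extending $p$ on the $M$-end and $q$ on the $N$-end, now with vanishing Whitehead torsion. Since $\dim M \geq 5$, the $s$-cobordism theorem trivializes $W'$, so $p$ is homotopic to the MAF $q$. The main obstacle, as expected, is the identification of MAF-realizable torsions with the norm image in $\NWh(p)$ and the parallel well-definedness of $\kappa_0$ in the indicated Tate group. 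This is the step in which MAF Rigidity is indispensable: it reduces the comparison of different MAF realizations of $p$ to the involution structure on $\NWh(p)$, which is precisely why a single Tate cohomology obstruction suffices here rather than the entire tower of higher obstructions $\kappa_i$ appearing in Theorem~\ref{the:main_result_2}.
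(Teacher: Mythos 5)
Your first deduction contains a genuine error that would collapse the theorem to a triviality. You write that the image of $\tau(W,M)$ in $\NWh(p)$ equals $N\tau(p)$ and conclude that $\tau(W,M)$ lies in the assembly image. Both claims are wrong. The tight torsion $N\tau(p)$ is, by definition, the class of the Whitehead torsion of the \emph{homotopy equivalence} $f\colon M\to \partial\calw$ induced by the $h$-cobordism $W$, and the duality formula gives $\tau(f)=x-(-1)^n\overline{x}$ for $x=\tau(W,M)$. So $N\tau(p)=0$ means $[x]=(-1)^n[\overline{x}]$ in $\NWh(p)$ — i.e., $[x]$ is a cocycle for $\widehat H^n(\IZ/2;\NWh(p))$ — and \emph{not} that $[x]=0$. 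Had your conclusion been correct, $\kappa_0$ would vanish identically, contradicting the theorem's content. The paper's definition of $\kappa_0$ is exactly the class of $x$ in this Tate group.

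Your sufficiency argument also diverges from the paper's in a way that introduces an unsupported claim. You assert that a torsion in the norm image is automatically ``MAF-realizable,'' i.e.\ realized by a MAF $h$-cobordism $V\colon M\to M$ over $B\times [0,1]$ with $\tau(V,M)$ prescribed. No such statement is proved or needed. The paper instead argues more elementarily, entirely inside the completion picture from Quinn's End Theorem: write $x=z+(-1)^n\overline{z}+a$ with $a$ in the assembly image, change the embedding of $M\cong M\times 0$ inside $M\times\IR$ by inserting an ordinary $h$-cobordism of torsion $-z-(-1)^n\overline{z}$, then apply Quinn's Thin $h$-Cobordism and End Theorems to change the completion at $+\infty$ by $-a$; the result is a product $h$-cobordism, hence $M\cong\partial\calw$ maps to $B$ by a MAF.

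On the positive side, your overall structure is close to the paper's: take the $h$-cobordism output of Theorem~\ref{the:stably_implies_unstably}\,(i), extract a class in Tate cohomology from $\tau(W,M)$, use MAF Rigidity to make that class independent of choices, and win the ``only if'' direction by choosing the trivial completion. The well-definedness step is also correctly attributed to MAF Rigidity, though the paper's implementation is concrete: given two choices, MAF Rigidity produces a controlled concordance, Quinn's relative End Theorem gives a completion $\calv$ extending both, and one computes $[x]-[x']=[\tau(\calv,\calw')]-[\tau(\calv,\calw)]=(-1)^{n+1}\overline{[\tau(\calv,\calw)]}-[\tau(\calv,\calw)]$, a Tate coboundary, using that the boundary piece $\partial_0\calv$ is a thin $h$-cobordism.
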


See 
Section~\ref{sec:Proof_of_Theorem_ref(the:obstruction_to_destabilization)} for a 
proof of this
result. It implies Theorem~\ref{the:main_result_2} in the same way as
Theorem~\ref{the:main_result_1_generalized} followed from 
Theorem~\ref{the:stably_implies_unstably} (and recalling that the FJC is 
known to hold in this situation).

%%%%%%%%%%%%%%%%%%%%%%%%%%%%%%%%%%%%%%%%%%%%%%%%%%%%%%%%%%%%%%%%%%%%%%%%%%%%%%%

\begin{remark}\label{rem:pi_1_surjective}
  In practice, the $\pi_1$-surjectivity condition on $p\colon M\to B$ is not
  restrictive. Indeed, if the homotopy fiber is finitely dominated, then the image of
  $\pi_1(M)$ in $\pi_1(B)$ has finite index so $p$ lifts along a finite covering
  $\overline B\to B$ of $B$, such that the new map $p'\colon M\to \overline B$ is
  $\pi_1$-surjective. It follows from the definitions that $p$ is a MAF if and only if
  $p'$ is a MAF.
\end{remark}

\subsection{Proof of Theorem~\ref{the:main_easy_to_state_result_I}}
\label{subsec:Proof_of_Theorem_ref(the:main_easy_to_state_result_I)}

Using Remark~\ref{rem:pi_1_surjective}, we can reduce to the special case where $p$ is $\pi_1$-surjective.
We want to apply 
Theorem~\ref{the:main_result_1} 
and therefore have to 
show that all conditions are satisfied. By Theorem~\ref{the:status_of_FJC}  the
$K$-theoretic FJC holds for $\pi_1(M)$, for $\pi_1(M) \times \IZ$, and for
$\pi_1(F_p)$. 
Notice that the Farrell-Jones Conjecture with coefficients in an additive
category
implies the original Farrell-Jones Conjecture.  By assumption $\pi_1(M)$ and
therefore also $\pi_1(M) \times \IZ$ and $\pi_1(F_p)$ are torsionfree.
Hence the Whitehead groups  $\Wh(\pi_1(M))$ and $\Wh(\pi_1(M \times S^1))$ and
the reduced projective class group $\widetilde{K}_0(\IZ[\pi_1(F_p)])$ vanish, 
see
for 
instance~\cite[Conjecture~1.1 on page~652, Conjecture~1.3 on page~653,
Corollary~2.3 on page~685]{Lueck-Reich(2005)}. 
Therefore the finiteness obstruction of $F_p$ is zero and hence $F_p$ is
homotopy equivalent to a finite $CW$-complex. Moreover $N\tau(p)$ vanishes. We
conclude $K\nsubseteq \pi_1(B)$ from the fact
that a hyperbolic group does not contain $\IZ \oplus \IZ$ as subgroup. Hence, by
Theorem~\ref{the:main_result_1}, $p$ is homotopic to a {MAF}.
This finishes the proof of 
Theorem~\ref{the:main_easy_to_state_result_I}.

%%%%%%%%%%%%%%%%%%%%%%%%%%%%%%%%%%%%%%%%%%%%%%%%%%%%%%%%%%%%%%%%%%%%%%%%%%%%%%%

\subsection{Proof of Theorem~\ref{the:main_easy_to_state_result_II}}
\label{subsec:Proof_of_Theorem_ref(the:main_easy_to_state_result_II)}

Again by Remark~\ref{rem:pi_1_surjective}, we only need to consider the case where $p$ is $\pi_1$-surjective.
Since $M$ and $B$ are aspherical, $F_p$ is aspherical with a torsionfree 
poly-cyclic
fundamental group. Any torsionfree poly-cyclic group has a finite model for its
classifying space since it is an iterated extension by an infinite cyclic group. 
 Hence
$F_p$ is homotopy equivalent to a finite $CW$-complex. Since an extension of a 
poly-cyclic
group by an infinite cyclic group is again poly-cyclic, we conclude from
Theorem~\ref{the:status_of_FJC} that $\pi_1(M)$ and $\pi_1(M) \times \IZ$ 
satisfies
{FJC}. Now proceed as in the proof of 
Theorem~\ref{the:main_easy_to_state_result_I}.

%%%%%%%%%%%%%%%%%%%%%%%%%%%%%%%%%%%%%%%%%%%%%%%%%%%%%%%%%%%%%%%%%%%%%%%%%%%%%%%

\subsection{Proof of Theorem~\ref{the:interpretation_in_terms_of_Q_manifolds}}
\label{subsec:Proof_of_Theorem_ref(the:interpretation_in_terms_of_Q_manifolds)_announced}

The proof of Theorem~\ref{the:interpretation_in_terms_of_Q_manifolds}, finally, 
will be given 
Subsection~\ref{subsec:Proof_of_Theorem_ref(the:interpretation_in_terms_of_Q_manifolds)}.
It does not depend on the stabilization-destabilization technique.

To summarize, we still have to prove Theorem~\ref{the:theorem_is_stably_true},
Theorem~\ref{the:stably_implies_unstably}, 
Theorem~\ref{the:obstruction_to_destabilization},
and Theorem~\ref{the:interpretation_in_terms_of_Q_manifolds}.

%%%%%%%%%%%%%%%%%%%%%%%%%%%%%%%%%%%%%%%%%%%%%%%%%%%%%%%%%%%%%%%%%%%%%%%%%%%%%%%
%%%%%%%%%%%%%%%%%%%%%%%%%%%%%%%%%%%%%%%%%%%%%%%%%%%%%%%%%%%%%%%%%%%%%%%%%%%%%%%
%%%%%%%%%%%%%%%%%%%%%%%%%%%%%%%%%%%%%%%%%%%%%%%%%%%%%%%%%%%%%%%%%%%%%%%%%%%%%%%

\section{Brief review of the Farrell-Jones Conjecture\\with coefficients in 
additive categories}
\label{sec:Brief_review_of_the_Farrell-Jones_Conjecture_with_coefficients_in_additive_categories}

The original statement of the Farrell-Jones Conjecture can be found in~\cite[1.6 
on page 257]{Farrell-Jones(1993a)}.  
We will focus on the formulation with coefficients in additive categories as it 
is for instance
given for $K$-theory in~\cite[Conjecture~2.3]{Bartels-Reich(2007coeff)} and 
for $L$-theory in~\cite[Definition~0.2]{Bartels-Lueck(2009coeff)}.

\begin{definition}[$K$-theoretic Farrell-Jones Conjecture]
\label{def:K-theoretic_Farrell-Jones_Conjecture}
  A group $G$ satisfies the \emph{$K$-theoretic Farrell-Jones Conjecture with 
coefficients in additive categories} 
  if     for any additive $G$-category $\cala$ the assembly map
\begin{eqnarray*}
& \asmb^{G,\cala}_n \colon H_n^G\bigl(\EGF{G}{\VCyc};\bfK_{\cala}\bigr) \to
 H_n^G\bigl(\pt;\bfK_{\cala}\bigr)
= K_n\left(\intgf{G}{\cala}\right)
&
\end{eqnarray*}
induced by the projection $\EGF{G}{\VCyc} \to \pt$ is bijective for all $n \in 
\IZ$.
\end{definition}

\begin{definition}[$L$-theoretic Farrell-Jones Conjecture]
\label{def:L-theoretic_Farrell-Jones_Conjecture}
  A group $G$ satisfy the \emph{$L$-theoretic Farrell-Jones Conjecture with 
coefficients in additive categories} 
  if   for any additive $G$-category with involution $\cala$ the assembly map
\begin{eqnarray*}
& \asmb^{G,\cala}_n \colon H_n^G\bigl(\EGF{G}{\VCyc};\bfL_{\cala}^{\langle - 
\infty\rangle}\bigr) \to
 H_n^G\bigl(\pt;\bfL_{\cala}^{\langle - \infty\rangle}\bigr)
= L_n^{\langle - \infty\rangle}\left(\intgf{G}{\cala}\right)
&
\end{eqnarray*}
induced by the projection $\EGF{G}{\VCyc} \to \pt$ is bijective for all $n \in 
\IZ$.
\end{definition}

For more information about these conjecture we refer for instance to the survey
article~\cite{Lueck-Reich(2005)}.  In this paper we will use it as a black box 
and it is
not necessary to understand the details of the construction of the assembly maps 
for
equivariant additive categories (with involutions). It is more important to know 
for which
groups these conjectures are known.

\begin{theorem}[Status of the Farrell-Jones Conjecture]
\label{the:status_of_FJC}
The class of groups  for which both the $K$-theoretic and the $L$-theoretic 
Farrell-Jones
Conjectures~\ref{def:K-theoretic_Farrell-Jones_Conjecture} 
and~\ref{def:L-theoretic_Farrell-Jones_Conjecture}
are known has the following properties:
\begin{enumerate}

\item It contains all hyperbolic groups;

\item It contains all CAT(0)-groups;

\item It contains all solvable groups,

\item It contains all lattices in almost connected Lie groups;

\item It contains all arithmetic groups;

\item It  contains all fundamental groups of (not necessarily compact) 
$3$-manifolds (possibly with boundary);

\item It is closed under taking subgroups;

\item It is closed under finite free products and finite direct products;

\item It is closed under directed colimits over directed systems (with arbitrary 
 structure maps);

\item Let $1 \to H \to G \xrightarrow{p} Q \to 1$ be an extension of groups.
Suppose that $Q$ and $p^{-1}(V)$ for any virtually cyclic subgroup $V \subseteq 
Q$ belong to this class of groups,
then also $G$ does.

\end{enumerate}
\end{theorem}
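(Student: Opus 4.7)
The plan is simple in structure but bibliographic in nature: the statement compiles results that have been proven across many papers in the literature, often with separate treatments for $K$- and $L$-theory. My approach is, for each of the ten items, to identify the reference(s) where the assembly map from Definitions~\ref{def:K-theoretic_Farrell-Jones_Conjecture} and~\ref{def:L-theoretic_Farrell-Jones_Conjecture} is shown to be an isomorphism, taking care that the cited version really concerns coefficients in additive categories (with involution in the $L$-theoretic case) and not merely the original formulation with integral group ring coefficients.

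First I would dispatch the concrete classes of groups. For hyperbolic groups (i) I would cite Bartels--Lueck--Reich for the $K$-theoretic case and Bartels--Lueck for the $L$-theoretic case. For CAT(0)-groups (ii) I would cite Bartels--Lueck and Wegner. For solvable groups (iii), the $K$-theoretic statement is due to Wegner and subsequent extensions to $L$-theory are available in the literature. Lattices in almost connected Lie groups (iv) are handled by Bartels--Farrell--Lueck, and arithmetic groups (v) then follow from (iv) combined with commensurability-invariance and the subgroup inheritance (vii). For 3-manifold fundamental groups (vi), I would invoke geometrization to reduce to pieces with geometries already covered by (i)--(iii), and then use the closure properties (vii)--(x) to reassemble.

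Next I would address the closure properties. The subgroup property (vii) is essentially formal: restrict the additive $G$-category along the inclusion $H \hookrightarrow G$, or rather induce an additive $H$-category up to a $G$-category, depending on the direction of argument. Direct products in (viii) are a special instance of extensions (x), while free products follow from the treatment of groups acting on trees in Bartels--Lueck. Property (ix) for directed colimits is the continuity result of Bartels--Echterhoff--Lueck, proved at the spectrum level before passing to homotopy groups. The deepest item, the extension closure (x), is due to Bartels--Lueck and uses transfer reducibility together with the structure of classifying spaces for the family $\VCyc$; the key point is that if $V \subseteq Q$ is virtually cyclic and the preimage $p^{-1}(V)$ satisfies FJC, then one can match classifying spaces across the extension.

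The main obstacle, such as it is, lies not in any single argument but in the book-keeping: several of the $L$-theoretic statements were proven later than their $K$-theoretic counterparts, and a few required first passing through the formulation with finite wreath products in order to deduce the inheritance statements in the generality needed here. The cleanest way to proceed is to compile a reference table matching each of (i)--(x) to its source, and to direct the reader to the surveys (for example Lueck--Reich and Lueck's more recent overview) for a synthesis. No new mathematical argument is required of us.
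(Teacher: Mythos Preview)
Your proposal is correct and takes essentially the same approach as the paper: the paper's own proof is a one-sentence list of references (Bartels--Echterhoff--L\"uck, Bartels--Farrell--L\"uck, Bartels--L\"uck, Bartels--L\"uck--Reich, Bartels--L\"uck--Reich--R\"uping, Kammeyer--L\"uck--R\"uping, R\"uping, Wegner), and your more granular item-by-item attribution is a perfectly acceptable expansion of that. No new argument is needed, and none is given in the paper either.
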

\begin{proof} The proofs can be found 
in~\cite{Bartels-Echterhoff-Lueck(2008colim),
Bartels-Farrell-Lueck(2014),
Bartels-Lueck(2012annals),
Bartels-Lueck-Reich(2008hyper), Bartels-Lueck-Reich-Rueping(2014), 
Kammeyer-Lueck-Rueping(2016),
Rueping(2016_S-arithmetic), Wegner(2012), Wegner(2015)}.
\end{proof}

%%%%%%%%%%%%%%%%%%%%%%%%%%%%%%%%%%%%%%%%%%%%%%%%%%%%%%%%%%%%%%%%%%%%%%%%%%%%%%%
%%%%%%%%%%%%%%%%%%%%%%%%%%%%%%%%%%%%%%%%%%%%%%%%%%%%%%%%%%%%%%%%%%%%%%%%%%%%%%%
%%%%%%%%%%%%%%%%%%%%%%%%%%%%%%%%%%%%%%%%%%%%%%%%%%%%%%%%%%%%%%%%%%%%%%%%%%%%%%%

\section{Assembly maps}
\label{sec:Assembly_maps}

%%%%%%%%%%%%%%%%%%%%%%%%%%%%%%%%%%%%%%%%%%%%%%%%%%%%%%%%%%%%%%%%%%%%%%%%%%%%%%%

\subsection{Fibered assembly maps}\label{subsec:fibered_assembly_maps}

We will use the setup for fibered assembly maps 
from~\cite[Section~11]{Lueck-Steimle(2016splitasmb)}, 
which we recall here for the reader's convenience.

Let
\[
\bfE \colon \Spaces \to \Spectra
\]
be a covariant functor which is homotopy invariant, i.e., sends
weak equivalences of spaces to weak equivalences of spectra. Our main examples 
are
\[
\bfK_{\IZ\Pi}, \bfA, \bfWh, \bfA^\% \colon \Spaces \to \Spectra
\]
which are defined as follows: Given a space $Y$, let $\bfK_{\IZ\Pi}(Y)$ be the 
non-connective $K$-theory
spectrum associated to the additive category of finitely generated projective 
$\IZ \Pi(Y)$-modules,
let $\bfA(Y)$ be the non-connective $A$-theory spectrum associated to $Y$, let 
$\bfWh(Y)$ be the 
associated non-connective PL Whitehead spectrum and let $\bfA^\%(Y)$ be  $Y_+ 
\wedge \bfA(\pt)$. 
By definition there is a cofibration sequence of spectra, natural in $Y$
\begin{eqnarray}
& \bfA^\%(Y)  \xrightarrow{\bfasmb} \bfA(Y) \to \bfWh(Y). &
\label{cofiber_sequence_A_Wh}
\end{eqnarray}
We emphasize that we use non-connective $K$-theory spectra. More information 
about these spectra 
can be found for instance 
in~\cite{Lueck(1989),Lueck-Steimle(2014delooping),Pedersen-Weibel(1985),
Vogell(1990),Vogell(1991)}.

Let $p \colon X \to B$ be a map of path connected spaces which induces an 
epimorphism on fundamental groups.
Suppose that $B$ admits a universal covering $q \colon \widetilde{B} \to B$.

Choose base points $x_0 \in X$, $b_0 \in B$ and $\widetilde{b_0} \in 
\widetilde{B}$
satisfying $p(x_0) = b_0 = q(\widetilde{b_0})$. We will abbreviate $\Gamma = 
\pi_1(X,x_0)$ and
$\pi = \pi_1(B,b_0)$. The free right proper $\pi$-action on $\widetilde{B}$
and $q$ induces a homeomorphism $\widetilde{B}/\pi \xrightarrow{\cong} B$. For 
a subgroup $H \subseteq \pi$ denote by $q(\pi/H) \colon \widetilde{B} 
\times_{\pi} \pi/H = \widetilde{B}/H \to B$
the obvious covering induced by $q$. The pullback construction yields a 
commutative square of spaces
\[
\xymatrix@!C= 7em{
X(\pi/H) \ar[r]^-{\overline{q}(\pi/H)} \ar[d]_{\overline{p}(\pi/H)}
&
X \ar[d]^{p}
\\
\widetilde{B} \times_{\pi} \pi/H \ar[r]_-{q(\pi/H)}
&
B
}
\]
where $\overline q(\pi/H)$ is again a covering. 
This yields covariant functors from the orbit category of $\pi$ to the category 
of topological spaces
\begin{eqnarray*}
\underline{B}  \colon \Or(\pi) & \to & \Spaces, \quad \pi/H \mapsto 
\widetilde{B} \times_{\pi} \pi/H;
\\
\underline{X} \colon \Or(\pi)  & \to & \Spaces, \quad \pi/H \mapsto X(\pi/H).
\end{eqnarray*}

Let
\[
\bfE(p):= \bfE\circ \underline X\colon \Or(\pi)\to \Spectra.
\]
Associated to this functor is a $\pi$-homology theory on the category of 
$\pi$-$CW$-complexes 
\begin{eqnarray}
& H_*^{\pi}(-;\bfE(p))&
\label{H_npi(-,E(p))}
\end{eqnarray}
such that
\begin{eqnarray}
H_n^{\pi}(\pi/H;\bfE(p))  & = & \pi_n\bigl(\bfE(X(\pi/H))\bigr)
\label{H_pi_at_pi/H}
\end{eqnarray}
holds for any subgroup $H \subseteq \pi$
and $n \in \IZ$, see~\cite[Sections~4 and~7]{Davis-Lueck(1998)}.

We will be interested in the two maps
\begin{eqnarray}
H_n^{\pi}(\pr;\bfE(p)) \colon H_n^{\pi}(E\pi;\bfE(p)) & \to & 
H_n^{\pi}(\pt;\bfE(p)) = \pi_n(\bfE(X));
\label{ass_Davis-Lueck_Epi}
\\
H_n^{\pi}(\pr;\bfE(p)) \colon H_n^{\pi}(\widetilde{B};\bfE(p)) & \to & 
H_n^{\pi}(\pt;\bfE(p)) = \pi_n(\bfE(X)),
\label{ass_Davis-Lueck_widetildeB}
\end{eqnarray}
where $\pr$ denotes the projection onto $\pt = \pi/\pi$.

\begin{definition}[$\NWh(p)$] \label{def:NWH(p)}
Let $p \colon X \to B $ be a $\pi_1$-surjective map of path connected spaces. Suppose that $B$ 
admits a universal covering
$q \colon \widetilde{B} \to B$. Define $\NWh_n(p)$
to be the cokernel of the assembly map \eqref{ass_Davis-Lueck_widetildeB} with 
$\bfE=\bfWh$,
\[
H_n^{\pi}(\pr;\bfWh(p)) \colon  H_n^{\pi}(\widetilde{B};\bfWh(p)) \to 
H_n^{\pi}(\pt;\bfWh(p)) = \pi_n(\bfWh(X)).
\]
We abbreviate $\NWh(p) := \NWh_1(p)$.
\end{definition}

\begin{remark}
If $B$ is aspherical, 
we can replace in Definition~\ref{def:NWH(p)} of $\NWh(p)$ the spectrum $\bfWh$ 
by either $\bfA$ or $\bfK_{\IZ\Pi}$
and replace the assembly map~\eqref{ass_Davis-Lueck_widetildeB} 
by~\eqref{ass_Davis-Lueck_Epi}.
The proof of this claim is contained in the proof of 
Theorem~\ref{the:FJC_and_Tate_cohomology}.
\end{remark}

\begin{remark}[Involutions] \label{rem:involutions} Notice that all four 
functors $\bfK_{\IZ\Pi}, \bfA, \bfWh, \bfA^\%$ take values in the category of spectra 
with (strict) involutions. See for instance~\cite{Bartels-Lueck(2009coeff),Vogell(1984),Vogell(1985),
Weiss-Williams(1998)}.
  Hence all the homology groups above come with involutions and all maps are 
compatible
  with them. In particular $\NWh_n(p)$ has an involution.
\end{remark}

We finish this subsection by giving some naturality properties. Given a 
commutative diagram
\[
\xymatrix{X_0 \ar[rd]_-{p_0} \ar[rr]^{f} 
& & 
X_1 \ar[dl]^-{p_1}
\\
& B & 
}
\]
we obtain a natural transformation of homology theories
\[
H_*^{\pi}(-,\bfE(f)) \colon H_*^{\pi}(-,\bfE(p_0)) \to H_*^{\pi}(-,\bfE(p_1))
\]
from the obvious transformation  $\underline{f} \colon \underline{X_0} \to 
\underline{X_1}$ 
of functors $\Or(\pi) \to \Spaces$.

\begin{lemma} \label{lem:weak_equi_f} If $f$ is weak homotopy equivalence, then
  $H_*^{\pi}(-,\bfE(f))$ is a natural equivalence of $\pi$-homology theories, 
i.e.,
  $H_n^{\pi}(Y,\bfE(f)) \colon H_n^{\pi}(Y,\bfE(p_0)) \to 
H_n^{\pi}(Y,\bfE(p_1))$ is
  bijective for any $\pi$-$CW$-complex $Y$ and $n \in \IZ$.
\end{lemma}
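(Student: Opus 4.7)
The plan is to apply the general principle from the Davis--Lück framework \cite{Davis-Lueck(1998)}: a natural transformation of $\Or(\pi)$-spectra that is an objectwise weak equivalence induces a natural equivalence of the associated equivariant homology theories. In our situation the relevant transformation is $\bfE\circ \underline{f}\colon \bfE\circ\underline{X_0}\to \bfE\circ\underline{X_1}$. Since $\bfE$ is homotopy invariant and, by \eqref{H_pi_at_pi/H}, the value at $\pi/H$ is computed as $\pi_n(\bfE(X_i(\pi/H)))$, it suffices to show that the underlying $\Or(\pi)$-space transformation $\underline{f}$ is objectwise a weak equivalence, i.e., that $\underline{f}(\pi/H)\colon X_0(\pi/H)\to X_1(\pi/H)$ is a weak homotopy equivalence for every subgroup $H\subseteq\pi$.

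To verify this, I use the defining pullback
\[
\xymatrix{X_i(\pi/H) \ar[r] \ar[d] & X_i \ar[d]^{p_i} \\ \widetilde{B}/H \ar[r]_-{q(\pi/H)} & B}
\]
The map $q(\pi/H)$ is a covering map and hence a Serre fibration, so this square is in fact a homotopy cartesian square. Placing the $i=0$ and $i=1$ squares next to each other yields a morphism of homotopy cartesian squares that is the identity on the base $B$, the identity on $\widetilde{B}/H$, and the weak equivalence $f$ between the upper-right corners. The standard fact that a morphism of homotopy cartesian squares which is a weak equivalence on three corners is a weak equivalence on the fourth corner then gives that $\underline{f}(\pi/H)$ is a weak equivalence, as required.

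Once this levelwise input is in place, extending to arbitrary $\pi$-CW-complexes $Y$ is routine: check the isomorphism first on orbits $\pi/H$, then on $\pi$-cells via a suspension argument, and then proceed by induction over the skeleta of $Y$ using Mayer--Vietoris and the five-lemma; all of this is encoded in the construction of $H^{\pi}_*(-;\bfE(p))$ as an equivariant homology theory in \cite{Davis-Lueck(1998)}. The only potentially delicate point is recognizing the defining pullback as a homotopy pullback, and this is immediate because coverings are fibrations; so I do not expect a genuine obstacle in carrying out the argument.
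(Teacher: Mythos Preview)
Your proposal is correct and follows essentially the same approach as the paper: you show that each $\underline{f}(\pi/H)$ is a weak equivalence because the defining pullback along the covering $q(\pi/H)$ is a homotopy pullback, apply the homotopy invariance of $\bfE$, and then invoke the Davis--L\"uck machinery to pass to arbitrary $\pi$-CW-complexes. The paper's proof is the same argument compressed into three sentences, citing \cite[Lemma~4.6]{Davis-Lueck(1998)} for the last step where you sketch the skeletal induction.
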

\begin{proof}
Each map $\underline{f}(\pi/H)$ is a weak homotopy equivalence since it is a homotopy pullback of the weak homotopy
equivalence $f$.
Hence by assumption each map $\bfE(f)(\pi/H) \colon \bfE(p_0)(\pi/H)  \to 
\bfE(p_1)(\pi/H)$
is a weak homotopy equivalence of spectra. Now 
apply~\cite[Lemma~4.6]{Davis-Lueck(1998)}.
\end{proof}

\begin{corollary}
The groups $H_*^\pi(X, \bfE(p))$ only depend on the homotopy class of $p$.
\end{corollary}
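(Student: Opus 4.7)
The plan is to deduce the corollary from Lemma \ref{lem:weak_equi_f} by applying it to the endpoint inclusions of a homotopy. Concretely, suppose $p_0, p_1 \colon X \to B$ are homotopic $\pi_1$-surjective maps and let $H \colon X \times I \to B$ be a homotopy with $H \circ i_k = p_k$, where $i_k \colon X \to X \times I$ is the inclusion at $k \in \{0,1\}$.

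For each $k$, the triangle
\[
\xymatrix{X \ar[rd]_-{p_k} \ar[rr]^-{i_k} & & X \times I \ar[dl]^-{H} \\ & B &}
\]
commutes, and $i_k$ is a homotopy equivalence (in particular a weak homotopy equivalence). By Lemma \ref{lem:weak_equi_f} (applied to the pair of maps $p_k$, $H$ over $B$ connected by $i_k$), the induced transformation
\[
H_*^{\pi}(-,\bfE(i_k)) \colon H_*^{\pi}(-,\bfE(p_k)) \xrightarrow{\cong} H_*^{\pi}(-,\bfE(H))
\]
is a natural equivalence of $\pi$-homology theories. Composing the one for $k=0$ with the inverse of the one for $k=1$ yields a natural equivalence $H_*^{\pi}(-,\bfE(p_0)) \cong H_*^{\pi}(-,\bfE(p_1))$, which in particular gives an isomorphism on the value at any fixed $\pi$-$CW$-complex.

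There is no real obstacle here; the only thing to check is the hypothesis of Lemma \ref{lem:weak_equi_f}, namely that $p_k = H \circ i_k$ and that $i_k$ is a weak equivalence, both of which are immediate. The naturality in $Y$ of the resulting isomorphism is automatic since it is the composite of two natural equivalences. Thus the assignment $p \mapsto H_*^{\pi}(-,\bfE(p))$ factors through the set of homotopy classes of maps $X \to B$, completing the argument.
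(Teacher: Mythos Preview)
Your argument is correct and is precisely the intended deduction: the paper states this result as an immediate corollary of Lemma~\ref{lem:weak_equi_f} without giving a proof, and the standard way to extract it is exactly via the endpoint inclusions $i_0,i_1\colon X\to X\times I$ into the cylinder carrying the homotopy $H$, which are weak equivalences making the required triangles over $B$ strictly commute.
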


Let $\bfu \colon \bfE_0 \to \bfE_1$ be a transformation of functors $\Spaces \to 
\Spectra$.
It induces in the obvious way a natural transformation on homology theories
\[
H_*^{\pi}(Y,\bfu(p)) \colon H_*^{\pi}(Y,\bfE_0(p)) \to H_*^{\pi}(Y,\bfE_1(p)).
\]

\begin{lemma} \label{lem:weak_in_bfE} Let $k$ be an integer. Suppose that for 
every space $Z$ 
the homomorphisms $\pi_m(\bfu(Z))
  \colon \pi_m(\bfE_0(Z)) \to \pi_m(\bfE_1(Z))$ is bijective for $m < k$ and 
surjective for $m = k$.
  Let $Y$ be any  $\pi$-$CW$-complex.

Then $H_n^{\pi}(Y,\bfu(p)) \colon H_n^{\pi}(Y,\bfE_0(p)) \to 
H_n^{\pi}(Y,\bfE_1(p))$
is bijective for $n < k$ and surjective for $n = k$.
\end{lemma}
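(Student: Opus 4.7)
The plan is to reduce the statement to the case of orbits $Y = \pi/H$ and then extend by the standard homological induction argument. Both source and target are $\pi$-equivariant homology theories on $\pi$-CW-complexes, so a cell-by-cell build-up combined with the five lemma will suffice once the orbit case is established.

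First I would verify the statement when $Y = \pi/H$ for a subgroup $H \subseteq \pi$. By~\eqref{H_pi_at_pi/H}, the map $H_n^{\pi}(\pi/H, \bfu(p))$ is identified with $\pi_n(\bfu(X(\pi/H)))$, where $X(\pi/H)$ is the space obtained from the pullback construction. The hypothesis applied to $Z = X(\pi/H)$ then gives the claim in the orbit case: bijective for $n<k$, surjective for $n=k$.

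Next, for a general $\pi$-CW-complex $Y$, I would argue by induction over the skeleta $Y^{(m)}$. The key observation is that the relative $\pi$-homology of $(Y^{(m)}, Y^{(m-1)})$, by the excision and disjoint union axioms of the $\pi$-homology theories $H_*^\pi(-, \bfE_i(p))$, is a direct sum of terms of the form $H_{n-m}^{\pi}(\pi/H, \bfE_i(p))$ (with an $m$-fold suspension corresponding to attaching $m$-cells $\pi/H \times D^m$). Thus the transformation $H_*^\pi(-, \bfu(p))$ on such relative groups also satisfies the bijective/surjective conclusion in the stated range of degrees. A diagram chase in the long exact sequence of $(Y^{(m)}, Y^{(m-1)})$, using the five lemma and its surjectivity variant, then propagates the conclusion from $Y^{(m-1)}$ to $Y^{(m)}$. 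For finite-dimensional $Y$ this finishes the proof immediately; for infinite-dimensional $Y$ one passes to the colimit, using that $H_*^{\pi}(-, \bfE_i(p))$ commutes with directed colimits of $\pi$-CW-complexes (which follows from the construction via smash products with orbit spectra, as in~\cite[Section~4]{Davis-Lueck(1998)}) and that both the isomorphism range and the surjection range are preserved under colimits of abelian groups.

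The only mild subtlety is bookkeeping in the surjection-at-$n=k$ case; one needs the standard ``four lemma'' (or the appropriate half of the five lemma) rather than the full five lemma. Apart from this, no new idea is required: the argument is the usual reduction of a natural transformation of equivariant homology theories to its values on orbits, exactly analogous to the proof of Lemma~\ref{lem:weak_equi_f} but tracking connectivity instead of weak equivalence.
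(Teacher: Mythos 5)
Your proof is correct and is, in substance, the paper's own argument unrolled. The paper simply invokes a spectral sequence comparison argument for the Atiyah--Hirzebruch-type spectral sequence of \cite[Theorem~4.7]{Davis-Lueck(1998)} (whose $E^2$-term is cellular homology with coefficients in the orbit-wise homotopy groups), while you carry out the equivalent skeleta-by-skeleta induction using the identification~\eqref{H_pi_at_pi/H} on orbits, the wedge/suspension axioms on skeletal quotients, the (four/five) lemma on the long exact sequence, and a colimit argument for infinite-dimensional $Y$ --- which is exactly what that spectral sequence comparison packages.
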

\begin{proof}
This follows from a standard spectral sequence comparison argument applied to 
the spectral sequence appearing in~\cite[Theorem~4.7]{Davis-Lueck(1998)}
since $\bfu$ induces a map between the spectral sequences associated to  
$H_*^{\pi}(Y,\bfE_0(p))$
and $H_*^{\pi}(Y,\bfE_1(p))$. 
\end{proof}

%%%%%%%%%%%%%%%%%%%%%%%%%%%%%%%%%%%%%%%%%%%%%%%%%%%%%%%%%%%%%%%%%%%%%%%%%%%%%%%

\subsection{Fibered assembly maps and the Farrell-Jones 
Conjecture}\label{subsec:NWh_and_FJC}

The content of this subsection is a proof the following result:

\begin{theorem}\label{the:FJC_and_Tate_cohomology}
  Let $p \colon E \to B$ be a map of path connected spaces which is  $\pi_1$-surjective.
  Suppose that $B$ is an aspherical $CW$-complex with 
fundamental group a torsionfree group $\pi$. Assume that the $K$-theoretic Farrell-Jones 
Conjecture of Definition~\ref{def:K-theoretic_Farrell-Jones_Conjecture} holds for $\pi$ 
and that the cyclic subgroups of $\pi$ are orientable in the sense of Definition~\ref{def:orientable_cyclic_subgroups}.  

  Then, for $n\leq 1$, the map
  \begin{equation}\label{eq:Wh_theory_fibered_assembly_map}
    H_n^{\pi}(E\pi;\bfWh(p)) \to H_n^{\pi}(\pt;\bfWh(p)) = \pi_n(\bfWh(E))
  \end{equation}
  induced by the projection $\pr \colon E\pi \to \pt$ is split injective, its 
cokernel is
  $\NWh_n(p)$, and we get for the Tate cohomology
  \[
  \widehat{H}^i(\IZ/2;\NWh_n(p)) = 0
  \]
  for all $i \in \IZ$.
\end{theorem}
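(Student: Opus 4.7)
The plan is to reduce the assertion to the $K$-theoretic fibered assembly map, apply the Farrell-Jones Conjecture together with a Bass-Heller-Swan-type analysis, and finally use the orientability hypothesis to establish the Tate cohomology vanishing.

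First I would replace $\bfWh$ by $\bfK_{\IZ\Pi}$. The cofibration sequence \eqref{cofiber_sequence_A_Wh} applied fiberwise yields a cofibration sequence of $\pi$-homology theories, hence a long exact sequence relating $H_*^\pi(-;\bfA^\%(p))$, $H_*^\pi(-;\bfA(p))$ and $H_*^\pi(-;\bfWh(p))$. Because $\bfA^\%(Y) = Y_+ \wedge \bfA(\pt)$ is itself a generalized homology theory, the corresponding assembly map for $\bfA^\%$ is a homotopy equivalence, and a diagram chase reduces the split injectivity and cokernel statements to the same statements with $\bfWh$ replaced by $\bfA$. Using the linearization transformation $\bfA \to \bfK_{\IZ\Pi}$, which by Waldhausen's theorem induces isomorphisms on homotopy groups in the range $n \leq 1$, together with Lemma~\ref{lem:weak_in_bfE}, we may further replace $\bfA$ by $\bfK_{\IZ\Pi}$. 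Since $B$ is aspherical, $\widetilde{B}$ is a model for $E\pi$, so the assembly map in question agrees with \eqref{ass_Davis-Lueck_Epi} for $\bfK_{\IZ\Pi}$.

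Next I would invoke the $K$-theoretic Farrell-Jones Conjecture. Applied to a suitable additive $\pi$-category built from the covers $X(\pi/H)$, it says that the assembly map $\asmb\colon H_n^\pi(E_{\VCyc}\pi;\bfK_{\IZ\Pi}(p)) \to \pi_n(\bfK_{\IZ\Pi}(E))$ is bijective for all $n$. Thus the split injectivity of the assembly map from $E\pi$ is equivalent to split injectivity of the restriction map
\[
 H_n^\pi(E\pi;\bfK_{\IZ\Pi}(p)) \to H_n^\pi(E_{\VCyc}\pi;\bfK_{\IZ\Pi}(p)),
\]
and the cokernel of the latter is exactly $\NWh_n(p)$ in this range. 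Since $\pi$ is torsionfree, every virtually cyclic subgroup is either trivial or infinite cyclic, so $E_{\VCyc}\pi$ is obtained from $E\pi$ by equivariantly attaching cells with isotropy in the conjugacy classes of nontrivial infinite cyclic subgroups. A standard equivariant cellular computation (as in the Bass-Heller-Swan decomposition for $K_*$ of group rings of $\IZ$) then identifies $\NWh_n(p)$, for $n \leq 1$, with a direct sum over conjugacy classes $[C]$ of nontrivial infinite cyclic subgroups $C \subseteq \pi$ of two copies of a twisted $\NK$-group with coefficients determined by the cover $X(\pi/C)$ of $E$. The split injectivity is automatic from this description.

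It remains to show that each local summand has trivial Tate cohomology. For a fixed infinite cyclic subgroup $C = \langle g_C \rangle$ the two $\NK$-summands $\NK^{+} \oplus \NK^{-}$ correspond to the two choices of generator $g_C$ and $g_C^{-1}$. The $w_1$-twisted involution, which comes from the involution on $\IZ\pi$ sending $g$ to $\pm g^{-1}$, interchanges the two choices of generator and therefore swaps $\NK^{+}$ and $\NK^{-}$ up to sign. The orientability hypothesis is precisely the coherence needed to make these local identifications compatible under the inclusions and conjugations that occur in the indexing set of conjugacy classes, via Lemma~\ref{lem:extensions_and_orientable_subgroups} (or rather its analogue used here). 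Since for any abelian group $A$ with involution, the swap involution on $A \oplus A$ has vanishing Tate cohomology in every degree, the conclusion follows. The main obstacle I anticipate is the careful identification of the relative assembly term with the sum of twisted $\NK$-groups and, in particular, the verification that the $w_1$-twisted involution acts by swapping the two generator summands coherently across all conjugacy classes; this is where the orientability condition of Definition~\ref{def:orientable_cyclic_subgroups} enters in an essential way.
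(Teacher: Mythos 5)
Your outline follows essentially the same route as the paper: pass from $\bfWh$ to $\bfA$ via the cofibration sequence~\eqref{cofiber_sequence_A_Wh} and the fact that the $\bfA^\%$-assembly is an equivalence (this is Lemma~\ref{lem:asmb_for_A_pt}); pass from $\bfA$ to $\bfK_{\IZ\Pi}$ via the $2$-connected linearization and Lemma~\ref{lem:weak_in_bfE}; invoke the $K$-theoretic FJC to identify $\pi_n(\bfK_{\IZ\Pi}(E))$ with $H_n^\pi(\underline{\underline{E}}\pi;\bfK_{\IZ\Pi}(p))$; and then reduce everything to the relative assembly map from $\underline{E}\pi = E\pi$ to $\underline{\underline{E}}\pi$. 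That is exactly the structure of the paper's argument.

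Where you diverge is in the last step. The paper quotes~\cite[Theorems~0.1 and~0.2]{Lueck-Steimle(2016splitasmb)} as a black box for the split injectivity of the relative assembly map and the Tate-cohomology vanishing of its cokernel (this is where the orientability of cyclic subgroups enters). You instead try to re-derive these by identifying the relative term with a direct sum of twisted $\NK$-groups indexed by conjugacy classes of (maximal) infinite cyclic subgroups and then observing that the involution acts as a coherent swap. This is the right picture of what lies behind the cited theorems, but two of your steps are not as cheap as stated. First, obtaining the $\NK$-decomposition of the relative term is not a ``standard equivariant cellular computation''; it relies on a structure theorem for the pair $(\underline{\underline{E}}\pi, \underline{E}\pi)$ and a careful identification of the stalks, which is the actual content of~\cite{Lueck-Steimle(2016splitasmb)}. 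Second, the assertion that ``the split injectivity is automatic from this description'' is an overstatement: knowing the cokernel of a map does not by itself produce a splitting, and the retraction is a genuine construction in the cited paper (via a Farrell--Hsiang-type transfer/Frobenius argument) rather than a formal consequence. Your instinct about where the orientability hypothesis enters — to make the pairing of the two generator-summands coherent under inclusions and conjugations — is correct; note, however, that the involution does not merely swap the two $\NK$-summands of a fixed $C$, it can also permute the conjugacy classes, and the orientability condition is precisely what rules out the fixed points of that permutation (an orientation-reversing element of $N_\pi(C)/C$ would produce a Klein-bottle subgroup), so that the action is a free $\IZ/2$-action on the indexing set and the Tate cohomology dies. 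If you are content to cite~\cite{Lueck-Steimle(2016splitasmb)} for this, your proof closes up and agrees with the paper's.
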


For its proof we will need the following 

\begin{lemma} \label{lem:asmb_for_A_pt}
If $u \colon Y \to Z$ is a $\pi$-map of $\pi$-$CW$-complexes which is a weak
homotopy
equivalence after forgetting the $\pi$-action, then the induced map
\[
H_n^{\pi}(u;\bfA^\%(p)) \colon H_n^{\pi}(Y;\bfA^\%(p)) \xrightarrow{\cong}
H_n^{\pi}(Z;\bfA^\%(p))
\]
is bijective for all $n \in \IZ$.
\end{lemma}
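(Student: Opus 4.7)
My plan is to reduce the claim to the standard fact that a fiberwise weak equivalence between fiber bundles induces an isomorphism on homology, by identifying $H_n^{\pi}(-;\bfA^\%(p))$ with an ordinary generalized homology theory applied to a Borel construction; the special product form $\bfA^\%(Z)=Z_+\wedge\bfA(\pt)$ is what makes this reduction possible and is the reason the $\pi$-equivariance of $u$ being merely a non-equivariant weak equivalence is enough.

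First I set up the geometry. Since $p$ is $\pi_1$-surjective, the pullback $\widetilde X^B := X \times_B \widetilde B$ is a connected free $\pi$-space (the $\pi$-action is free because $\pi$ acts freely on $\widetilde B$), and there are natural homeomorphisms $X(\pi/H) \cong \widetilde X^B/H$ for every subgroup $H\subseteq\pi$. Thus the functor $\underline X\colon \Or(\pi)\to \Spaces$ is the usual ``fixed-point style'' functor associated to the free $\pi$-space $\widetilde X^B$. Moreover, from $\bfA^\%(Z)=Z_+\wedge \bfA(\pt)$ we have $\pi_n(\bfA^\%(Z))=H_n(Z;\bfA(\pt))$, ordinary homology with coefficients in the spectrum $\bfA(\pt)$.

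The key step is to establish a natural identification
\[
H_n^{\pi}\bigl(Y;\bfA^\%(p)\bigr)\;\cong\; H_n\bigl(Y\times_\pi\widetilde X^B;\,\bfA(\pt)\bigr),
\]
natural in the $\pi$-$CW$-complex $Y$. Both sides are equivariant homology theories on $\pi$-$CW$-complexes, and they agree on orbits: at $Y=\pi/H$ the left side equals $\pi_n(\bfA^\%(\widetilde X^B/H))=H_n(\widetilde X^B/H;\bfA(\pt))$ by~\eqref{H_pi_at_pi/H}, while the right side equals $H_n(\widetilde X^B/H;\bfA(\pt))$ directly using $\pi/H\times_\pi\widetilde X^B=\widetilde X^B/H$. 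A standard comparison of equivariant homology theories (induction over the $\pi$-cells, using Mayer--Vietoris and the colimit axiom) then yields agreement on all $\pi$-$CW$-complexes. Under this identification, the map $H_n^{\pi}(u;\bfA^\%(p))$ becomes the map on $\bfA(\pt)$-homology induced by $u\times_\pi\id_{\widetilde X^B}$.

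To finish, I use that the freeness of the $\pi$-action on $\widetilde X^B$ makes projection onto the second factor into a fiber bundle $Y\times_\pi \widetilde X^B \to \widetilde X^B/\pi = X$ with fiber $Y$, and similarly for $Z$. The map $u\times_\pi\id$ is a map of such fiber bundles over $X$ which restricts on each fiber to the non-equivariant weak equivalence $u$, hence is itself a weak homotopy equivalence and induces an isomorphism on $H_*(-;\bfA(\pt))$, as required. The main obstacle I anticipate is the construction of the natural transformation between the two equivariant homology theories in the middle paragraph, which is cleanest via the coend/balanced-smash-product description of $H_n^\pi$ together with the observation that smashing with the fixed spectrum $\bfA(\pt)$ commutes with this balanced smash product; once this is in place, the comparison and the fiber bundle argument are routine.
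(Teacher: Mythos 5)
Your proposal is correct and takes essentially the same route as the paper: both reduce to identifying $H_n^\pi(Y;\bfA^\%(p))$ with $\pi_n\bigl((\overline X\times_\pi Y)_+\wedge\bfA(\pt)\bigr)$ (the paper does this via an explicit homeomorphism $\alpha(Y)\colon O(Y)\times_{\Or(\pi)}\underline X\xrightarrow{\cong}\overline X\times_\pi Y$ at the coend level, which is exactly the ``balanced smash product'' naturality you flag as the one non-routine point), and then both finish with the fibration $Y\to\overline X\times_\pi Y\to X$ over the covering $\overline X\to X$ together with the five lemma. Your phrasing via comparison of equivariant homology theories on orbits is a thin wrapper around the same argument; writing the homeomorphism $\alpha(Y)$ directly, as the paper does, gives the required naturality for free and lets you skip the Mayer--Vietoris induction entirely.
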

\begin{proof}
Define the contravariant functor
\[
O(Y) \colon \Or(\pi) \to \Spaces, \quad \pi/H \mapsto \map_{\pi}(\pi/H,Y).
\]
By unravelling the definitions, we see that $H_n^{\pi}(u;\bfA^\%(p))$ is
the homomorphism on $\pi_n$ coming from the map of spectra
\[
\bigl(O(Y) \times_{\Or(\pi)} \underline{X}\bigr)_+ \wedge \bfA(\pt)
\to
\bigl(O(Z) \times_{\Or(\pi)} \underline{X}\bigr)_+ \wedge \bfA(\pt).
\]
Hence it suffices to show that the map of spaces
\[
O(Y) \times_{\Or(\pi)} \underline{X} \to O(Z) \times_{\Or(\pi)} \underline{X}
\]
is a weak homotopy equivalence. If we put $\overline{X} := X(\pi/1)$,
then $\overline{X}$ carries a proper free right $\pi$-action and
$\underline{X}$ can be identified with the functor sending $\pi/H$ to the space 
$\overline{X} \times_{\pi} \pi/H$.
There is a homeomorphism
\[
\alpha(Y) \colon O(Y) \times_{\Or(\pi)} \underline{X} \xrightarrow{\cong} 
\overline{X} \times_{\pi} Y
\]
which is induced by the various maps 
\[
\map_{\pi}(\pi/H, Y) \times \bigl(\overline{X} \times_{\pi} \pi/H\bigr) \to 
\overline{X} \times_{\pi} Y,
\quad \bigl(\sigma, (\overline{x}, wH)\bigr) \mapsto \bigl(\overline{x},\sigma(w 
H)\bigr).
\]
The inverse of $\alpha$ sends $(\overline{x},y)$ to the element in $O(Y) 
\times_{\Or(\pi)} \underline{X}$
given by 
$(\sigma_y,\overline{x})$ in $\map_{\pi}(\pi, Y) \times_{\pi}  \overline{X} 
= \map_{\pi}(\pi/\{1\},Y) \times_{\pi} \underline{X}(\pi/\{1\})$  
for $\sigma_y \colon \pi \to Y,\; w \mapsto w \cdot y$.
Hence it suffices to show that 
\[
\id_{\overline{X}} \times_{\pi} u \colon \overline{X} \times_{\pi} Y \to 
\overline{X} \times_{\pi} Z
\]
is a weak homotopy equivalence. Since $\overline{X} \to X$ is a covering, 
we obtain a commutative diagram whose rows are fibrations.
\[
\xymatrix{
Y \ar[r] \ar[d]^{u}
&
\overline{X} \times_{\pi} Y  \ar[r] \ar[d]^{\id_{\overline{X}} \times_{\pi} u} 
&
X \ar[d]^{\id_X}
\\
Z \ar[r] 
&
\overline{X} \times_{\pi} Z \ar[r]  
&
X 
}
\]
Now the long homotopy sequence associated to a fibration and the Five-Lemma 
prove that
$\id_{\overline{X}} \times_{\pi} u$ is a weak homotopy equivalence.
This finishes the proof of Lemma~\ref{lem:asmb_for_A_pt}.
\end{proof}

Now we can give the

\begin{proof}[Proof of Theorem~\ref{the:FJC_and_Tate_cohomology}]
Notice that
we can use $\widetilde{B}$ as a model for $E\pi$ for $\pi = \pi_1(B)$, as $B$ is 
aspherical. 
So the cokernel of the map~\eqref{eq:Wh_theory_fibered_assembly_map} is 
$\NWh_n(p)$ by its very definition.

Let $\underline{\underline{E}}\pi$ be the classifying space for the family of
virtually cyclic subgroups. By~\cite[Lemma 11.3]{Lueck-Steimle(2016splitasmb)}, 
the
$K$-theoretic Farrell-Jones Conjecture implies that the map  induced by the 
projection
$\underline{\underline{E}}\pi \to \pt$
\[
H_n^{\pi}(\underline{\underline{E}}\pi;\bfK_{\IZ\Pi}(p)) \xrightarrow{\cong}
H_n^{\pi}(\pt;\bfK_{\IZ\Pi}(p))
\]
is an isomorphism for all $n \in \IZ$. (Here we use the $\pi_1$-surjectivity.)
As $\pi$ is assumed to be torsionfree, 
the $\pi$-space $E\pi$ is
also a classifying space $\underline{E}\pi$ for $\pi$-actions with finite
stabilizers. We conclude from~\cite[Theorem~0.1 and 
Theorem~0.2]{Lueck-Steimle(2016splitasmb)}
that the map induced by the projection $\pr \colon E\pi \to \pt$
\begin{equation}\label{eq:K_theory_fibered_assembly_map}
H_n^{\pi}(\pr;\bfK_{\IZ\Pi}(p))  \colon H_n^{\pi}(E\pi;\bfK_{\IZ\Pi}(p)) \to 
H_n^{\pi}(\pt;\bfK_{\IZ\Pi}(p))\end{equation}
is split injective (for all $n$) and that the Tate cohomology groups of its 
cokernel
(considered as a $\IZ/2$-module under the canonical involution) vanish.

There exists a linearization map 
\begin{eqnarray*}
& \bfL(Y) \colon \bfA(Y)  \to \bfK_{\IZ\Pi}(Y) &
\end{eqnarray*}
which is natural in $Y$ and always $2$-connected.
It induces a transformation of functors $\Or(\pi) \to \Spectra$ from
$\bfA(p)$ to $\bfK_{\IZ\Pi}(p)$ whose evaluation at any object $\pi/H$ is 
$2$-connected.
From Lemma~\ref{lem:weak_in_bfE} we obtain for every $n \le 1$ and every 
$\pi$-$CW$-complex an isomorphism, natural in $Y$,
\begin{eqnarray}
&
H_n^{\pi}(Y;\bfA(p)) \xrightarrow{\cong}  H_n^{\pi}(Y;\bfK_{\IZ\Pi}(p)).
&
\label{from_A-to_K_ZPi}
\end{eqnarray}
We conclude that the map 
\begin{equation}\label{eq:A_theory_fibered_assembly_map}
H_n^\pi(\pr, \bfA(p))\colon H_n^{\pi}(E\pi;\bfA(p)) \to H_n^{\pi}(\pt,\bfA(p))
\end{equation}
is split injective at least for $n\leq 1$ with the same cokernel as 
the map~\eqref{eq:K_theory_fibered_assembly_map}.

Next we apply Lemma~\ref{lem:asmb_for_A_pt} for $Y=E\pi$ and $Z=\pt$. The long 
exact
sequence obtained from the cofiber sequence~\eqref{cofiber_sequence_A_Wh} 
implies that the
natural transformation $\bfA\to \bfWh$ induces an isomorphism
\[
H_n^\pi(E\pi \to \pt; \bfA(p))\xrightarrow{\cong} H_n^\pi(E\pi \to \pt; 
\bfWh(p))
\]
on the relative homology groups. We  obtain a commutative diagram
with exact columns whose horizontal  arrows marked with $\cong$ are bijective
\[
\xymatrix{\vdots \ar[d]_{z_{n+1}} 
& 
\vdots \ar[d]^{z_{n+1}} 
\\
H_{n+1}^{\pi}(E\pi \to \pt; \bfA(p)) \ar[r]^{\cong} \ar[d]_{\delta_{n+1}}
&
H_{n+1}^{\pi}(E\pi \to \pt; \bfWh(p)) \ar[d]^{\delta_{n+1}}
\\
H_n^{\pi}(E\pi; \bfA(p)) \ar[r] \ar[d]_{H_n^{\pi}(\pr; \bfA(p))}
&
H_n^{\pi}(E\pi; \bfWh(p)) \ar[d]^{H_n^{\pi}(\pr; \bfWh(p))}
\\
H_n^{\pi}(\pt; \bfA(p)) \ar[r] \ar[d]_{z_n}
&
H_n^{\pi}(\pt; \bfWh(p)) \ar[d]^{z_n}
\\
H_n^{\pi}(E\pi \to \pt; \bfA(p)) \ar[r]^{\cong} \ar[d]_{\delta_n}
&
H_n^{\pi}(E\pi \to \pt; \bfWh(p)) \ar[d]^{\delta_n}
\\
\vdots & \vdots
}
\]
Since each of the maps $H_n^{\pi}(\pr; \bfA(p))$ is split injective for $n \le 
1$, one
easily checks that each of the maps $z_n \colon H_n^{\pi}(\pt; \bfA(p)) \to 
H_n^{\pi}(E\pi
\to \pt; \bfA(p))$ is split surjective for $n \le 1$.  This implies that each of 
the maps
$z_n \colon H_n^{\pi}(\pt; \bfWh(p)) \to H_n^{\pi}(E\pi \to \pt; \bfWh(p))$ is 
split
surjective for $n \le 1$. Finally we conclude for $n \le 1$ that the map
$H_n^{\pi}(\pr; \bfWh(p))$ is split injective and has the same cokernel as
$H_n^{\pi}(\pr; \bfA(p))$. Since we have already shown that 
$H_n^{\pi}(\pr; \bfA(p))$ has  the same cokernel as 
the map~\eqref{eq:K_theory_fibered_assembly_map},
Theorem~\ref{the:FJC_and_Tate_cohomology} follows.
\end{proof}

%%%%%%%%%%%%%%%%%%%%%%%%%%%%%%%%%%%%%%%%%%%%%%%%%%%%%%%%%%%%%%%%%%%%%%%%%%%%%%%

\subsection{Comparison to Quinn's assembly maps}

Quinn~\cite[8. Appendix]{Quinn(1982a)} defines for a stratified system of 
fibrations $p \colon X \to B$ 
over a simplicial complex $B$ and a homotopy invariant functor $\bfE
\colon \Spaces \to \Spectra$ an assembly map
\begin{eqnarray}
\asmb_n \colon \IH_n(B;\bfE(p)) \to \pi_n(\bfE(X)).
\label{Quinn_assembly}
\end{eqnarray}
We want to compare the three assembly 
maps~\eqref{ass_Davis-Lueck_Epi},~\eqref{ass_Davis-Lueck_widetildeB},
and~\eqref{Quinn_assembly}. Roughly speaking, the 
map~\eqref{ass_Davis-Lueck_Epi}
is best suited for calculations based on the Farrell-Jones 
Conjecture,~\eqref{Quinn_assembly}
is best suited for geometric applications, 
and~\eqref{ass_Davis-Lueck_widetildeB} interpolates 
between~\eqref{ass_Davis-Lueck_Epi} and~\eqref{Quinn_assembly}. 

\begin{lemma} \label{lem:identifying_assembly_maps}
Let $p \colon X \to B$ be a $\pi_1$-surjective
map of path connected spaces such 
that $B$ is a simplicial complex.
Assume that $p$ is a stratified system of fibrations.

\begin {enumerate}
\item \label{lem:identifying_assembly_maps:map}
There is a natural homomorphism
\[
\mu_n \colon \IH_n(B;\bfE(p)) \to H_n^{\pi}(\widetilde{B};\bfE(p))
\]
from the group $\IH_n(B;\bfE(p))$ appearing in~\eqref{Quinn_assembly} to the 
$\pi$-homology group
$H_n^{\pi}(\widetilde{B};\bfE(p))$ of~\eqref{H_npi(-,E(p))}  such that the 
following diagram commutes
\[
\xymatrix{\IH_n(B;\bfE(p)) \ar[r]^{\mu_n} \ar[d]_{\asmb_n} 
& 
H_n^{\pi}(\widetilde{B} ;\bfE(p)) \ar[d]^{H_n^{\pi}(\pr;\bfE(p))}
\\
\pi_n(\bfE(X)) & 
H_n^{\pi}(\pt;\bfE(p)) \ar[l]^{\nu_n}_{\cong}
}
\]
where $\nu_n$ is the canonical isomorphism from~\eqref{H_pi_at_pi/H} for the 
homogeneous
space $\pi/\pi$;

\item \label{lem:identifying_assembly_maps:iso} Suppose that $p$ is a (Hurewicz) fibration 
and $B$ is aspherical.

Then the natural homomorphism
\[
\mu_n \colon \IH_n(B;\bfE(p)) \to H_n^{\pi}(\widetilde{B};\bfE(p))
\]
is bijective for $n \in \IZ$;

\item \label{lem:identifying_assembly_maps:same_image}
Suppose that $B$ is aspherical. Then the assembly maps
\[
\asmb_n \colon \IH_n(B;\bfE(p)) \to \pi_n(\bfE(X))
\]
of~\eqref{Quinn_assembly} and the assembly maps 
of~\eqref{ass_Davis-Lueck_widetildeB}
\[
H_n^{\pi}(\pr;\bfE(p)) \colon H_n^{\pi}(\widetilde{B} ;\bfE(p)) 
= H_n^{\pi}(E\pi;\bfE(p)) \to H_n^{\pi}(\pt;\bfE(p)) 
\]
have the same image, if we identify the targets by the isomorphism $\nu$ 
from~\eqref{H_pi_at_pi/H} for the homogeneous space $\pi/\pi$;

\item \label{lem:identifying_assembly_maps:passage_from_widetildeB_to_Epi}
Let $f \colon \widetilde{B} \to E\pi$ be the classifying $\pi$-map. Suppose that 
$\pi_1(X)$ satisfies
the $K$-theoretic FJC, see 
Definition~\ref{def:K-theoretic_Farrell-Jones_Conjecture}. Take $\bfE = \bfWh$,
defined in Subsection~\ref{subsec:fibered_assembly_maps}.

Then the left vertical arrow in the following commutative diagram is bijective 
for $n \le 0$ and surjective
for $n = 1$ 
\[
\xymatrix@!C= 15em{
H_n^{\pi}(\widetilde{B};\bfWh(p)) \ar[r]^-{H_n^{\pi}(\pr;\bfWh(p))} 
\ar[d]_{H_n^{\pi}(f;\Wh(p))}
&
H_n^{\pi}(\pt;\bfWh(p)) = \Wh_n(\pi_1(X)) \ar[d]^{\id}
\\
H_n^{\pi}(E\pi;\bfWh(p)) \ar[r]_-{H_1^{\pi}(\pr;\bfWh(p))} 
&
H_n^{\pi}(\pt;\bfWh(p)) = \Wh_n(\pi_1(X)) 
}
\]
In particular the cokernel of the upper horizontal arrow agrees with the 
cokernel of the lower
horizontal arrow for $n \le 1$. 
\end{enumerate}
\end{lemma}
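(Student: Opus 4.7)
The plan is to prove the four parts in sequence, each building on the previous. For part~(i), I would construct $\mu_n$ by directly comparing the two categorical index sets underlying the two homology theories. Quinn's group $\IH_n(B;\bfE(p))$ is assembled from the local spectra $\bfE(p^{-1}(\sigma))$ as $\sigma$ ranges over the simplices of $B$, while Davis--L\"uck's group $H_n^{\pi}(\widetilde B;\bfE(p))$ is assembled from $\bfE(X(\pi/H))$ as $\pi/H$ ranges over $\Or(\pi)$. A simplex $\widetilde\sigma$ of $\widetilde B$ lying over $\sigma \subset B$ has trivial $\pi$-stabilizer, and the inclusion $p^{-1}(\sigma) \hookrightarrow X$ factors through $X(\pi/1) = \widetilde B \times_B X$ via the defining pullback. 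Packaging this $\pi$-equivariantly into a natural transformation of spectrum-valued diagrams induces $\mu_n$. Compatibility with the two assembly maps is then essentially tautological, both being induced by projection to the terminal orbit $\pi/\pi$.

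For part~(ii), when $p$ is a Hurewicz fibration and $B$ is aspherical, I would compare Atiyah--Hirzebruch type spectral sequences. Since each $\sigma$ is contractible and $p$ is a fibration, $p^{-1}(\sigma) \simeq F$; and since $B$ is aspherical, $\widetilde B$ is contractible so $X(\pi/1) \simeq F$. Both spectral sequences then have the same $E^2$-page $H_p(B;\{\pi_q \bfE(F)\})$ with the twisted coefficient system induced by the monodromy action of $\pi$ on $F$. Lemma~\ref{lem:weak_equi_f} applied to the equivalence $X(\pi/1) \simeq F$ shows that $\mu_n$ induces the identity on $E^2$-pages, hence is an isomorphism. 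For part~(iii), for a general stratified system of fibrations one replaces $p$ by a Hurewicz fibration $p'\colon X' \to B$ together with a map $X \to X'$ over $B$ that is a weak equivalence on each simplex preimage; Lemma~\ref{lem:weak_equi_f} and the analogous homotopy invariance for Quinn's construction show that the assembly images for $p$ and $p'$ coincide, reducing the claim to part~(ii).

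The main technical content is part~(iv). Using the long exact sequence of the $\pi$-pair $(E\pi,\widetilde B)$, it suffices to show that $H_n^{\pi}(E\pi,\widetilde B;\bfWh(p))$ vanishes for $n \le 1$. Because $\widetilde B$ is simply connected, one can build $E\pi$ from $\widetilde B$ by attaching free $\pi$-cells in dimensions $\ge 3$; all such cells have trivial stabilizer since both $\pi$-actions are free. The Atiyah--Hirzebruch spectral sequence then takes the form
\[
E^2_{p,q} = H_p(B\pi,B;\pi_q\bfWh(X(\pi/1))) \Longrightarrow H_{p+q}^{\pi}(E\pi,\widetilde B;\bfWh(p)),
\]
and vanishes identically for $p \le 2$. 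For $p+q \le 1$ with $p \ge 3$ one has $q \le -2$, and the main obstacle is to verify that $\pi_q\bfWh(X(\pi/1)) = 0$ in this range. This is where the $K$-theoretic FJC for $\pi_1(X)$ enters: by closure under subgroups (Theorem~\ref{the:status_of_FJC}) it transfers to $\pi_1(X(\pi/1)) = \ker\pi_1(p)$, and combined with the known vanishing of $K_q(\IZ V)$ for virtually cyclic $V$ in degrees $q \le -2$ it forces $K_q(\IZ\pi_1(X(\pi/1))) = 0$ in that range; the cofiber sequence~\eqref{cofiber_sequence_A_Wh} together with the $2$-connected linearization $\bfA \to \bfK_{\IZ\Pi}$ then transports this vanishing to $\pi_q\bfWh$, exactly as in the proof of Theorem~\ref{the:FJC_and_Tate_cohomology}.
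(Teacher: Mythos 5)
Your outline for parts~(i), (ii), and~(iv) is essentially the paper's proof, with one route variation worth noting and one genuine gap in~(iii).

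For part~(i) your lift-and-assemble strategy is the paper's; the paper just spells out the $\pi$-equivariance check with an explicit commutative diagram. For part~(iv) your argument (replace $E\pi$ by a relative free $\pi$-CW complex with cells in dimension $\geq 3$, then kill the relative spectral sequence using negative $K$-theory vanishing from FJC, transported through the cofiber sequence~\eqref{cofiber_sequence_A_Wh} and the $2$-connected linearization) is the same as the paper's, which simply cites the Davis--L\"uck spectral sequence for the $2$-connected map $f$ together with~\cite[Subsection~3.1.1]{Lueck-Reich(2005)}. One small imprecision: $\widetilde B$ being simply connected only gets you cells in dimensions $\geq 2$; you also need $\pi_2(f)$ to be surjective, which holds automatically because $E\pi$ is contractible. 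For part~(ii), you compare $E^2$-pages of two spectral sequences, while the paper extends $\mu_n$ to a natural transformation of homology theories on spaces over $B$ and reduces by Mayer--Vietoris to the case of a point $b\in B$, where both sides compute $\pi_n\bfE$ of the (homotopy) fiber. These are equivalent routes, but your citation of Lemma~\ref{lem:weak_equi_f} is misplaced: that lemma compares $H^\pi_*(-,\bfE(p_0))$ and $H^\pi_*(-,\bfE(p_1))$ for a weak equivalence over $B$, not Quinn's theory against Davis--L\"uck's. What you actually need is that for $p$ a fibration over aspherical $B$, the inclusion $p^{-1}(\sigma)\to X(\pi/1)$ is a weak equivalence (both are equivalent to the fiber because $\sigma$ and $\widetilde B$ are contractible); this is the ingredient the paper isolates in its point check.

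The real problem is part~(iii). You propose to replace $p$ by a Hurewicz fibration $p'\colon X'\to B$ via a map $X\to X'$ over $B$ that is ``a weak equivalence on each simplex preimage.'' For a general stratified system of fibrations no such replacement exists: the fibration replacement (mapping path space) turns the preimage of $\sigma$ into something homotopy equivalent to the homotopy fiber, whereas $p^{-1}(\sigma)$ for the original $p$ need not have the homotopy type of the homotopy fiber at all (this is precisely what can fail when $p$ is not already an approximate fibration). For instance, project $S^1$ onto a diameter $D^1$; over a vertex of the triangulation the preimage is a point, while the homotopy fiber is $S^1$. Since Quinn's $\IH_n(B;\bfE(p))$ is built out of the simplex preimages $\bfE(p^{-1}(\sigma))$, it is \emph{not} invariant under a map over $B$ that is merely a global weak equivalence, so you cannot conclude that the assembly images for $p$ and $p'$ agree by ``analogous homotopy invariance for Quinn's construction.'' The paper handles this by explicitly building the diagram relating $\asmb_n$ for $p$ and for the fibration replacement $\widehat p$; it never claims $\IH_n(B;\bfE(p))\cong\IH_n(B;\bfE(\widehat p))$, only that a diagram chase through the isomorphisms $\pi_n(\bfE(u))$, $\pi_n(\bfE(v))$ on the \emph{targets} forces the two assembly \emph{images} to coincide. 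You need that diagram chase, not a false equivalence on the domains.
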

\begin{proof}~\ref{lem:identifying_assembly_maps:map}
   Quinn defines a spectrum $\IE(p)$ by $\bigvee_{\sigma} \sigma_+ \wedge
  \bfE(p^{-1}(\sigma))/\sim$ where $\sigma$ runs through the simplices of $B$, 
the pointed
  space $\sigma_+$ is obtained from $\sigma$ by adding a disjoint base point, 
and the
  equivalence relation $\sim$ identifies for an inclusion $\sigma \subset \tau$ 
the
  spectrum $\sigma_+ \wedge \bfE(p^{-1}(\sigma))$ with its image under the map 
  $\sigma_+ \wedge \bfE(p^{-1}(\sigma)) \to \tau_+ \wedge \bfE(p^{-1}(\tau))$ 
coming from the
  inclusion $\sigma \subseteq \tau$.

Consider any simplex $\sigma$ of $B$. Let $\widetilde{\sigma}$ be any lift to 
$\widetilde{B}$
which will be equipped with the obvious simplicial structure coming from $B$. 
Then we obtain a map
of spectra 
\[
\widetilde{\bfa}(\widetilde{\sigma}) \colon 
\widetilde{\sigma}_+ \wedge 
\bfE\bigl(\overline{p}(\pi/1)^{-1}(\widetilde{\sigma})\bigr)
\to \widetilde{B}_+ \wedge \bfE\bigl(X(\pi/1)\bigr)
\]
by the smash product of  the inclusion $\widetilde{\sigma}_+ \to 
\widetilde{B}_+$
and the map $\bfE\bigl(\overline{p}(\pi/1)^{-1}(\widetilde{\sigma})\bigr) \to 
\bfE\bigl(X(\pi/1)\bigr)$
induced by the inclusion $\overline{p}(\pi/1)^{-1}(\widetilde{\sigma}) \to 
X(\pi/1)$.
The maps $q \colon \widetilde{B} \to B$ and
$\overline{p}(\pi/1) \colon X(\pi/1) \to X$ induce homeomorphisms 
$\widetilde{\sigma} \xrightarrow{\cong} \sigma$ and 
$\overline{p}(\pi/1)^{-1}(\widetilde{\sigma}) \xrightarrow{\cong} 
p^{-1}(\sigma)$ 
and thus an isomorphism of spectra
\[
\widetilde{\bfb}(\widetilde{\sigma}) \colon 
\widetilde{\sigma}_+\wedge 
\bfE\bigl(\overline{p}(\pi/1)^{-1}(\widetilde{\sigma})\bigr)
\xrightarrow{\cong} \sigma_+ \wedge \bfE(p^{-1}(\sigma)).
\]
For any $w \in \pi$ we obtain a commutative diagram, where the vertical maps are 
all
induced by multiplication with $w$
\[
\xymatrix@!C=12em{
 & \widetilde{\sigma}_+ \wedge 
\bfE\bigl(\overline{p}(\pi/1)^{-1}(\widetilde{\sigma})\bigr) 
\ar[ld]^{\widetilde{\bfb}(\widetilde{\sigma})}_{\cong} 
\ar[r]^-{\widetilde{\bfa}(\widetilde{\sigma})} \ar[dd]
& \widetilde{B}_+ \wedge \bfE\bigl(X(\pi/1)\bigr) \ar[dd]
\\
\sigma_+ \wedge \bfE(p^{-1}(\sigma)) 
& 
&
\\
& \bigl(\widetilde{\sigma} \cdot w\bigr)_+ 
\wedge \bfE\bigl(\overline{p}(\pi/1)^{-1}(\widetilde{\sigma} \cdot w)\bigr) 
\ar[lu]^{\widetilde{\bfb}(\widetilde{\sigma} \cdot w)}_{\cong}
\ar[r]_-{\widetilde{\bfa}(\widetilde{\sigma} \cdot w)} 
& \widetilde{B}_+ \wedge \bfE\bigl(X(\pi/1)\bigr) 
}
\]
Thus we obtain  a map of spectra
\[
\bfa(\sigma) \colon \sigma_+ \wedge \bfE\bigl(p^{-1}(\sigma)\bigr) 
\to \widetilde{B}_+ \wedge_{\pi}  \bfE\bigl(X(\pi/1)\bigr).
\]
They fit together to a map of spectra
\begin{eqnarray*}
\bfa \colon  \IE(p) := \bigvee_{\sigma} \sigma_+ \wedge 
\bfE(p^{-1}(\sigma))/\sim 
&  \to &
\widetilde{B}_+ \wedge_{\pi}  \bfE\bigl(X(\pi/1)\bigr).
\end{eqnarray*}
After taking homotopy groups it induces the desired map
\[
\mu_n \colon \IH_n(B;\bfE(p)) \xrightarrow{\cong} 
H_n^{\pi}(\widetilde{B};\bfE(p)).
\]
Notice that Quinn~\cite[8. Appendix]{Quinn(1982a)} replaces $\cals(B)$ by the 
associated
$\Omega$-spectrum but this does not matter since it does not change the homotopy
groups. In the setting of~\cite{Davis-Lueck(1998)} one does not need 
$\Omega$-spectra as
long as one is only interested in homology, 
see~\cite[Lemma~4.4]{Davis-Lueck(1998)}.  The
commutativity of the diagram appearing in
Lemma~\ref{lem:identifying_assembly_maps}~\ref{lem:identifying_assembly_maps:map} follows
directly from the definitions.
\\[2mm]~\ref{lem:identifying_assembly_maps:iso} 
If $Z$ is a space over $B$, i.e., a space with a reference map $f\colon Z\to B$, 
then the construction above 
extends and yields a map
\[
\mu_n(Z) \colon \IH_n(Z;\bfE(p))\to H_n^\pi(\overline{Z};\bfE(p))
\]
where $\overline{Z}$ is the $\pi$-covering given by the pullback
\[
\xymatrix{\overline{Z} \ar[d]
\ar[r]^{\overline{f}}
& \widetilde{B} \ar[d]
\\
Z \ar[r]^f &
B
}
\]
The map $\mu_n(Z)$ is natural in the space $Z$ over $B$. Next we show for any 
space $Z$ over $B$
that $\mu_n(Z)$ is an isomorphism for all $n\in \IZ$. Then the claim follows by 
applying it to
the case $Z = B$ with reference map $\id_B \colon B \to B$.

In fact both functors are homology theories on the category of spaces over $B$: 
For the
first one, see~\cite[Proposition~8.4]{Quinn(1982a)}, for the second one this is 
true by
construction. Hence by the standard Mayer-Vietoris and colimit argument
it is enough to check the case where $f\colon Z\to B$ is the inclusion
of a point $b\in B$. In this case $\overline{Z}$ is isomorphic to $\pi/1$ so
$H_n^\pi(\overline{Z};\bfE(p))\cong \bfE(X(\pi/1))$, and the map $\mu_n$ is the 
map 
$\pi_n \bfE(p\inv(b))\to \pi_n (\bfE(X(\pi/1)))$ induced by the inclusion 
from the point-preimage
into the homotopy fiber. But this map is a homotopy equivalence since $p$ is a
fibration and $\widetilde{B}$ is contractible. The claim follows as $\bfE$ is 
homotopy invariant.
\\[2mm]~\ref{lem:identifying_assembly_maps:same_image}
We can turn $p$ into a fibration $\widehat{p}$, i.e., there exists a commutative 
diagram
\[
\xymatrix{X \ar[r]^{u} \ar[d]^{p} 
& \widehat{X} \ar[d]^{\widehat{p}} \ar[r]^-{i_0}
& \widehat{X} \times [0,1] \ar[d]^{h}
& \widehat{X} \ar[l]_-{i_1} \ar[d]^{h_1}
\ar[r]^v
& X \ar[d]^{p} 
\\ 
B \ar[r]^{\id_B} 
&  B \ar[r]^-{i_0}
& B \times [0,1] 
& B \ar[l]_-{i_1} \ar[r]^{\id_B}
& B
}
\]
such that $\widehat{p} \colon \widehat{X} \to B$ is a fibration, $u$ and $v$ 
are homotopy inverse homotopy equivalences
and the maps $i_k$ denote the obvious inclusions coming from the inclusions 
$\{k\} \to [0,1]$, and $h$ is a homotopy between $p\circ v$ and $\widehat p$.
It induces a commutative diagram of abelian groups whose arrows marked with 
$\cong$
are bijections because of the spectral sequence due to Quinn~\cite[8. 
Appendix]{Quinn(1982a)}
and the assumption that $\bfE$ sends weak homotopy equivalences to weak homotopy 
equivalences. 
\[
\xymatrix@!C= 10em{
\IH_n(B;\bfE(p)) \ar[r]^{\asmb_n} \ar[d]_{u_*}
& \pi_n(\bfE(X)) \ar[d]_{\pi_n(\bfE(u))}^{\cong} \ar@/^{25mm}/[dddd]^{\id}
\\
\IH_n(B;\bfE(\widehat{p})) \ar[r]^{\asmb_n} \ar[d]_{(i_0)_*}^{\cong}
& \pi_n(\bfE(\widehat{X})) \ar[d]_{\pi_n(\bfE(i_0))}^{\cong}
\\
\IH_n(B\times [0,1] ;\bfE(h)) \ar[r]^{\asmb_n} 
& \pi_n(\bfE(\widehat{X} \times [0,1]))
\\
\IH_n(B;\bfE(\widehat{p})) \ar[r]^{\asmb_n} \ar[u]^{(i_1)_*}_{\cong} 
\ar[d]_{v_*}
& \pi_n(\bfE(\widehat{X})) \ar[u]^{\pi_n(\bfE(i_1))}_{\cong} 
\ar[d]_{\pi_n(\bfE(v))}^{\cong}
\\
\IH_n(B;\bfE(p)) \ar[r]^{\asmb_n} 
& \pi_n(\bfE(X)) 
}
\]
Notice that we are not claiming that the vertical arrows $u_*$ and $v_*$ are 
isomorphisms.
Nevertheless, an easy diagram chase shows that the images of 
\[
\asmb_n \colon \IH_n(B;\bfE(p)) \to  \pi_n(\bfE(X)) 
\]
and
\[
\asmb_n \colon \IH_n(B;\bfE(\widehat{p})) \to  \pi_n(\bfE(\widehat{X})) 
\]
agree if we identify their targets with the isomorphism $\pi_n(\bfE(u))$
whose inverse is  $\pi_n(\bfE(v))$.
The following diagram commutes 
\[
\xymatrix@!C= 10em{
H_n^{\pi}(\widetilde{B};\bfE(p)) \ar[r]^{\asmb_n} 
\ar[d]_{H_n^{\pi}(\widetilde{B}; \bfE(u))} 
& H_n^{\pi}(\pt; \bfE(p)) \ar[d]^{H_n^{\pi}(\pt; \bfE(u))} 
\\
H_n^{\pi}(\widetilde{B};\bfE(\widehat{p}) \ar[r]_{\asmb_n} 
& H_n^{\pi}(\pt; \bfE(\widehat{p})) }
\]
The vertical arrows are bijections because of Lemma~\ref{lem:weak_equi_f}.
Hence the images of the upper and the lower horizontal arrows agree if
we identify their targets with the right vertical isomorphisms.
We conclude that it suffices to prove 
assertion~\ref{lem:identifying_assembly_maps:same_image}
for $\widehat{p}$, or in other words, we can assume without loss of generality
that $p$ is a fibration.
But then the claim follows directly from 
assertion~\ref{lem:identifying_assembly_maps:iso}.
\\[2mm]~\ref{lem:identifying_assembly_maps:passage_from_widetildeB_to_Epi}
By assumption $\pi_1(X)$ satisfies the $K$-theoretic 
Farrell-Jones Conjecture; Theorem~\ref{the:status_of_FJC} 
implies that each of its subgroups $\pi_1(X(\pi/H))$ for 
$H \subseteq \pi$ does. Hence $\pi_q(\bfWh(X(\pi/H))$ vanishes 
for all $q \le -2$ and $H \subseteq \pi$, see for 
instance~\cite[Subsection~3.1.1]{Lueck-Reich(2005)}. The map $f \colon 
\widetilde{B} \to E\pi$ 
is a  $2$-connected map of free  $\pi$-$CW$-complexes. Now apply the spectral 
sequence 
of~\cite[Theorem~4.7]{Davis-Lueck(1998)}.
This finishes the proof of Lemma~\ref{lem:identifying_assembly_maps}.
\end{proof}

%%%%%%%%%%%%%%%%%%%%%%%%%%%%%%%%%%%%%%%%%%%%%%%%%%%%%%%%%%%%%%%%%%%%%%%%%%%%%%%
%%%%%%%%%%%%%%%%%%%%%%%%%%%%%%%%%%%%%%%%%%%%%%%%%%%%%%%%%%%%%%%%%%%%%%%%%%%%%%%
%%%%%%%%%%%%%%%%%%%%%%%%%%%%%%%%%%%%%%%%%%%%%%%%%%%%%%%%%%%%%%%%%%%%%%%%%%%%%%%

\section{The tight torsion and proof of 
Theorem~\ref{the:interpretation_in_terms_of_Q_manifolds}}
\label{sec:tight_torsion}

%%%%%%%%%%%%%%%%%%%%%%%%%%%%%%%%%%%%%%%%%%%%%%%%%%%%%%%%%%%%%%%%%%%%%%%%%%%%%%%

\subsection{Factorization to an approximate fibration}
\label{subsec:Factorization_to_an_approximate_fibration}

This subsection is devoted to the following result which will be one of the key 
ingredients in the definition of tight torsion.

\begin{theorem}[Factorization to a approximate fibration]
\label{thm:factorization_to_an_approximate_fibration}
  Let $p\colon M\to B$ be a map between topological spaces, such that $B$ is a 
finite
  simplicial complex and for each $b\in B$, the homotopy fiber of $p$ over $B$ is homotopy finite. Then 
there exists
  a homotopy commutative diagram
  \[
   \xymatrix{
    M \ar[rr]^f \ar[rd]_p && E \ar[ld]^q\\
    & B }
   \]
  where $f$ is a homotopy equivalence, $q$ is an approximate fibration, and $E$ 
is a
  compact ENR.
\end{theorem}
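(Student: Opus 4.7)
The plan is to replace $p$ by a Hurewicz fibration, and then convert its total space into a compact ENR while preserving the approximate lifting property over $B$; the second stage will be the main obstacle. For the first stage, apply the standard path space construction:
\[
\widehat{M}=\{(m,\gamma)\in M\times B^I \mid \gamma(0)=p(m)\},\qquad \widehat{p}(m,\gamma)=\gamma(1).
\]
Then $\widehat{p}$ is a Hurewicz fibration, the map $\iota\colon m\mapsto (m,\mathrm{const}_{p(m)})$ is a homotopy equivalence $M\to \widehat{M}$ with $\widehat{p}\circ\iota=p$, and the fibers of $\widehat{p}$ are the homotopy fibers of $p$, hence homotopy finite. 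Since every Hurewicz fibration is already an approximate fibration, the only obstruction to concluding is that $\widehat{M}$ fails to be a compact ENR.

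Next, I would construct $E$ by induction on the skeleta of the finite simplicial complex $B$. At each vertex $v$, pick a finite CW-complex $E_v$ homotopy equivalent to the fiber $\widehat{p}^{-1}(v)$ and realize it as a compact ENR. For the inductive step, having constructed a compact ENR $E^{(n-1)}$ over $B^{(n-1)}$ together with a controlled fiber-homotopy equivalence to $\widehat{p}^{-1}(B^{(n-1)})$, pick for each $n$-simplex $\sigma$ a compact ENR $E_\sigma$ homotopy equivalent to the (homotopy finite) fiber of $\widehat{p}$ over a point of $\sigma$, and attach $\sigma\times E_\sigma$ to $E^{(n-1)}$ along $\partial\sigma$ via the mapping cylinder of the fiber-homotopy equivalence $\partial\sigma\times E_\sigma\to E^{(n-1)}|_{\partial\sigma}$. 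Compact ENRs are closed under finite mapping cylinders and finite unions, so $E$ is a compact ENR; the fiber-homotopy equivalences glue to a map $f\colon M\to E$ with $q\circ f\simeq p$.

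The main obstacle is to arrange that $q\colon E\to B$ is an \emph{approximate} fibration rather than merely a Hurewicz fibration on each open simplex. One has to concentrate the mapping cylinder identifications in arbitrarily thin collars of the faces: any lifting problem can then be solved stratum-by-stratum via the Hurewicz lifting property on each open simplex, with only a small $\eps$-controlled error arising from the transitions across the thin collars. As an alternative route, one may invoke Quinn's theory of stratified systems of fibrations from~\cite{Quinn(1982a)}, which is designed precisely to produce approximate fibrations from such fiberwise data over a polyhedron.
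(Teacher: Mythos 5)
Your proposal takes essentially the same inductive route as the paper: reduce to a Hurewicz fibration, build $E$ simplex by simplex by replacing the fiber over each simplex by a mapping cylinder onto a homotopy finite model, and then verify the approximate lifting property by a gluing argument. The paper's construction differs in a minor but cleaner detail: rather than attaching a product $\sigma\times E_\sigma$ with mapping-cylinder identifications squeezed into a thin collar of $\partial\sigma$, it cones $E'\vert_{\partial\sigma}$ directly onto the finite model $F$ over the barycenter, so the base of the new piece is literally $\sigma=\cyl(\partial\sigma\to\ast)$ and the total space is $\cyl\bigl(E'\vert_{\partial\sigma}\xrightarrow{h\circ t\circ g'}F\bigr)$; this avoids the need to manipulate collar widths. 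It also constructs a strict map $g\colon E\to M$ with $p\circ g=q$ (using the fibration lifting property to correct the homotopy), which makes the bookkeeping of the inductive hypotheses easier, and it explicitly maintains the stronger inductive statement that the restriction of $q$ over every subcomplex $A\subset B$ is already an approximate fibration.

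The one place where your write-up is materially less rigorous is the verification that $q$ is an approximate fibration. Your ``thin collar, stratum-by-stratum lifting with $\eps$-error'' sketch is really an informal re-proof of the gluing criterion that is actually needed; the paper instead invokes~\cite[Theorem 12.15]{Hughes-Taylor-Williams(1990)} (a precise criterion for when a map over a polyhedron, built as you describe, is an approximate fibration), with~\cite{Steimle(2011)} supplying the details. Your alternative suggestion of appealing to Quinn's theory of stratified systems of fibrations would also work but again needs the hypotheses checked. You would also want to justify that compact ENRs are stable under the gluings you perform; the paper cites~\cite[Corollary~E.7]{Bredon(1997a)} and~\cite[Chapter~VI, \S1]{Hu(1965retracts)} for this. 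These are gaps of detail rather than of idea: the core strategy is correct and matches the paper's.
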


We are grateful to Steve Ferry for suggesting the following argument to us, 
which replaces
a more complicated proof in an earlier version.

\begin{proof}
  We may assume that $p$ is a fibration. We will construct a homotopy 
equivalence 
  $g\colon  E\to M$ such that $p\circ g=q$ strictly, and we will moreover make 
sure that for any
  subcomplex $A$ of $B$, the restriction of $q$ over $A$ is an approximate 
fibration whose
  total space is a compact ENR and that the restriction of $f$ over $A$ is a 
homotopy
  equivalence.

  The proof is by induction on the number of simplices of $B$. The case where 
$B$ consists
  of a single 0-simplex is trivial. For the inductive step, assume that $B$ is 
obtained
  from $B'$ by attaching a single $n$-simplex $\sigma$ along its boundary, and 
denote by
  $g'\colon E'\to M':=M\vert_{B'}$ and $q'\colon E'\to B'$ the maps obtained 
from the
  inductive assumption.

Let $b\in B$ be the barycenter of $\sigma$ and choose a homotopy equivalence 
\[
h\colon M_b:=p\inv(b)\to F
\]
where $F$ is a finite $CW$-complex. As $p$ is a fibration, there is a homotopy 
equivalence
\[
M\vert_\sigma\xrightarrow{\simeq} \cyl(M\vert_{\partial\sigma} \xrightarrow{t} 
M_b)
\]
relative to $M\vert_{\partial\sigma}$, where $t$ is given by fiber transport.

Let $E:=E'\cup_{E'\vert_{\partial\sigma}} 
\cyl(E'\vert_{\partial\sigma}\xrightarrow{h\circ
  t\circ g'} F)$. Then $E$ is a compact ENR 
by~\cite[Corollary~E.7]{Bredon(1997a)}
and~\cite[chapter VI, \S1]{Hu(1965retracts)}. The commutative diagram
\[
\xymatrix{E'\vert_{\partial\sigma} \ar[d]^{g'}_\simeq \ar[rr]^{h\circ t\circ g'} 
&& F\\
 M\vert_{\partial\sigma} \ar[rr]^t && M_b \ar[u]_h^\simeq
}
\]
induces a homotopy equivalence $\cyl(h\circ t\circ g')\to \cyl(t)$ which 
restricts to $g'$
on $E'\vert_{\partial\sigma}$, so we obtain a homotopy equivalence $g''\colon 
E\to M$
which extends $g'$.

We let $q\colon E\to B$ extend $q'$ by sending the mapping cylinder canonically 
to
$\sigma=\cyl(\partial\sigma\to \ast)$. It follows from the inductive assumption 
by
application of the criterion~\cite[Theorem 12.15]{Hughes-Taylor-Williams(1990)} 
that $q$
is in fact an approximate fibration (see~\cite{Steimle(2011)} for more details 
of this
argument). Moreover $p\circ g''$ and $q$ agree over $B'$ so these two maps are 
homotopic
by the straight-line homotopy in the interior of $\sigma$. As $p$ is a 
fibration, the map
$g''$ is homotopic to a map $g$ such that $p\circ g=q$, via a homotopy which is 
stationary
over $B'$.

If $A\subset B$ is a subcomplex, then either it does not contain $\sigma$, in 
which case
the claim is contained in the inductive assumption. Otherwise $A$ is obtained 
from some
$A'\subset B'$ by attaching the single simplex $\sigma$, and the same argument 
as above
applies to show that the restriction of $q$ over $A'$ is an approximate 
fibration whose
total space is a compact ENR, and that the restriction of $g$ over $A'$ is a 
homotopy
equivalence.
\end{proof}

%%%%%%%%%%%%%%%%%%%%%%%%%%%%%%%%%%%%%%%%%%%%%%%%%%%%%%%%%%%%%%%%%%%%%%%%%%%%%%%

\subsection{Definition of tight torsion}
\label{subsec:Definition_of_tight_torsion}

Recall that, given a homotopy equivalence $f\colon X\to Y$ between
compact ENRs, it is possible to define the Whitehead torsion
$\tau(f)\in\Wh(\Pi Y)$ which has the usual properties. It can be
calculated from the classical Whitehead torsion of a homotopy
equivalence between compact CW-complexes by the composition rule using the 
following
facts:
\begin{enumerate}
\item each compact ENRs receives a cell-like map $A\to X$ from a
  finite $CW$-complex $A$;

\item the Whitehead torsion of a cell-like map between compact ENRs
  (which is always a homotopy equivalence) is zero.
\end{enumerate}
See~\cite{Lacher(1977)} for a survey on cell-like maps.

In the sequel $p \colon M \to B$ is a $\pi_1$-surjective
map between closed topological
manifolds such that $B$ is triangulable.  In particular $M$ is  a compact ENR.

In order to define tight torsion below, we will need the following.

\begin{lemma}\label{lem:well_definition_of_tight_torsion}
  Given a metric on $B$, there exists an $\eps>0$, such that the following 
holds: For $i=1,2$, let
  $p\simeq q_i\circ f_i$ be two factorizations up to homotopy into a homotopy 
equivalence, followed by an 
  $\eps$-fibration whose total spaces are compact {ENR}'s. 
  Then the element
  \[
   (f_1)_*\inv\tau(f_1)-(f_2)_*\inv\tau(f_2) \in \Wh(\Pi M)
  \] 
  is in the image of the assembly map $H_1^\pi(\pr;\bfWh(p)) \colon 
H_1^{\pi}(\widetilde{B};\bfWh(p)) \to \Wh(\Pi M)$.
 \end{lemma}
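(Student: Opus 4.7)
The strategy is to reduce the statement to a controlled Whitehead torsion of a single homotopy equivalence between compact ENRs ``over $B$'', and then to identify its image with the image of the Davis--L\"uck assembly via Lemma~\ref{lem:identifying_assembly_maps}.

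First, fix homotopy inverses and set $g := f_2\circ f_1^{-1}\colon E_1\to E_2$. Since $q_i\circ f_i\simeq p$ for $i=1,2$, we have $q_2\circ g\simeq q_1$, i.e., $g$ is a homotopy equivalence ``over $B$ up to homotopy''. The composition formula for the Whitehead torsion of maps between compact ENRs gives
\[
\tau(f_2) \;=\; g_*\tau(f_1) + \tau(g) \quad\text{in}\quad \Wh(\Pi E_2),
\]
and using $(f_2)_* = g_*\circ (f_1)_*$ one obtains
\[
(f_1)_*^{-1}\tau(f_1) - (f_2)_*^{-1}\tau(f_2) \;=\; -(f_2)_*^{-1}\tau(g)\quad\text{in}\quad \Wh(\Pi M).
\]
Hence it is enough to prove that $(f_2)_*^{-1}\tau(g)$ lies in the image of $H_1^\pi(\pr;\bfWh(p))$.

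For this I would invoke controlled simple homotopy theory. Given the fixed metric on $B$, choose $\eps>0$ small enough that the controlled Whitehead group over $B$ is defined and that, whenever $q_1,q_2$ are $\eps$-fibrations and $g$ is a homotopy equivalence with $q_2\circ g\simeq q_1$, the map $g$ can be made $C\eps$-controlled over $B$ for a constant $C=C(B)$ (this uses the approximate lifting property of $\eps$-fibrations to convert a homotopy inverse of $g$ into a bounded one). Because the $E_i$ are compact ENRs, cell-like approximation reduces the computation of $\tau(g)$ to a homotopy equivalence of finite $CW$-complexes without altering the torsion, and Chapman--Quinn controlled Whitehead torsion theory then furnishes a refinement
\[
\tau^c(g)\in \IH_1(B;\bfWh(p))
\]
in Quinn's homology of the spectrum $\bfWh(p)$, whose image under Quinn's assembly map~\eqref{Quinn_assembly} is (after identification via $(f_2)_*$) precisely $\tau(g)$.

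Finally, by Lemma~\ref{lem:identifying_assembly_maps}\ref{lem:identifying_assembly_maps:same_image}, Quinn's assembly map $\asmb_1\colon \IH_1(B;\bfWh(p))\to \pi_1(\bfWh(M))$ and the Davis--L\"uck fibered assembly map $H_1^\pi(\pr;\bfWh(p))\colon H_1^\pi(\widetilde{B};\bfWh(p))\to \pi_1(\bfWh(M))=\Wh(\Pi M)$ have the same image. Therefore $(f_2)_*^{-1}\tau(g)$, and hence the difference $(f_1)_*^{-1}\tau(f_1)-(f_2)_*^{-1}\tau(f_2)$, lies in the image of $H_1^\pi(\pr;\bfWh(p))$, as required.

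The main obstacle is the construction of the controlled torsion $\tau^c(g)$ together with the assembly identity $\asmb_1(\tau^c(g))=(f_2)_*^{-1}\tau(g)$. The subtle point is that the hypotheses of the lemma only provide $\eps$-fibrations and a single $\eps$, so one must bootstrap the $\eps$-lifting property to genuine geometric control on $g$ (and on a chosen homotopy inverse), uniform in the two factorizations; this is exactly where the freedom to choose $\eps$ small depending on $B$ is used.
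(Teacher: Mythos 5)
Your proposal follows essentially the same route as the paper's own proof: reduce to a single homotopy equivalence $g$ via the composition rule, pass to compact $CW$ targets via cell-like maps (which have vanishing torsion), observe that over small mesh the relevant comparison map is a controlled homotopy equivalence so that Quinn's controlled Whitehead torsion applies, and then translate from Quinn's assembly to the Davis--L\"uck assembly via Lemma~\ref{lem:identifying_assembly_maps}. One small point where the paper is more careful: Quinn's homology $\IH_1(B;\bfWh(-))$ is defined relative to a reference stratified system of fibrations, not the original $p$, so the paper first factors $q_1=p'\circ\lambda$ into a homotopy equivalence followed by a genuine fibration $p'\colon E'\to B$ (with $\lambda$ a controlled equivalence because $q_1$ is an approximate fibration), obtains the controlled torsion in $\IH_1(B;\bfWh(p'))$, and then identifies $\bfWh(p')$ with $\bfWh(p)$ via the homotopy-invariance Lemma~\ref{lem:weak_equi_f}; your proposal elides this factorization step and writes $\IH_1(B;\bfWh(p))$ directly, though you do flag the needed identifications. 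Likewise, you invoke part~\ref{lem:identifying_assembly_maps:same_image} of Lemma~\ref{lem:identifying_assembly_maps} to match images, whereas the paper assembles the same conclusion from parts~\ref{lem:identifying_assembly_maps:map} and~\ref{lem:identifying_assembly_maps:iso} via an explicit commutative diagram; these are equivalent routes.
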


 Recall that we have introduced $\NWh(p)$ in Definition~\ref{def:NWH(p)} as the 
cokernel
 of this assembly map. Hence the following definition is meaningful because of 
 Lemma~\ref{lem:well_definition_of_tight_torsion}.

\begin{definition}[Tight torsion] \label{def:tide_torsion}
  Let $p \colon M \to B$ be a $\pi_1$-surjective    map between closed topological
  manifolds such that $B$ is triangulable and such that the homotopy fiber of $p$ is homotopy finite.  The \emph{tight torsion}
  \[
   N\tau(p)\in \NWh(p)
   \] 
   is defined whenever there exists a factorization $p\simeq q\circ f$ 
  up to homotopy into a homotopy equivalence followed by an
  approximate fibration whose total space is a compact ENR; in this case it is defined to be the image of the Whitehead torsion
  $f_*\inv\tau(f)$ of $f$ in the quotient group $\NWh(p)$. 
\end{definition}

Note that $ \NWh(p)$ and the tight torsion $N\tau(p)$ depend only on the 
homotopy class of $p$. 
Our terminology comes from the case where $B$ is the circle $S^1$. Then
$N\tau(p)$ is the component of the primary torsion obstruction defined in
Farrell~\cite{Farrell(1967),Farrell(1971)} which lies in the summand
\[
\widetilde{C}(\IZ \Gamma, \alpha)\oplus \widetilde{C}(\IZ \Gamma, \alpha^{-1})
\]
of $\Wh(\pi_1(M))$ where $\Gamma=\ker \pi_1(p)$ and $\pi_1(M) = \Gamma 
\rtimes_\alpha \pi_1(S^1)$. 
In this case $N\tau(p)$ vanishes under transfer to
the finite sheeted covers $M_n\xrightarrow{p_n} S^1$ 
for all sufficiently large integers $n$ where these
covers are defined by the following Cartesian diagram:
\[
\xymatrix{
  M_n \ar[rr]^{p_n} \ar[d] && S^1 \ar[d]^{z\mapsto z^n}\\
  M \ar[rr]^p && S^1 }
\]
Our terminology comes by thinking that these covers relax the torsion more and
more, as $n\to\infty$, until it becomes zero; 
cf.~\cite{Farrell(1971b),Siebenmann(1970b)}.

%%%%%%%%%%%%%%%%%%%%%%%%%%%%%%%%%%%%%%%%%%%%%%%%%%%%%%%%%%%%%%%%%%%%%%%%%%%%%%%

\subsection{Proof of Lemma~\ref{lem:well_definition_of_tight_torsion}}
\label{subsec:Proof_of_Lemma_ref(lem:well_definition_of_tight_torsion)}

In order to guarantee that Definition~\ref{def:tide_torsion} makes sense,
we still have to prove Lemma~\ref{lem:well_definition_of_tight_torsion}.
This needs some preparations.

\begin{definition}[Control] \label{def:controlled_maps}
  Let
  \begin{equation}\label{diag:epsilon_homotopy_equivalence}
  \xymatrix{X \ar[rr]^f \ar[rd]_p && Y \ar[ld]^q\\
      & B  
    }\end{equation}
  be a (not necessarily commutative) diagram of spaces, with $B$ a metric space, 
 and let $\eps>0$. 
  \begin{enumerate}
  
\item $f$ is called \emph{$\eps$-controlled} if $d(q\circ  f(x), p(x))<\eps$
holds for all $x\in X$;
  
\item A homotopy $H\colon X\times I\to Y$ is called an $\eps$-homotopy if for
    all $x\in X$ the path $q\circ H(x,-)$ in $B$ has diameter less than $\eps$;
  
\item $f$ is an \emph{$\eps$-domination} if it is $\eps$-controlled and  there
    exists an $\eps$-controlled map $g\colon Y\to X$ and an $\eps$-homotopy
    $f\circ g\simeq \id_Y$;
 
 \item An $\eps$-domination $f$ is an \emph{$\eps$-homotopy equivalence} if, in
    addition, there exists an $\eps$-homotopy $g\circ f\simeq \id_X$.
  \end{enumerate}
\end{definition}

\begin{remark}
  It is easy to see that if $f\colon X\to Y$ and $g\colon Y\to Z$ are 
$\eps$-homotopy
  equivalences, then $g\circ f$ is an $4\eps$-homotopy equivalence. This is the 
reason to
  require that not just the homotopies, but also the maps themselves have
  $\eps$-control. Thus our convention differs from other definitions (for 
instance,
  in~\cite{Chapman(1980)}) that just require the homotopies to be controlled. 
Note that if
  a map $f$ is $\eps$-controlled and an $\eps$-homotopy equivalence in this 
weaker sense,
  then the triangle inequality implies that it is automatically a 
$2\eps$-homotopy
  equivalence in our sense.
\end{remark}

\begin{lemma}\label{lem:condition_for_epsilon_h_eq}
  Suppose that $X$, $Y$ are separable metric spaces, that $B$ is a compact ANR 
with a metric and let $\eps>0$. There is a $\delta>0$ such that the following 
holds: If 
  in~\eqref{diag:epsilon_homotopy_equivalence}, $p$ and $q$ are
  $\delta$-fibrations, and $f$ is a homotopy equivalence such that the diagram 
commutes up to homotopy, 
then $f$ is homotopic
  to an $\eps$-homotopy equivalence.
\end{lemma}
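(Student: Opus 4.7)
The plan is to choose $\delta \ll \eps$ (how much smaller depending on a modulus of uniform local contractibility of the ANR $B$) and to produce the desired $\eps$-homotopy equivalence in two main stages.

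\emph{Controlling the maps.} I would first use the approximate lifting property of $q$ to move $f$ within its homotopy class to a $\delta$-controlled map $f'$. By hypothesis there exists a homotopy $h\colon X\times [0,1]\to B$ from $q\circ f$ to $p$; the approximate lifting property of $q$ applied to $h$ with initial lift $f$ produces a homotopy $H$ in $Y$ whose endpoint $f':=H_1$ satisfies $d(q\circ f', p)\le\delta$. Starting from a homotopy inverse $\bar f$ of $f$ and the homotopy $p\circ\bar f\simeq q\circ f\circ\bar f\simeq q$, the analogous construction applied to the $\delta$-fibration $p$ yields a $\delta$-controlled map $g\colon Y\to X$ homotopic to $\bar f$. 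The triangle inequality gives $d(q\circ f'\circ g, q)\le 2\delta$ and $d(p\circ g\circ f', p)\le 2\delta$.

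\emph{Controlling the homotopies.} The main remaining task is to replace the a priori uncontrolled homotopies $f'\circ g\simeq\id_Y$ and $g\circ f'\simeq\id_X$ by $\eps$-controlled ones; this is where the compact ANR hypothesis on $B$ enters essentially. Since $B$ is a compact ANR, it is uniformly locally contractible: there is a modulus $\eta$ with $\eta(\delta)\to 0$ as $\delta\to 0$ such that any two maps into $B$ that are pointwise $\delta$-close are connected by an $\eta(\delta)$-small homotopy (obtained as a straight-line homotopy in a neighborhood retract of the diagonal $\Delta\subset B\times B$). Applied to the $2\delta$-close maps $q\circ f'\circ g$ and $q\circ \id_Y$ this yields an $\eta(2\delta)$-small homotopy in $B$ between them, and lifting it through the $\delta$-fibration $q$ starting at $f'\circ g$ produces an $(\eta(2\delta)+\delta)$-small homotopy in $Y$ ending at some map $\phi\colon Y\to Y$ with $d(q\circ\phi, q)$ small.

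\emph{The main obstacle} is that this endpoint $\phi$ need not equal $\id_Y$ exactly, and one must still connect $\phi$ to $\id_Y$ by an $\eps$-small homotopy in $Y$. To accomplish this I would apply the same reasoning one parameter higher: an uncontrolled homotopy $\phi\simeq \id_Y$ (which exists because both are homotopic to $f'\circ g$) projects to a homotopy $Y\times[0,1]\to B$ whose endpoints are close in $\mathrm{Map}(Y,B)$; uniform local contractibility, applied in the function space $\mathrm{Map}(Y,B)$ (itself an ANR since $B$ is), deforms this homotopy, rel endpoints, to a globally small one in $B$, and the approximate lifting property of $q$ --- which holds with arbitrary parameter spaces, not just $[0,1]$ --- then lifts this deformation to an $\eps$-controlled homotopy $\phi\simeq \id_Y$ in $Y$. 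Exchanging the roles of $p$ and $q$ gives the corresponding control of $g\circ f'\simeq \id_X$. The appropriate choice of $\delta=\delta(\eps,B)$ is then obtained by tracing through the accumulated constants and inverting the modulus $\eta$.
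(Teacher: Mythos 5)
Your first stage is sound: the $\delta$-lifting property of $q$ and $p$ replaces $f$ and a homotopy inverse by $\delta$-controlled maps $f'$, $g$, and the triangle inequality gives $2\delta$-closeness of $q\circ f'\circ g$ to $q$ and of $p\circ g\circ f'$ to $p$. The difficulty is concentrated in the second stage, and there the argument has a genuine gap.

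You must connect $\phi$ (the endpoint of your small lifted homotopy starting at $f'\circ g$) to $\id_Y$ by an $\eps$-small homotopy, and you propose: take an uncontrolled homotopy $G\colon\phi\simeq\id_Y$, project it to a path $\gamma=q\circ G$ in $\map(Y,B)$, deform $\gamma$ rel endpoints to a small path $\gamma'$, and lift. Two things go wrong. First, close endpoints do not yield a deformation rel endpoints to a small path: $\gamma$ and $\gamma'$ differ by a loop $\gamma\cdot(\gamma')^{-1}$ in $\map(Y,B)$, and nothing in the hypotheses forces this loop to be null; uniform local contractibility of $B$ controls local, not global, homotopies. Second, and more seriously, even if the two-parameter deformation $\Gamma$ of $\gamma$ to $\gamma'$ existed, lifting it through $q$ is only a $\delta$-lift, so the lifted square is not constant along $\{t=1\}$. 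At $s=1$ you therefore obtain a small homotopy from $\phi$ to some new map $\psi$ that is again close to, but not equal to, $\id_Y$. You are back to the original problem, one $\delta$ closer, and the regress never terminates. Terminating it requires an additional device (the estimated homotopy extension property, or factoring through an exact fibration) that you do not invoke.

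The paper's proof sidesteps the regress entirely by a different route: it factors $q$ through a genuine (not approximate) fibration $r\colon \cale\to B$, uses the \emph{exact} lifting property of $r$ to produce a map $f'$ with $r\circ f'=p$ strictly, and then applies Chapman's Proposition~2.3, which is precisely the assertion that a fiber-preserving homotopy equivalence of $\delta$-fibrations over a compact ANR is an $\eps$-homotopy equivalence for $\delta$ small. All the approximate-control bookkeeping you are attempting by hand is thereby delegated to that citation. A self-contained version of your approach would essentially have to reproduce Chapman's proof, including the mechanism for closing the loop at $\id_Y$.
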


\begin{proof}
  Factor $q$ into a homotopy equivalence $\lambda\colon Y\to\cale$ followed by a 
fibration
  $r\colon \cale\to B$. By the fibration property of $r$, the composite 
$\lambda\circ f$
  is homotopic to a homotopy equivalence $f'$ such that $r\circ f'=p$, i.e., we 
get a
  commutative diagram
\[
\xymatrix{
X \ar[rr]^{f'}_\simeq \ar[rrd]_p && \cale \ar[d]^r && Y \ar[lld]^q 
\ar[ll]_\lambda^\simeq\\
&& B
}
\]
By~\cite[Proposition 2.3]{Chapman(1980)}, both $f'$ and $\lambda$ will be 
$\eps/4$-homotopy equivalences, provided
$p$ and $q$ are $\delta$-fibrations for a suitably chosen $\delta>0$. Let
$\lambda\inv$ denote a homotopy inverse which is also an $\eps/4$-homotopy 
equivalence. Then
$f$ is homotopic to the $\eps$-homotopy equivalence $\lambda\inv\circ f'$.
\end{proof}

For later use we also record the following opposite result:

\begin{lemma}\label{lem:condition_for_epsilon_fibration}
  Let $Y$ be an ANR, let $B$ be a compact  ANR with a 
metric and let $\eps>0$. There is a $\delta>0$ such that the following holds: 
If 
  in~\eqref{diag:epsilon_homotopy_equivalence}, $p$ is a $\delta$-fibration and 
$f$ is
  a $\delta$-domination, then $q$ is an
  $\eps$-fibration.
\end{lemma}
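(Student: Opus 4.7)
The plan is to transfer the lifting problem from $q$ to $p$ via the $\delta$-domination, solve the modified problem in $X$ using the $\delta$-fibration property of $p$, and then push the solution back to $Y$ while correcting the initial value using the $\delta$-homotopy $K\colon fg\simeq \id_Y$.

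Suppose we are given a lifting problem for $q$: a map $\tilde h_0\colon Z\to Y$ and a homotopy $H\colon Z\times[0,1]\to B$ with $q\circ \tilde h_0=H(-,0)$. First I form $\tilde h_0':=g\circ \tilde h_0\colon Z\to X$. The $\delta$-control of $f$ (applied to $g\tilde h_0(z)\in X$) gives $d(pg\tilde h_0(z),qfg\tilde h_0(z))<\delta$, while the $\delta$-homotopy $K$ applied pointwise to $\tilde h_0(z)$ gives $d(qfg\tilde h_0(z),q\tilde h_0(z))<\delta$. Hence
\[
d(p\tilde h_0'(z),H(z,0))<2\delta.
\]
Since $B$ is a compact ANR it is uniformly locally contractible: for any $\eta>0$ there is $\delta_0>0$ such that $\delta_0$-close maps into $B$ are homotopic through a homotopy of pointwise diameter $<\eta$. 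Choosing $\delta$ so that $2\delta<\delta_0$ with $\eta$ a small fraction of $\eps$, I obtain a homotopy $L\colon Z\times[0,1]\to B$ from $p\tilde h_0'$ to $H(-,0)$ of pointwise diameter $<\eps/4$. I then form the concatenation $\bar H\colon Z\times[0,1]\to B$ given by $\bar H(z,s)=L(z,2s)$ on $[0,1/2]$ and $\bar H(z,s)=H(z,2s-1)$ on $[1/2,1]$, so that $\bar H(-,0)=p\tilde h_0'$.

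Next, $p$'s $\delta$-fibration property lifts $\bar H$ to $\tilde H'\colon Z\times[0,1]\to X$ with $\tilde H'(-,0)=\tilde h_0'$ and $d(p\tilde H'(z,s),\bar H(z,s))<\delta$ for all $(z,s)$. Pushing forward by $f$ produces $f\tilde H'\colon Z\times[0,1]\to Y$, whose starting value is $fg\tilde h_0$ rather than $\tilde h_0$. To correct this, I prepend the reversed $K$-homotopy applied to $\tilde h_0$, compressed into a short time interval $[0,\rho]$, and reparametrize $f\tilde H'$ into $[\rho,1]$, yielding a continuous $\tilde H\colon Z\times[0,1]\to Y$ with $\tilde H(-,0)=\tilde h_0$. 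The pointwise estimate $d(q\tilde H(z,t),H(z,t))<\eps$ then follows by summing the errors: $\delta$-control of $f$, the $\delta$-homotopy $K$, the diameter of $L$, and $p$'s $\delta$-fibration. Each contributes $O(\delta)$ or $O(\eps/4)$, and suitable choice of $\delta$ and $\eta$ makes the total less than $\eps$.

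The main obstacle is the time reparametrization in the construction of $\tilde H$: on $[\rho,1]$ the value $\tilde H(z,t)$ tracks $H(z,s)$ with $s\neq t$, so bounding $d(H(z,s),H(z,t))<\eps'$ for $|s-t|$ small requires uniform continuity of $H$ in the time variable. This is handled by restricting to compact test spaces $Z$, where $H$ is uniformly continuous — a standard and harmless reduction for approximate fibration conditions. An alternative route that avoids the issue altogether is to subdivide $[0,1]$ into many short subintervals and lift $H$ one short segment at a time through $X$ via $p$, transferring to $Y$ via $f$ and correcting with $K$ at each step; the cumulative error scales linearly with the number of subintervals, so balancing against $\delta$ again produces the desired $\eps$-fibration property of $q$.
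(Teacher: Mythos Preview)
Your overall strategy---transfer to $p$ via $g$, solve there, push back via $f$, correct the initial value with $K$---is the same as the paper's. But there is a genuine gap in your error analysis.

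When you form $\bar H$ you place $L$ on $[0,\tfrac12]$ and the original $H$ (at double speed) on $[\tfrac12,1]$. After lifting, pushing forward by $f$, and then reparametrizing $f\tilde H'$ into $[\rho,1]$, the value at time $t$ tracks $\bar H$ at parameter $s=(t-\rho)/(1-\rho)$, which in turn tracks $H$ at parameter $2s-1$ once $s\ge\tfrac12$. So the time discrepancy is $|2s-1-t|$, and at $t\approx\tfrac12$ this is about $\tfrac12$, not $O(\rho)$. On the $L$-half the situation is no better: $q\tilde H(z,t)$ stays near $H(z,0)$ while $t$ ranges over an interval of length about $\tfrac12$. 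Your subsequent appeal to ``$|s-t|$ small'' is therefore false as written; uniform continuity of $H$ buys nothing across a gap of size $\tfrac12$. The alternative subdivision scheme has a related problem: the number $N$ of subintervals would have to be chosen before $\delta$, but you give no $\eps$-dependent reason for any particular $N$, so ``balancing $N\delta$'' is not well posed.

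The paper sidesteps all of this by replacing your prepend-and-reparametrize step with the \emph{estimated homotopy extension property} of Chapman~\cite[Proposition~2.1]{Chapman(1979)}: once you have a lift whose initial value is $\delta$-homotopic to the desired one, you deform the whole lift (keeping it $\delta$-close) so that the initial value becomes exactly right. This produces an honest $\eps$-lift with no time reparametrization and no dependence on the modulus of continuity of $H$. Concretely, the paper first shows that $p\circ g$ is a $3\delta$-fibration (using the estimated HEP once), and then that any map $\delta$-close to a $3\delta$-fibration is an $\eps$-fibration (using it a second time). If you want to salvage your route, either invoke the estimated HEP in place of your prepending step, or at least replace the fixed $\tfrac12$ in $\bar H$ by a small $\rho'$ chosen (together with $\rho$) after seeing $H$; but the HEP argument is both cleaner and what the paper does.
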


\begin{proof}
    Let $\delta>0$ and assume that there exists a $\delta$-section 
  $g\colon Y\to X$ of $f$, i.e., $g$ is $\delta$-controlled and that
  $f\circ g$ is $\delta$-homotopic to the identity map on $Y$. We are first
  going to show that $p\circ g$ is a $5\delta$-fibration. So suppose we are given a
  homotopy lifting problem
  \[
   \xymatrix{Z\times 0 \ar[rr]^{H_0} \ar@{^{(}->}[d] && Y \ar[d]^{p\circ g}\\
    Z\times I \ar[rr]^h && B }
   \]
  for $p\circ g$. By~\cite[Theorem~12.13]{Hughes-Taylor-Williams(1990)}, we can assume that $Z$ is a cell. Postcomposing with the map $g$ yields a homotopy lifting problem for $p$
  which can be solved up to $\delta$ by a map $L\colon Z\times I\to X$. Then
  $f\circ L\vert_{Z\times 0}$ is $\delta$-homotopic to $H_0$, so using the
  estimated homotopy extension property~\cite[ Proposition~2.1]{Chapman(1979)}, 
  we can replace
  $f\circ L$ by a $\delta$-homotopic
  map $H$ such that $H\vert_{Z\times 0}=H_0$. By the triangle inequality, $H$ is then a $3\delta$-lift of $h$.

  Now, since $p\circ g$ is $\delta$-close to $q$ and $p\circ g$ is a
  $3\delta$-fibration, it follows that $q$ is an $\eps$-fibration if $\delta$
  was chosen small enough. This can be seen as follows:

  Choose $\delta>0$ small enough so any two $\delta$-close maps to $B$ are
  $\eps/4$-homotopic and $3\delta<\eps/4$ holds, 
  see~\cite[Theorem~1.1 in Chapter~IV on page~111]{Hu(1965retracts)}. In 
particular $p\circ g$ and $q$ are
  $\eps/4$-homotopic. This homotopy may be used to obtain from the homotopy 
lifting
  problem displayed on the left a homotopy lifting problem as displayed on the 
right hand
  side:
\[
\xymatrix{Z\times \{0\}\ar[d] \ar[r]^{H_0} & Y \ar[d]^q && Z\times \{-1\} \ar[d] 
\ar[r]^{H_0} & Y \ar[d]^{p\circ g}\\
 Z\times [0,1] \ar[r]^h & B   && Z\times [-1,1] \ar[r]^h \ar@{.>}[ru]^L & B  
}
\]
As $p\circ g$ is a $3\delta$-fibration, we may solve the problem on the right 
hand side up
to $3\delta<\eps/4$ by a map $L$. Now by the triangle inequality the restriction 
of $L$ to $Z\times [0,1]$ is a $3\eps/4$-homotopy from the restriction of $L$ to 
$Z\times \{0\}$ to $H_0$.  The
estimated homotopy extension property (control with respect to the map $p\circ 
g$) yields a map
\[
\widehat{H}\colon Z\times [-1,1]\to Y
\]
which is $3\eps/3$-homotopic (control with respect to the map $p\circ g$) to $L$ 
and which is $H_0$
when restricted to $Z\times \{0\}$. It is easily seen that $H:=\widehat 
H\vert_{Z\times
  [0,1]}$ is an $\eps$-lift of $h$.
\end{proof}

\begin{proof}[Proof of Lemma~\ref{lem:well_definition_of_tight_torsion}]
  Let us first consider the special case where $E_1$ and $E_2$ happen to be 
compact CW
  complexes. Let $g:= f_1\circ f_2\inv$.  We have a homotopy commutative 
triangle
  \[
  \xymatrix{E_2 \ar[rr]^g_\simeq \ar[rd]_{q_2} && E_1 \ar[ld]^{q_1}\\
    & B }
   \]
  By Lemma~\ref{lem:condition_for_epsilon_h_eq}, choosing $\eps$ small enough, 
we may
  assume that $g$ is a $\delta$-homotopy equivalence, for some given 
$\delta>0$. 
  
  Factor $q_1=p'\circ \lambda$ into a homotopy equivalence $\lambda\colon E_1\to 
E'$ followed by a fibration $p'\colon E'\to B$. As $q_1$ is an approximate 
fibration $\lambda$ is a controlled homotopy equivalence, that is, an $\eps$-homotopy equivalence for every $\eps>0$.
  
  In this situation, by~\cite[1.4]{Quinn(1982a)} there is a controlled Whitehead torsion  in
  $\IH_1(\widetilde B;\bfWh(p'))$; it is mapped under the assembly map of Quinn
  \begin{eqnarray}
  \IH_1(B;\bfWh(p')) & \to & \Wh(\pi_1(E'))
  \label{Quinn_assembly_map_in_degree_1}
   \end{eqnarray}
  to the image of the Whitehead torsion of $g$ under the map induced by 
$\lambda$ (see~\cite[paragraph before~1.7]{Quinn(1982a)}). We get from
  the composition rule the following equation in $\Wh(\pi_1(E_1))$:
  \begin{eqnarray*}
  \tau(g) & = &  \tau(f_1) - (f_1)_*(f_2)_*\inv\tau(f_2).
   \end{eqnarray*}
  This implies that $\tau(f_1) - (f_1)_*(f_2)_*\inv\tau(f_2)$ lies in the image 
of the assembly 
   map~\eqref{Quinn_assembly_map_in_degree_1}. We obtain from
   Lemma~\ref{lem:weak_equi_f}\ref{lem:identifying_assembly_maps:map} and 
   Lemma~\ref{lem:identifying_assembly_maps}~\ref{lem:identifying_assembly_maps:iso}
    the following commutative diagram
   \[
    \xymatrix{\IH_1(B;\bfWh(p')) \ar[r] \ar[d]_{\mu_1}^\cong  & \Wh(\pi_1(E'))
    \\
    H_1^{\pi}(\widetilde{B};\bfWh(p')) \ar[r]  & H_1^{\pi}(\pt;\Wh(p'))  
\ar[u]_{\nu_1}^{\cong}
    \\
    H_1^{\pi}(\widetilde{B};\bfWh(p)) \ar[r] \ar[u]^{(\lambda\circ 
f_1)_*}_{\cong}]  & H_1^{\pi}(\pt;\Wh(p)) = \Wh(\pi_1(M)) \ar[u]_{(\lambda\circ 
f_1)_*}^{\cong}
    }
    \]
    where the horizontal maps are assembly maps and the vertical maps are 
bijective.
    This implies that the image of the assembly 
    map~\eqref{Quinn_assembly_map_in_degree_1} agrees with the image of the 
assembly map
   $H_1^{\pi}(\widetilde{B};\bfWh(p))  \to H_1^{\pi}(\pt;\Wh(p)) = 
\Wh(\pi_1(M))$.  
   This finishes the proof of Lemma~\ref{lem:well_definition_of_tight_torsion}  
in the special case 
   that $E_1$ and $E_2$ are compact CW-complexes.

  In the general case, choose cell-like maps $h_i\colon E'_i\to E_i$ for
  $i=1,2$, such that $E'_i$ is a compact CW-complex. As any cell-like map
  between compact ENRs is an approximate fibration, the composition $q_i\circ
  h_i$ is a, say, $2\eps$-fibration. Thus, replacing $q_i$ by $q_i\circ h_i$ and
  $f_i$ by $h_i\inv \circ f_i$, and $\eps$ by $\eps/2$, the proof of the special
  case shows that
  \[
   (f_1)_*\inv (h_1)_*\tau(h_1\inv\circ f_1) - (f_2)_*\inv (h_2)_* 
\tau(h_2\inv\circ f_2) 
  \] 
  is in the image of the assembly map. Now use the composition rule together
  with the fact that cell-like maps have zero Whitehead torsion.
  This finishes the proof of Lemma~\ref{lem:well_definition_of_tight_torsion}.
\end{proof}

%%%%%%%%%%%%%%%%%%%%%%%%%%%%%%%%%%%%%%%%%%%%%%%%%%%%%%%%%%%%%%%%%%%%%%%%%%%%%%%

\subsection{Relating tight torsion to previously defined torsion invariants}
\label{subsec:Relating-tight-torsion_to_previously_defined_torsion_invariants}

In~\cite{Farrell-Lueck-Steimle(2010)}, the authors defined obstructions
$\Theta(p)$ and $\tau_{\fib}(p)$ to (actually) fibering a given map $p\colon
M\to B$ where the homotopy fiber $F_p$ of $p$ is homotopy finite. For simplicity 
let us assume that $M$, $B$, and $F_p$ are path-connected. The element
$\tau_{\fib}(p)$ is defined whenever $\Theta(p)=0$ and lives in the cokernel of
\[
 \Wh(\pi_1F_p)\xrightarrow{\chi(B)\cdot i_*} \Wh(\pi_1(M))
 \]
where $i \colon F_p \to M$ is the canonical map and $\chi(B)\in\IZ$ denotes the 
Euler
characteristic of $B$.

\begin{lemma} \label{lem:i_ast_factories}
There is a factorization
\[
i_* \colon 
\Wh(\pi_1(F_p))  \to H_1^{\pi}(\widetilde{B} ;\bfWh(p)) 
\to H_1^{\pi}(\pt;\bfWh(p)) = \Wh(\pi_1(M))
\]
of the map induced by $i$.
\end{lemma}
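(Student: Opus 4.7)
The strategy is to factor $i_*$ through the orbit map induced by a free orbit $\pi/1 \hookrightarrow \widetilde{B}$ on $\pi$-homology. By the identification~\eqref{H_pi_at_pi/H} one has
\[
H_1^{\pi}(\pi/1; \bfWh(p)) = \pi_1\bigl(\bfWh(X(\pi/1))\bigr) = \Wh\bigl(\pi_1(X(\pi/1))\bigr),
\]
where $X(\pi/1) = X \times_B \widetilde{B}$ is, under the standing $\pi_1$-surjectivity assumption, a connected covering of $M = X$ with fundamental group $\ker(\pi_1(p))$.

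First I would produce a map $j \colon F_p \to X(\pi/1)$ lifting $i$. Since $p \circ i$ is null-homotopic, the composition $F_p \xrightarrow{i} M \xrightarrow{p} B$ admits a lift along $q \colon \widetilde{B} \to B$; combining such a lift with $i$ via the universal property of the pullback yields a map $j$ whose postcomposition with the canonical projection $\mathrm{pr} \colon X(\pi/1) \to X$ is homotopic to $i$. Arranging, by adjusting the lift (resp.\ postcomposing with a $\pi$-translation), that $j(F_p)$ lies in the fiber over the chosen base point $\widetilde{b}_0 \in \widetilde{B}$, I would then use the $\pi$-equivariant orbit inclusion $\pi/1 \cong \pi \cdot \widetilde{b}_0 \hookrightarrow \widetilde{B}$ to define the first map of the factorization as the composite
\[
\Wh(\pi_1(F_p)) \xrightarrow{j_*} \Wh(\pi_1(X(\pi/1))) = H_1^{\pi}(\pi/1;\bfWh(p)) \to H_1^{\pi}(\widetilde{B};\bfWh(p)).
\]

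Finally I would verify that postcomposition with the assembly map yields $i_*$. The essentially unique $\pi$-map $\pi/1 \to \pi/\pi$ is sent by the functor $\underline{X}\colon \Or(\pi) \to \Spaces$ to the covering projection $\mathrm{pr}\colon X(\pi/1) \to X$. By naturality of the $\pi$-homology theory $H_*^{\pi}(-;\bfWh(p))$, the full composite with $H_1^{\pi}(\pr;\bfWh(p))$ is induced by the map of spectra $\bfWh(F_p) \to \bfWh(X(\pi/1)) \to \bfWh(X)$ coming from $\pr\circ j \simeq i$, and hence equals $i_*$ on $\pi_1$. The main point to verify is the identification of $X(\pi/1)$ as the covering of $M$ classified by $\ker(\pi_1 p)$ and the compatibility of $j$ with the chosen base point; note that in general $\pi_1(X(\pi/1)) = \ker(\pi_1 p)$ is only a quotient of $\pi_1(F_p)$ (unless $B$ is aspherical, in which case $j$ is a homotopy equivalence), but this causes no difficulty for constructing the factorization.
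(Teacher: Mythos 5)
Your proof is correct and follows essentially the same route as the paper's: lift $i$ (using a lift of the nullhomotopic map $p\circ i$ to $\widetilde B$ and the universal property of the pullback) to a map $F_p\to X(\pi/1)$, identify $\Wh(\pi_1(X(\pi/1)))$ with $H_1^\pi(\pi/1;\bfWh(p))$, and then use the essentially unique $\pi$-map $\pi/1\to\widetilde B$; the final compatibility with the assembly map is verified exactly as you do. The only cosmetic difference is that the paper does not bother to arrange $j(F_p)$ over the chosen basepoint, instead appealing directly to the fact that any two $\pi$-maps $\pi\to\widetilde B$ are $\pi$-homotopic since $\widetilde B$ is path-connected.
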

\begin{proof} 
In the sequel we will use the notation of Section~\ref{sec:Assembly_maps}.
The canonical map $i\colon F_p\to M=M(\pi/\pi)$ may be lifted to a map $i'\colon 
F_p\to M(\pi/1)$ (by choosing a lift of the homotopically constant map $F_p\to 
X$ to a map $F_p\to \widetilde X$). This lift induces a map on Whitehead groups
\[\Wh(\pi_1(F_p))\to \Wh(\pi_1(M(\pi/1)))\cong H_1^\pi(\pi;\bfWh(p)).\]
As $B$ is path-connected, there is up to homotopy a unique $\pi$-map $\pi\to 
\tilde B$. It induces a map
\[H_1^\pi(\pi;\bfWh(p))\to H_1^\pi(\widetilde B;\bfWh(p)).\]
The composite of these two maps is by definition the first map in the 
factorization of $i_*$. By construction, composing this map with the map induced 
by the projection $\widetilde B\to B$ yields the map $i_*$.
\end{proof}

Because of Lemma~\ref{lem:i_ast_factories} we obtain a well-defined projection
\begin{multline}
\pr \colon \cok\big( \chi(B)\cdot i_* \colon \Wh(\pi_1(F_p)) \to 
\Wh(\pi_1(M))\bigr)
\\
\to \cok\big(i_* \colon \Wh(\pi_1(F_p)) \to \Wh(\pi_1(M))\bigr)
\\
\to 
\cok\left(H_1^{\pi}(\widetilde{B} ;\bfWh(p))  
\to H_1^{\pi}(\pt;\bfWh(p))\right) :=\NWh(p).
\label{projection_cok(i_ast)_to_NWh(p)}
\end{multline}

\begin{proposition}
  Suppose that the homotopy fiber $F_p$ of $p$ is homotopy finite, and that
  $\Theta(f)=0$. Then $\tau_{\fib}(p)$ maps to $N\tau(p)$ under 
  the projection~\eqref{projection_cok(i_ast)_to_NWh(p)}.
\end{proposition}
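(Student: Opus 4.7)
The plan is to show that a single factorization diagram serves both to represent the fibering obstruction and to compute the tight torsion, so that the two classes have literally the same representative $f_*^{-1}\tau(f)$ in $\Wh(\pi_1(M))$ and differ only in which quotient one takes.

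First I would recall from~\cite{Farrell-Lueck-Steimle(2010)} that the vanishing of the primary obstruction $\Theta(p)$ means there is a factorization up to homotopy $p\simeq q\circ f$ in which $q\colon E\to B$ is a Hurewicz fibration whose total space is a finite $CW$-complex (in particular a compact ENR) with fiber of the homotopy type of $F_p$, and $f\colon M\to E$ is a homotopy equivalence. By construction the class $\tau_{\fib}(p)$ is then the image of $f_*^{-1}\tau(f)\in\Wh(\pi_1(M))$ in the cokernel of $\chi(B)\cdot i_*$. Next I would observe that every Hurewicz fibration is an $\varepsilon$-fibration for every $\varepsilon>0$, hence an approximate fibration in the sense of Definition~\ref{def:approximate_fibration}. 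Consequently the very same diagram is one of those admitted by Definition~\ref{def:tide_torsion}, and the tight torsion is given by the image of the very same element $f_*^{-1}\tau(f)$ in $\NWh(p)$.

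To conclude I would unwind the projection~\eqref{projection_cok(i_ast)_to_NWh(p)}: in view of Lemma~\ref{lem:i_ast_factories} it is induced by the identity on $\Wh(\pi_1(M))$ followed by passage to successive quotients, so it carries the class of $f_*^{-1}\tau(f)$ in $\cok(\chi(B)\cdot i_*)$ to its class in $\NWh(p)$, yielding $\pr(\tau_{\fib}(p))=N\tau(p)$. The step that really requires checking---and which I expect to be the main (minor) obstacle---is confirming that the representative of $\tau_{\fib}(p)$ in \cite{Farrell-Lueck-Steimle(2010)} is indeed literally $f_*^{-1}\tau(f)$ for the same $f$ appearing in a factorization of $p$ through a Hurewicz fibration with compact ENR total space, with matching conventions for the direction of $f$ and for the sign of the Whitehead torsion. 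Once this bookkeeping is in place, the statement is essentially tautological.
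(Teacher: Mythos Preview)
Your argument has a genuine gap at the first step. You assert that when $\Theta(p)=0$ one can factor $p\simeq q\circ f$ with $q\colon E\to B$ a Hurewicz fibration whose total space $E$ is itself a finite $CW$-complex (hence a compact ENR). This is not what the vanishing of $\Theta(p)$ gives you, and in general no such factorization exists: turning $p$ into a Hurewicz fibration (say by the path-space construction) yields a total space $E$ which is only \emph{homotopy equivalent} to a finite complex, not literally compact. Conversely, if you replace $E$ by a homotopy equivalent finite complex, the resulting map to $B$ will typically no longer be a Hurewicz fibration. The two desiderata --- strict homotopy lifting and compactness of the total space --- are in tension, and this is precisely why the notion of approximate fibration enters the picture (Theorem~\ref{thm:factorization_to_an_approximate_fibration} produces a compact ENR total space at the cost of weakening the lifting property).

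The paper's proof works differently. One first factors $p$ as $q\circ\lambda$ with $q\colon E\to B$ a genuine fibration; here $E$ is not compact, but the theory of~\cite{Farrell-Lueck-Steimle(2010)} equips it with a \emph{preferred simple structure}, i.e., a distinguished homotopy class of homotopy equivalences $\varphi\colon X\to E$ from compact ENRs $X$, and $\tau_{\fib}(p)$ is represented by the torsion of the composite $M\to X$. One then applies the construction of Theorem~\ref{thm:factorization_to_an_approximate_fibration} to $q$ itself, producing an approximate fibration $X\to B$ with $X$ a compact ENR together with a homotopy equivalence $\varphi\colon X\to E$. The nontrivial point --- established by tracing through the inductive construction in the proof of Theorem~\ref{thm:factorization_to_an_approximate_fibration}, with details in~\cite{Steimle(2011)} --- is that this particular $\varphi$ represents the preferred simple structure on $E$. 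Only then does one conclude that the single map $M\to X$ represents both invariants. Your proposal skips exactly this comparison, which is the substance of the argument.
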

\begin{proof}
  Factorize $p$ into a homotopy equivalence $\lambda$ followed by a fibration
  $q\colon E\to B$. Recall from~\cite{Farrell-Lueck-Steimle(2010)} that in our
  situation $E$ carries (after making several choices) a preferred simple
  structure, i.e., a preferred homotopy equivalence $\varphi\colon X\to E$ from
  some compact {ENR}. Moreover $\tau_{\fib}(p)$ is represented by the Whitehead
  torsion of the composite homotopy equivalence from $M$ to $X$.

  Now consider the factorization of $q$ into a homotopy equivalence $\varphi$ 
and an
  approximate fibration from 
Theorem~\ref{thm:factorization_to_an_approximate_fibration}. From the
  inductive construction in the proof of that Theorem, it follows 
  (see~\cite[Proposition~8.1]{Steimle(2011)} for more details) that $\varphi$ 
represents the simple structure on
  $E$. So both $N\tau(p)$ and $\tau_{\fib}(p)$ are represented by the Whitehead 
torsion of
  the same homotopy equivalence.
\end{proof}

%%%%%%%%%%%%%%%%%%%%%%%%%%%%%%%%%%%%%%%%%%%%%%%%%%%%%%%%%%%%%%%%%%%%%%%%%%%%%%%

\subsection{Proof of Theorem~\ref{the:interpretation_in_terms_of_Q_manifolds}}
\label{subsec:Proof_of_Theorem_ref(the:interpretation_in_terms_of_Q_manifolds)}

We conclude this section by giving the 

\begin{proof}[Proof of Theorem~\ref{the:interpretation_in_terms_of_Q_manifolds}]
  Choose a factorization up to homotopy
  \[
   \xymatrix{
    M \ar[rr]^f \ar[rd]_p && E \ar[ld]^q\\
    & B }
   \]
  into a homotopy equivalence and an approximate fibration, where
  $E$ is a compact {ENR}, see 
Theorem~\ref{thm:factorization_to_an_approximate_fibration}.
  Since any compact ENR receives a cell-like map (and
  hence an approximate fibration) from a compact manifold with 
  boundary~\cite[Corollary~11.2]{Lacher(1977)}, we may assume that $E$ is a 
compact manifold with
  boundary. By assumption,
  $\tau(f)$ is the image of some element $\tau'$ under the assembly 
map~\eqref{ass_Davis-Lueck_widetildeB}
  and hence by 
Lemma~\ref{lem:identifying_assembly_maps}~\ref{lem:identifying_assembly_maps:same_image}  
  under the assembly map~\eqref{Quinn_assembly}.
  By Quinn's Thin $h$-Cobordism Theorem~\cite[1.2]{Quinn(1982a)}, there is a
  controlled $h$-cobordism $W$ from $E$ to some other compact manifold $E'$,
  such that the controlled torsion of $(W, E)$ equals $-\tau'$. By 
  Lemma~\ref{lem:condition_for_epsilon_fibration}, the map $W\to B$ 
  is an approximate fibration, hence the composite $W\times Q\to
  W\to B$ is also an approximate fibration. As the resulting map $M\to W$ has
  Whitehead torsion zero, the map $M\times Q\to W\times Q$ is homotopic to a
  homeomorphism~\cite[Main Theorem]{Chapman(1974)} and hence $M\times Q$ 
approximately fibers  over $B$.
  This finishes the proof of 
Theorem~\ref{the:interpretation_in_terms_of_Q_manifolds}.
\end{proof}

%%%%%%%%%%%%%%%%%%%%%%%%%%%%%%%%%%%%%%%%%%%%%%%%%%%%%%%%%%%%%%%%%%%%%%%%%%%%%%%
%%%%%%%%%%%%%%%%%%%%%%%%%%%%%%%%%%%%%%%%%%%%%%%%%%%%%%%%%%%%%%%%%%%%%%%%%%%%%%%
%%%%%%%%%%%%%%%%%%%%%%%%%%%%%%%%%%%%%%%%%%%%%%%%%%%%%%%%%%%%%%%%%%%%%%%%%%%%%%%

\section{Proof of the Stabilization Theorem~\ref{the:theorem_is_stably_true}  
for finite homotopy fiber}
\label{sec:stably_true_finite_fiber}

Let $p \colon M \to B$ be a $\pi_1$-surjective
map of closed manifolds such that 
$B$ is PL and 
aspherical, and  we assume that the $L$-theoretic FJC  holds for $\pi_1(B)$, 
see Definition~\ref{def:L-theoretic_Farrell-Jones_Conjecture}. 
This section is entirely devoted to the proof of 
Theorem~\ref{the:theorem_is_stably_true}
under the stronger assumption that the homotopy fiber of $p$ is finite.

%%%%%%%%%%%%%%%%%%%%%%%%%%%%%%%%%%%%%%%%%%%%%%%%%%%%%%%%%%%%%%%%%%%%%%%%%%%%%%%

\subsection{$s$-split factorization}
\label{subsec:s-split_factorization}

\begin{notation}
  Let $\overline{M}$ denote the normal covering space of $M$ corresponding to
  $\ker(p_*\colon \pi_1(M) \to \pi_1(B))$ and $\widetilde{B}$ be the universal 
covering of
  $B$. Put $G=\pi_1(B)$ and  $E=\widetilde{B}\times_G \overline{M}$. Let 
$\widehat{p}\colon E\to B$ be the induced
  fiber bundle with fiber $\overline{M}$. Finally let $q\colon E\to M$ be the 
induced fiber
  bundle with fiber $\widetilde{B}$. Since $\widetilde{B}$ is contractible, $q$ 
is a homotopy
  equivalence.
  
  Finally, let $\calt$ be a combinatorial triangulation of $B$, determining a PL structure on $B$.
\end{notation}

Consider the following diagram:
\[ 
\xymatrix{
  & \overline{M}\ar[d]\\
  {\widetilde{B}}\ar[r] & E=\widetilde{B}\times_G \overline{M} 
\ar[r]^(.65)q_(.65)\simeq \ar[d]^{\widehat{p}} & M \ar[ld]^p\\
  & B }
\]
An easy exercise shows that the triangle in the diagram commutes after applying
$\pi_1$. It follows that it commutes up to homotopy since $B$ is aspherical.

\begin{definition}[$s$-split] \label{def:s-split}
  Let $s$ be a positive integer. 
The map
  $q$ is \emph{$s$-split} (with respect to $\mathcal T$) if there exists a 
homotopy
  inverse
  \[
   f\colon M\times T^s\to E\times T^s
  \] 
  of $q_s=q\times\id_{T^s}$ (called an
  $s$-splitting of $q$ relative to $\mathcal T$) and a collection of compact 
submanifolds
  $M_\sigma$, $\sigma\in\mathcal T$ of $M\times T^s$, so that for all simplices
  $\sigma\in\calt$ we have:
  \begin{enumerate}
  \item If $\tau$ is a proper face of $\sigma$, then $M_\tau\subset\partial 
M_\sigma$,
  \item $M_\sigma=f\inv (\widehat{p}\inv(\sigma)\times T^s)$, and
  \item $f$ restricts to a homotopy equivalence of pairs
    \[
    f_\sigma\colon (M_\sigma, \partial 
M_\sigma)\to(\widehat{p}\inv(\sigma)\times T^s, 
    \widehat{p} \inv(\partial \sigma)\times T^s).
    \]
  \end{enumerate}
\end{definition}

The following diagram illustrates the situation, the dotted map $g$ will appear 
later.
\begin{equation*}
  \xymatrix{E\times T^s \ar[r]^{q_s} \ar@/^3ex/@{.>}[r]^g \ar[d]^{\widehat{p}_s}
& 
M\times T^s \ar@/^/[l]^f  \ar[ld]^{p_s}\\
    B\times T^s
  }\end{equation*}

\begin{notation}[Suppressing orientation homomorphisms]
  \label{rem:Suppressing_orientation_homomorphisms}
  We will surpress the orien\-ta\-tion homomorphisms in the notation of the 
$L$-groups and for
  instance write just $L_n(\IZ[\pi_1(M)])$ instead of
  $L_n(\IZ[\pi_1(M)],w_1(M))$  for a connected closed manifold $M$.
\end{notation}

\begin{theorem}\label{the:existence_of_s_splitting}
  Suppose that the homotopy fiber $F_p$ is finitely dominated.  
Then there exists a positive integer $s$ such that $q\colon E\to M$ is 
$s$-split.
\end{theorem}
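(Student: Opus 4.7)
The strategy is to produce the desired $f$ in two stages: first use PL transversality to arrange that the preimages of $\widehat p\inv(\sigma)\times T^s$ are compact submanifolds, and then use surgery rel boundary to upgrade each restriction to a homotopy equivalence of pairs. The crucial point is that the global surgery obstruction can be identified with a class in an equivariant $L$-homology group to which the Farrell--Jones Conjecture applies, and that $T^s$-stabilization makes this class vanish for $s$ large enough.

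To begin, pick any homotopy inverse $f_0\colon M\to E$ of $q$ and set $f:=f_0\times\id_{T^s}$, which is a homotopy inverse of $q_s$. Since $\widehat p\colon E\to B$ is a (PL) fiber bundle, the collection $\{\widehat p\inv(\sigma)\times T^s\mid \sigma\in\calt\}$ is a PL stratification of $E\times T^s$. By standard PL transversality, after a homotopy we may assume $f$ is transverse to all strata, which produces the compact submanifolds $M_\sigma:=f\inv(\widehat p\inv(\sigma)\times T^s)$ with the required face-compatibility. Only the homotopy equivalence condition on the restrictions $f_\sigma$ remains.

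Induct on $\dim\sigma$. Assuming $f_\tau$ is a homotopy equivalence of pairs for all proper faces $\tau$ of $\sigma$, the restriction of $f_\sigma$ to $\partial M_\sigma$ is already a homotopy equivalence. Since $\sigma$ is contractible and $\widehat p$ is a bundle, $\widehat p\inv(\sigma)\times T^s\simeq \overline{M}\times\sigma\times T^s$; the Wall obstruction to surgering $f_\sigma$ to a homotopy equivalence rel boundary thus lies in an $L$-group of $\pi_1(\overline M)\times\IZ^s$ with $w_1$-twist, with $\langle-\infty\rangle$-decoration in order to accommodate the fact that the fiber is only finitely dominated. As $\sigma$ ranges over $\calt$ these local obstructions assemble, in the usual Ranicki--Quinn fashion, into a single global splitting obstruction
\[
\Theta(f,s)\;\in\; H^G_{n+s}\bigl(\widetilde{B}\times E\IZ^s;\bfL^{\langle-\infty\rangle}_{\cala}\bigr),
\]
where $G=\pi_1(B)$ and $\cala$ is the additive $G$-category built from $\IZ[\pi_1(\overline M)]$ with $G$-action by fiber transport. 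Vanishing of $\Theta(f,s)$ is equivalent to the existence (up to homotopy) of an $s$-splitting.

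To show that $\Theta(f,s)$ vanishes for $s$ sufficiently large, apply the $L$-theoretic Farrell--Jones Conjecture for $G$ with coefficients in $\cala$: the assembly map
\[
H^G_*(\EGF{G}{\VCyc};\bfL^{\langle-\infty\rangle}_{\cala})\xrightarrow{\cong} L_*^{\langle-\infty\rangle}\!\bigl(\intgf{G}{\cala}\bigr)
\]
is an isomorphism. Combining this with Shaneson/Bass--Heller--Swan splittings for each additional circle factor shows that crossing with an $S^1$ absorbs one layer of the virtually cyclic contributions, essentially shifting them into pieces governed by lower-dimensional skeleta, which vanish by the inductive hypothesis. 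After $s$ stabilizations, where $s$ depends only on the stratified geometry of $B$ and on the virtually cyclic subgroup structure of $G$ up to the required dimension, the image of the obstruction class is forced to be zero. Wall realization then modifies $f$ within its homotopy class to yield the desired $s$-splitting. The principal obstacle is the careful packaging of the local surgery obstructions as a single equivariant homology class whose image under the assembly map can be controlled, together with the torus-absorption bookkeeping; the other ingredients (PL transversality, Wall realization, factorization through an honest fiber bundle) are standard once this obstruction-theoretic picture is in place.
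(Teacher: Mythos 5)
Your high-level skeleton (transversality to the strata of $\widehat p$, a conglomerate surgery obstruction, FJC, torus stabilization) is the right one, but there are two genuine gaps in the argument as you have stated it. First, the vanishing of the assembled surgery obstruction is \emph{not} equivalent to the existence of an $s$-splitting, and ``Wall realization'' does not close the gap. Killing the conglomerate obstruction produces a conglomerate normal cobordism whose top end is a new closed manifold $M'$ together with a conglomerate structure $\varphi'\colon M'\to E\times T^m$; there is no reason for $M'$ to be $M\times T^m$, nor for the resulting cobordism $W$ from $M\times T^m$ to $M'$ to be a product. The paper's proof therefore has a second stage that your proposal is missing: the cobordism $W$ determines a class $\theta\in L^h_{n+m+1}(\pi_1(M)\times\IZ^m)$; surjectivity of the assembly map is used (after a further $T^t$-stabilization) to realize $-\theta$ by another conglomerate cobordism $W'$ ending in a conglomerate structure $\varphi''$; gluing $W\times T^t\cup W'$ and surgering yields an $h$-cobordism $C$ from $M\times T^{m+t}$ to the source of $\varphi''$; and finally one crosses with one more $S^1$ to turn $C$ into an $s$-cobordism, so that the $s$-cobordism theorem supplies the homeomorphism needed to make the domain of the splitting literally $M\times T^s$ with $s=m+t+1$. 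Without this stage you have produced a conglomerate structure, not an $s$-splitting of $q$.

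Second, the mechanism you invoke for why the obstruction dies after stabilization is not correct as stated. The assembly map being an isomorphism does not make the obstruction vanish; what FJC gives is \emph{injectivity} of the $\bfLinfty$-assembly map $\IH_n(B;\bfLinfty(\widehat p))\to L^{\langle-\infty\rangle}_n(\IZ[\pi_1 M])$. The conglomerate obstruction $\sigma(\calp)$ lives with the $h$-decoration, its image in $L^h_n(\IZ[\pi_1 M])$ is zero because $\varphi$ assembles to a homotopy equivalence, hence by injectivity its image in $\IH_n(B;\bfLinfty(\widehat p))=\colim_m \IH_n(B;\bfL^{\langle -m\rangle}(\widehat p))$ is zero, and therefore its image in $\IH_n(B;\bfL^{\langle -m\rangle}(\widehat p))$ is zero for some finite $m$. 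Crossing with $T^m$ (via the Shaneson/BHS splitting) converts that into the vanishing of the conglomerate $h$-obstruction for $\calp_m$. This is a statement about $K$-theoretic decorations $h\to p\to\langle-1\rangle\to\cdots$, not about absorbing ``virtually cyclic contributions'' or pushing them into ``lower-dimensional skeleta''; that picture would not give a proof, and the induction on skeleta you sketch cannot be carried out one simplex at a time because the local rel-boundary obstructions are coupled. (Minor further point: you should first reduce to the homotopy-finite case by one preliminary $S^1$-stabilization, as the paper does, so that $E$ is an $n$-dimensional Poincar\'e complex and the $h$-decorated conglomerate obstruction is defined.)
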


\begin{remark}\label{rem:subdividing_leaves_s_fixed}
By using Wall's 2-sided separating codimension 1 splitting theorem~\cite[Theorem 12.1]{Wall(1999)}, 
we see that the triangulation $\calt$ has a subdivision $\calt'$ of arbitrarily small mesh 
such that $q$ is also $s$-split (for the same integer $s$) with respect to $\calt'$.
\end{remark}

\begin{proof}
  We will proceed by an argument similar to one originated by Quinn in his
  thesis~\cite{Quinn(1969)}, cf.~\cite{Quinn(1970)}. Let $n=\dim M$ and 
$\varphi\colon
  M\to E$ be a homotopy inverse to $q$. We may assume, after a homotopy, that 
$\varphi$ is
  transverse to each submanifold $\widehat{p}\inv(\sigma)$ of $E$, where 
$\sigma$ denotes
  a simplex of $\calt$; i.e., $\widehat{p}\circ\varphi$ is transverse to each
  $\sigma\subset B$~\cite{Freedman-Quinn(1990), Kirby-Siebenmann(1977)}.  Put 
$M_\sigma:=(\widehat{p}\circ
  \varphi)\inv(\sigma)$ and let
  \[
  \varphi_\sigma\colon M_\sigma\to \widehat{p}\inv(\sigma)
  \]
  be the restriction of $\varphi$.

  We first complete the proof of Theorem~\ref{the:theorem_is_stably_true} under 
the extra
  assumption that the homotopy fiber $F_b$ is a finite complex; we will show 
afterwards
  how the general case follows easily from the restricted case.

  We conclude from~\cite[Lemma 11.3]{Lueck-Steimle(2016splitasmb)}, whose proof 
carries over to the $L$-theory case word by word,
  that the assembly map
  \[
  H_*^\pi(\underline{\underline{E}}\pi;\bfLinfty(\widehat{p}))\to 
\Linfty_{*}(\IZ[\pi_1(M)])
  \] 
  is an isomorphism for all $* \in \IZ$. Since every virtually cyclic subgroup of the 
torsionfree group $\pi$ is infinite cyclic,
  the relative assembly map
  \[
  H_*^\pi(E\pi ;\bfLinfty(\widehat{p}))    \xrightarrow{\cong} 
H_*^\pi(\underline{\underline{E}}\pi;\bfLinfty(\widehat{p}))
  \] 
  is bijective for all $* \in \IZ$ by~\cite[Lemma~4.2]{Lueck(2005heis)}.
 This together with the assumption that $B$ is aspherical implies
  that the assembly map
   \[
   H_*^\pi(\widetilde{B};\bfLinfty(\widehat{p}))\to \Linfty_{*}(\IZ[\pi_1(M)]) 
   \]
    is bijective for all $* \in \IZ$. Since this
   assembly map is isomorphic to Quinn's assembly map  by
  Lemma~\ref{lem:identifying_assembly_maps}~\ref{lem:identifying_assembly_maps:iso},
  we can assume in the sequel that  Quinn's assembly map 
  \[
   \IH_*(B;\bfLinfty(\widehat{p}))\to  \Linfty_{*}(\IZ[\pi_1(M)])
  \]
  is bijective for all $* \in \IZ$.

Although the manifold dimension of $E$ is greater than $n$, our assumption 
implies that
it is a $n$-dimensional Poincar\'e complex. In this situation,
\[
\calp=\{\varphi_\sigma\colon M_\sigma \to \widehat{p}\inv(\sigma); 
\sigma\in\calt\}
\]
is a conglomerate surgery problem to which we can assign a conglomerate surgery 
obstruction
$\sigma(\calp) \in \IH_n(B; \bfL^h(\widehat{p}))$.

Since $\calp$ assembles to $\varphi$, which is a homotopy equivalence, the image 
of $\sigma(\calp)$
under the assembly map
\[
\IH_n(B;\bfL^h(\widehat{p}))\to  L^h_n(\pi_1(M))
\]
is zero. 

There is a sequence of homomorphisms
\[
\IH_n(B;\bfL^h(\widehat{p}))  = \IH_n(B;\bfL^{\langle 0 \rangle}(\widehat{p})) 
\to \IH_n(B;\bfL^{\langle -1 \rangle}(\widehat{p}))  \to 
\IH_n(B;\bfL^{\langle -2\rangle}(\widehat{p})) \to \cdots
\]
whose colimit is $\IH_n(B;\bfL^{\langle -\infty \rangle}(\widehat{p}))$. The 
assembly maps fit into a commutative diagram
\[
\xymatrix{\IH_n(B;\bfL^h(\widehat{p})) \ar[r] \ar[d]  
&
L^h_n(\pi_1(M)) \ar[d]
\\
\IH_n(B;\bfL^{\langle -i \rangle}(\widehat{p})) \ar[r] \ar[d]  
&
L^{\langle -i \rangle}_n(\pi_1(M)) \ar[d]
\\
\IH_n(B;\bfL^{\langle -\infty \rangle}(\widehat{p})) \ar[r] 
&
L^{\langle -\infty \rangle}_n(\pi_1(M))
}
\]
Since the bottom horizontal arrow is injective, there exists $m \ge 0$ such that
the image of $\sigma(\calp)$ under the map
\[
\IH_n(B;\bfL^h(\widehat{p}))  \to \IH_n(B;\bfL^{\langle -m 
\rangle}(\widehat{p}))
\]
is trivial. Crossing with $T^m$ yields a map
\[
\IH_n(B;\bfL^{\langle -m \rangle}(\widehat{p})) \to 
\IH_{n+m}(B;\bfL^{h}(\widehat{p}_m)).
\]
Consider the new conglomerate surgery problem
\[
\calp_m=\{\varphi_\sigma\times\id_{T^m}\colon M_\sigma\times T^m \to \widehat
p\inv(\sigma)\times T^m; \sigma\in\calt\}
\] 
Its  conglomerate surgery obstruction $\sigma(\calp_m) \in  
\IH_{n+m}(B;\bfL^{h}(\widehat{p}_m))$
is the image of $\sigma(\calp)$ under the composite
\[
\IH_n(B;\bfL^h(\widehat{p}))  \to \IH_n(B;\bfL^{\langle -m 
\rangle}(\widehat{p}))
\to \IH_{n+m}(B;\bfL^{h}(\widehat{p}_m))
\]
and hence trivial. This implies  that there exists a conglomerate surgery 
problem
\[
\IP=\{\psi_\sigma \colon W_\sigma\to\widehat{p}\inv(\sigma)\times T^m\times 
[0,1];
\sigma\in \calt\}
\]
such that $\partial^- W_\sigma= M_\sigma\times T^m$ and
\[
\psi_\sigma\vert_{\partial^- W_\sigma}\colon M_\sigma\times T^m\to \hat
p\inv(\sigma)\times T^m\times 0\] is $\varphi_\sigma\times\id_{T^m}$. 

Furthermore, if we put
\[
\cals=\{\varphi'_\sigma:=\psi_\sigma\vert_{\partial^+ W_\sigma}\colon \partial^+
W_\sigma \to \widehat{p}\inv(\sigma)\times T^m\times 1; \sigma \in \calt\}
\] 
then each $\varphi'_\sigma$ is a homotopy equivalence; i.e., $\cals$ is a 
conglomerate
homotopy-topological structure, and we denote the homotopy-topological structure 
on
$E\times T^m$ that $\cals$ assembles to by
\[
\varphi'\colon (M')^{n+m}\to E\times T^m.
\]

By topologically assembling $\IP$, we obtain a (single) surgery problem
\[
\psi\colon W^{n+m+1}\to E\times T^m\times [0,1]
\]
satisfying
\begin{enumerate}
\item $\psi\vert_{\partial^- W}\colon M\times T^m\to E\times T^m\times 0$ is
  $\varphi\times\id_{T^m}$;
\item $\psi\vert_{\partial^+ W}\colon (M')^{n+m}\to E\times T^m$ is $\varphi'$.
\end{enumerate}
This surgery problem determines an element $\theta\in L^h_{n+m+1}(\pi_1(M)\times 
\IZ^m)$.

Since the assembly map 
\[
\IH_{n+m+1}(B;\Linfty(\widehat{p}_m))\to \Linfty_{n+m+1}(\pi_1(M) \times \IZ^m)
\]
is an epimorphism, we see by an argument similar to the one for $T^m$
that after taking the product with an additional torus $T^t$ of
sufficiently large dimension $t$, there is another conglomerate surgery problem
\[
\IP'=\{\eta_\sigma\colon W'_\sigma\to \widehat{p}\inv(\sigma)\times T^m\times 
T^t\times
[1,2];\sigma\in\calt\}
\] 
such that $\partial^- W'_\sigma = \partial^+ W_\sigma$ and
\[
\eta_\sigma\vert_{\partial^- W'_\sigma}\colon \partial^+ W_\sigma \to \widehat
p\inv(\sigma)\times T^m\times T^t \times 1
\] 
is $\varphi'_\sigma\times \id_{T^t}$. Furthermore
\[
\cals'=\{\varphi''_\sigma:=\eta_\sigma\vert_{\partial^+
  W'_\sigma}\colon \partial^+W'_\sigma
  \to \widehat{p} \inv(\sigma)\times T^m\times T^t\times 2; \sigma\in\calt\}
\] 
is a conglomerate homotopy-topological structure which assembles to a
homotopy-topological structure on $E\times T^m\times T^t$ denoted by
\[
\varphi''\colon (M'')^{n+m+t}\to E \times T^m\times T^t.
\]
Assembling $\IP'$ yields a surgery problem
\[
\eta\colon (W')^{n+m+t+1}\to E\times T^m\times T^t\times [1,2]
\]
which represents the image of $-\theta$ in 
$L^h_{n+m+t+1}(\pi_1(M)\times\IZ^m\times \IZ^t)$
(under the natural map in the Wall-Shaneson formula), and satisfies
\begin{enumerate}
\item $\eta\vert_{\partial^- W'}\colon (M')^{n+m}\times T^t\to E\times T^m\times 
T^t$ is
  $\varphi'\times\id_{T^t}$;
\item $\eta\vert_{\partial^+ W'}\colon (M'')^{n+m+t} \to E\times T^m\times T^t$ 
is
  $\varphi''$.
\end{enumerate}

Glueing together the two surgery problems $W\times T^t$ and $W'$ along 
$\partial^+ W\times T^t=\partial^- W'$ yields a surgery problem representing 0 
in 
$L^h_{n+m+t+1}(\pi_1(M)\times \IZ^{m+t})$. Hence $W\times T^t\cup W'$ can be 
surgered, 
without touching its boundary, so
as to yield an $h$-cobordism $C$ between $\partial^- C= M\times T^{m+t}$ and 
$\partial^+ C= \partial^+ W'=(M'')^{n+m+t}$. Since $C\times S^1$ is an 
$s$-cobordism, 
$M\times T^{m+t}\times S^1$ is homeomorphic to $(M'')^{n+m+t}\times S^1$ via a 
homeomorphism
$g\colon M\times T^{m+t+1}\to M''\times S^1$ such that the composition
\[
f\colon M\times T^{m+t+1}\xrightarrow{g} M''\times S^1 
\xrightarrow{\varphi''\times\id}
E\times T^{m+t+1}
\] 
is homotopic to $\varphi\times\id_{T^{m+t+1}}$. Thus $q$ is $s$-split
for $s=m+t+1$ by the map $f$.

This completes the proof of the restricted case of
Theorem~\ref{the:theorem_is_stably_true}, i.e., assuming that $F_b$ is a finite
complex. However in the general case we assume that $F_b$ is dominated by a 
finite
complex. But this at least implies that $F_b\times S^1$ has the homotopy type of 
a finite
complex and consequently the homotopy fiber of the composite map
\[
M\times S^1\to M\xrightarrow{p} B
\] 
is a finite complex. Therefore the restricted form
of Theorem~\ref{the:theorem_is_stably_true} applies, showing that 
$q\times\id_{S^1}$ is
$s$-split. Consequently, $q$ is $(s+1)$-split. This finishes the proof of
Theorem~\ref{the:existence_of_s_splitting}.
\end{proof}

\begin{remark}[Bounds for $s$]\label{rem:number_of_stabilizations_needed}
  An analysis of the proof of Theorem~\ref{the:existence_of_s_splitting} 
 shows that if $\dim(M)\geq 5$, then $s$ can be taken to be
  $2(k+1)+2$ if $k$ has the property that $H_*^\pi(\widetilde{B}; \bfL^{\langle 
-k\rangle}(p))
  \xrightarrow{\cong} H_*^\pi(\widetilde{B};\bfL^{\langle -\infty\rangle}(p))$ 
and
  $L_*^{\langle -k\rangle}(\pi_1(M))\xrightarrow{\cong} \Linfty_*(\pi_1(M))$ 
under the
  canonical maps. If the \emph{total space} of $p$ satisfies the $K$-theoretic 
FJC, then
  $k=1$ such that $s$ can be taken to be $6$ (and even $5$ in the homotopy finite case).
\end{remark}

%%%%%%%%%%%%%%%%%%%%%%%%%%%%%%%%%%%%%%%%%%%%%%%%%%%%%%%%%%%%%%%%%%%%%%%%%%%%%%%

\subsection{Gaining control over the torus}
\label{subsec:Gaining_control_over_the_torus}

Now fix a triangulation $\calt$ and an $s$-splitting $f$ of $q$ relative to 
$\calt$. Using a cofibration argument, there
is a left homotopy inverse
\[
g\colon E\times T^s\to M\times T^s
\] 
to $f$ and a homotopy $g\circ f\simeq_H
\id_{M\times T^s}$, such that the restriction 
\[
g_\sigma \colon (\widehat{p}\inv(\sigma)\times
T^s, \widehat{p}\inv(\partial\sigma)\times T^s)\to (M_\sigma, \partial M_\sigma)
\] 
of $g$ is a left homotopy inverse to $f_\sigma$, via a homotopy $H_\sigma$ which 
is the restriction of
$H$.  If the mesh of
$\mathcal T$ is less than $\eps$, it follows the map $g$ is an $\eps$-domination 
over $B$
in the sense of Definition~\ref{def:controlled_maps},
where the control map from $M\times T^s$ to $B$ is the composite of $f$ with the
projection from $E\times T^s$ to $B$.

If we knew that $g$ was an $\eps$-domination over $B\times T^s$, then we could 
apply
Lemma~\ref{lem:condition_for_epsilon_fibration} to conclude that 
$\widehat{p}_s\circ f$ were a
$\delta$-fibration. But unfortunately there is no control over the torus yet.

\begin{lemma}\label{lem:bounded_over_torus}
  Suppose that the homotopy fiber $F_p$ is homotopy finite.
  There is a $4\eps$-domination $g$ over $B$ in such a way that $g$ is 
\emph{bounded} over
  the torus, i.e., for one (and hence any) lift $\widetilde{g}\colon E\times 
\IR^s\to M\times \IR^s$ of $g$ along 
   the universal coverings $\IR^s\to \IR^s/\IZ^s=T^s$ of the torus and for one 
(and hence all)
   lifts $\widetilde{f} \colon M \times \IR^s \to E \times \IR^s$ of $f$ 
  there exists $N>0$ such that
  \[
  d(\pi\circ\widetilde{f}\circ\widetilde{g}(e,x), x)<N\quad \forall (e,x)\in 
E\times \IR^s.
  \]
  where $\pi$ is the projection $E\times \IR^s\to \IR^s$.
\end{lemma}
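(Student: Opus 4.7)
Proof plan for Lemma~\ref{lem:bounded_over_torus}:

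The plan is to observe that the desired bound depends only on a $\IZ^s$-periodic quantity on $E\times T^s$, and then to obtain the bound by constructing $g$ simplex-by-simplex over $\calt$, using the homotopy-finiteness of $F_p$ to produce fiberwise bounds that assemble uniformly since $\calt$ is finite.

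First I would establish the required equivariance. Since $\widetilde B$ is contractible, $q\colon E\to M$ is a homotopy equivalence, so $f$ (being homotopy inverse to $q_s=q\times\id_{T^s}$) is homotopic to $q^{-1}\times\id_{T^s}$, and the left homotopy inverse $g$ is therefore homotopic to $q\times\id_{T^s}$. In particular, both induced maps on $\pi_1$ preserve the product decomposition and restrict to the identity on the $\IZ^s=\pi_1(T^s)$-factor. Any lifts $\widetilde f\colon M\times\IR^s\to E\times\IR^s$ and $\widetilde g\colon E\times\IR^s\to M\times\IR^s$ are then $\IZ^s$-equivariant, so $\widetilde f\circ\widetilde g$ is $\IZ^s$-equivariant and the displacement $\pi\circ\widetilde f\circ\widetilde g(e,x)-x$ is $\IZ^s$-periodic in $x$. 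It descends to a continuous function $\overline\Phi\colon E\times T^s\to\IR^s$, and the lemma is equivalent to boundedness of $\overline\Phi$. This boundedness is intrinsic in the sense of the lemma's parenthetical ``for one (and hence any)'', since different lifts of $f$, $g$ shift $\overline\Phi$ by a constant vector in $\IZ^s$. The subtle point is that continuity of $\overline\Phi$ alone is insufficient, because $E\times T^s$ is noncompact: the fiber of $\widehat p\colon E\to B$ is $\overline M$, which is noncompact even though homotopy equivalent to the finite complex $F_p$.

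To obtain a uniform bound, I would modify $g$ simplex-by-simplex over $\calt$, exploiting the finite homotopy type of $F_p$. Over each simplex $\sigma$, the restriction $f_\sigma\colon(M_\sigma,\partial M_\sigma)\to(\widehat p^{-1}(\sigma)\times T^s,\widehat p^{-1}(\partial\sigma)\times T^s)$ is a homotopy equivalence of pairs with compact source $M_\sigma$, while the target has the homotopy type of $F_p\times T^s$, a finite complex. Using this compact homotopy model, one can choose $g_\sigma$ and a homotopy $f_\sigma\circ g_\sigma\simeq\id$ whose tracks, lifted to $\widehat p^{-1}(\sigma)\times\IR^s$, project to paths in $\IR^s$ of uniformly bounded length $N_\sigma$. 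I would then assemble the $g_\sigma$ into a global $g$ via the estimated homotopy extension property, exactly as in the cofibration construction sketched at the start of the subsection; applying the triangle inequality while extending successively over the skeleta of $\calt$ loses a bounded factor in the $B$-control, which is the source of the $4\eps$ weakening stated in the lemma. Since $\calt$ has only finitely many simplices, $N:=\max_\sigma N_\sigma$ is finite; this bounds $\overline\Phi$ everywhere, completing the proof.

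The main obstacle is the per-simplex construction: producing $g_\sigma$ and a homotopy $f_\sigma g_\sigma\simeq\id$ whose tracks do not spiral around the $T^s$-factor as one escapes to infinity in the noncompact fiber $\overline M$. Without the homotopy-finiteness of $F_p$ there is no compact model to replace $\overline M$ by, and the tracks could be unbounded in $\IR^s$-direction, making no global bound possible. A secondary technical issue is the bookkeeping in the patching step: one must simultaneously preserve the weakened $B$-control (the factor $4\eps$) and the torus-boundedness inherited from each simplex, which is accomplished by a careful use of the estimated homotopy extension property as in Lemma~\ref{lem:condition_for_epsilon_fibration}.
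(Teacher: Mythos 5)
Your reduction to the boundedness of the $\IZ^s$-periodic displacement function $\overline\Phi$ on $E\times T^s$ is exactly the right observation, and it matches the role equivariance plays in the paper's argument. You also correctly identify the crux: $E\times T^s$ is noncompact (the fiber $\overline M$ of $\widehat p$ is), so continuity of $\overline\Phi$ is not enough, and homotopy finiteness of $F_p$ has to be brought in somewhere. However, the route you propose from that point is both substantially more complicated than the paper's and not actually justified at its key step.

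The paper does not reconstruct $g$ simplex-by-simplex. Instead it keeps the existing $g$ and precomposes: by Theorem~\ref{thm:factorization_to_an_approximate_fibration} and Lemma~\ref{lem:condition_for_epsilon_h_eq}, $\widehat p_s$ is $\eps$-homotopy equivalent to an approximate fibration with compact total space, so the identity on $E\times T^s$ is $\eps$-homotopic \emph{over $B\times T^s$} (crucially: control in the torus direction as well) to a map $z$ factoring through a compact set $C\subset E\times T^s$. Setting $g'=g\circ z$, the $B$-control degrades to $4\eps$ by the triangle inequality, and the torus boundedness is read off by lifting: $\widetilde\Phi'(e,x)$ splits as $\widetilde\Phi(\widetilde z(e,x))$ plus the (bounded, $<\eps$) torus displacement of $\widetilde z$, and the first term is controlled by evaluating the $\IZ^s$-invariant function $\widetilde\Phi$ on a compact set $\widetilde C$ covering $C$. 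This is a single global precomposition with no patching.

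The gap in your plan is the per-simplex step, which you flag as ``the main obstacle'' but treat as if it were routine. Over a simplex $\sigma$, the target $\widehat p^{-1}(\sigma)\times T^s$ is still noncompact, and the homotopy equivalence $\widehat p^{-1}(\sigma)\simeq F_p$ is a bare homotopy equivalence: it carries no control over the $T^s$-coordinate. There is no a priori reason that a homotopy $f_\sigma g_\sigma\simeq\id$ should have tracks of bounded length in the $\IR^s$-direction --- a homotopy compressing the noncompact fiber to a compact model can wind arbitrarily far around the torus as one escapes to infinity in $\overline M$. Ruling this out is precisely what requires the \emph{controlled} compact replacement supplied by Theorem~\ref{thm:factorization_to_an_approximate_fibration} (an approximate fibration over $B\times T^s$ with compact total space) and Lemma~\ref{lem:condition_for_epsilon_h_eq}. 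But once you invoke those, you have the global map $z$ and the simplex-by-simplex decomposition and the gluing via estimated homotopy extension become superfluous overhead (and a further source of control loss that you would have to account for). So the proposal is not wrong in spirit, but as written it asserts the hard part instead of proving it, and the machinery it proposes is strictly harder than the paper's one-line fix $g\mapsto g\circ z$.
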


\begin{proof}
  By Theorem~\ref{thm:factorization_to_an_approximate_fibration} and
  Lemma~\ref{lem:condition_for_epsilon_h_eq}, the map $\widehat{p}_s$ is 
$\eps$-homotopy
  equivalent to an approximate fibration $E'\to B\times T^s$ where $E'$ is 
compact. In
  particular the identity map on $E\times T^s$ is $\eps$-homotopic (over 
$B\times T^s$) to
  a map $z \colon E\times T^s \to E\times T^s$ which factors 
  into a compact subset $C\subset E\times T^s$. Then, by the
  triangle inequality, the map $g':=g\circ z$ is $3\eps$-controlled over $B$, 
and $g'\circ
  f$ is $4\eps$-homotopic to the identity map (over $B$). Thus, $g'$ is a
  $4\eps$-domination over $B$. Choosing a lift $\widetilde{z}$ of $z$ along the 
universal
  covering of the torus, we have
\[
  d(\pi\circ\widetilde{f}\circ\widetilde{g'}(e,x),x) \leq 
d(\pi\circ\widetilde{f}\circ\widetilde{g}
  \circ\widetilde{z}(e,x),\pi\circ \widetilde{z}(e,x)) + d(\pi\circ 
\widetilde{z}(e,x),x).
 \]
The second summand is less than $\eps$. Choose a compact set 
$\widetilde{C}\subset E\times \IR^s$ 
which surjects onto $C$. Then we have
\[
 \sup_{(e,x)\in E\times \IR^s} d(\pi\circ\widetilde{f}\circ\widetilde{g}
\circ\widetilde{z}(e,x),\pi\circ \widetilde{z}(e,x))\leq 
\sup_{\widetilde{c}\in \widetilde{C}} d(\pi\circ \widetilde{f}
\circ \widetilde{g}(\widetilde{c}), \pi(\widetilde{c}))<\infty.
 \]
\end{proof}

Note that as $M\times T^s$ is compact, the homotopy $H$ between $g\circ 
f$ and the identity
map is an $N$-homotopy for some $N>0$. (Here we measure the diameter of a path 
in $T^s$
also by first lifting it to the universal cover $\IR^s$.) We might call $g$ 
from 
Lemma~\ref{lem:bounded_over_torus} a ``bounded domination'' over the torus.

Lifting such a map $g$ to a map $g_k$ between coverings over the torus of index 
$k$ (i.e., to the coverings whose covering projections are determined by the expanding self-maps $x\mapsto x^k$, where $x\in T^s$), we
can improve the bound by a factor of $k$. Choosing $k$ large enough, it follows that $g_k$ 
is an
$5\eps$-domination over $B\times T^s$ (where $M\times T^s$ is controlled by 
$\widehat
p_s\circ f_k$ now). Since $E\times T^s$ is a fiber bundle over $B\times T^s$, we 
obtain
from Lemma~\ref{lem:condition_for_epsilon_fibration}:

\begin{corollary} \label{cor:getting_epsilon_fibration}
  Suppose that the homotopy fiber $F_p$ is homotopy finite.
  Then for all $\eps>0$ there is a $\delta>0$ and an $k>0$ such that the 
composite
  \[
  M\times T^s \xrightarrow{f_k} E\times T^s \xrightarrow{\widehat{p}_s} B\times 
T^s
 \]
  is an $\eps$-fibration provided that the mesh of $\calt$ is less than 
$\delta$.
\end{corollary}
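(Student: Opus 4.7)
The plan is essentially to combine Lemma~\ref{lem:bounded_over_torus} with Lemma~\ref{lem:condition_for_epsilon_fibration}, passing to a sufficiently large finite cover of $T^s$ to turn the ``bounded'' torus control provided by the former into the small metric control required by the latter.

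First I would fix $\eps>0$ and apply Lemma~\ref{lem:condition_for_epsilon_fibration} (with $Y=E\times T^s$ and base $B\times T^s$) to produce some $\eps_0>0$ such that, whenever $\widehat p_s$ is an $\eps_0$-fibration over $B\times T^s$ and $f$ is an $\eps_0$-domination over $B\times T^s$ with $f$-homotopy inverse controlled accordingly, the composite $\widehat p_s \circ f$ is an $\eps$-fibration. Since $\widehat p$ is an honest fiber bundle over $B$, it is also a fibration, and $\widehat p_s = \widehat p\times \id_{T^s}$ is a fibration; in particular it is an $\eps_0$-fibration for the metric on $B\times T^s$, for any $\eps_0>0$, as soon as we choose the metric appropriately (e.g.\ a product metric where the torus factor is rescaled).

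Next I would feed Lemma~\ref{lem:bounded_over_torus} with this $\eps_0$: choosing $\delta$ so that any triangulation $\calt$ of mesh less than $\delta$ yields $4\eps \le \eps_0/2$ in the $B$-direction (so $\eps$ in Lemma~\ref{lem:bounded_over_torus} is taken tiny), we obtain a homotopy inverse $g$ of $f$ together with a homotopy $g\circ f\simeq \id$ that is $\eps_0/2$-controlled in the $B$-direction and bounded, with some constant $N$, in the $T^s$-direction (after passing to universal covers of the torus). Here I use compactness of $M\times T^s$ to promote the homotopy $H\colon g\circ f\simeq \id$ to an $N$-homotopy in the torus direction as well.

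Finally I would pass to the $k$-fold covering $M\times T^s\to M\times T^s$ (and analogously on $E\times T^s$) determined by $x\mapsto x^k$ on each torus factor. Under this covering the lifted maps $f_k$ and $g_k$ have the same $B$-control as $f$ and $g$, but distances measured on the torus factor get divided by $k$: the displacement bound $N$ and the $N$-homotopy become an $N/k$-displacement and an $N/k$-homotopy. Choosing $k$ large enough that $N/k<\eps_0/2$, the triangle inequality shows that $g_k$ is an $\eps_0$-domination of $f_k$ over $B\times T^s$, so Lemma~\ref{lem:condition_for_epsilon_fibration} applies and $\widehat p_s\circ f_k$ is an $\eps$-fibration, which is the claim. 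The main conceptual step is the passage to the $k$-fold cover: it is the only way to upgrade ``bounded over the torus'' to ``arbitrarily small over the torus'', and that is where all the subtlety sits; everything else is an application of the two preceding lemmas and the triangle inequality.
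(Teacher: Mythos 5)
Your argument is correct and is essentially the paper's own proof: apply Lemma~\ref{lem:bounded_over_torus}, pass to the $k$-fold cover to shrink the torus control by a factor of $k$, then invoke Lemma~\ref{lem:condition_for_epsilon_fibration}. One small remark: the parenthetical about rescaling the metric is unnecessary — since $\widehat{p}_s$ is an honest Hurewicz fibration it is already a $\delta$-fibration for every $\delta>0$ with respect to any metric.
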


Now we can complete the proof of Theorem~\ref{the:theorem_is_stably_true} with 
the help of
Corollary~\ref{cor:getting_epsilon_fibration} and Remark~\ref{rem:subdividing_leaves_s_fixed}.
Since $M\times T^s$ is a closed manifold, an approximation theorem by
Chapman~\cite{Chapman(1981)} shows that $\widehat{p}_s\circ f_k$ is homotopic to 
an
approximate fibration, provided $\eps$ was chosen small enough. Now $f_k$ and 
$f$ induce
the same map on fundamental groups, namely $\pi_1(q)\inv\times\id_{\IZ^s}$. 
Since $B$ is
aspherical, $p_s\simeq\widehat{p}_s\circ f$ is homotopic to $\widehat{p}_s\circ 
f_k$. 
This finishes the proof of Theorem~\ref{the:theorem_is_stably_true}.

%%%%%%%%%%%%%%%%%%%%%%%%%%%%%%%%%%%%%%%%%%%%%%%%%%%%%%%%%%%%%%%%%%%%%%%%%%%%%%%
%%%%%%%%%%%%%%%%%%%%%%%%%%%%%%%%%%%%%%%%%%%%%%%%%%%%%%%%%%%%%%%%%%%%%%%%%%%%%%%
%%%%%%%%%%%%%%%%%%%%%%%%%%%%%%%%%%%%%%%%%%%%%%%%%%%%%%%%%%%%%%%%%%%%%%%%%%%%%%%

\section{Proof of the Splitting Theorem~\ref{the:stably_implies_unstably}}
\label{sec:Proof_of_the_Destabilization_Theorem_ref(the:stably_implies_unstably)}

The content of this section is the proof of 
Theorems~\ref{the:stably_implies_unstably} 
and~\ref{the:obstruction_to_destabilization}.

%%%%%%%%%%%%%%%%%%%%%%%%%%%%%%%%%%%%%%%%%%%%%%%%%%%%%%%%%%%%%%%%%%%%%%%%%%%%%%%

\subsection{Proof of 
Theorem~\ref{the:stably_implies_unstably}~\ref{the:stably_implies_unstably:fac_prop}}
\label{subsec:Proof_of_Theorem_ref(the:stably_implies_unstably)_(i)}

  Let $\rho$ denote the composition
  \[
  \rho \colon M\times\IR\to M\times S^1\xrightarrow{\overline{p_1}} B\times S^1 
\to B
  \] 
   where
  $\overline{p_1}$ is the approximate fibration homotopic to $p_1$ which is 
assumed to exist. 
 Then $\rho$ is also   an approximate fibration. We wish to apply Quinn's End 
Theorem 1.1 
  from~\cite{Quinn(1982a)} to complete $\rho$ at ``$+\infty$''. 
  The end is tame since $M\times\IR$ is an approximate fibration over
  $B\times\IR$. 
  For the same reason, the end has a locally constant fundamental groupoid, 
which is the fundamental groupoid of the homotopy fiber of $\rho$, and hence, of 
$p$.
  
  Denoting by $p'\colon E\to B$ a fibration controlled equivalent to $p$, the 
end obstruction to obtain a completion is therefore an element
  \[
   q_0(\rho)\in \IH_0(\widetilde B;\bfWh(p'))
  \]
  whose image under Quinn's assembly map
  \[
  \IH_0(\widetilde B;\bfWh(p')) \to  
\Wh_0(\pi_1(E))\cong\widetilde{K_0}(\IZ[\pi_1(M)])
  \]
  is Siebenmann's obstruction to adding a boundary (see~\cite{Siebenmann(1965)} and the paragraph preceding Proposition 1.7 
in~\cite{Quinn(1982a)}).

The
  latter obstruction is zero since $M\times\IR$ compactifies to $M\times
  [-\infty,+\infty]$.  Since we assume that the $K$-theoretic FJC holds for 
$\pi_1(B)$ and
  $B$ is aspherical, the assembly map $H_0^{\pi}(\widetilde B;\bfWh(p')) \to
  \Wh_0(\pi_1(M))$ is injective. Now we conclude from
  
Lemma~\ref{lem:identifying_assembly_maps}~\ref{lem:identifying_assembly_maps:iso} that
  Quinn's assembly map $\IH_0(\widetilde B;\bfWh(p')) \to \Wh_0(\pi_1(M))$
     is injective and hence
  $q_0(\rho) = 0$. Let
  \[
   \overline{\rho}\colon\calw\to B
  \]
  denote a completion of $\rho\colon M\times\IR\to B$ given by the End
  Theorem. It is also an approximate fibration. Now let $W$ denote the compact
  $h$-cobordism connecting $M$ to $\partial\calw$ inside $\calw$. (Identify $M$
  with $M\times 0$.)
  \begin{center}
\begin{tikzpicture}
[scale=1.3,
% Styles
line/.style={thin}, curved line/.style={decorate,
  decoration={snake,amplitude=.4mm}}, dashed line/.style={loosely dashed},
dotted line/.style={densely dotted}]

\draw [dotted line] (0,0) -- (1,0); \draw [dotted line] (0,1) -- (1,1); \draw
[line] (1,0) -- (6,0); \draw [line] (1,1) -- (6,1); \draw (.5,.5) node{$\calw$};
\draw (4,.5) node{$W$}; \draw (2,-.25) node{$M\!=\!M\!\times\!0$};

\draw [line] (2,0) -- (2.3,.5) -- (2,1); \draw [dotted line] (2,1) -- (1.7,.5)
-- (2,0);

\draw [line] (6,0) .. controls (6.3,0) and (6.3,1) .. (6,1); \draw [dotted line]
(6,0) .. controls (5.7,0) and (5.7,1) .. (6,1); \draw (6,-.25)
node{$\partial\calw$};

\end{tikzpicture}
\end{center}

So there is an $h$-cobordism from $M$ to some closed manifold 
$N:=\partial\calw$,
which is a compact ENR and maps to $B$ by the approximate fibration
\[
\overline{\rho}\vert_{\partial\calw}\colon\partial\calw\to B.
\]
 This finishes the proof of 
Theorem~\ref{the:stably_implies_unstably}~\ref{the:stably_implies_unstably:fac_prop}.

%%%%%%%%%%%%%%%%%%%%%%%%%%%%%%%%%%%%%%%%%%%%%%%%%%%%%%%%%%%%%%%%%%%%%%%%%%%%%%%

\subsection{Proof of 
Theorem~\ref{the:stably_implies_unstably}~\ref{the:stably_implies_unstably_approx}}
\label{subsec:Proof_of_Theorem_ref(the:stably_implies_unstably)_(ii)}

(ii) Denote by
\[
\bar{}\;\colon \Wh(G)\to\Wh(G)
\]
the involution coming from $w_1(M)$-twisted involution $a\mapsto
\overline{a}$ on $\IZ G$. It sends a matrix to its conjugate-transpose.

If we let $x=\tau(W,M)$, then
\[
\tau(f)=x - (-1)^n \overline{x} 
\]
where $f\colon M\to \partial\calw$ is the homotopy equivalence determined by $W$
and $n=\dim M$. Next note that the class $\tau(f)$ in the cokernel of the 
assembly map
\[
\alpha=H_1^\pi(\pr;\bfWh(p))\colon H_1^\pi(\widetilde B;\bfWh(p))\to 
\Wh(\pi_1(M))
\]
is $N\tau(p)$ and hence vanishes by assumption.

Hence the class of $x$ in $\NWh(p)$ defines an element in the Tate 
cohomology group $\widehat H^n(\IZ/2;\NWh(p))$. It follows that we can write
\[
x=z+(-1)^n\overline{z} +a
\]
where $a\in\im(\alpha)$ because of our
assumption on the vanishing of the Tate cohomology group. We can change
the embedding of $M$ into $M\times\IR$ so that 
\[
x=a\in\im(\alpha)
\]
by inserting an $h$-cobordism with torsion $-z-(-1)^n\overline{z}$ to the left 
of $M\times 0$.

\begin{center}
\begin{tikzpicture}
[scale=1.3,
% Styles
line/.style={thin}, curved line/.style={decorate,
  decoration={snake,amplitude=.4mm}}, dashed line/.style={loosely dashed},
dotted line/.style={densely dotted}]

\draw [line] (6,0) -- (9,0); \draw [line] (6,1) -- (9,1);

\draw (7.5,.5) node{old $W$};

\draw [line] (6,0) -- (6.3,.5) -- (6,1); \draw [dotted line] (6,1) -- (5.7,.5)
-- (6,0); \draw (6,-.25) node{$M\!=\!M\!\times\!0$};

\draw (6.4,.5) node{$\longrightarrow$}; \draw (6.45,.3) node{$x$};

\draw (5.6,.5) node{$\longleftarrow$}; \draw (5.55,.3) node{$-z$};

\draw [line] (9,0) .. controls (9.3,0) and (9.3,1) .. (9,1); \draw [dotted line]
(9,0) .. controls (8.7,0) and (8.7,1) .. (9,1); \draw (9,-.25)
node{$\partial\calw$};

\draw [line] (6,0) -- (4.5,-.25) -- (3,0); \draw [line] (6,1) -- (4.5,1.25) --
(3,1);

\draw [line] (4.5,-.25) .. controls (5,-.25) and (5,1.25) .. (4.5,1.25); \draw
[dotted line] (4.5,-.25) .. controls (4,-.25) and (4,1.25) .. (4.5,1.25);

\draw [line] (3,0) -- (3.3,.5) -- (3,1); \draw [dotted line] (3,1) -- (2.7,.5)
-- (3,0); \draw (3,-.25) node{$M$};

\draw (3.4,.5) node{$\longrightarrow$}; \draw (3.45,.3) node{$-z$};

\draw [snake=brace,segment amplitude=1ex] (3,1.4) -- (9,1.4); \draw (6,1.65)
node{new $W$};

\end{tikzpicture}
\end{center}

Lemma~\ref{lem:identifying_assembly_maps}~\ref{lem:identifying_assembly_maps:same_image} shows
  that Quinn's assembly map $\IH_1(\widetilde B;\bfWh(p)) \to \Wh(\pi_1(M))$ 
and 
  $\alpha=H_1^\pi(\pr;\bfWh(p))\colon H_1^\pi(\widetilde B;\bfWh(p))\to 
\Wh(\pi_1(M))$ have the same image.
  Hence $x$ lies in the image of Quinn' assembly map as well.

  We now use Quinn's $h$-cobordism Theorem 1.2 (b) together with his 
  End Theorem~1.1 (b) from~\cite{Quinn(1982a)} to change the completion of
  \[
\rho\colon M\times \IR\to B
 \]
 so that the torsion $\tau(W,M)$ changes by adding to it the element $-a$.
 Hence we may assume after doing this that $x=0$ so $W\cong M\times [0,1]$; in
 particular that $M\cong \partial\calw$.

This finishes the proof of Theorem~\ref{the:stably_implies_unstably}.

%%%%%%%%%%%%%%%%%%%%%%%%%%%%%%%%%%%%%%%%%%%%%%%%%%%%%%%%%%%%%%%%%%%%%%%%%%%%%%%
%%%%%%%%%%%%%%%%%%%%%%%%%%%%%%%%%%%%%%%%%%%%%%%%%%%%%%%%%%%%%%%%%%%%%%%%%%%%%%%
%%%%%%%%%%%%%%%%%%%%%%%%%%%%%%%%%%%%%%%%%%%%%%%%%%%%%%%%%%%%%%%%%%%%%%%%%%%%%%%

\section{Proof of Theorem~\ref{the:obstruction_to_destabilization} about the 
MAF-Rigidity Conjecture and the splitting obstruction}
\label{sec:Proof_of_Theorem_ref(the:obstruction_to_destabilization)}

This section is devoted to the proof of 
Theorem~\ref{the:obstruction_to_destabilization}.

%%%%%%%%%%%%%%%%%%%%%%%%%%%%%%%%%%%%%%%%%%%%%%%%%%%%%%%%%%%%%%%%%%%%%%%%%%%%%%%

\subsection{The MAF Rigidity Conjecture}
\label{subsec:The_MAF_Rigidity_Conjecture}

Let us first recall the statement of the ``MAF Rigidity Conjecture'' by
Hughes-Taylor-Williams~\cite[page~568]{Hughes-Taylor-Williams(1995)}:

\begin{conjecture}[MAF Rigidity Conjecture]\label{con:MAF_rigidity_conjecture}
  Let $B$ be a closed aspherical manifold. Then two MAFs
  $p\colon M\to B$ and $q\colon N\to B$ are controlled homeomorphic if and only
  if there is a homeomorphism $h\colon M\to N$ such that $q\circ h$ is homotopic
  to $p$.
\end{conjecture}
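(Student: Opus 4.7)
The plan is to treat the two directions of the biconditional separately. The ``only if'' direction---that a controlled homeomorphism $h\colon M\to N$ gives a homotopy $q\circ h\simeq p$---is essentially formal: the $\eps$-closeness of $q\circ h$ to $p$ for arbitrarily small $\eps$, combined with the fact that $B$ is an ANR, produces a short homotopy that can be made honest using that $p$ is an approximate fibration. So the content of the conjecture is the reverse direction.

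For the ``if'' direction, I would follow the controlled-surgery template that parallels the proof of the Borel Conjecture. First, I would set up a structure set $\cals(p)$ classifying pairs $(q\colon N\to B, h\colon M\xrightarrow{\cong} N)$ with $q\circ h$ homotopic to $p$ and $q$ a MAF, modulo controlled homeomorphism over $B$. The goal is to show $\cals(p)$ is a single point. The strategy is to relate $\cals(p)$ to a generalized homology group of $B$ with coefficients in an appropriate spectrum built from $L$-theory and Whitehead-type invariants of the homotopy fiber, using the Hughes-Taylor-Williams bundle-theoretic classification of MAFs.

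Next, the hope is to fit this into a surgery-type exact sequence
\[
\cdots\to \IH_{n+1}(\widetilde B; \bfL(p))\to \cals(p)\to [M, G/\topo]\to \IH_n(\widetilde B; \bfL(p))\to\cdots
\]
in which the assembly-map side is governed by the Farrell-Jones Conjecture for $\pi_1(B)$. Since $B$ is aspherical, $\widetilde B$ is a model for $E\pi$, so under FJC the assembly maps become isomorphisms and a diagram chase analogous to Borel rigidity forces $\cals(p)$ to be trivial. This would also have to be coupled with a corresponding statement in $K$-theory to handle Whitehead-torsion obstructions coming from $h$-cobordisms over $B$, using Quinn's Thin $h$-Cobordism Theorem to realize controlled torsions geometrically.

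The main obstacle is establishing this exact sequence rigorously: one has to upgrade the bare homotopy $q\circ h\simeq p$ into a \emph{controlled} homotopy over $B$, which requires breaking $B$ into small pieces and iteratively applying Chapman-type controlled approximation theorems together with Quinn's Thin $h$-Cobordism and End Theorems to convert homotopy-theoretic data into controlled geometric data. The passage from ``homotopic'' to ``$\eps$-close for every $\eps$'' is exactly where the controlled $K$- and $L$-theoretic obstructions live, and showing their vanishing under FJC is the heart of the matter. This is why the conjecture remains open in general and is known only under strong curvature or rigidity hypotheses on $B$, where additional geometric tools (such as the asymptotic transfer in negative curvature) are available.
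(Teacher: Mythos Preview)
This statement is a \emph{conjecture} in the paper, not a theorem: the paper does not prove it and does not attempt to. It is quoted from Hughes--Taylor--Williams, and the only result the paper records about it is the theorem (immediately following the conjecture) that it holds when $B$ is non-positively curved, again due to Hughes--Taylor--Williams. Everywhere else in the paper the conjecture is used as a \emph{hypothesis}, most notably in Theorem~\ref{the:obstruction_to_destabilization}.

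Your write-up is therefore not really a proof proposal but a strategy sketch, and you yourself concede as much in the final paragraph. That is the honest thing to do here. A couple of comments on the sketch itself:
\begin{itemize}
\item The ``only if'' direction is indeed straightforward and your outline is fine.
\item For the ``if'' direction, the surgery-exact-sequence heuristic you describe is in the right spirit, but be aware that the actual Hughes--Taylor--Williams proof in the non-positively curved case does not proceed by assembling a controlled surgery sequence and invoking FJC. Their argument uses the geometry of non-positive curvature directly (via Farrell--Jones foliated control techniques and the structure of approximate fibrations over such manifolds). So while your template is a plausible line of attack in general, it is not the route by which the known cases were established, and the step you flag---upgrading a bare homotopy to a controlled one---is precisely the unresolved point.
\end{itemize}
In short: there is nothing to compare against, because the paper offers no proof. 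Your sketch correctly identifies why the conjecture is hard and where the obstruction lies.
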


Here, $p$ and $q$ being controlled homeomorphic means that there is a locally
trivial fiber bundle $E\to [0,1]$ with $E\times 0=M$, $E\times 1=N$, and a
fiberwise map $H\colon E\to B\times [0,1]$ which is an approximate fibration,
such that $H_0=p$ and $H_1=q$. 
(Compare~\cite[Proposition~12.17]{Hughes-Taylor-Williams(1990)} 
and~\cite[page~567]{Hughes-Taylor-Williams(1995)}.)

The next result is taken from~\cite[Theorem~1.2]{Hughes-Taylor-Williams(1995)}.

\begin{theorem}
  The MAF Rigidity Conjecture holds when $B$ is non-positively curved.
\end{theorem}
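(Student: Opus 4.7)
The ``only if'' direction is essentially formal: a controlled homeomorphism $H \colon E \to B \times [0,1]$ restricts at the endpoints to $p$ and $q$, and trivializing the bundle $E \to [0,1]$ produces the desired homeomorphism $h \colon M \to N$ together with a homotopy $q \circ h \simeq p$ extracted from $H$.

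The substance lies in the ``if'' direction. Using $h$, identify $N$ with $M$ and reduce to the following: if $p, q \colon M \to B$ are homotopic MAFs, then there exists a map $\calh \colon M \times [0,1] \to B \times [0,1]$ over $[0,1]$ which is an approximate fibration, and whose restrictions at $0$ and $1$ are $p$ and $q$.

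My plan is to produce $\calh$ by geodesic straightening using the CAT(0) geometry of $\widetilde B$. Fix a homotopy $H \colon M \times [0,1] \to B$ from $p$ to $q$ and lift it to a $\pi_1(B)$-equivariant map $\widetilde H \colon \overline M \times [0,1] \to \widetilde B$, where $\overline M \to M$ is the $\pi_1(B)$-covering classified by $p$. For each $x \in \overline M$, replace the path $t \mapsto \widetilde H(x,t)$ by the unique CAT(0) geodesic from $\widetilde H(x,0)$ to $\widetilde H(x,1)$, affinely parametrized by $[0,1]$. The resulting map $\widetilde \calh$ remains $\pi_1(B)$-equivariant, as geodesics in $\widetilde B$ are canonical and preserved by isometries, and therefore descends to the desired $\calh \colon M \times [0,1] \to B$.

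The main obstacle is to verify that $\calh$, combined with the projection to $[0,1]$, yields an approximate fibration to $B \times [0,1]$. This reduces to showing that the approximate lifting properties of $p$ and $q$ propagate through the geodesic straightening, uniformly in $t$. The crucial geometric input is the convexity of the distance function along pairs of geodesics in a CAT(0) space: nearby geodesics remain uniformly close throughout their parametrization. One solves a given $\varepsilon$-lifting problem for $\calh$ by first solving the restricted problem for $p$ at $t = 0$ (using that $p$ is a MAF), then propagating the solution along the straight-line family with uniformly controlled error. Making this quantitative, with an explicit $\varepsilon$--$\delta$ matching that handles also the end $t=1$ (where one must match the solution to one coming from $q$), is the technical core; it parallels Chapman's controlled deformation machinery and constitutes the heart of Hughes--Taylor--Williams's Theorem~1.2.
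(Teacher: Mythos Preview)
The paper does not prove this theorem at all; it simply records it as \cite[Theorem~1.2]{Hughes-Taylor-Williams(1995)} and uses it as a black box. So there is no ``paper's own proof'' to compare against---both you and the authors ultimately defer to Hughes--Taylor--Williams.

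That said, your sketch goes further than the paper by proposing a mechanism, and it is worth commenting on. The geodesic-straightening idea is indeed the geometric heart of the Hughes--Taylor--Williams argument: non-positive curvature is used precisely because the exponential map and CAT(0) convexity allow one to canonically interpolate between two maps into $B$. However, your justification of the approximate-fibration property of the straightened family is where the real work hides, and your outline does not make clear why it should succeed. CAT(0) convexity controls distances between \emph{images} of geodesics, but the approximate lifting property is a statement about \emph{preimages}: one must show that for each intermediate $t$ the map $\calh_t$ is a MAF, uniformly in $t$. Your proposed scheme of solving at $t=0$ and ``propagating along the straight-line family'' is not an argument---there is no evident way to transport a lift for $p$ to a lift for $\calh_t$ using only convexity in the target. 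The actual Hughes--Taylor--Williams proof uses their controlled bundle theory (moduli spaces of MAFs and a controlled isotopy-covering theorem) together with the Farrell--Hsiang asymptotic transfer on non-positively curved manifolds; the straightening is one ingredient, but the verification that the resulting homotopy is a \emph{controlled} homeomorphism of MAFs requires considerably more than what you have written.

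In short: your conclusion---that the technical core ``constitutes the heart of Hughes--Taylor--Williams's Theorem~1.2''---is correct, but that sentence is doing essentially all the work, and the preceding sketch does not reduce the problem to something more elementary.
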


%%%%%%%%%%%%%%%%%%%%%%%%%%%%%%%%%%%%%%%%%%%%%%%%%%%%%%%%%%%%%%%%%%%%%%%%%%%%%%%

\subsection{The splitting obstruction}
\label{subsec:The_splitting_obstruction}

Suppose now that the MAF Rigidity Conjecture holds for $B$. Recall the
construction of an $h$-cobordism $W$ from $M$ to $\partial \calw$ in the proof
of Theorem~\ref{the:stably_implies_unstably}, (i). Recall also from the
beginning of the proof of part (ii) that if we let $x=\tau(W, M)$, we have
$[x]-(-1)^n[\overline{x}]=N\tau(p)=0\in \NWh(p)$.

\begin{definition}[Splitting obstruction] \label{def:splitting_obstruction}
  The \emph{splitting obstruction} 
  \[
  \kappa_0\in \widehat H^n(\IZ/2;\NWh(p))
  \] 
  is the class determined by $x=\tau(W,M)$.
\end{definition}

We have to show that the element $\kappa_0$ is well-defined, i.e., does not
depend on the choice of approximate fibration homotopic to $p_1$ nor on the
choice of completion. So suppose that $\overline{p_1}$ and $\overline{p_1}'$ are 
two
approximate fibrations homotopic to $p_1$. By passing to the infinite cyclic
cover, we obtain two maps $\rho, \rho'\colon M\times \IR \to B$; suppose that
$(\calw,\overline{\rho})$ and $(\calw',\overline{\rho}')$ are completions of 
$(M\times\IR,\rho)$ 
and $(M\times\IR,\rho')$ at $\infty$.

By the MAF Rigidity Conjecture, we obtain an approximate fibration $H\colon E\to
B\times S^1\times [0,1]$ interpolating between $\overline{p_1}$ and 
$\overline{p_1}'$. If
$\overline{E}$ denotes the infinite cyclic cover corresponding to the kernel of
$\pi_1(H)$, we obtain an approximate fibration $F\colon \overline{E}\to B$
interpolating between $\rho$ and $\rho'$. Applying Quinn's relative End Theorem
to $F$, there is a completion $(\mathcal V,\overline{F})$ of $F$ that extends 
the given
completions $(\calw,\overline{\rho})$ and $(\calw',\overline{\rho}')$, and the 
boundary component 
$\partial_0\mathcal V$ is a thin $h$-cobordism from $\partial\calw$ to $\partial 
\calw'$. (See the
left picture.)

\begin{tikzpicture}
[scale=1.3,
% Styles
line/.style={thin},
curved line/.style={decorate, decoration={snake,amplitude=.4mm}}]

\draw [line] (0,0) -- (3,0);
\draw (1.5, -.25) node{$\calw'$};
\draw [line] (0,0) -- (0,1);
\draw (-.2,.5) node[rotate=90]{$M\!\times \!I$};
\draw [curved line] (3,0) -- (3,1);
\draw (3.25,.5) node[rotate=90]{$\partial_0 \mathcal V$};
\draw [line] (0,1) -- (3,1);
\draw (1.5,1.25) node{$\calw$};
\draw (1.25,.5) node {$\mathcal V$};

\draw [line] (5,0) -- (8,0);
\draw (6.5, -.25) node{$\calw'$};
\draw [line] (5,0) -- (5,1);
\draw (4.8,.5) node[rotate=90]{$M\!\times \!I$};
\draw [line] (8,0) -- (8,1);
\draw (8.2,.5) node[rotate=90]{$\partial\calw'\!\times \!I$};
\draw [line] (5,1) -- (7,1);
\draw (6,1.25) node{$\calw$};
\draw [curved line] (7,1) -- (8,1);
\draw (7.5,1.25) node{$\partial_0\mathcal V$};
\draw (6.25,.5) node {$\mathcal V$};
\end{tikzpicture}

We have
\[
\tau(\calw', M)+\tau(\calv, \calw') =  \tau(\calw, M)+\tau(\calv, \calw),
\]
as both are the Whitehead torsions of the inclusion from $M$ to $\calv$.  So,
\[
x-x'=\tau(\calv, \calw') - \tau(\calv,\calw).
\] 
But the torsions of $(\calv,\calw')$
and $(\calv,\calw)$ are related by the involution: In fact, bending around the 
corners
transforms the left picture into the right picture, where we see that
\[
\tau(\calv,\calw')=(-1)^{(n+1)}\overline{\tau(\calv,\calw\cup\partial_0\calv)}.
\]

Now, since $\partial_0\calv$ is a thin $h$-cobordism, its Whitehead torsion
becomes zero in $\NWh(p)$. Hence
\[
[\tau(\calv,\calw')]=(-1)^{(n+1)}\overline{[\tau(\calv,\calw)]}\in\NWh(p).
\]
so that 
\[
[x]-[x']=0\in \widehat H^n(\IZ/2;\NWh(p)),
\]
establishing that $\kappa_0$ is well-defined.

%%%%%%%%%%%%%%%%%%%%%%%%%%%%%%%%%%%%%%%%%%%%%%%%%%%%%%%%%%%%%%%%%%%%%%%%%%%%%%%

\subsection{Finishing the proof of 
Theorem~\ref{the:obstruction_to_destabilization}}
\label{subsec:Finishing_the_proof_of_Theorem_ref(the:obstruction_to_destabilization)}

  Suppose first that $p$ is homotopic to an approximate fibration $q$. Then the
  map $q_1=q\times \id_{S^1}$ is an approximate fibration homotopic to $p_1$,
  and (again in the notation of the proof of 
  Theorem~\ref{the:stably_implies_unstably}) we may take $\rho$ to be the 
composite
  \[
  M\times \IR\to M\xrightarrow{q} B
  \] of the projection and $q$. This map can
  obviously be completed by $M\times (-\infty, \infty]$ so that $W$ is just
  $M\times [0,\infty]$ which is a trivial $h$-cobordism. So $\tau(W, M)=0$ and
  $\kappa_0$ vanishes.

  On the other hand, if $\kappa_0=0$, then we can write 
$x=z+(-1)\inv\overline{z}$
  modulo the image of the assembly map and the very same argument as in the 
proof of part (ii) of
  Theorem~\ref{the:stably_implies_unstably} shows that $p$ is homotopic to an
  approximate fibration. Note that for $n>0$, $N\tau(p_n)=0$ since it is given by
  \[\tau(\varphi\times \id_{S^1})=\tau(\varphi)\cdot \chi(S^1)=0\]
  by~\cite{Kwun-Szczarba(1965)}.

%%%%%%%%%%%%%%%%%%%%%%%%%%%%%%%%%%%%%%%%%%%%%%%%%%%%%%%%%%%%%%%%%%%%%%%%%%%%%%%
%%%%%%%%%%%%%%%%%%%%%%%%%%%%%%%%%%%%%%%%%%%%%%%%%%%%%%%%%%%%%%%%%%%%%%%%%%%%%%%
%%%%%%%%%%%%%%%%%%%%%%%%%%%%%%%%%%%%%%%%%%%%%%%%%%%%%%%%%%%%%%%%%%%%%%%%%%%%%%%

\section{Orientability of cyclic subgroups}\label{sec:orientability}

If a torsionfree group $G$ contains the fundamental group of the Klein bottle 
$K:=Z\rtimes Z$ as a
subgroup, then the cyclic subgroups are certainly not orientable in the 
sense of Definition~\ref{def:orientable_cyclic_subgroups}.  
We conclude from~\cite[Lemma~8.7 and Lemma~8.8]{Lueck-Steimle(2016splitasmb)} 
a converse in a special situation for $G$.

\begin{theorem}\label{the:fundamental_groups_of_npc_manifolds_are_orientable}
  Let $G$ be a torsionfree group satisfying one of the following conditions
  \begin{enumerate}

  \item $G$ is hyperbolic;

  \item $G$ is a  CAT(0)-group and  satisfies the Klein bottle condition, i.e., 
$G$ does not contain the fundamental group
  $K=\IZ\rtimes \IZ$ of the Klein bottle.

\end{enumerate}

  Then the cyclic subgroups of $G$ are orientable in the sense of  
Definition~\ref{def:orientable_cyclic_subgroups}.
\end{theorem}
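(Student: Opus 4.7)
The plan is to reduce the problem to making a $G$-equivariant choice of generator on each conjugacy class of maximal infinite cyclic subgroups of $G$. First I would establish that in both cases (i) and (ii), every nontrivial cyclic subgroup $C \subseteq G$ is contained in a \emph{unique} maximal infinite cyclic subgroup, which I will denote by $C^{\max}$. For hyperbolic groups this follows from the fact that the centralizer of any nontrivial element of a torsionfree hyperbolic group is itself infinite cyclic, so every nontrivial $g$ has a well-defined primitive root up to inversion. For the CAT(0) case, a short argument using the Klein bottle condition together with torsionfreeness shows that two distinct maximal infinite cyclic subgroups cannot share a nontrivial element without producing either torsion or a Klein bottle subgroup.

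Next, granting a $G$-equivariant choice of generator $g_{C'}$ on the set of maximal infinite cyclic subgroups, meaning $g_{h C' h^{-1}} = h g_{C'} h^{-1}$ for all $h \in G$ and every maximal $C'$, I would define $g_C$ for a general nontrivial cyclic subgroup $C$ to be the unique positive power of $g_{C^{\max}}$ that generates $C$. Compatibility with inclusions $C \subseteq C''$ is then automatic, since both $g_C$ and $g_{C''}$ are by construction positive powers of the common generator $g_{C^{\max}} = g_{(C'')^{\max}}$, and compatibility with conjugation follows directly from the equivariance of the choice on maximal subgroups together with the fact that conjugation carries $C^{\max}$ to $(hCh^{-1})^{\max}$.

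The hard part will be showing that such an equivariant choice of $g_{C'}$ exists. The only obstruction is whether the normalizer $N_G(C')$ of a maximal cyclic $C'$ can act on $C'$ by inversion; that is, whether there is some $h \in N_G(C') \setminus C'$ with $h g_{C'} h^{-1} = g_{C'}^{-1}$. If such an $h$ exists, then $h^2$ centralizes $g_{C'}$, so $\langle g_{C'}, h \rangle$ is virtually infinite cyclic and a short case analysis produces either a $\IZ \oplus \IZ$ subgroup (which is forbidden in a hyperbolic group) or a copy of the Klein bottle group $\IZ \rtimes \IZ$ (which is forbidden by the standing hypothesis in the CAT(0) case, and the torsionfreeness excludes the remaining degenerate case $h^2 = 1$). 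Once inversions have been ruled out, one picks a generator for a single maximal cyclic in each conjugacy class and propagates by conjugation to obtain the required equivariant assignment. This is exactly the content of Lemmas~8.7 and~8.8 of~\cite{Lueck-Steimle(2016splitasmb)} on which the theorem relies; the essential input is the no-inversion property, and everything else is bookkeeping.
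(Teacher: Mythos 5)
Your proposal is correct and takes essentially the same route as the paper: the paper offers no argument beyond citing Lemmas~8.7 and~8.8 of~\cite{Lueck-Steimle(2016splitasmb)}, and you identify exactly that citation as the content of the proof. Your supplementary outline — reduce to a $G$-equivariant choice of generator on maximal infinite cyclic subgroups, observe that the only obstruction is an element $h$ inverting a maximal cyclic, and note that such an $h$ (necessarily of infinite order with no power in $\langle g_{C'}\rangle$, by torsionfreeness) would generate with $g_{C'}$ a copy of $K=\IZ\rtimes\IZ$, forbidden in both cases — is a sound account of what those lemmas establish, so it adds useful detail without diverging from the paper's approach.
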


\begin{lemma} \label{lem:extensions_and_orientable_subgroups}
Let $1 \to K \xrightarrow{i} G \xrightarrow{p}  Q \to 1$ be an extension of 
torsionfree groups.
Suppose that for any $g \in G$ the conjugation automorphisms $K \to K$ is an 
inner automorphism
of $K$. If the cyclic subgroups of both $K$ and $Q$ are orientable in the sense 
of
Definition~\ref{def:orientable_cyclic_subgroups}, then the same is true for $G$.
\end{lemma}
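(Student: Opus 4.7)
The plan is to define, for each non-trivial cyclic subgroup $C$ of $G$, a generator $g_C$ by splitting into two cases, and then verify the orientability condition case-by-case. First I would observe that since $G$ is torsionfree, every non-trivial cyclic subgroup $C$ is infinite cyclic, so $C \cap K$ is either trivial or of finite index in $C$. In the latter situation $p(C) \cong C/(C \cap K)$ would be a finite cyclic subgroup of the torsionfree group $Q$, hence trivial, forcing $C \subseteq K$. This gives a clean dichotomy: either (A) $C \subseteq K$, or (B) $C \cap K = \{1\}$ and $p\vert_C$ is an isomorphism onto an infinite cyclic subgroup of $Q$. In Case (A) I set $g_C := g_C^K$, the generator provided by orientability of cyclic subgroups in $K$; in Case (B) I set $g_C$ to be the unique generator of $C$ with $p(g_C) = g_{p(C)}^Q$.

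For an inclusion $C \subseteq C'$ of nontrivial cyclic subgroups, both must lie in the same case: if $C \subseteq K$, then $C \subseteq C' \cap K$ is nontrivial, forcing $C' \subseteq K$ by the dichotomy; conversely $C' \subseteq K$ obviously implies $C \subseteq K$. If both are in Case (A), orientability in $K$ directly yields that $g_C$ is a positive power of $g_{C'}$. If both are in Case (B), applying $p$ yields an inclusion $p(C) \subseteq p(C')$ in $Q$, and orientability in $Q$ gives $p(g_C) = p(g_{C'})^n$ for some $n \geq 1$; since $p\vert_{C'}$ is injective and $g_C, g_{C'}^n$ both lie in $C'$, this forces $g_C = g_{C'}^n$.

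For a conjugation $c_g \colon C \to C' := gCg^{-1}$ with $g \in G$, the same dichotomy propagates from $C$ to $C'$ because $K$ is normal in $G$. In Case (B), the identity $p \circ c_g = c_{p(g)} \circ p$ reduces the claim to orientability in $Q$ applied to the conjugation $c_{p(g)} \colon p(C) \to p(C')$; the injectivity of $p\vert_{C'}$ then transfers the conclusion back to $G$. In Case (A), the standing hypothesis supplies some $k \in K$ with $c_g\vert_K = c_k\vert_K$, so $c_g(g_C) = c_k(g_C^K)$ is a positive power of $g_{C'}^K = g_{C'}$ by orientability in $K$.

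The main obstacle is precisely this last step: conjugation by an arbitrary element of $G$ need not be realized by an element of $K$, and the whole argument collapses without the hypothesis that every $G$-conjugation is inner on $K$. This hypothesis is exactly what allows Case (A) under conjugation to be reduced to the orientability assumption for $K$, and once this reduction is in place the remaining verifications are essentially bookkeeping in the two short exact sequence factors.
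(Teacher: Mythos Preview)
Your proof is correct and follows exactly the same approach as the paper: the same dichotomy (either $C\subseteq K$ or $p\vert_C$ is an isomorphism onto its image) and the same definition of $g_C$ in each case. The paper simply states the dichotomy and the definition and then writes ``We leave it to the reader to check that these choices are compatible with the requirements,'' whereas you have actually carried out that verification, including the observation that the hypothesis on $G$-conjugations being inner on $K$ is precisely what handles Case~(A) under conjugation.
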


\begin{proof} Fix orientations for the cyclic subgroups of $K$ and of $Q$.  Let 
$C \subset
  G$ be a cyclic subgroup of $G$. If $C= i(C')$ for $C'\subset K$, we let $g_C$ 
be the
  image of the chosen generator of $C'$. Otherwise $C\cap K=\emptyset$ so $p$ 
defines an
  isomorphism $C\to p(C)$, in which case we let $g_C$ be the preimage of the 
chosen
  generator of $p(C)$. We leave it to the reader to check that these choices are
  compatible with the requirements in 
Definition~\ref{def:orientable_cyclic_subgroups}.
\end{proof}

We close this section with a result  stated in 
Proposition~\ref{prop:klein_bottle_and_locally_symmetric_space}
and mentioned in the introduction. It is not needed elsewhere in the paper.
The key ingredient in its  proof is~\cite[Theorem 6.11]{Raghunathan(1972)}, 
which implies the following:

\begin{theorem}\label{the:theorem_6_11}
  Let $\Gamma \subset\GL(n,\IC)$ be a finitely generated subgroup. Then $\Gamma$
  contains a subgroup $\pi$ of finite index such that for each matrix $A\in
  \pi$ the eigenvalues of $A$ contain no non-trivial root of unity.
\end{theorem}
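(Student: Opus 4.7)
The plan is to reduce the problem to an arithmetic setting and then apply a congruence-subgroup argument. \emph{First,} since $\Gamma$ is finitely generated, pick generators and let $R \subset \IC$ be the $\IZ$-subalgebra generated by all the matrix entries of those generators together with the entries of their inverses. Then $R$ is a finitely generated $\IZ$-algebra (in particular a noetherian integral domain), and $\Gamma \subset \GL(n,R)$.

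\emph{Second,} I would bound the possible orders of root-of-unity eigenvalues that can occur for elements of $\Gamma$. Let $K = \operatorname{Frac}(R)$; this is a finitely generated field extension of $\IQ$, and a standard transcendence-basis argument shows that the relative algebraic closure $F := K \cap \overline{\IQ}$ is a number field. Any eigenvalue $\zeta$ of any $A \in \Gamma$ is a root of the characteristic polynomial of $A$, a monic polynomial of degree $n$ over $K$; if $\zeta$ happens to be a root of unity, then $\zeta \in \overline{\IQ}$, and the coefficients of its minimal polynomial over $K$ (being symmetric functions of Galois conjugates, all algebraic over $\IQ$) already lie in $F$. Hence $[F(\zeta):F] \le n$, and therefore $\varphi(\operatorname{ord}(\zeta)) = [\IQ(\zeta):\IQ] \le n \cdot [F:\IQ]$. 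Consequently the set $\mathcal{K}$ of possible orders of root-of-unity eigenvalues of elements of $\Gamma$ is finite.

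\emph{Third,} I would construct the subgroup $\pi$ as a congruence subgroup. Choose a rational prime $p$ coprime to every element of $\mathcal{K}$. By the weak Nullstellensatz for finitely generated $\IZ$-algebras, there exists a maximal ideal $\mathfrak{m} \subset R$ with $R/\mathfrak{m}$ a finite field of characteristic $p$. Define $\pi := \Gamma \cap \ker\!\bigl(\GL(n,R) \to \GL(n,R/\mathfrak{m})\bigr)$; since $\GL(n,R/\mathfrak{m})$ is finite, $\pi$ has finite index in $\Gamma$. To verify the conclusion, suppose for contradiction that some $A \in \pi$ has an eigenvalue $\zeta$ which is a primitive $k$-th root of unity for some $k > 1$. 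Then $k \in \mathcal{K}$, hence $\gcd(k,p) = 1$. Let $S$ be the integral closure of $R$ in a finite extension of $K$ containing $\zeta$, and pick a prime $\mathfrak{P}$ of $S$ above $\mathfrak{m}$. Since $A \equiv I \pmod{\mathfrak{P}}$, the characteristic polynomial of $A$ reduces to $(x-1)^n$ modulo $\mathfrak{P}$, forcing $\zeta \equiv 1 \pmod{\mathfrak{P}}$. On the other hand, the identity $\prod_{\xi \in \mu_k \setminus \{1\}} (\xi - 1) = \pm k$ shows that $\zeta - 1$ is coprime to $\mathfrak{P}$ whenever $p \nmid k$, so $\zeta \not\equiv 1 \pmod{\mathfrak{P}}$. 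This contradiction completes the proof.

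The main obstacle to anticipate is the uniform bound in the second step: the content is that the orders of root-of-unity eigenvalues across \emph{all} elements of $\Gamma$ are bounded by data depending only on $K$, and this is precisely what allows a single prime $p$ to rule out every ``bad'' eigenvalue simultaneously. The remaining ingredients---the weak Nullstellensatz for finitely generated $\IZ$-algebras, and elementary ramification of primes in integral closures---are standard.
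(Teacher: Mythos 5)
The paper itself gives no proof of this statement: it records it as a direct consequence of \cite[Theorem~6.11]{Raghunathan(1972)}, and Raghunathan's argument runs along the same Selberg-style congruence-subgroup lines that you use---spread the group out over a finitely generated $\IZ$-subalgebra $R\subset\IC$, bound the orders of roots of unity that can occur as eigenvalues by a constant depending only on the fraction field, and then kill them all at once by reducing modulo a well-chosen maximal ideal of $R$. So your strategy is the expected one, and Steps~1 and~2 are correct: in particular, the observation that the minimal polynomial over $K$ of an algebraic eigenvalue already has coefficients in the number field $F=K\cap\overline{\IQ}$, yielding $\varphi(\operatorname{ord}\zeta)\le n\cdot[F:\IQ]$, is exactly the right reason why the set $\mathcal K$ of possible orders is finite.

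There is, however, a gap in Step~3 as written. You first fix a prime $p$ coprime to every element of $\mathcal K$ and then assert that ``by the weak Nullstellensatz'' there is a maximal ideal $\mathfrak m\subset R$ with $R/\mathfrak m$ finite of characteristic $p$. That is not what the Nullstellensatz over $\IZ$ gives you: it says that \emph{every} maximal ideal of a finitely generated $\IZ$-algebra has finite residue field, but it says nothing about which characteristics actually occur, and for a fixed prime $p$ there need be no such $\mathfrak m$ at all (take $R=\IZ[1/p]$). The missing ingredient is the observation that, because $R$ is a finitely generated $\IZ$-algebra domain of characteristic zero, the image of $\operatorname{Spec}R\to\operatorname{Spec}\IZ$ is (by Chevalley's theorem, or Noether normalization over $\IZ$) a constructible set containing the generic point, hence misses only finitely many closed points $(p)$; equivalently, only finitely many primes are invertible in $R$. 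Since $\mathcal K$ is also finite, one can then choose a single prime $p$ satisfying both constraints simultaneously, after which a maximal ideal $\mathfrak m\supset pR$ has the required finite residue field of characteristic $p$. With that supplied, your ramification argument using $\prod_{\xi^k=1,\,\xi\neq 1}(\xi-1)=\pm k$ is correct and the proof closes.
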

 
\begin{proposition}\label{prop:klein_bottle_and_locally_symmetric_space}
  Let $B$ be a non-positively curved closed locally symmetric space, then there
  is a finite sheeted cover $\widetilde{B}\to B$ such that 
$\pi_1(\widetilde{B})$ does not
  contain a subgroup isomorphic to the fundamental group of the Klein bottle.
\end{proposition}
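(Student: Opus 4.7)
The plan is to apply Raghunathan's Theorem~\ref{the:theorem_6_11} in its slightly stronger form (which is what the proof actually establishes) that the finite-index subgroup $\pi$ can be chosen so that the eigenvalues of every element of $\pi$ generate a \emph{torsion-free} multiplicative subgroup of $\IC^*$. Since $B$ is a closed non-positively curved locally symmetric space, the group $\Gamma=\pi_1(B)$ is a torsion-free uniform lattice in a semisimple real Lie group $G$ of non-compact type; such a $G$ has finite center and, by Godement's criterion together with torsion-freeness, $\Gamma$ contains no non-trivial unipotent elements, so every element of $\Gamma$ is semisimple. As $\Gamma\cap Z(G)=\{1\}$, the adjoint representation $\mathrm{Ad}\colon G\to\GL(\mathfrak g)$ restricts to a faithful embedding $\rho\colon\Gamma\hookrightarrow\GL(n,\IC)$. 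Applying the strengthened Theorem~\ref{the:theorem_6_11} yields the desired finite-index subgroup $\pi\subset\Gamma$, and I take $\widetilde B$ to be the associated finite cover of $B$.

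It remains to show that $\pi$ contains no subgroup isomorphic to the Klein bottle group $\langle s,t\mid tst^{-1}=s^{-1}\rangle$. Suppose for contradiction it does, and write $S=\rho(s)$, $T=\rho(t)$. The element $S$ is diagonalizable over $\IC$ by the semisimplicity of $s$, and since $s\neq 1$ and $\rho$ is faithful, $S\neq I$, so $S$ has some eigenvalue $\lambda\neq 1$. The relation $TST^{-1}=S^{-1}$ implies that $\lambda^{-1}$ is also an eigenvalue, with $T$ interchanging the eigenspaces $V_\lambda$ and $V_{\lambda^{-1}}$. Moreover $\lambda\neq\lambda^{-1}$, for otherwise $\lambda=-1$ would be a torsion element of the subgroup of $\IC^*$ generated by the eigenvalues of $S\in\pi$, contradicting the neat property.

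Consequently, on the direct sum $V_\lambda\oplus V_{\lambda^{-1}}$, the restriction of $T$ takes the block-antidiagonal form $\bigl(\begin{smallmatrix}0 & A\\ B & 0\end{smallmatrix}\bigr)$. The characteristic polynomial of such a matrix is a polynomial in the square of the variable, so its eigenvalues on this subspace come in $\pm$-pairs $\{\nu,-\nu\}$. Hence $-1=\nu\cdot(-\nu)^{-1}$ lies in the multiplicative subgroup of $\IC^*$ generated by the eigenvalues of $T\in\pi$, contradicting the neat property of $\pi$; this contradiction completes the proof. The main technical point to keep in mind is the appeal to the strengthened form of Raghunathan's theorem: the statement as literally quoted in Theorem~\ref{the:theorem_6_11}, that no individual eigenvalue of an element of $\pi$ is a non-trivial root of unity, is by itself \emph{not} sufficient, since the pair $\{\nu,-\nu\}$ may well consist of two non-roots of unity, and the obstruction arises only from their multiplicative relation.
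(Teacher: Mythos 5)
Your proof is correct, but it takes a genuinely different route from the one in the paper, and the difference is worth spelling out.

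You invoke the full strength of Raghunathan's Theorem~6.11 --- that $\Gamma$ has a finite-index \emph{neat} subgroup $\pi$, meaning the multiplicative subgroup of $\IC^*$ generated by the eigenvalues of each element of $\pi$ is torsion-free. You are right that this is what Raghunathan's theorem actually says and what its proof establishes; the statement recorded in the paper as Theorem~\ref{the:theorem_6_11} is a deliberate weakening (no individual eigenvalue is a nontrivial root of unity). The paper's argument is engineered precisely so that it only needs this weaker input: it passes to the polar decomposition $A=kp$ (citing Mostow), shows $p\neq\id$ from torsion-freeness, uses the one-parameter subgroup $p^t=\exp(tv)$ and the relation $B^2p^tB^{-2}=p^t$ to see that $B$ swaps $v$ and $u=B(v)$, and then splits into the two cases $\dim\,\mathrm{span}\{u,v\}=2$ (where $B$ restricts to $\bigl(\begin{smallmatrix}0&1\\1&0\end{smallmatrix}\bigr)$, giving $-1$ as an eigenvalue) and $\dim\,\mathrm{span}\{u,v\}=1$ (where either $p=\id$, a contradiction, or again $-1$ is an eigenvalue of $B$). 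Your argument skips all of this machinery: you diagonalize $S$, use the conjugation relation to see that $T$ interchanges the eigenspaces $V_\lambda$ and $V_{\lambda^{-1}}$, observe that on the invariant subspace $V_\lambda\oplus V_{\lambda^{-1}}$ the matrix of $T$ is block-antidiagonal so its characteristic polynomial is a polynomial in the square of the variable, and conclude that its eigenvalues come in pairs $\{\nu,-\nu\}$, whence $-1=(-\nu)\nu^{-1}$ is a torsion element of the group generated by the eigenvalues of $T$, violating neatness. Your closing observation is the crux of the comparison: the $\pm\nu$ obstruction is a \emph{multiplicative relation} among eigenvalues, so it is invisible to the weaker form of the theorem --- neither $\nu$ nor $-\nu$ need be a root of unity. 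This is exactly why the paper, having committed to the weaker statement, has to go through the polar-decomposition argument to extract $-1$ as an actual eigenvalue. What each approach buys: yours is shorter, purely linear-algebraic, and exposes the true strength of Raghunathan's theorem; the paper's is self-contained modulo the weaker statement it actually records and displays the geometric content (one-parameter subgroups, adjoint action on $\mathfrak g$) more explicitly. Both establish that elements of a cocompact lattice are semisimple under $\mathrm{Ad}$ by appeal to Mostow; your phrasing of that step (via Godement and torsion-freeness) is a little loose but the conclusion you need is the standard one cited in the paper.
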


\begin{proof}
Put $\Gamma=\pi_1(B)$. There exists a linear centerless semi-simple Lie
group $G$ containing $\Gamma$ as a discrete cocompact subgroup. Consider the
adjoint representation of $G$ onto its Lie algebra $\mathfrak g$. Via this
representation $\Gamma$ embeds in $\GL_n(\IR)=\GL(\mathfrak g)$ where
$n=\dim_\IR \mathfrak g$. By passing to subgroups of finite index
we may assume that $G$ is connected; i.e., is analytic. See 
Mostow~\cite[\S~2]{Mostow(1973)} for
terminology. We will apply Theorem~\ref{the:theorem_6_11} to this situation,
i.e.,  $\Gamma\subset\GL_n(\IR)\subset\GL_n(\IC)$.

Let $\pi\subset\Gamma$ be the finite index subgroup given by 
Theorem~\ref{the:theorem_6_11}, and let $\widetilde{B}\to B$ be the covering 
space
corresponding to $\pi\subset\Gamma$. It remains to show that $\pi$ does not
contain a pair of matrices $A,B$ satisfying $BAB^{-1}=A^{-1}$ and $A\neq I$. We
do this by showing that the existence of such a pair leads to a
contradiction. (Now glance at~\cite[\S~3 on page~12]{Mostow(1973)} and also note
that $\pi$ is a discrete subgroup of $\GL_n(\IR)$.)

By the first paragraph on page 76 of Mostow's book both $A$ and $B$ are
semi-simple matrices, i.e.,  diagonalizable in $\GL_n(\IC)$. And since $A, B^2$
commute they are simultaneously diagonalizable. By page 10 of Mostow,
\[
A= k p = p k
\] 
where $p$ has positive real eigenvalues and $k$ has all
eigenvalues of length 1. Furthermore $p, k\in G\subset \GL_n(\IC)$, see p.~12 of
Mostow. Also $p,k$ are uniquely defined by $A$. Since $A$ has infinite order,
$p\neq\id$. (Otherwise $A=k$ which lies in a compact subgroup of $\GL_n(\IC)$.)
Furthermore $p^t$, $t\in\IR$, is a 1-parameter subgroup of $G$ passing through
$p$ (see p.~12 of Mostow). Now $B^2$ commutes with $p$ since it commutes with
$A$ (see p.~10 of Mostow). Likewise $B^2$ commutes with every matrix $p^t$ since
$p$ and $B^2$ are simultaneously diagonalizable. Now
\[
p^t=\exp(tv),\quad t\in\IR
\] 
for some $v\in\mathfrak g$, $v\neq 0$. And
\[
t\mapsto B p^t B^{-1}
\] 
is the 1-parameter subgroup $\exp(tu)$ where $u=B(v)$
-- through the adjoint action of $B$ on $\IR^n$, i.e., through
$B\in\Gamma\subset\GL_n(\IR)$. And since
\[
p^t=B^2 p^t B^{-2}= B\exp(tu) B^{-1},
\] 
we have that $B(u)=v$.

Let $V$ be the subspace of $\mathfrak g=\IR^n$ spanned by $u$ and $v$. Then
$\dim V$ is 1 or 2, and $V$ is left invariant by $B$. We now proceed to show
that each of the two possibilities leads to a contradiction, thus completing the
proof of Proposition~\ref{prop:klein_bottle_and_locally_symmetric_space}.

\emph{Case 1. }$\dim V=2$. Then $\{u,v\}$ form a basis for $V$ and with respect
to this basis $B\vert_V$ is represented by the matrix $\begin{pmatrix} 0 & 1 \\
  1 & 0
\end{pmatrix}$ which has characteristic polynomial $\lambda^2-1$. Hence
$\lambda=-1$ is an eigenvalue of $B$, contradicting 
Theorem~\ref{the:theorem_6_11}.

\emph{Case 2. }$\dim V= 1$. Then $u=\lambda v$ for $\lambda\in\IR$. Hence
\[
v=Bu=\lambda Bv=\lambda u=\lambda^2 v.
\] 
Since $v\neq 0$,
$\lambda^2=1$. Therefore $\lambda=\pm 1$. If $\lambda=1$, then
\[p^t=\exp(tv)=\exp(tu)=B p^t B^{-1}\quad\forall t\in \IR.\] Setting $t=1$
yields $p=BpB^{-1}$. Therefore
\[p(BkB^{-1})=(BpB^{-1})(BkB^{-1})=BAB^{-1}=A^{-1}=p^{-1} k^{-1}\] and by the
uniqueness of the polar decomposition (Mostow p.~10) we have that
\[
p=p^{-1}\quad\mathrm{and}\quad BkB^{-1}=k^{-1}.
\] 
In particular $p^2=\id$, so
$p=\id$, which contradicts the assumptions on the decomposition $A=kp$.

Therefore $\lambda=-1$, which is an eigenvalue of $B$, contradicting 
Theorem~\ref{the:theorem_6_11}. This finishes the proof of 
Proposition~\ref{prop:klein_bottle_and_locally_symmetric_space}
\end{proof}

%%%%%%%%%%%%%%%%%%%%%%%%%%%%%%%%%%%%%%%%%%%%%%%%%%%%%%%%%%%%%%%%%%%%%%%%%%%%%%%
%%%%%%%%%%%%%%%%%%%%%%%%%%%%%%%%%%%%%%%%%%%%%%%%%%%%%%%%%%%%%%%%%%%%%%%%%%%%%%%
%%%%%%%%%%%%%%%%%%%%%%%%%%%%%%%%%%%%%%%%%%%%%%%%%%%%%%%%%%%%%%%%%%%%%%%%%%%%%%%

\section{The case of a non-finite homotopy fiber}\label{sec:non_finite_fiber}

%%%%%%%%%%%%%%%%%%%%%%%%%%%%%%%%%%%%%%%%%%%%%%%%%%%%%%%%%%%%%%%%%%%%%%%%%%%%%%%

\subsection{The homotopy fiber of an approximate fibration}
\label{subsec:The_homotopy_fiber_of_an_approximate_fibration}

\begin{lemma}\label{lem:finite_domination_for_approximate_fibration}
  Let $q\colon E\to B$ be an approximate fibration, where $E$ is a compact {ENR} and $B$
  is a connected topological manifold. Then the homotopy fiber of $q$ is dominated by a
  finite complex.
\end{lemma}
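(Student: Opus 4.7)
The plan is to identify the homotopy fiber $F$ of $q$ with $q^{-1}(U)$ for a small open ball $U$ around a chosen point $b \in B$, and then show that this open subset of the compact ENR $E$ is finitely dominated. The identification $F \simeq q^{-1}(U)$ for sufficiently small $U$ is a standard consequence of the approximate lifting property: approximate lifts of the straight-line contractions inside $U$ show that any point-fiber inclusion $q^{-1}(b) \to q^{-1}(U)$ is a weak equivalence, and converting $q$ to a Hurewicz fibration then identifies $F$ with a point-fiber up to weak equivalence. Since $q^{-1}(U)$ is open in the compact ENR $E$, it is itself a locally compact metric ENR (in particular an ANR), and by Milnor's theorem has the homotopy type of a CW complex; it therefore suffices to show that $q^{-1}(U)$ is dominated by a finite CW complex.

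The main construction uses the approximate homotopy lifting property to produce a self-map of $X := q^{-1}(U)$ homotopic to the identity whose image lies in a compact subset. I would choose nested closed balls $b \in D_1 \subset D_2 \subset U$ with $D_1$ in the interior of $D_2$, together with a strong deformation retraction $h_t \colon U \to U$ onto $D_1$. Picking $\eps > 0$ small enough that the closed $\eps$-neighborhood of each $h_t(U)$ remains inside $D_2$, and applying the $\eps$-lifting property of $q$ to the homotopy $h_t \circ q|_X \colon X \times [0,1] \to B$ with initial lift $\id_X$, one obtains a homotopy $H_t \colon X \times [0,1] \to E$ with $H_0 = \id_X$ and $d(q H_t, h_t q) < \eps$. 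By the choice of $\eps$ we have $H_t(X) \subset X$ for all $t$ and $H_1(X) \subset q^{-1}(D_2)$.

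The set $C := q^{-1}(D_2)$ is closed in the compact space $E$, hence compact, and $H_1$ exhibits $\id_X$ as homotopic to a map factoring through $C \subset X$. The concluding step is a general fact: if $Y$ has the homotopy type of a CW complex and $\id_Y$ is homotopic to a map factoring through a compact subspace, then $Y$ is finitely dominated. Indeed, transporting the compact subspace to the CW model, its image is contained in a finite subcomplex $L$, and the resulting factorization of $\id_Y$ up to homotopy through $L$ provides the desired finite domination. Applying this to $X$ finishes the proof.

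The main obstacle I anticipate is the quantitative control ensuring that the approximate lift $H_t$ actually stays inside $X = q^{-1}(U)$: one must guarantee that the error $\eps$ by which $q H_t$ can drift from $h_t q$ is smaller than the distance from $h_t(U)$ to $B \setminus U$, which is why one introduces the intermediate ball $D_2$ strictly containing $D_1$. A secondary point, which I would treat briefly, is the weak equivalence $q^{-1}(U) \simeq F$; this is standard for approximate fibrations but deserves explicit mention since $F$ is defined via Moore paths rather than directly as a preimage.
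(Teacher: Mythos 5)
Your strategy --- producing a self-map of $X=q^{-1}(U)$ homotopic to the identity and factoring through a compact set, directly from the approximate lifting property --- is genuinely different from the paper's proof, which reduces to the case of a finite simplicial complex via cell-like maps (Lacher; Kirby--Siebenmann) and sandwiches a finite subcomplex between preimages of two nested balls. Your route is attractive because it avoids the cell-like reduction entirely, but as written it has a concrete gap in the choice of $\eps$. You claim that $\eps>0$ can be taken so small that the closed $\eps$-neighborhood of $h_t(U)$ lies in $D_2$ for all $t$. At $t=0$ the retraction is the identity, so $h_0(U)=U$ and its $\eps$-neighborhood contains all of $U\supsetneq D_2$. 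Even the weaker demand that the lift stay in $U$ fails, because $d(h_0(U),\,B\setminus U)=0$. You flag this difficulty yourself at the end, but the pair of balls $D_1\subset D_2$ cannot fix it; no positive $\eps$ works. Consequently the assertion ``$H_t(X)\subset X$ for all $t$'' is unjustified, and without a homotopy taking place inside $X$ the compact-image criterion for finite domination does not apply.

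The gap is repairable with one more nested ball. Take open balls $D_1\subset D_2\subset U'\subset U$ with each closure contained in the next, a radial retraction $h_t\colon U\to U$ onto $\overline{D_1}$, and apply the $\eps$-lifting property to $h_t\circ q$ restricted to $X':=q^{-1}(U')$, with initial lift the inclusion $X'\hookrightarrow E$. If $\eps$ is less than both $d(\overline{U'},B\setminus U)$ and $d(\overline{D_1},B\setminus D_2)$, then since the radial retraction keeps $U'$ inside $U'$, the lift $H_t$ lands in $X=q^{-1}(U)$ for every $t$, while $H_1$ lands in the compact set $q^{-1}(\overline{D_2})$. Hence the inclusion $j\colon X'\hookrightarrow X$ is homotopic in $X$ to a map with compact image. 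By Hughes--Taylor--Williams (the same fact the paper cites) $j$ is a homotopy equivalence, and $X$ has the homotopy type of a CW complex by Milnor; composing with a homotopy inverse of $j$ shows that $\id_X$ factors up to homotopy through a finite subcomplex of a CW model, so $X$ (and hence the homotopy fiber) is finitely dominated. With this repair your argument gives a correct, more self-contained alternative to the proof in the paper.
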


\begin{remark}
  This result is probably well-known to the experts but we couldn't find a reference so we
  include a proof for the reader's convenience. We are grateful to the referee for
  providing us with the following, alternative argument: By Ferry~\cite{Ferry(1977a)} any
  CW model for the homotopy fiber is shape equivalent to the actual fiber $q\inv(b)$,
  which is compact, so it follows from~\cite[Corollary 3.2]{Edwards-Geoghegan(1975)} that
  the homotopy fiber is finitely dominated.
\end{remark}

\begin{proof}[Proof of Lemma~\ref{lem:finite_domination_for_approximate_fibration}]
  Let $U_1\subset U_2\subset B$ two open balls such that $\overline U_1\subset
  U_2$. We form the pull-backs
  \[
   \xymatrix{
    E\vert_{U_1} \ar[r] \ar[d]^q & E\vert_{U_2} \ar[r] \ar[d]^q & E \ar[d]^q\\
    U_1 \ar[r] & U_2 \ar[r] & B }
   \]
   It follows from the fact that $q$ is an approximate fibration that the 
inclusion
   $E\vert_{U_1}\to E\vert_{U_2}$ is a homotopy equivalence and that both spaces 
have the
   homotopy type of the homotopy fiber of $q$.
   (See~\cite[\S10]{Hughes-Taylor-Williams(1990)}~\cite[Corollary~2.14 and
     Theorem~12.15]{Hughes-Taylor-Williams(1990)}.)

  We first complete the argument under the extra assumption that $E$ is a
  simplicial complex. In this case there is a finite subcomplex $K$ lying in
  between $E\vert_{U_1}$ and $E\vert_{U_2}$ (subdividing $E$ if necessary). So 
$K$
  dominates $E\vert_{U_1}$ which is the homotopy type of the homotopy fiber of
  $q$.

  In the general case, $E$ receives a cell-like map from a compact topological manifold~\cite[11.2]{Lacher(1977)}, 
  which in turn admits a disk bundle which is triangulated (see~\cite[\S III.4]{Kirby-Siebenmann(1977)}). 
  This implies that $E$ receives a cell-like map
  from a finite simplicial complex $X$.
 
  Since $X$ is homotopy equivalent to $E$ and the map from $X$ to
  $B$ is still an approximate fibration, we may just replace $E$ by $X$ and use
  the argument of the special case above.
\end{proof}

\begin{remark} \label{rem:homotopy_fiber_for_aspherical_base} Let $p \colon X 
\to B$ be a
  $\pi_1$-surjective map of $CW$-complexes such that $B$ is aspherical and $X$ 
finite
  dimensional.  Then the homotopy fiber of $p$ is the total space of the 
covering $q
  \colon \overline{X} \to X$ associated to the kernel of $\pi_1(p)$. In 
particular it is
  homotopy equivalent to a finite dimensional $CW$-complex.  This implies that 
the
  homotopy fiber is finitely dominated if and only if it has the homotopy type 
of a
  $CW$-complex of finite type, i.e., a $CW$-complex all whose skeleta are 
finite.
\end{remark}

%(Here we use the ANR condition on
%  $Y$.)

%%%%%%%%%%%%%%%%%%%%%%%%%%%%%%%%%%%%%%%%%%%%%%%%%%%%%%%%%%%%%%%%%%%%%%%%%%%%%%%

\subsection{Factorization into an $\eps$-fibration}
\label{subsec:Factorization_to_an_epsilon_fibration}

We do not know whether 
Theorem~\ref{thm:factorization_to_an_approximate_fibration} holds when the
homotopy fiber is only required to be finitely dominated. (See~\cite[Theorem 
1.1]{Ferry(1977a)} for a related statement when $B=S^1$.)

\begin{theorem}[Factorization to an $\eps$-fibration]
\label{the:factorization_for_nonfinite_fiber}
  Let $p\colon M\to B$ be a map of topological spaces, such that $B$ is a finite
  simplicial complex  and the homotopy fiber of $p$ is finitely dominated. Fix a
metric on $B \times S^1$. 

  Then for each  $\eps>0$ there is a homotopy commutative diagram
  \[
  \xymatrix{
    M\times S^1 \ar[rd]_{p_1 := p\times \id_{S^1}} \ar[rr]^f_\simeq && C 
\ar[ld]^{q}  \\
    & B\times S^1 }
   \]
  where $C$ is a compact ENR, $f$ is a homotopy equivalence, and $q$ is an
  $\eps$-fibration.
\end{theorem}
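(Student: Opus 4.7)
The plan is to reduce to Theorem~\ref{thm:factorization_to_an_approximate_fibration} by absorbing the finite domination of $F_p$ into the new $S^1$-factor via Mather's theorem, which states that $F_p \times S^1$ has the homotopy type of a finite $CW$-complex whenever $F_p$ is finitely dominated. The composition
\[
\bar p := p \circ \pr_M \colon M \times S^1 \to M \to B
\]
has homotopy fiber $F_p \times S^1$ at each $b \in B$, which is therefore homotopy finite. Applying Theorem~\ref{thm:factorization_to_an_approximate_fibration} to $\bar p$ produces a factorization
\[
\xymatrix{
M \times S^1 \ar[rd]_{\bar p} \ar[rr]^{f_0}_-{\simeq} && E \ar[ld]^{q_0} \\
& B
}
\]
with $E$ a compact ENR, $f_0$ a homotopy equivalence, and $q_0$ an approximate fibration.

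To promote this into a factorization of $p_1 = p \times \id_{S^1}$ over $B \times S^1$, I would construct a map $h \colon E \to S^1$ such that $h \circ f_0 \simeq \pr_{S^1}$; such an $h$ exists (and is unique up to homotopy) because $f_0^* \colon [E, S^1] \to [M \times S^1, S^1]$ is a bijection. Setting $C := E$, $f := f_0$, and $q := (q_0, h) \colon E \to B \times S^1$ yields $q \circ f \simeq (\bar p, \pr_{S^1}) = p_1$, so the homotopy-equivalence and compact-ENR requirements are automatic. To make $q$ an $\eps$-fibration, I would choose $h$ to respect the approximate-fibration structure of $q_0$. For this, pick a domination $F_p \xleftarrow{r} Y_0 \xrightarrow{s} F_p$ with $Y_0$ a finite $CW$-complex and $s \circ r \simeq \id_{F_p}$. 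Mather's argument identifies $F_p \times S^1$ with the mapping torus $Y := T(r \circ s \colon Y_0 \to Y_0)$, which carries a canonical fibration $\pi \colon Y \to S^1$ with fiber $Y_0$. Using $Y$ (with its projection $\pi$) as the finite-$CW$ fiber model in every inductive step of the proof of Theorem~\ref{thm:factorization_to_an_approximate_fibration} then produces, simultaneously with $E$ and $q_0$, a map $h \colon E \to S^1$ by gluing the various $\pi$'s over the simplices of $\calt$.

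The main obstacle is verifying that $q = (q_0, h)$ is an $\eps$-fibration. The approximate-fibration property of $q_0$ yields lifts with arbitrarily small error in the $B$-coordinate, while the fact that $h$ restricts to the genuine fibration $\pi \colon Y \to S^1$ on each fiber of $q_0$ provides exact lifts in the $S^1$-coordinate. Given a homotopy-lifting problem for $q$ and a sufficiently fine triangulation of $B$ (depending on $\eps$), one first uses the approximate-fibration property of $q_0$ to solve the $B$-part of the lifting problem with error far smaller than $\eps$, and then exploits the local fiber-bundle structure coming from $\pi$ together with the estimated homotopy extension property (as in Lemma~\ref{lem:condition_for_epsilon_fibration}) to adjust the $S^1$-coordinate so that the combined error in $B \times S^1$ is bounded by $\eps$. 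Making these two pieces of control fit together compatibly simplex-by-simplex is the technical heart of the argument.
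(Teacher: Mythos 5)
Your opening move is sound and genuinely different from the paper's: you reduce to the homotopy-finite case by noting that $F_p\times S^1$ is homotopy finite (Mather) and applying Theorem~\ref{thm:factorization_to_an_approximate_fibration} to $\bar p = p\circ\pr_M\colon M\times S^1\to B$, whereas the paper works with the extra factor $Z$ inside the fiber and builds a chain of mapping-torus comparisons directly over $B\times S^1$. The gap in your argument is in the assertion that the mapping torus projection $\pi\colon Y=T(s\circ r\colon Y_0\to Y_0)\to S^1$ is a ``canonical fibration.'' It is not, and this is precisely what goes wrong. Since $F_p$ is only finitely dominated and not homotopy finite, the self-map $s\circ r$ is a homotopy idempotent which is \emph{not} a homotopy equivalence; the preimage of a small arc in $S^1$ under $\pi$ is a mapping cylinder of $s\circ r$, hence homotopy equivalent to $Y_0$, while the homotopy fiber of $\pi$ is the mapping telescope of $s\circ r$, which is $\simeq F_p\not\simeq Y_0$. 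So $\pi$ violates the open-ball criterion for approximate fibrations, and indeed fails to be an $\eps$-fibration for small $\eps$: a homotopy in $S^1$ that moves \emph{backwards} through the glueing locus of the mapping torus has no continuous lift within error $\eps$ (one would have to choose preimages under $s\circ r$). In particular even in the degenerate case $B=\pt$, where $q=(q_0,h)$ becomes exactly $\pi$, your construction does not give an $\eps$-fibration for small $\eps$. Since your argument would have produced an honest approximate fibration (independent of $\eps$), it would have answered the question the authors explicitly say they cannot answer just before Theorem~\ref{the:factorization_for_nonfinite_fiber}; that should serve as a warning sign.

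What the paper does instead is not to make the identification respect the $S^1$-direction, but to accept that the natural comparison maps $\varphi_1,\varphi_2,\varphi_3$ between the relevant mapping tori are only \emph{bounded} (not controlled) over the $S^1$-factor, because they are assembled from pieces supported on a single cylinder segment of each mapping torus. The key extra step, absent from your proposal, is then to pull the whole diagram back along the $n$-fold self-coverings $\id_B\times n\colon B\times S^1\to B\times S^1$. This replaces each mapping torus by an $n$-fold multiple mapping torus while keeping the metric on $B\times S^1$ fixed, so the bounded drift over $S^1$ per cylinder piece shrinks by a factor of $n$. Together with a sufficiently fine triangulation of $B$, this makes the whole chain an $\eps$-homotopy equivalence over $B\times S^1$, and Lemma~\ref{lem:condition_for_epsilon_fibration} then upgrades the resulting map $C=T(k)\to B\times S^1$ to an $\eps$-fibration. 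It is exactly because of this cover-pulling that the theorem asserts only an $\eps$-fibration (with $C$ and $q$ depending on $\eps$) rather than an approximate fibration; your plan would need to incorporate some analogue of this stretching step in the $S^1$-direction to close the gap.
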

\begin{proof}  We can assume that
  $p$ is a fibration. Given any $\eps>0$, we will construct a space $C$ which is 
a compact ENR,  a
  continuous map $q\colon C\to B\times S^1$, and an $\eps$-homotopy equivalence
  $\varphi\colon C\to M\times S^1$. (This is good enough by
  Lemma~\ref{lem:condition_for_epsilon_fibration}.) The map $\varphi$ will be a 
composite
  of several homotopy equivalences which we are going to define now.

Let $Z$ be a finite CW-complex of Euler characteristic 0 with a chosen
basepoint. Consider the inclusion and projection maps
\[
M\xrightarrow{i} M\times Z \xrightarrow{r} M.
\] 
Since the composite $r\circ i$ is the identity map, its mapping torus is given 
by
\[
T(r\circ i)=M\times S^1.
\]

Denote by $T(r,i)$ the space $M\times [0,\frac12]\amalg M\times Z\times
[\frac12,1]/{\sim}$, where we glue $M\times\frac12$ to $M\times Z\times \frac12$
using the map $i$ and we glue $M\times Z\times 1$ to $M\times 0$ via $r$. There
are well-known homotopy equivalences
\[
\varphi_1\colon T(r\circ i)\to T(r,i),\quad\varphi_2\colon T(i\circ r)\to 
T(r,i).
\] 
For instance, $\varphi_1$ is the identity on $M\times [0,\frac12]$ and
$i\times \id$ on $M\times [\frac12,1]$. Its homotopy inverse $\psi_1$ collapses
$M\times Z\times [\frac12,1]$ to $M$ and expands $M\times [0,\frac12]$ linearly
to $M\times [0,1]$. As both $i$ and $r$ commute with the projection to $B$, both
homotopies $\psi_1\circ\varphi_2\simeq\id$ and $\varphi_1\circ\psi_2\simeq \id$
are stationary over $B$. As $T(r,i)=T(i,r)$, the same is true for the
homotopy equivalence $\varphi_2$.

Now, as $Z$ has Euler characteristic zero and the fibers of $p$ are assumed to 
be finitely
dominated, the fibers of the composite fibration $p'\colon M\times Z\to 
M\xrightarrow{p} B$
have finiteness obstruction zero and hence are homotopy finite. Choose a 
triangulation
$\calt$ of $B$. By the proof of Theorem~\ref{thm:factorization_to_an_approximate_fibration} 
there exists a commutative diagram
\[
\xymatrix{
 X \ar[rr]^g \ar[rd]_\xi && M\times Z   \ar[ld]^{p'}\\
 & B
}
\]
where $X$ is a compact ENR and $g$ is a homotopy equivalence over each simplex. 
Moreover
it follows from the construction of $X$ that for all simplices of $B$, the 
inclusions
$X\vert _{\partial\sigma}\to X\vert_\sigma$ is a cofibration. As $p'$ is a 
fibration, the inclusion
$M\vert_{\partial\sigma}\to M\vert_\sigma$ is a cofibration for each simplex 
$\sigma$, too. We
may therefore choose a self-map $k\colon X\to X$ such that $i\circ r\circ 
g\simeq g\circ
k$ by a homotopy which restricts to a homotopy over each simplex of $B$. It 
follows that
there is a map $\varphi_3\colon T(k) \to T(i\circ r)$ which is a homotopy 
equivalence over
each simplex of $B$. (Here $T(k)$ is controlled over $B$ via $p'\circ 
\varphi_3$.)

Hence we obtain a chain of maps
\[
E\times S^1\cong T(r\circ i)\xrightarrow{\varphi_1} T(r,i) 
\xleftarrow{\varphi_2} T(i\circ r) \xleftarrow{\varphi_3} T(k)=:C
\]
where each map is a homotopy equivalence over each simplex of $B$. Note that $C$ 
is a compact ENR.

Each of the spaces $T(r\circ i)$, $T(r,i)$, $T(i\circ r)$, and $T(k)$ naturally maps to
$S^1$ and hence also to $B\times S^1$. Now we pull back along self-coverings 
$\id_B\times n\colon B\times S^1\to B\times S^1$ of index $n$. Geometrically this corresponds to
replacing the mapping tori (such as $T(k)$) by $n$-fold multiple mapping tori (such as
$T(k,\dots,k)$). The pull-backs of the corresponding homotopy equivalences $\varphi_1$,
$\varphi_2$, $\varphi_3$ are by construction pieced together from homotopy equivalences
over each simplex of $B$ and over a single cylinder piece within the multiple mapping
torus. Thus, by choosing $n$ and the mesh of the triangulation of $B$ small enough, we can
get each of the homotopy equivalences as controlled over $B\times S^1$ as we wish.

This concludes the proof of~Theorem~\ref{the:factorization_for_nonfinite_fiber}.
\end{proof}

%%%%%%%%%%%%%%%%%%%%%%%%%%%%%%%%%%%%%%%%%%%%%%%%%%%%%%%%%%%%%%%%%%%%%%%%%%%%%%%

\subsection{Proof of the Stabilization Theorem~\ref{the:theorem_is_stably_true} 
for finitely dominated homotopy fiber}
\label{subsec:stably_true_in_general}

The proof of Theorem~\ref{the:theorem_is_stably_true} for the special case of a 
finite homotopy fiber
Section~\ref{sec:stably_true_finite_fiber}   carries over to the finitely 
dominated case. We just need to replace the use of  
Theorem~\ref{thm:factorization_to_an_approximate_fibration} by
  Theorem~\ref{the:factorization_for_nonfinite_fiber} in the proof of 
Lemma~\ref{lem:bounded_over_torus}; it follows that 
Lemma~\ref{lem:bounded_over_torus} and hence 
Corollary~\ref{cor:getting_epsilon_fibration}
are still true under the weaker assumption that the homotopy fiber is finitely 
dominated, provided $s\geq 1$.

%%%%%%%%%%%%%%%%%%%%%%%%%%%%%%%%%%%%%%%%%%%%%%%%%%%%%%%%%%%%%%%%%%%%%%%%%%%%%%%

\subsection{Tight torsion for finitely dominated homotopy fiber}
\label{subsec:Tight_torsion_for_finitely_dominated_homotopy_fiber}

Next we define the tight torsion of $p_1 \colon M \times S^1 \to B \times S^1$, 
provided
that the homotopy fiber of $p$ is finitely dominated.  Fix a metric on $B \times 
S^1$.
Choose $\epsilon > 0$ such that Lemma~\ref{lem:well_definition_of_tight_torsion} 
(for 
$B \times S^1$ instead of $B$) applies to it.  We define the tight torsion 
$N\tau(p_1)$ of
$p_1 = p \times \id_{S^1}$ as the class of the Whitehead torsion of $f$ in 
$\NWh(p_1)$,
where $f$ is the map appearing in the factorization $p_1 = q \circ f$ for a
$\eps$-fibration $q$ in Theorem~\ref{the:factorization_for_nonfinite_fiber}. 
This is
independent of the factorization $p_1 = q \circ f$ by
Lemma~\ref{lem:well_definition_of_tight_torsion}. One easily checks that it is 
also
independent of the choice of the metric on $B \times S^1 $ since $B \times S^1$ 
is
compact.  This definition reduces to the Definition~\ref{def:tide_torsion} of 
tight
torsion $N\tau(p_1)$ in the special case that the homotopy fiber of $p$ is 
finite since an
approximate fibration over $B \times S^1$ is an $\eps$-fibration for any 
$\epsilon > 0$ and any metric
on $B$.

%%%%%%%%%%%%%%%%%%%%%%%%%%%%%%%%%%%%%%%%%%%%%%%%%%%%%%%%%%%%%%%%%%%%%%%%%%%%%%%

\subsection{The approximate fiber problem for finitely dominated homotopy fiber}
\label{subsec:The_approximate_fiber_problem_for_finitely_dominated_homotopy_fiber}

The following theorem extends Theorem~\ref{the:main_result_1_generalized} to the 
case of a finitely dominated fiber.

\begin{theorem}[A criterion in the case of a finitely dominated 
fiber]\label{the:main_result_3}
  Let $B$ be an aspherical closed triangulable manifold. Suppose that $\pi_1(B)$ 
satisfies FJC and the cyclic subgroups
  of $\pi_1(B)$ are orientable.  Let $M$ be a closed connected manifold of 
dimension $\neq 4$. 

  Then a $\pi_1$-surjective map $p\colon M\to B$ is homotopic to a MAF if and only if
  the following conditions are satisfied:
  \begin{enumerate}
  \item \label{the:main_result_3:(1)} The homotopy fiber of $p$ is finitely 
dominated;
  \item \label{the:main_result_3:(2)} If~\ref{the:main_result_3:(1)} is 
satisfied, the element $N\tau(p_1) \in \NWh(p_1)$ is defined
  and we require it to vanish;
  \item \label{the:main_result_3:(3)} If~\ref{the:main_result_3:(1)} 
and~\ref{the:main_result_3:(2)}  are  satisfied,
  the element $N\tau(p) \in \NWh(p)$ is defined and we require it to vanish.
  \end{enumerate}
\end{theorem}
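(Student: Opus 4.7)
The plan is to mimic the proof of Theorem~\ref{the:main_result_1_generalized}, using the generalizations of the key tools that have already been established for the finitely dominated setting. The case $\dim M \leq 3$ is covered by Subsection~\ref{subsec:Low-dimensions} and Section~\ref{sec:Three-dimensional_manifolds}, so I focus on $\dim M \geq 5$. The \emph{only if} direction is immediate: if $p \simeq q$ for some MAF $q$, then the homotopy fiber is finitely dominated by Lemma~\ref{lem:finite_domination_for_approximate_fibration}, giving~\ref{the:main_result_3:(1)}, and the trivial factorizations $p \simeq q \circ \id_M$ and $p_1 \simeq (q \times \id_{S^1}) \circ \id_{M \times S^1}$ (where $q$ and $q \times \id_{S^1}$ are approximate fibrations, hence $\varepsilon$-fibrations for every $\varepsilon$) exhibit both $N\tau(p)$ and $N\tau(p_1)$ as the Whitehead torsion of an identity map, hence zero.

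For the \emph{if} direction, I first apply the Stabilization Theorem~\ref{the:theorem_is_stably_true}, in the finitely dominated version of Subsection~\ref{subsec:stably_true_in_general}, to obtain an integer $s \geq 1$ such that $p_s$ is homotopic to a MAF; this uses~\ref{the:main_result_3:(1)} together with the fact that FJC implies the $L$-theoretic FJC. I then iteratively apply the Splitting Theorem~\ref{the:stably_implies_unstably} to $p_{k-1}$ for $k = s, s-1, \ldots, 1$, at each step using that $(p_{k-1})_1 = p_k$ is homotopic to a MAF from the previous iteration. The $K$-theoretic FJC for $\pi_1(B \times T^{k-1}) = \pi_1(B) \times \IZ^{k-1}$ holds by Theorem~\ref{the:status_of_FJC}, and the Tate cohomology obstruction in part~\ref{the:stably_implies_unstably_approx} of the Splitting Theorem vanishes by Theorem~\ref{the:FJC_and_Tate_cohomology}, since the cyclic subgroups of the torsionfree group $\pi_1(B) \times \IZ^{k-1}$ remain orientable by Lemma~\ref{lem:extensions_and_orientable_subgroups}.

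The decisive point is to verify $N\tau(p_{k-1}) = 0$ at each step, which is where conditions~\ref{the:main_result_3:(2)} and~\ref{the:main_result_3:(3)} enter. For $k - 1 \geq 2$, the factorization $p_1 \simeq q_1 \circ f_1$ supplied by Theorem~\ref{the:factorization_for_nonfinite_fiber} yields
\[
p_{k-1} \simeq (q_1 \times \id_{T^{k-2}}) \circ (f_1 \times \id_{T^{k-2}}),
\]
where the first factor is an $\varepsilon'$-fibration with compact ENR total space and the product formula for Whitehead torsion gives $\tau(f_1 \times \id_{T^{k-2}}) = \chi(T^{k-2}) \cdot i_\ast \tau(f_1) = 0$, as $\chi(T^{k-2}) = 0$ whenever $k - 2 \geq 1$. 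The remaining two cases $k - 1 = 1$ and $k - 1 = 0$ are precisely covered by conditions~\ref{the:main_result_3:(2)} and~\ref{the:main_result_3:(3)} respectively. The main obstacle is technical: to confirm that the product of an $\varepsilon$-fibration with $\id_{T^{k-2}}$ remains an $\varepsilon'$-fibration for suitable $\varepsilon'$, so that this product factorization indeed computes the tight torsion as defined in Subsection~\ref{subsec:Tight_torsion_for_finitely_dominated_homotopy_fiber}, and that the well-definedness lemma~\ref{lem:well_definition_of_tight_torsion} carries over across the finitely dominated case. Once this is in place, the inductive application of the Splitting Theorem down to $k = 1$ concludes that $p$ itself is homotopic to a MAF.
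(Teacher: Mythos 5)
Your proposal is correct and follows essentially the same route as the paper's (very condensed) proof: stabilize via Theorem~\ref{the:theorem_is_stably_true} in its finitely-dominated form, then destabilize by iterating the Splitting Theorem~\ref{the:stably_implies_unstably}, with the Tate cohomology obstruction killed by FJC plus orientability via Theorem~\ref{the:FJC_and_Tate_cohomology} and Lemma~\ref{lem:extensions_and_orientable_subgroups}, and with the intermediate torsions $N\tau(p_{k-1})$ verified to vanish by the Kwun--Szczarba formula for $k-1 \geq 2$ and by hypotheses~\ref{the:main_result_3:(2)} and~\ref{the:main_result_3:(3)} for $k-1 = 1$ and $k-1 = 0$ respectively. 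The paper states the iteration from $p_s$ down to $p_1$ only implicitly (it asserts directly that $p$ is $h$-cobordant to an approximate fibration by combining the Stabilization Theorem with part~\ref{the:stably_implies_unstably:fac_prop} of the Splitting Theorem), so your explicit bookkeeping is a genuine improvement in exposition, and it makes transparent exactly where condition~\ref{the:main_result_3:(2)} enters.

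The technical worries you flag at the end are all already handled by the paper's machinery, and you could have closed them off. Crossing an $\eps$-fibration with $\id_{T^{k-2}}$ yields an $\eps'$-fibration over the product for $\eps'$ comparable to $\eps$ in any reasonable product metric, so since Theorem~\ref{the:factorization_for_nonfinite_fiber} allows $\eps$ to be arbitrarily small, the crossed factorization falls within the range where Lemma~\ref{lem:well_definition_of_tight_torsion} applies. Moreover, the proof of Lemma~\ref{lem:well_definition_of_tight_torsion} makes no finiteness assumption on the homotopy fiber (it only compares two $\eps$-fibration factorizations with compact ENR total spaces), so it carries over verbatim to the finitely dominated case; this is exactly why Subsection~\ref{subsec:Tight_torsion_for_finitely_dominated_homotopy_fiber} invokes it. One small point you could have made explicit, and which the paper does spell out: in the final step, $N\tau(p)$ becomes defined precisely because part~\ref{the:stably_implies_unstably:fac_prop} of the Splitting Theorem supplies an $h$-cobordism from $M$ to a manifold carrying a MAF over $B$; this $h$-cobordism yields a factorization of $p$ through an approximate fibration, which is what Definition~\ref{def:tide_torsion} requires.
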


\begin{proof}
  Suppose first that $p$ approximately fibers. Then the homotopy fiber of $p$ is 
finitely
  dominated by Lemma~\ref{lem:finite_domination_for_approximate_fibration}. 
  Note that in this case $N\tau(p)$ is defined and both
  $N\tau(p)$ and $N\tau(p_1)$ clearly vanish.

  Suppose conversely that conditions (i) to (iii) hold. 
By Theorem~\ref{the:theorem_is_stably_true} (see section~\ref{subsec:stably_true_in_general}) and 
Theorem~\ref{the:stably_implies_unstably}~\ref{the:stably_implies_unstably:fac_prop}, the
  map $p$ is $h$-cobordant to an approximate fibration. Hence there is a 
homotopy equivalence $f \colon M \to M'$
  an approximate fibration $q \colon M' \to B$ for a closed manifold $M'$ such 
that $q \circ f \simeq p$. 
  Hence $N\tau(p)$ is  defined, namely by the Whitehead torsion of $f$ 
considered in $\NWh(p)$.
  (This is well-defined by Lemma~\ref{lem:well_definition_of_tight_torsion}).
  Condition (iii) together with  
Theorem~\ref{the:stably_implies_unstably}~\ref{the:stably_implies_unstably_approx} implies  
  that $p$ is homotopic to a {MAF}. This finishes the proof of 
Theorem~\ref{the:main_result_3}.
\end{proof}

\begin{remark}\label{rem:approximate_fibration_with_nonfinite_fiber}
  Let $M$ be a closed manifold and $f\colon M\to B$ be a manifold approximate
  fibration whose homotopy fiber is not homotopy equivalent to a finite CW
  complex. Then $f$ cannot have an (even topological) regular value.

  In fact, suppose that $b\in B$ was a regular value, so that $f\inv(U)\cong
  f\inv(b)\times U\simeq f\inv(b)$ for a small contractible neighborhood $U$ of
  $b$ in $B$. Then $f\inv(U)$ is an ANR, being an open subset of an ANR, and
  $f\inv(b)$ is a retract of $f\inv(U)$, hence also an {ANR}. By 
  West~\cite{West(1977)}, $f\inv(b)$ is homotopy equivalent to a finite CW
  complex. But $f$ is an approximate fibration, so the homotopy fiber of $f$ is
  homotopy equivalent to $f\inv(U)$ and thus to a finite $CW$-complex,
  contradicting the assumption.
\end{remark}

%%%%%%%%%%%%%%%%%%%%%%%%%%%%%%%%%%%%%%%%%%%%%%%%%%%%%%%%%%%%%%%%%%%%%%%%%%%%%%%

\subsection{Approximate fibrations with non-finite homotopy fiber}
\label{subsec:The_approximate_fibrations_with_non-finite_homotopy_fiber}

Next we use our results obtained so far to give an example of an approximate 
fibration
$g\colon M\to T^2$ whose total space $M$ is a closed smooth manifold, such that 
the
homotopy fiber of $g$ is \emph{not} homotopy equivalent to a finite CW
complex. Chapman-Ferry~\cite{Chapman-Ferry(1983)} have produced similar examples 
over
arbitrary base manifolds of Euler characteristic zero.

Let $p$ be an odd prime such that $h_1(p)$ has an odd prime factor. (Here
$h_1(p)$ denotes the first factor of the class number of the ring $\IZ[e^{2\pi
  i/p}]$. For instance, the primes $p=23,31,37,41,43$, and $47$ will do
\cite[Table 8]{Borevich-Shafarevich(1966)}. Any irregular prime $p$ satisfies
this condition on $h_1(p)$.) Then Wall has 
shown~\cite[Corollary~5.4.2]{Wall(1967)} 
that there exists a Poincar\'e 4-complex $Y$ with
$\pi_1Y=\IZ/p$ whose Wall finiteness obstruction $\sigma(Y)\in \widetilde 
K_0(\IZ[\IZ/p])$
is non-zero and in fact not of order two.

We claim that $Y\times S^1\times S^2$ is homotopically equivalent to a closed
smooth 7-manifold $M'$. In fact, the only obstruction to lifting the Spivak
fibration of $Y$ to a stable vector bundle is in
\[
H^3(Y; \IZ/2)=H_1(Y; \IZ/2)=\IZ/p\otimes \IZ/2=0.
\] 
The Wall obstruction is
$\sigma(Y\times S^1)=\chi(S^1)\sigma(Y)=0$. Therefore there exists a closed
smooth 5-manifold $N$ and a normal map $N\to Y\times S^1$. Crossing $f$ with
$\id\colon (D^3, S^2)\to (D^3, S^2)$ and using the $\pi$-$\pi$-theorem yields
that
\[
f\times \id\colon N\times (D^3, S^2)\to Y\times S^1\times (D^3, S^2)
\] 
can be surgered to a homotopy equivalence. Restricting this to the boundary 
yields a
closed smooth 7-manifold $M'$ homotopy equivalent to $Y\times S^1\times S^2$.

Notice that $M'$ comes with a canonical map $g'\colon M'\to S^1$, corresponding
to $\pi_1(M') = \IZ/p\times \IZ\to \IZ$. Its homotopy fiber $F_{g'}$ is 
dominated by a finite
complex but \emph{not} homotopy equivalent to a finite complex, as can be seen
by the Wall obstruction: 
\[
\sigma(F_{g'})=\sigma(Y\times S^2)=\chi(S^2)\cdot \sigma(Y) =  2 \cdot  
\sigma(Y) \neq
0.
\]

Following the proof of Theorem~\ref{the:main_result_3}, we see that the map 
\[g'\times \id_{S^1}\colon M'\times S^1\to T^2\]
is $h$-cobordant to an approximate fibration $g\colon M\to T^2$. Of course the homotopy fiber $F_g$ of $g$ is homotopy equivalent to $F_{g'}$.

%%%%%%%%%%%%%%%%%%%%%%%%%%%%%%%%%%%%%%%%%%%%%%%%%%%%%%%%%%%%%%%%%%%%%%%%%%%%%%%
%%%%%%%%%%%%%%%%%%%%%%%%%%%%%%%%%%%%%%%%%%%%%%%%%%%%%%%%%%%%%%%%%%%%%%%%%%%%%%%
%%%%%%%%%%%%%%%%%%%%%%%%%%%%%%%%%%%%%%%%%%%%%%%%%%%%%%%%%%%%%%%%%%%%%%%%%%%%%%%

\section{Three-dimensional manifolds}\label{sec:Three-dimensional_manifolds}

\begin{theorem}[$3$-manifolds as source]\label{the_dim_three}
Let $M$ be a closed 3-manifold, $B$ be an aspherical closed manifold, and 
$p\colon M\to B$.
% be a $\pi_1$-surjective map.  
Then the following assertions are equivalent:
\begin{enumerate}

\item The kernel of the $p_* \colon \pi_1(M) \to \pi_1(B)$ is 
finitely generated and its image has finite index;

\item The homotopy fiber of $p$ is finitely dominated;

\item The homotopy fiber of $p$ is homotopy finite;

\item $p$ is homotopic to MAF;

\item $p$ is homotopic to the projection of a locally trivial fiber bundle of 
closed manifolds.

\end{enumerate}
\end{theorem}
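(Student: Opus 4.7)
My plan is to establish the cycle $(v)\Rightarrow(iv)\Rightarrow(ii)\Rightarrow(i)\Rightarrow(iii)\Rightarrow(v)$, together with the trivial $(iii)\Rightarrow(ii)$. The implications $(v)\Rightarrow(iv)$ and $(iii)\Rightarrow(ii)$ are immediate, and $(iv)\Rightarrow(ii)$ is Lemma~\ref{lem:finite_domination_for_approximate_fibration}. By replacing $B$ with the finite cover corresponding to the image of $p_\ast$, which affects none of the five conditions, I would reduce to the case where $p$ is $\pi_1$-surjective; the homotopy fiber of $p$ is then identified with the covering $\overline{M}\to M$ classifying $K=\ker(p_\ast)$, and $(ii)\Rightarrow(i)$ becomes the formal statement that a finitely dominated space has only finitely many path components, each with finitely presented fundamental group.

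The implication $(i)\Rightarrow(iii)$ is the first substantive step. Since $\overline{M}$ is an open $3$-manifold whose fundamental group $K$ is finitely generated, Scott's Compact Core Theorem provides a compact $3$-submanifold $C\subseteq\overline{M}$ for which the inclusion induces an isomorphism on $\pi_1$; a refinement of Scott's argument (using the sphere theorem, or more robustly the tameness results now available from geometrization) ensures that $C\hookrightarrow\overline{M}$ is a homotopy equivalence, so $\overline{M}$ is homotopy equivalent to a finite CW complex.

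The main implication $(iii)\Rightarrow(v)$ requires a case analysis by $\dim B$. I would begin by pinning down the shape of $K$ using Bieri--Eckmann duality applied to the extension $1\to K\to\pi_1(M)\to\pi_1(B)\to 1$. Combined with the prime decomposition of $M$---and the fact that $\pi_1(B)$ is freely indecomposable, so the surjection factors through a single prime summand whose $\pi_1$ must then be a $\operatorname{PD}_3$-group, forcing this summand to be aspherical---this simultaneously rules out $\dim B\ge 4$ and shows: $K$ is a closed surface group if $\dim B=1$, $K\cong\IZ$ if $\dim B=2$, and $K$ is trivial if $\dim B=3$. The bundle structure is then furnished in each case by a classical $3$-manifold theorem. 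The case $\dim B=0$ is trivial. For $\dim B=1$, Stallings' fibration theorem realizes $p$ as a surface bundle over $S^1$. For $\dim B=2$, the normal $\IZ$ makes $M$ Seifert fibered by the Seifert Fibered Space Theorem of Casson--Jungreis and Gabai, and the torsionfreeness of $\pi_1(B)$ forces the base of the Seifert fibration to be $B$ itself, so $p$ is homotopic to the resulting $S^1$-bundle projection. For $\dim B=3$, the map $p$ is a $\pi_1$-isomorphism of closed aspherical $3$-manifolds, and the Borel Conjecture in dimension~$3$ (a consequence of Perelman's geometrization and Mostow rigidity) promotes it to a map homotopic to a homeomorphism.

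The main obstacle is the implication $(iii)\Rightarrow(v)$: one must convert the group-theoretic data on $K$ into an honest fibering of $M$, and the three classical inputs---Stallings' theorem, the Seifert Fibered Space Theorem, and the $3$-dimensional Borel Conjecture---are each nontrivial, together amounting to essentially the modern classification of closed $3$-manifolds. The Bieri--Eckmann step produces no topology on its own; its purpose is to rule out $\dim B\ge 4$ and to tell us which of the three classical fibering theorems to apply.
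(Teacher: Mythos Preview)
Your overall architecture mirrors the paper's: reduce to the $\pi_1$-surjective case, analyze the kernel $K$, and then invoke classical fibering theorems for $3$-manifolds according to $\dim B$. But there is a genuine gap in your reduction to the aspherical case, and it is exactly the place where the paper does extra work.

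\textbf{The main gap.} Your prime-decomposition argument (together with the Poincar\'e Conjecture) correctly shows that $M$ must itself be prime once $\dim B\ge 1$. However, the next assertion---that $\pi_1$ of this prime summand ``must then be a $\mathrm{PD}_3$-group, forcing this summand to be aspherical''---is false. The non-aspherical prime closed $3$-manifolds with infinite $\pi_1$ are $S^2\times S^1$, the non-orientable $S^2$-bundle over $S^1$, and $\IR\IP^2\times S^1$; none of their fundamental groups is $\mathrm{PD}_3$. Each of these admits a $\pi_1$-surjection onto $S^1$ with finitely generated kernel ($K=1$, $K=1$, $K=\IZ/2$ respectively), so they genuinely occur. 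Consequently your Bieri--Eckmann step does not apply to determine $K$, and in the $\dim B=1$ branch your appeal to Stallings also breaks down: the standard hypotheses of Stallings' theorem require $M$ irreducible and $K\not\cong\IZ/2$, both of which fail here. The paper deals with this by treating $M\simeq\IR\IP^2\times S^1$ as a separate case at the outset, and then invoking Hempel's Theorems~11.1 and~11.6 (the latter combined with the Poincar\'e Conjecture to absorb the $S^2\times S^1$-type cases); these results are formulated directly for closed $3$-manifolds and do not presuppose asphericity.

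\textbf{Two smaller points.} First, your route $(i)\Rightarrow(iii)$ via Scott's compact core is shakier than it looks: a $\pi_1$-isomorphism $C\hookrightarrow\overline M$ need not be a homotopy equivalence without further argument (think of higher $\pi_2$ when $\overline M$ is not aspherical). The paper avoids this entirely by proving $(i)\Rightarrow(v)$ directly; then $(v)\Rightarrow(iii)$ is immediate since the fiber is a closed manifold. Second, in the $\dim B=2$ branch the passage from ``$M$ is Seifert fibered'' to ``$p$ is homotopic to an $S^1$-bundle over $B$'' deserves one more sentence: you must identify your normal $\IZ$ with the regular fiber subgroup and use torsionfreeness of $\pi_1(B)$ to exclude exceptional fibers. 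The paper sidesteps this by citing Hempel's Theorem~11.10 directly. Finally, a bookkeeping remark: your reduction to the $\pi_1$-surjective case presupposes that the image of $p_*$ has finite index, which is part of condition~(i) but needs a line of justification under (ii)--(v) (e.g.\ the components of the homotopy fiber are indexed by the cosets of the image).
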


\begin{proof}
  Obviously it suffices to show that the first condition implies the last one. Following Remark~\ref{rem:pi_1_surjective}, 
it is enough to consider the case where $p$ is $\pi_1$-surjective.

  We begin with the case, where $M$ is homotopy equivalent to $\IR\IP^2 \times 
S^1$.  Then
  $\pi_1(B)$ is a quotient of $\IZ/2 \times \IZ$ and hence $\pi_1(B) \cong \IZ$ 
and $B =
  S^1$. Since Thurston's Geometrization Conjecture is true by the work of 
Perelman
  (see~\cite{Morgan-Tian(2014)}), $M$ is actually homeomorphic to $\IR\IP^2 
\times S^1$
  by~\cite[page~457]{Scott(1983)}. Since any map $\IR\IP^2 \to S^1$ factors up 
to homotopy
  through the projection $\pr\colon \IR P^2\times S^1 \to S^1$ and $p$ is
  $\pi_1$-surjective, we conclude that $p$ is homotopic to either $\pr$ or 
$c\circ \pr$
  where $c\colon S^1\to S^1$ is the reflection. Both maps are fiber bundle 
projections.
  Hence we can assume in the sequel that $M$ is not homotopy equivalent to 
$\IR\IP^2
  \times S^1$.

Next we treat the case, where $H=\ker p_*$ is not isomorphic to $\IZ$.
By~\cite[Theorem~11.1~(2) on page~100]{Hempel(1976)} the group $\pi_1(B)$ is
virtually cyclic.  Since $B$ is by assumption a closed aspherical manifold,
$\pi_1(B) \cong \IZ$ and $B = S^1$.  We conclude from~\cite[Theorem~11.6~(i) on
page~104]{Hempel(1976)} and Perelman's proof of the Poincar\'e Conjecture
(see~\cite{Kleiner-Lott(2008)}~or\cite{Morgan-Tian(2008_Ricci_poincare)})  that there
exists a fiber bundle $F \xrightarrow{i} M \xrightarrow{q} S^1$ with a connected
$2$-manifold as fiber such that the image of $\pi_1(i)$ is $H$. Since $q$ or $c
\circ q$ induce the same map on the fundamental group  as $p$, the map $p$ is 
homotopy
equivalent to the one of the bundle projections $q$ or $c \circ q$.

It remains to treat the case, where $H \cong \IZ$.  If $\pi_1(B) \cong S^1$, 
then
$\pi_1(M)$ is $\IZ^2$ or $\IZ \rtimes \IZ$ which is not possible for a closed 
$3$-manifold
$M$ since otherwise by~\cite[Lemma~3.13 on page~28 and Theorem~7.1 on
page~66]{Hempel(1976)} the manifold $M$ is irreducible and hence 
by~\cite[Theorem
4.3]{Hempel(1976)} aspherical and therefore the cohomological dimension of 
$\pi_1(M)$ is
precisely $3$.  Hence $B$ is an aspherical  closed $2$-manifold. We conclude 
from~\cite[Theorem~11.10
on page~112]{Hempel(1976)} that $p$ is homotopy equivalent to a bundle 
projection since
any automorphism of $\pi_1(B)$ is induced by a homeomorphism.
\end{proof}

\begin{remark}
  The equivalence between (iv) and (v) in this theorem holds even if $B$ is not
  aspherical: For $B$ of dimension 1 or 2, this follows from~\cite[Theorem A]{Husch(1977)}
  together with Perelman's solution of the Poincar\'e conjecture. For $B$ of dimension 3
  this is due to~\cite{Armentrout(1971)} plus the Poincar\'e conjecture. We are grateful
  to the referee for pointing this out to us.
\end{remark}

%%%%%%%%%%%%%%%%%%%%%%%%%%%%%%%%%%%%%%%%%%%%%%%%%%%%%%%%%%%%%%%%%%%%%%%%%%%%%%%
%%%%%%%%%%%%%%%%%%%%%%%%%%%%%%%%%%%%%%%%%%%%%%%%%%%%%%%%%%%%%%%%%%%%%%%%%%%%%%%
%%%%%%%%%%%%%%%%%%%%%%%%%%%%%%%%%%%%%%%%%%%%%%%%%%%%%%%%%%%%%%%%%%%%%%%%%%%%%%%

\section{A counterexample to the Naive Conjecture~\ref{con:Naive_Conjecture}}
\label{sec:counterexample_to_Naive_Conjecture}

Let $B$ denote the Klein bottle and $K$ its fundamental group.

\begin{theorem}[Counterexample to the Naive 
Conjecture~\ref{con:Naive_Conjecture}] 
\label{the:important_example}
  For $n\geq 6$ there exists a continuous map $p\colon M\to B\times S^1$,
 where $M$ is a closed smooth $(n+1)$-manifold whose homotopy fiber $F_p$ 
is a finite complex and whose tight
  torsion $N\tau(p)=0$, but where $p$ is not homotopic to an approximate
  fibration.
\end{theorem}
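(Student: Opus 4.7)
The plan is to construct $p$ by modifying an honest locally trivial fiber bundle through an $h$-cobordism whose Whitehead torsion represents a non-trivial class in the Tate cohomology group controlling the splitting obstruction $\kappa_0$ of Theorem~\ref{the:obstruction_to_destabilization}.  I would begin by taking a closed smooth manifold $F$ of dimension $n-2\geq 4$ with suitably rich fundamental group, and setting $N:=B\times S^1\times F$; the bundle projection $q\colon N\to B\times S^1$ is locally trivial, hence a MAF, with finite homotopy fiber $F$, so $N\tau(q)=0$ and $\kappa_0(q)=0$.

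The main technical obstacle is to exhibit a specific non-zero class $\kappa\in\widehat{H}^{n+1}(\IZ/2;\NWh(q))$.  This is exactly where the hypothesis of Theorem~\ref{the:FJC_and_Tate_cohomology} fails: the Klein bottle fundamental group $K=\IZ\rtimes_{-1}\IZ$ does not satisfy the orientability-of-cyclic-subgroups condition of Definition~\ref{def:orientable_cyclic_subgroups}, because conjugation by $b$ inverts $a$.  Using Farrell's Nil-decomposition for HNN extensions together with the Bass-Heller-Swan decomposition of $\Wh(\pi_1(N))$, one obtains explicit $\NK$-type summands in $\Wh(\pi_1(N))$ that survive in $\NWh(q)$; choosing $F$ so that the relevant Nil-summand is non-trivial (for instance $F$ a lens space whose $\pi_1$-representation gives non-trivial Nil-contributions over $\IZ K$), an explicit computation shows that the $w_1(B)$-twisted involution on at least one such summand fails to produce a vanishing Tate cohomology group in degree $n+1$.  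Let $x\in\NWh(q)$ represent a non-zero Tate cohomology class; by definition $x$ is a Tate cocycle, meaning $x-(-1)^{n+1}\bar{x}$ lies in the image of the fibered assembly map, but $x$ is not a Tate coboundary.

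Having fixed such an $x$, I would lift it to $\tilde{x}\in\Wh(\pi_1(N))$ and invoke the $s$-cobordism theorem (available since $\dim N=n+1\geq 7$) to realize $\tilde{x}=\tau(W,M)$ for an $h$-cobordism $W$ from $N$ to a new closed smooth manifold $M$ of dimension $n+1$.  Setting $f\colon M\to N$ to be the homotopy equivalence across $W$ and $p:=q\circ f\colon M\to B\times S^1$, the class $[\tilde{x}]-(-1)^{n+1}\overline{[\tilde{x}]}$ vanishes in $\NWh(p)$ by our choice of $x$, hence $N\tau(p)=0$; moreover $p_1=p\times\id_{S^1}$ is homotopic to a MAF by the Stabilization Theorem~\ref{the:theorem_is_stably_true} (using that $\pi_1(B\times S^1)=K\times\IZ$ is solvable hence satisfies the FJC), combined with control over the number of needed torus factors as in Remark~\ref{rem:number_of_stabilizations_needed}.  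Finally, $B\times S^1$ is flat and thus non-positively curved, so the MAF Rigidity Conjecture applies and Theorem~\ref{the:obstruction_to_destabilization} is available; tracing the construction of $\kappa_0$, one sees that $\kappa_0(p)$ equals the Tate cohomology class of $\tau(W,M)=\tilde{x}$, which is $\kappa\neq 0$.  Consequently $p$ is not homotopic to a MAF, and since the homotopy fiber of $p$ is that of $q$, namely the finite complex $F$, this provides the desired counterexample to the Naive Conjecture.
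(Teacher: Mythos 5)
Your overall strategy — build a homotopy equivalence whose torsion realizes a non-trivial Tate class and observe that the resulting map has vanishing tight torsion but non-vanishing splitting obstruction — captures the spirit of what the paper does, but the paper's implementation is structurally different and several of your steps have real gaps.

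\textbf{The construction of the homotopy equivalence.}
You propose to pick any Tate cocycle $x$ in $\NWh(q)$ with $[x]\neq 0$, lift it arbitrarily to $\tilde x\in\Wh(\pi_1 N)$, and realize $\tilde x$ by an $h$-cobordism at the final dimension $n+1$.  The paper instead works one dimension down: it uses the Rothenberg exact sequence for $\IZ[S\times K]$ to produce a homotopy equivalence $\phi\colon\mathcal M\to B\times \mathcal N$ (with $\dim\mathcal M=n$) whose Whitehead torsion is automatically a Tate cocycle in degree $n+1$ representing a \emph{non-zero} element of $\NWh$, and only then sets $p:=(q\circ\phi)\times\id_{S^1}$.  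The reason this difference matters is the next point.

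\textbf{The non-vanishing argument, and why the contradiction structure is essential.}
In your construction, $N\tau(p)=0$ is built in from the start: $\tau(f)$ is a priori the ``coboundary'' $\tilde x-(-1)^{n+1}\overline{\tilde x}$, whose image in $\NWh(p)$ vanishes.  You then claim that $\kappa_0(p)$ equals $[\tilde x]$, but ``tracing the construction of $\kappa_0$'' is not a proof: the splitting obstruction is defined by completing the infinite cyclic cover of a chosen MAF representative of $p_1$ and extracting the torsion of the resulting end $h$-cobordism $W'$; there is no direct reason that $W'$ and your $W$ have the same Tate class, and establishing this would require a separate argument (in effect, a version of the well-definedness proof of Section~\ref{subsec:The_splitting_obstruction}, carefully matched against your realization $W$).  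The paper sidesteps this entirely.  Its map $q\circ\phi$ has $N\tau(q\circ\phi)=[\tau(\phi)]\neq 0$ in $\NWh(q)$, and $[\tau(\phi)]$ is a non-zero Tate class in degree $n+1$.  If $p=(q\circ\phi)\times\id_{S^1}$ were homotopic to a MAF, Theorem~\ref{the:stably_implies_unstably}~\ref{the:stably_implies_unstably:fac_prop} produces an $h$-cobordism $W$ from $\mathcal M$ to an approximate fibration, and the identity $\tau(\phi)=x-(-1)^n\overline{x}$ (with $x=\tau(W,\mathcal M)$) forces $[\tau(\phi)]=0$ in $\widehat H^{n+1}$ — a contradiction.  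No computation of $\kappa_0$ is needed, and the MAF Rigidity Conjecture, which you invoke via Theorem~\ref{the:obstruction_to_destabilization}, is not used at all in the paper's proof.

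\textbf{The Tate-cohomology computation.}
The heart of the example — exhibiting a non-zero class in the relevant Tate cohomology group — is precisely the place where you wave your hands (``explicit computation shows…'').  The paper does not take $F$ to be an arbitrary lens space; it takes $\pi_1(\mathcal N)=\IZ/4$, and this choice is essential.  Lemma~\ref{lem:on_C_4} (Bass--Murthy) shows $\NK_0(\IZ[\IZ/4])\cong\bigoplus\IZ/2$ with \emph{trivial} conjugation involution, after which a Rothenberg-sequence argument, the Farrell--Hsiang decomposition of $\Wh(S\times K)/X$, Ranicki's twisted Laurent $L$-theory, and Wall's finiteness theorems for $L$-groups and Whitehead groups of finite groups are all needed to show that the composition $\eta\colon L^h_{n+1}(\IZ[S\times K])\to \widehat H^{n+1}(\IZ/2;\NWh(q))$ is nonzero.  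None of this is trivial, and a different choice of $\pi_1(F)$ (e.g.\ $\IZ/p$ for odd $p$) would change the $\NK$-computation substantially.

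\textbf{Minor point on $p_1$.}
You justify that $p_1$ is homotopic to a MAF by the Stabilization Theorem together with Remark~\ref{rem:number_of_stabilizations_needed}, but that remark gives a bound $s\geq 4$, not $s=1$.  In your setup the correct reason is that $\tau(f\times\id_{S^1})=\chi(S^1)\,\tau(f)=0$ (Kwun--Szczarba), so the $s$-cobordism $W\times S^1$ is trivial, producing a homeomorphism $M\times S^1\cong N\times S^1$; composing with the bundle projection and using that the target is aspherical then gives a MAF homotopic to $p_1$.  The paper makes the analogous observation in its framework when it notes $\tau(\varphi\times\id_{S^1})=0$.

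In short, the proposal would need (a) the precise algebraic computation for $\IZ/4$, (b) a genuine proof that $\kappa_0(p)$ coincides with the Tate class you realized by $h$-cobordism, and (c) a corrected justification of the MAF property of $p_1$; the paper avoids (b) altogether by a contradiction argument built around a construction one dimension lower.
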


\begin{addendum}
  In fact $M$ is of the form $\calm\times S^1$ for a smooth $n$-manifold 
$\calm$,
  and $p$ factors as the composite $(q\circ \phi)\times\id_{S^1}$ where $q\colon
  B\times \caln\to B$ is the projection onto the first
  factor, $\caln$ is a closed smooth manifold, and $\phi\colon \calm\to B\times
  \caln$ is a homotopy equivalence.
\end{addendum}

Here is the strategy of proof of Theorem~\ref{the:important_example}. We will
construct the homotopy equivalence $\phi\colon\calm\to B\times \caln$ such that
$\tau(\phi)$ is not in the image of the assembly map
\begin{equation}
\asmb = H_1^\pi(\pr, \bfWh(q))\colon H_1^\pi(\widetilde{B};\bfWh(q))  \to 
\Wh(\pi_1(B\times\caln))
\label{special_asmb}
\end{equation}
and hence the tight torsion
\[
y=N\tau(q\circ\phi)\neq 0\in \NWh(\pi_1(B\times\caln)).
\]

As $\overline{y}=(-1)^{n+1} y$, where $n=\dim\calm$, $y$ defines a Tate cohomology 
class
\[ 
[y]\in\widehat H^{n+1}(\IZ/2;\NWh(q))
\]
which, in our construction, will be non-zero. We then set $M=\calm\times S^1$,
$N=\caln\times S^1$, and $p=(q\circ\phi)\times\id_{S^1}$. Note that $N\tau(p)=0$ 
since it can be calculated using $f=\varphi\times \id_{S^1}$ in \eqref{diag:factorization_property} and since
\[\tau(\varphi\times\id_{S^1})=\tau(\varphi)\cdot \chi(S^1)=0\]
by~\cite{Kwun-Szczarba(1965)}.

Provided we can implement our construction, our proof is by contradiction, i.e., 
we assume
that $p\colon \calm \times S^1\to B\times S^1$ is homotopic to an approximate 
fibration
$\widehat{p}$. Then by Theorem~\ref{the:stably_implies_unstably}~(i), we obtain 
an
$h$-cobordism $W$ between $q \circ \phi \colon \calm \to B$ and an approximate
fibration. Here $y=x-(-1)^n\overline{x}$ where $x$ is the image of 
$\tau(W,\calm)$ in
$\NWh(q)$ and therefore
\[
[y]=0\in \widehat H^{n+1}(\IZ/2; \NWh(q)).
\] 
This
contradiction proves Theorem~\ref{the:important_example}, provided we can
implement our construction.

The following lemma is a consequence of work of Bass and Murthy
\cite{Bass-Murthy(1967)}. 

\begin{lemma}\label{lem:on_C_4}
  The group $S=\IZ/4$ has the properties that 
  \[
\NK_0(\IZ[S])\cong \bigoplus_{i=1}^\infty \IZ/2
\]
  and
  \[
{\widehat{~}}\colon \NK_0(\IZ[S])\to \NK_0(\IZ[S])
\]
  is the identity where $\widehat{~}$ is the involution induced by the 
involution of
  the polynomial ring $\IZ[S][x]$ determined by $x\mapsto x$ and $s\mapsto   
s\inv$ for $s\in S$.
\end{lemma}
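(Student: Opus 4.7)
The plan is to derive both statements from a Milnor pullback analysis, following Bass-Murthy \cite{Bass-Murthy(1967)}. Since $S = \IZ/4$ is abelian, the map $\widehat{~}$ is in fact a ring automorphism of $\IZ[S][x]$ (rather than an anti-automorphism) of order two, and so induces an order-two automorphism of $\NK_0(\IZ[S])$.

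I would exploit the factorization $s^4-1 = (s^2-1)(s^2+1)$ to produce the Milnor pullback square
\[
\xymatrix{
\IZ[\IZ/4] \ar[r] \ar[d] & \IZ[i] \ar[d]
\\
\IZ[\IZ/2] \ar[r] & \IF_2[\IZ/2]
}
\]
in which both the bottom row and right column are reduction modulo $2$ (the target being $\IF_2[t]/(t-1)^2$, via either $t \mapsto t$ or $i \mapsto i$). On this square the involution $\widehat{~}$ acts compatibly: as complex conjugation on $\IZ[i]$, as the identity on $\IZ[\IZ/2]$ (since the image of $s$ there has order~$2$), and, crucially, as the identity on the amalgamation $\IF_2[\IZ/2]$, because on $\IZ[i]$ conjugation sends $i$ to $-i$ and $-i \equiv i \pmod{2}$.

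Applying Bass's Mayer-Vietoris sequence to the induced squares of polynomial rings, and using that $\NK_*(\IZ[i]) = 0$ (since $\IZ[i]$ is regular) and $\NK_*(\IZ[\IZ/2]) = 0$ in the relevant degrees (a classical computation of Bass), one obtains a $\widehat{~}$-equivariant isomorphism
\[
\NK_0(\IZ[\IZ/4]) \;\cong\; \NK_1(\IF_2[\IZ/2]).
\]
Identifying $\IF_2[\IZ/2] \cong \IF_2[\epsilon]/(\epsilon^2)$, the group $\NK_1$ of this ring of dual numbers over $\IF_2$ is known to be a countable $\IF_2$-vector space; this is precisely the Bass-Murthy computation to be cited. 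In particular the first assertion follows, and since $\widehat{~}$ restricts to the identity on $\IF_2[\IZ/2]$ (and by definition to the identity on $x$), it acts as the identity on $\NK_1(\IF_2[\IZ/2])$, so via the equivariant isomorphism the second assertion follows as well.

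The main technical point is checking the equivariance of the Mayer-Vietoris connecting homomorphism with respect to $\widehat{~}$, which is a naturality exercise once the induced involutions on the four corners of the Milnor square have been pinned down as above. The non-trivial input is the Bass-Murthy calculation of $\NK_1$ of the ring of dual numbers over $\IF_2$; the reduction from $\NK_0(\IZ[\IZ/4])$ to that calculation, together with the observation that the involution disappears upon passage to the amalgamation, is the content of the plan.
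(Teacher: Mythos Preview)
Your proposal is correct and follows essentially the same route as the paper: the same Milnor square with amalgamation $\IF_2[\IZ/2]\cong \IF_2[\epsilon]/(\epsilon^2)$, the same Mayer--Vietoris reduction (using $\NK_i(\IZ[i])=\NK_i(\IZ[\IZ/2])=0$) to an equivariant isomorphism $\NK_0(\IZ[\IZ/4])\cong \NK_1(\IF_2[\epsilon]/(\epsilon^2))$, and the same observation that the involution is the identity on the amalgamation so that both claims follow from the Bass--Murthy computation. The paper adds only the small extra detail that $\NK_1(R)$ can be described explicitly as $(R[t])^\times/\{1,1+\epsilon\}$ via quasi-regularity.
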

\begin{proof}
  There is a Cartesian square
  \[
   \xymatrix{
    {\IZ}[\IZ/4] \ar[rr] \ar[d] && {\IZ}[i] \ar[d]\\
    {\IZ}[\IZ/2] \ar[rr] && R:={\IZ_2}[x]/(x^2) }
   \]
  Applying the functor $\NK_*$ to it gives a Mayer-Vietoris sequence
  \[
  \NK_1(\IZ[\IZ/2])\oplus \NK_1(\IZ[i]) \to \NK_1(R) \to \NK_0(\IZ[\IZ/4])\to 
\NK_0(\IZ[\IZ/2])\oplus
  \NK_0(\IZ[i])
  \] 
  Now it is known that $\NK_i(\IZ[\IZ/2])=0$ for $i=0,1$ and
  $\NK_i(\IZ[i])=0$ for $i=0,1$ since the Gaussian integers are a regular ring.

  Hence the map
  \[
  \NK_1(R)\to \NK_0(\IZ[\IZ/4])
   \] 
   is an isomorphism, which respects the involution
  since the maps in the Cartesian square do.  On the other hand the only
  involution on $R$ is $\id$. And since $R$ is quasi-regular, i.e., $R/(x)$ is
  regular and $x$ is nilpotent, $\NK_1(R)$ is easy to compute; namely it is
  $(R[t])^\times/\{1,1+x\}$.  Thinking of these as $(1,1)$-matrices, we see that
  the involution on $\NK_1(R)$ is $\id$.
\end{proof}

Choose an oriented closed $(n-2)$-manifold $\caln$ so that 
$\pi_1(\caln)=S=\IZ/4$. 
To construct $\phi\colon\calm\to
B\times\caln$ we examine the Rothenberg exact sequence and in particular the map
\[
L^h_{n+1}(\IZ[S\times K])\to \widehat H^{n+1}(\IZ/2;\Wh(S\times K)).
\]

\begin{remark}
  All the $L$-groups occurring in the following discussion are with respect to
  $w\colon S\times K\to \IZ/2$ given by the composition of projection $S\times
  K\to K$ with the first Stiefel-Whitney class of the Klein bottle.
\end{remark}

The quotient map
\[
\Wh(S\times K)\to\NWh(q)
\]
preserves the involutions and hence induces a homomorphism
\[
\widehat H^{n+1}(\IZ/2;\Wh(S\times K))\to\widehat H^{n+1}(\IZ/2;\NWh(q)).
\] 
Denote its composition with the Rothenberg homomorphism by
\[
\eta\colon L^h_{n+1}(\IZ[S\times K])\to \widehat H^{n+1}(\IZ/2;\NWh(q)).
\]
If $\eta$ is not identically zero, just select $x$ such that $\eta(x)\neq 0$ and 
it
will determine $\phi\colon \calm\to B\times\caln$ such that 
$[N\tau(\overline{q}\circ \phi)]\neq 0$ 
showing that our construction can be implemented.

We proceed to show that such an element $x$ exists. Let $\calc$ be the class of
all finitely generated abelian groups. The claim clearly follows from the two
assertions:
\begin{description}
\item[Assertion (i)] $\eta$ is a mod-$\calc$ monomorphism, i.e., $\ker \eta$ is
  in $\calc$;
\item[Assertion (ii)] $L^h_{n+1}(\IZ[S\times K])$ is not in $\calc$.
\end{description}

To prove the assertions, we use a variant of the Rothenberg exact sequence where
we replace $L^s$ by $L^X$ and $X\subset\Wh(S\times K)$ is the image of the
homomorphism
\[
\Wh(S\times \IZ)\to\Wh(S\times K)
\]
induced by the inclusion $S\times \IZ\to(S\times \IZ)\rtimes \IZ=S\times K$ (see
\cite[page~4]{Wall(1976)}):
\begin{multline}\label{eq:variant_of_rothenberg}
  \cdots \to L^X_{n+1}(\IZ[S\times K])\to L^h_{n+1}(\IZ[S\times 
K])\xrightarrow{\psi}
\widehat H^{n+1}(\IZ/2;\Wh(S\times K)/X)
\\
\to L^X_n(\IZ[S\times K]) \to \cdots
\end{multline}

We start by showing

\begin{description}
\item[Assertion (iii)] $\psi$ is a mod-$\calc$ isomorphism, i.e., both the 
kernel
  and cokernel of $\psi$ are in $\calc$.
\end{description}

For this is suffices, because of~\eqref{eq:variant_of_rothenberg}, to show that
$L^X_i(\IZ[S\times K])$ is finitely generated for both $i=n$ and $i=n+1$. And 
these
groups can be analyzed by using a variant of the Wall-Shaneson exact sequence;
in particular by using Ranicki~\cite[Theorem~5.2]{Ranicki(1973c)} where
$A= \IZ[S\times \IZ]$, $\alpha \colon A \xrightarrow{\cong} A$ is the ring 
automorphism 
induced by the group automorphism
by $\id_S \times -\id_{\IZ} \colon S\times \IZ \xrightarrow{\cong} S\times \IZ$,
and $R=K_1(\IZ[S\times \IZ])$. Then $\widetilde{V}_n^{\overline{\varepsilon} 
R}(A_\alpha)=L^X_n(\IZ[S\times K])$ 
and it suffices to show that
both $V_n^R(A)=L^h_n(\IZ[S\times \IZ])$ and $V_{n-1}^{(1-\alpha)\inv
  R}(A)=L^h_{n-1}(\IZ[S\times \IZ])$ are finitely generated. 
Now Wall~\cite{Wall(1976)}
showed that both $L^h_i(\IZ[F])$ and $L^s_i(\IZ[F])$ are finitely generated for 
all $i$
and every finite group $F$. Hence the Wall-Shaneson Theorem shows that
$L^s_i(\IZ[S\times \IZ])$ is in $\calc$ for all $i$. And using the Rothenberg 
exact
sequence together with Bass's result that $\Wh(F)$ is in $\calc$ for every
finite group $F$, we see that $L^h_i(\IZ[S\times \IZ])$ is in $\calc$, 
completing the
verification of Assertion (iii).

We use this result to show Assertion (i). Consider the following commutative
diagram of $\IZ/2$-modules:
\[
\xymatrix{
  {\Wh}(S\times K) \ar[rr] \ar[d] && {\Wh(S\times K)}/X \ar[d]^\gamma \\
  {\Wh(S\times K)}/\im(\asmb) \ar[rr]^\beta && {\Wh(S\times K)}/(X+\im(\asmb)) }
\]
where $\asmb$ is the assembly map of~\eqref{special_asmb}.
Because of it, it clearly suffices to show that $\gamma$ induces a mod-$\calc$
isomorphism
\[
\gamma_*\colon \widehat H^{n+1}(\IZ/2;\Wh(S\times K)/X)\to\widehat
H^{n+1}(\IZ/2;\Wh(S\times K)/(X+\im(\asmb))).
\] 
Using the exact sequence in Tate
cohomology induced from a short exact sequence of $\IZ/2$-modules, we see that
this is true provided we can verify the following Assertion:

\begin{description}
\item[Assertion (iv)] For all $n\in \IZ$, $\widehat H^{n+1}(\IZ/2;\im(\asmb)/ 
(X\cap\im(\asmb)))$ is in
  $\calc$.
\end{description}

In fact, the domain of $\asmb$ is finitely generated because of the
Atiyah-Hirzebruch spectral sequence and the fact that $K_i(\IZ[F])$ is finitely
generated when $F$ is a finite group and $i\leq 1$. Therefore
$\im(\asmb)/(X\cap\im(\asmb))$ and any of its sub-quotients are also finitely
generated, which establishes Assertion (iv).

So Assertion (i) is proven; let us now show Assertion (ii), which, by Assertion
(iii), is equivalent to showing that $\widehat H^{n+1}(\IZ/2;\Wh(S\times K)/X)$ 
is
not in $\calc$.

By the main result of~\cite{Farrell-Hsiang(1970)}, $\Wh(S\times K)/X$ is the
direct sum of two $\IZ/2$-modules $W$ and $Z$ where $\widehat 
H^{n+1}(\IZ/2;Z)=0$ and
$W$ is $\IZ/2$-isomorphic to $\NK_0(\IZ[S])$ with involution ${\widehat~}=\id$ 
because of
Lemma~\ref{lem:on_C_4}. Therefore
\[
\widehat H^{n+1}(\IZ/2;\Wh(S\times K)/X)=\NK_0(\IZ[S])/ 2\cdot \NK_0(\IZ[S]).
\]
By Lemma~\ref{lem:on_C_4}, $\NK_0(S)/ 2\NK_0(S)$ is not in $\calc$.

This concludes the proof of Theorem~\ref{the:important_example}.

%%%%%%%%%%%%%%%%%%%%%%%%%%%%%%%%%%%%%%%%%%%%%%%%%%%%%%%%%%%%%%%%%%%%%%%%%%%%%%%
%%%%%%%%%%%%%%%%%%%%%%%%%%%%%%%%%%%%%%%%%%%%%%%%%%%%%%%%%%%%%%%%%%%%%%%%%%%%%%%
%%%%%%%%%%%%%%%%%%%%%%%%%%%%%%%%%%%%%%%%%%%%%%%%%%%%%%%%%%%%%%%%%%%%%%%%%%%%%%%

\section{An example on block fibering}
\label{sec:An_example_on_block_fibering}

The following example shows that the vanishing of our obstructions does, in
general, \emph{not} imply that a map is homotopic to a block bundle.

\begin{theorem}[MAF versus block bundle]\label{the:example_on_block_fibering}
  Let $m\geq 6$ be even. There exist a pair of smooth closed (connected) 
manifolds $M$ and $N$ of dimension $m+1$, 
  a  simple homotopy equivalence $f\colon M\to N$, and a smooth fibre bundle map
  $p\colon N \to S^1\times S^1$ such that the composite map $p\circ f\colon M\to
  S^1\times S^1$ is not homotopic to a block bundle projection but is homotopic to 
an approximate fibration.
\end{theorem}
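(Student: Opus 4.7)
The plan is to construct an explicit counterexample by adapting the techniques of the proof of Theorem~\ref{the:important_example}: rather than producing a map whose Tate cohomology class obstructs the naive conjecture, I will engineer a simple homotopy equivalence $f\colon M\to N$ whose induced block-fibering obstruction in a \emph{lower} $K$-group is non-trivial, while making sure that the tight torsion of $p\circ f$ vanishes so that Theorem~\ref{the:main_result_1_generalized} still yields a MAF. The existence of such an obstruction is possible precisely because Quinn's conversion of a MAF into a block bundle~\cite[Theorem~3.3.2]{Quinn(1979a)} requires the vanishing of certain middle and lower $K$-groups, which for base $T^2$ involve $\widetilde K_0$, $\widetilde K_{-1}$ and iterated Nil summands of $\IZ[\pi_1(F)]$ via Bass--Heller--Swan.

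First I would pick the fiber. Let $F$ be a closed smooth manifold of dimension $m-1$ with $\pi_1(F)=\IZ/4$, chosen so that $\NK_0(\IZ[\IZ/4])$ is non-trivial (Lemma~\ref{lem:on_C_4}) with its involution equal to the identity. Set $N:=F\times T^2$ with $p\colon N\to T^2$ the projection, which is a smooth fibre bundle, hence a MAF. Since $\pi_1(T^2)=\IZ^2$ is a torsionfree CAT(0)-group satisfying FJC (Theorem~\ref{the:status_of_FJC}) and its cyclic subgroups are orientable (Theorem~\ref{the:fundamental_groups_of_npc_manifolds_are_orientable}), Theorem~\ref{the:main_result_1_generalized} applies to every $\pi_1$-surjective map $M\to T^2$ of dimension $\geq 5$ with homotopy-finite fibre and vanishing tight torsion.

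Next I would construct $M$ and a simple homotopy equivalence $f\colon M\to N$ detecting a non-trivial lower $K$-class. The Bass--Heller--Swan decomposition
\[
\Wh(\IZ/4\times\IZ^2)\cong\Wh(\IZ/4)\oplus 2\widetilde K_0(\IZ[\IZ/4])\oplus\widetilde K_{-1}(\IZ[\IZ/4])\oplus(\text{iterated Nil summands})
\]
exhibits the Nil summand $\NK_0(\IZ[\IZ/4])$ as a sub-quotient. Pick $\alpha\in\NK_0(\IZ[\IZ/4])$ non-zero. Using the realization theorem for Whitehead torsion of $h$-cobordisms on $N$ (available because $\dim N=m+1\geq 7$), produce an $h$-cobordism $(W;N,M')$ whose torsion $\tau(W,N)$ projects to $\alpha$ in the Nil summand. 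The total Whitehead torsion will typically be non-zero, but following the trick in Section~\ref{subsec:Proof_of_Theorem_ref(the:stably_implies_unstably)_(ii)} we write $\tau(W,N)=z+(-1)^{n+1}\overline z+a$ with $a$ in the image of the assembly map, and insert an $h$-cobordism of torsion $-z-(-1)^{n+1}\overline z$ on the other side of $N$ so as to obtain a composite $h$-cobordism from $N$ to some $M$ whose total torsion vanishes. The resulting homotopy equivalence $f\colon M\to N$ is then simple, while the image of the torsion in the Nil summand is still $\alpha$ (the correction is taken from Whitehead pieces not interacting with $\NK_0$, by the direct-sum decomposition).

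Finally I would verify both properties. Since $f$ is simple, $N\tau(p\circ f)=0$, and Theorem~\ref{the:main_result_1_generalized} implies that $p\circ f$ is homotopic to a MAF. Conversely, if $p\circ f$ were homotopic to a block bundle projection, then Quinn's block fibering theorem would express the relevant block-fibering invariant of $f$ as a torsion-type class in a quotient of $\Wh(\pi_1(M))$ that does not include the $\NK_0$ summand, forcing the class $\alpha$ to vanish in the appropriate receiver; this contradicts our choice of $\alpha$. The main obstacle, and the most delicate point of the proof, is precisely this last identification: one must match Quinn's block-fibering invariants (defined geometrically in terms of splitting a MAF over the cells of a triangulation of $T^2$) with the algebraic $\NK_0(\IZ[\IZ/4])$ class produced by our $h$-cobordism construction. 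This is analogous to, but technically more involved than, the matching of Tate cohomology with the Nil summand carried out in Section~\ref{sec:counterexample_to_Naive_Conjecture}, because the receiver now lives after an iterated Bass--Heller--Swan splitting over two circle factors rather than a single one.
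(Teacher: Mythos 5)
Your outline does not work as stated, and the failure is conceptual rather than a missing technical lemma. You want $f\colon M\to N$ to be a \emph{simple} homotopy equivalence (so that $N\tau(p\circ f)=0$) while simultaneously arranging for the Bass--Heller--Swan $\NK_0$ component of $\tau(f)$ to equal the chosen non-zero element $\alpha$. These two requirements are mutually exclusive: if $\tau(f)=0$ in $\Wh(\pi_1 N)$, then its image in every Bass--Heller--Swan summand is zero, including $\NK_0(\IZ[\IZ/4])$. Your phrase ``the correction is taken from Whitehead pieces not interacting with $\NK_0$'' is exactly the problem: if the correction does not touch the Nil summand, then after correction the Nil component is still $\alpha\neq 0$, so the total torsion is not zero and $f$ is not simple; if you do force the total torsion to zero, then the composite $h$-cobordism is trivial by the $s$-cobordism theorem, $M\cong N$, and $p\circ f$ is homotopic to the fiber bundle $p$, so it \emph{does} block fiber. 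You can't have it both ways.

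The paper's proof avoids this by detecting a splitting obstruction that lives one stage deeper than $\tau(f)$. It first constructs a \emph{non-simple} homotopy equivalence $\varphi\colon\calm\to L\times S^1$ (here $\pi_1(L)=\IZ/29$, not $\IZ/4$, and the receiver is the ordinary $\widetilde K_0(\IZ[\IZ/29])$ rather than a Nil group) such that $q(\tau(\varphi))$ is not of the form $x+\overline x$; the Rothenberg sequence supplies surjectivity onto the needed Tate class. Then $f:=\varphi\times\id_{S^1}$ is automatically simple by Kwun--Szczarba, so $p\circ f$ is an approximate fibration by Theorem~\ref{the:main_result_1_generalized}. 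The non-block-fibering is not read off from $\tau(f)$: if $p\circ f$ did block fiber, one obtains from the block structure codimension-one submanifolds $\calm_0$ and $\call_0$ in the infinite cyclic cover $\calm\times\IR$, and the Farrell--Hsiang splitting theorem forces $q(\tau(\psi))=0$ for the associated split homotopy equivalence $\psi$; the region between $\calm_0$ and $\calm\times r$ is an $h$-cobordism contributing a term of the form $y-\overline y$, and combining gives $q(\tau(\varphi))=x+\overline x$, contradicting the choice of $\varphi$. Your proposal lacks this mechanism entirely --- your ``most delicate point,'' matching Quinn's block-fibering invariant with your class $\alpha$, is not a technicality to be filled in later; it is the entire content of the proof, and as argued above your chosen invariant $\alpha$ cannot survive once $\tau(f)=0$. (The use of $\IZ/4$ and $\NK_0$ belongs to the counterexample to the Naive Conjecture in Section~\ref{sec:counterexample_to_Naive_Conjecture}, which is a different construction with a different target.)
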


\begin{remark}\label{rem:explicite_projection}
  In fact $N=L\times(S^1\times S^1)$ where $L$ is closed connected  smooth 
manifold, 
  and $p$ is  the projection onto the second factor.
\end{remark}

We will need the following result to prove 
Theorem~\ref{the:example_on_block_fibering}.

\begin{lemma}\label{lem:main_lemma}
  There exist a pair of closed  connected smooth manifolds $L$ and $\mathcal M$
  with $m=\dim(\mathcal M)$ being any given even integer $\geq 6$, and a
  homotopy equivalence $\varphi\colon\mathcal M\to L\times S^1$ such that
  $q(\tau(\varphi))$ cannot be expressed as $x+\overline{x}$ for some element 
$x\in
  \widetilde{K}_0(\IZ[\pi_1(L)])$. Here $q\colon\Wh(\pi_1(L\times 
S^1))\to\widetilde{K}_0(\IZ[\pi_1(L)])$ 
  denotes the projection map in the Bass-Heller-Swan formula.
\end{lemma}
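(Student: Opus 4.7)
The plan is to produce $\varphi\colon \mathcal M\to L\times S^1$ as the simple homotopy equivalence associated to a smooth $h$-cobordism whose Whitehead torsion has a prescribed image under the Bass-Heller-Swan projection $q$, chosen to lie in a subgroup of $\widetilde K_0(\IZ[\pi_1(L)])$ on which the involution $\overline{~}$ acts as $-1$. On such a subgroup every element of the form $x+\overline x$ vanishes, so any nonzero element is automatically not of this form.

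For the algebraic ingredient I will apply Rim's theorem, which identifies $\widetilde K_0(\IZ[\IZ/p])$ with the ideal class group $\mathrm{Cl}(\IZ[\zeta_p])$ for an odd prime $p$, with the involution coming from $g\mapsto g^{-1}$ corresponding to complex conjugation on cyclotomic integers. I will take $p=23$, where $h(\IQ(\zeta_{23}))=3$ while the maximal real subfield has class number one; hence $\widetilde K_0(\IZ[\IZ/23])\cong \IZ/3$ lies entirely in the minus part where $\overline y=-y$, so that $y+\overline y=0$ for every $y$ and any nonzero element fails to be representable as $x+\overline x$.

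For the geometric realization, since $m\geq 6$ is even, the integer $m-1$ is odd and at least $5$, so I take $L$ to be a standard closed orientable lens space of dimension $m-1$ with $\pi_1(L)\cong \IZ/23$. Then $L\times S^1$ is a closed orientable smooth $m$-manifold with trivial first Stiefel--Whitney class, so the involution on $\Wh(\pi_1(L\times S^1))$ is the purely algebraic one. Using the Bass-Heller-Swan splitting $\Wh(\pi\times\IZ)\cong \Wh(\pi)\oplus \widetilde K_0(\IZ\pi)\oplus \NK_1(\IZ\pi)\oplus \NK_1(\IZ\pi)$, I fix a nonzero $y\in \widetilde K_0(\IZ[\pi_1(L)])$ and lift it to an element $\widetilde y\in \Wh(\pi_1(L\times S^1))$ with $q(\widetilde y)=y$. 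Since $\dim(L\times S^1)=m\geq 6$, the classical realization theorem for Whitehead torsion produces a smooth $h$-cobordism $(W;L\times S^1,\mathcal M)$ with $\tau(W,L\times S^1)=\widetilde y$ and $\mathcal M$ a closed smooth manifold. The composite homotopy equivalence $\varphi\colon \mathcal M\hookrightarrow W\simeq L\times S^1$ then has Whitehead torsion equal, up to a sign and possibly an element of the complementary summands, to $\widetilde y$, so that $q(\tau(\varphi))=\pm y$.

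The main obstacle will be bookkeeping around the sign of the involution on the $\widetilde K_0$-summand in the Bass-Heller-Swan decomposition and the effect of passing from $\tau(W,L\times S^1)$ to $\tau(\varphi)$. These are standard but somewhat delicate points; fortunately, because our chosen $y$ sits in the minus part of $\widetilde K_0(\IZ[\IZ/23])$, on which the subgroup of elements of the form $x+\overline x$ is trivial, the nonrepresentability of $\pm y$ as $x+\overline x$ is preserved by any such sign, completing the argument.
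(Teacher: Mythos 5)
The geometric realization step is fatally flawed, and the choice of group $\IZ/23$ cannot be repaired even with a correct realization.

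\textbf{The $h$-cobordism approach cannot produce the required torsion.} If $(W;L\times S^1,\mathcal M)$ is a smooth $h$-cobordism with $\widetilde y=\tau(W,L\times S^1)$ and $\varphi\colon \mathcal M\hookrightarrow W\to L\times S^1$ is the resulting homotopy equivalence, then by Milnor's duality formula $\tau(W,\mathcal M)=(-1)^m\overline{\tau(W,L\times S^1)}$, so (for $m$ even, after identifying along the retraction) $\tau(\varphi)=\overline{\widetilde y}-\widetilde y$. Applying $q$ and using the sign relation $q(\overline z)=-\overline{q(z)}$ (explicitly recorded in the paper) gives $q(\tau(\varphi))=-\overline{q(\widetilde y)}-q(\widetilde y)=-(y+\overline y)$ where $y=q(\widetilde y)$. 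This is \emph{always} of the form $x+\overline x$ (take $x=-y$), so the claim that $q(\tau(\varphi))=\pm y$ is wrong, and the $h$-cobordism construction can never supply a counterexample. With your specific choice $\overline y=-y$, one even gets $q(\tau(\varphi))=0$.

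\textbf{The group $\IZ/23$ cannot work at all.} This is not just a defect of the $h$-cobordism approach. For any homotopy equivalence $\varphi$ of closed $m$-manifolds with $m$ even, Poincar\'e duality forces $\overline{\tau(\varphi)}=-\tau(\varphi)$, whence $\overline{q(\tau(\varphi))}=q(\tau(\varphi))$, i.e.\ $q(\tau(\varphi))$ must lie in the $+1$-eigenspace of the involution on $\widetilde K_0(\IZ[\pi_1 L])$. Your target element $y\ne 0$ in $\widetilde K_0(\IZ[\IZ/23])\cong\IZ/3$ with $\overline y=-y$ violates this constraint, so no $\varphi$ whatsoever can have $q(\tau(\varphi))=y$. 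More to the point, what is needed is an element fixed by the involution but not of the form $x+\overline x$, i.e.\ $\widehat H^0(\IZ/2;\widetilde K_0(\IZ[\IZ/p]))\ne 0$; since $\widetilde K_0(\IZ[\IZ/23])$ has odd order, all its Tate cohomology vanishes and no such element exists. The paper's choice $p=29$, where $\widetilde K_0(\IZ[\IZ/29])\cong(\IZ/2)^3$, is precisely tailored so that this $\widehat H^0$ is nonzero. It then takes $y=\overline y$ not of the form $x+\overline x$, sets $z=\sigma(y)$, observes $\overline z=-z$ and $[z]\ne 0$ in $\widehat H^{m+1}(\IZ/2;\Wh(\IZ\times\IZ/29))$, and uses the Rothenberg sequence for $\IZ[\IZ\times\IZ/29]$ to show that $L^h_{m+1}\to\widehat H^{m+1}$ is surjective; Wall realization of a preimage of $[z]$ then yields $\varphi$ with $q(\tau(\varphi))$ genuinely not of the form $x+\overline x$. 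Both the passage through surgery theory (replacing your $h$-cobordism) and the very different algebraic input for $\widetilde K_0$ are essential, and both are missing from your argument.
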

\begin{proof}
  We begin by recalling some needed calculations. These calculations can be
  found in~\cite{Bak(1974)} and~\cite[page~30]{Milnor(1971)}. Let $p$ be an odd 
prime.

  \begin{enumerate}
  \item $L^s_i(\IZ[\IZ/p])=L^h_i(\IZ[\IZ/p])=0$ if $i$ is odd;
  \item $\widehat H^i(\IZ/2;\Wh(\IZ/p))=0$ if $i$ is odd;
  \item $\widetilde{K}_0(\IZ[\IZ/29])=\IZ/2\oplus \IZ/2\oplus \IZ/2$.
  \end{enumerate}
  The following result is a consequence of (iii) and the fact that 
  $\IZ/2\oplus   \IZ/2\oplus \IZ/2$ equipped with some $\IZ/2[\IZ/2]$-module 
structure has 
  either the trivial   $\IZ/2$-action or is $\IZ/2[\IZ/2]$-isomorphic the direct 

sum of $\IZ/2[\IZ/2]$ 
  and the  trivial $\IZ/2[\IZ/2]$-module $\IZ/2$:
  \begin{enumerate}\setcounter{enumi}{3}
  \item $\widehat H^i(\IZ/2;\widetilde{K}_0(\IZ[\IZ/29]))\neq 0$ for all $i$.
  \end{enumerate}

  Next consider the Rothenberg exact sequence for $\IZ[\IZ\times \IZ/29]$:
  \begin{multline}\label{rothenberg_sequence}
    \cdots \to L^h_{m+1}(\IZ[\IZ\times \IZ/29]) \xrightarrow{\beta} \widehat 
H^{m+1}(\IZ/2;\Wh(\IZ\times \IZ/29))
   \\
   \to L^s_m(\IZ[\IZ\times \IZ/29])\xrightarrow{\alpha_m} L^h_m(\IZ[\IZ\times 
\IZ/29]) \cdots \to.
    \end{multline}

  Let us examine the map $\alpha_m$ of~\eqref{rothenberg_sequence} in terms of
  the Wall-Shaneson formula. Since
  \[
  L^s_m(\IZ[\IZ\times \IZ/29])= L^s_m(\IZ[\IZ/29])\oplus 
L^h_{m-1}(\IZ[\IZ/29])=L^s_m(\IZ[\IZ/29])
  \]
  because of calculation (i) and our assumption that $m$ is even, $\alpha_m$
  factors as the composition of the Rothenberg map
  \[
   \alpha\colon L^s_m(\IZ[\IZ/29])\to L^h_m(\IZ[\IZ/29])
   \]
  and the functorial inclusion
  \[
  L^h_m(\IZ[\IZ/29])\to L^h_m(\IZ[\IZ\times \IZ/29|).
  \] 
  But $\alpha$ is monic by the
  Rothenberg exact sequence for $\IZ[\IZ/29]$ and calculation (ii); namely, that
  $\widehat H^{m+1}(\IZ/2;\Wh(\IZ/29))=0$.

  Consequently, $\alpha_m$ is monic and hence the map $\beta$ 
  in~\eqref{rothenberg_sequence} is a epimorphism.

  Let $L$ be any closed smooth (connected) $(m-1)$-dimensional manifold with
  $\pi_1(L)=\IZ/29$ and fix an element $y\in\widetilde{K}_0(\IZ[\IZ/29])$ 
satisfying
  $y=\overline{y}$ but not of the form $x+\overline{x}$ where 
  $x\in\widetilde{K}_0(\IZ[\IZ/29])$. This is possible because of statement 
(iv). Let
  \[
  \sigma\colon\widetilde{K}_0(\IZ[\IZ/29])\to\Wh(\IZ\times \IZ/29)
  \]
  be the usual splitting of $q$ and let $z=\sigma(y)$. Then
  \[
  \overline{z}=-z
   \]
  since $\overline{\sigma(y)} = -\sigma(\overline{y})$. Since $\beta$ is onto, 
there
  exists a closed smooth manifold $\mathcal M$ together with a homotopy
  equivalence
  \[
   \varphi\colon\mathcal M\to L\times S^1
   \]
  such that $\tau(\varphi)=z$. But
  \[
   q(\tau(\varphi))=q(\sigma(y))=y
  \] 
so we have completed the construction. 
This finishes the proof of Lemma~\ref{lem:main_lemma}.
\end{proof}

\begin{proof}[Proof of Theorem~\ref{the:example_on_block_fibering}]
  Let $N=L\times S^1\times S^1$, $M=\mathcal M\times S^1$, $f=\varphi\times\id$,
  and $p$ be the projection described in Remark~\ref{rem:explicite_projection}. 
Since
  \[
   \tau(f)=\chi(S^1)\,\tau(\varphi)=0
  \]
  by the Kwun-Szczarba formula~\cite{Kwun-Szczarba(1965)}. Hence $p \circ f \colon \calm \times S^1 \to S^1 
\times S^1$ is homotopic to an 
  approximate fibration by Theorem~\ref{the:main_result_1_generalized},
  since the homotopy fiber of $f \circ p$ is $L$ and $\pi_1(S^1 \times S^1)$ 
satisfies
  FJC by Theorem~\ref{the:status_of_FJC}.

   Hence it remains to show that $p\circ f$ is
  \emph{not} homotopic to a block bundle projection. We do this by assuming the
  opposite and showing this assumption leads to a contradiction. Our assumption
  clearly forces $M$ to contain a closed connected (locally flat) codimension
  one submanifold $\mathcal M_0$ where $\mathcal M_0$ also contains a closed
  connected (locally flat) codimension one submanifold $\mathcal L_0$ with the
  following properties:
  \begin{enumerate}
  \item $\mathcal M_0$ lifts to the covering space $\mathcal M\times \IR\to
    \mathcal M\times S^1$ and the inclusion $\mathcal M_0\subset\mathcal
    M\times\IR$ is a homotopy equivalence;
  \item The composite map
    \[
    \mathcal L_0\subset\mathcal M_0\subset\mathcal M\times\IR\to \mathcal
    M\xrightarrow{\varphi}L\times S^1
    \] 
   lifts to the covering space $L\times\IR\to L\times S^1$ and this lift induces 
a homotopy equivalence.
  \end{enumerate}

\begin{remark}
  We can assume that $\mathcal M_0$ and $\mathcal L_0$ are smooth submanifolds
  satisfying properties (i) and (ii) by applying Hirsch's codimension one
  smoothing theorem in~\cite{Hirsch(1961)}.
\end{remark}

The following diagram~\eqref{diag1} is useful in ``visualizing'' properties (i)
and (ii): the dotted arrows in this diagram are the lifts posited in (i) and
(ii). Note that all the horizontal arrows are homotopy equivalences.

\begin{equation}\label{diag1}
  \xymatrix{{\mathcal M}\times S^1\\
    {\mathcal M_0} \ar@{^{(}->}[u] \ar@{.>}[r] & {\mathcal M}\times \IR \ar[lu] 
\ar[r] 
   & {\mathcal{M}} \ar[r]^(.4)\varphi & L\times S^1\\
    {\mathcal L_0} \ar@{^{(}->}[u] \ar@{.>}[rrr] &&&L\times\IR \ar[u] \ar[r] & L
  }
\end{equation}

Denote the composite of the horizontal arrows in line 2 of~\eqref{diag1} by
\[
\psi\colon\mathcal M_0\to L\times S^1
\]
and identify $L$ with the codimension one submanifold $L\times 1$ of $L\times
S^1$. It is easily seen using property (ii) that $\psi$ is homotopic to a smooth
homotopy equivalence $\eta$ which is split along $L$; i.e.
\begin{enumerate}
\item $\eta$ is transverse to $L$, and
\item $\eta\vert_{\eta^{-1}(L)}\colon\eta^{-1}(L)\to L$ is a homotopy
  equivalence.
\end{enumerate}

It was shown in~\cite{Farrell-Hsiang(1973)} (see also
\cite{Farrell-Hsiang(1968)}) that $\eta$ split forces its Whitehead torsion
$\tau(\eta)$ to lie in the image of $\Wh(\pi_1 (L))$ in $\Wh(\pi_1(L\times 
S^1))$;
consequently $q(\tau(\eta))=0$. And therefore
\begin{equation}\label{eq1}
  q(\tau(\psi))=0
\end{equation}
since $\tau(\psi)=\tau(\eta)$.

Now observe that, for $r$ a sufficiently large real number, $\mathcal M\times r$
is disjoint from $\mathcal M_0$ and the region $W$ between them is a compact
smooth $h$-cobordism with $\partial_- W=\mathcal M_0$ and $\partial_+ W=\mathcal
M\times r$. Let $y$ denote $\tau(W,\partial_- W)$ and let $g\colon \partial_-
W\to\partial_+ W$ be the composite of the inclusion $\partial_- W\subset W$ with
a retraction $W\to\partial_+ W$, then
\[
\tau(g)=y -\overline{y}
\]
because $m$ is assumed to be even. After identifying $\mathcal M\times r$ with 
$\mathcal M$ in the natural
(diffeomorphic) way, the composite $\varphi\circ g$ is homotopic to $\psi$ since
$g$ is easily seen to be homotopic to the composition
\[
\mathcal M_0\subset \mathcal M\times\IR\to \mathcal M
\]
of the first two arrows on line 2 of~\ref{diag1}. Therefore
\[
\tau(\psi)=\tau(\varphi)+\varphi_*(y-\overline{y});
\]
applying $q$ to this equation yields
\[
q(\tau(\psi))=q(\tau(\varphi)) - (x+\overline{x})
\] 
where $x=q(\varphi_*(\overline{y}))$. 
(Note that $q(\overline{z})=-\overline{q(z)}$.)

Substituting identity~\eqref{eq1} into this equation yields
\[
q(\tau(\varphi))=x+\overline{x}
\] 
which is the contradiction proving the 
Theorem~\ref{the:example_on_block_fibering}.
\end{proof}

%%%%%%%%%%%%%%%%%%%%%%%%%%%%%%%%%%%%%%%%%%%%%%%%%%%%%%%%%%%%%%%%%%%%%%%%%%%%%%%
%%%%%%%%%%%%%%%%%%%%%%%%%%%%%%%%%%%%%%%%%%%%%%%%%%%%%%%%%%%%%%%%%%%%%%%%%%%%%%%
%%%%%%%%%%%%%%%%%%%%%%%%%%%%%%%%%%%%%%%%%%%%%%%%%%%%%%%%%%%%%%%%%%%%%%%%%%%%%%%

\typeout{-----------------------  References ------------------------}

%\bibliographystyle{abbrv}
%\bibliography{dbpub,dbpre}

%\bibliography{../../dbpub,../../dbpre}

%\version{16.03.2017 (Wolfgang L.)}

\end{document}